\newtheorem {thm}{Theorem}
\newtheorem* {thm*}{Theorem}
\newtheorem {cor}[thm]{Corollary}
\newtheorem* {cor*}{Corollary}
\newtheorem {lem}[thm]{Lemma}
\newtheorem {prop}[thm]{Proposition}
\newtheorem* {prop*}{Proposition}
\theoremstyle{definition}
\newtheorem {rem}[thm]{Remark}
\newtheorem {defi}[thm]{Definition}
\newtheorem {exa}[thm]{Example}
\newtheorem* {conj*}{Conjecture}
\newtheorem* {quest*}{Question}
\numberwithin{thm}{section}
\DeclareMathOperator{\Aut}{Aut}
\DeclareMathOperator{\End}{End}
\DeclareMathOperator{\gal}{Gal}
\DeclareMathOperator{\Hom}{Hom}
\DeclareMathOperator{\GL}{GL}
\DeclareMathOperator{\SL}{SL}
\DeclareMathOperator{\PSL}{PSL}
\DeclareMathOperator{\PGL}{PGL}
\DeclareMathOperator{\mat}{Mat}
\DeclareMathOperator{\Id}{Id}
\DeclareMathOperator{\Mat}{Mat}
\DeclareMathOperator{\im}{Im}
\DeclareMathOperator{\ab}{ab}
\DeclareMathOperator{\disc}{disc}
\newcommand{\F}{\mathbb{F}}
\newcommand{\Q}{\mathbb{Q}}
\newcommand{\NN}{\mathbb{N}}
\newcommand{\Z}{\mathbb{Z}}
\renewcommand{\F}{\mathbb{F}}
\newcommand{\Kbar}{\overline{K}}
\newcommand{\lval}{v_\ell}
\newcommand{\order}{\mathcal{A}}
\newcommand{\set}[1]{\left\{ #1 \right\}}
\newcommand{\campo}{J}
\renewcommand{\geq}{\geqslant}
\renewcommand{\leq}{\leqslant}
\newcommand{\customlabel}[2]{
   \protected@write \@auxout {}{\string \newlabel {#1}{{#2}{\thepage}{#2}{#1}{}} }
   \hypertarget{#1}{}
}
\newcommand{\J}{J}
\newcommand{\JJ}{J(2)}
\begin{document}

\author{Davide Lombardo}
\address[]{Dipartimento di Matematica, Università di Pisa, Largo Bruno Pontecorvo 5, 56127 Pisa, Italy}
\email{davide.lombardo@unipi.it}

\title{Explicit Kummer Theory for Elliptic Curves}
\author{Sebastiano Tronto}
\address[]{Mathematics Research Unit, University of Luxembourg, 6 av.\@ de la Fonte, 4364 Esch-sur-Alzette, Luxembourg}
\email{sebastiano.tronto@uni.lu}


\begin{abstract}
Let $E$ be an elliptic curve defined over a number field $K$, let $\alpha \in E(K)$ be a point of infinite order, and let $N^{-1}\alpha$ be the set of $N$-division points of $\alpha$ in $E(\overline{K})$. 
We prove strong effective and uniform results for the degrees of the Kummer extensions $[K(E[N],N^{-1}\alpha) : K(E[N])]$. When $K=\Q$, and under a minimal assumption on $\alpha$, we show that the inequality $[\Q(E[N],N^{-1}\alpha) : \Q(E[N])] \geq cN^2$ holds with a constant $c$ independent of both $E$ and $\alpha$.
\end{abstract}

\maketitle

\section{Introduction}

\subsection{Setting}
Let $E$ be an elliptic curve defined over a number field $K$ (for which we fix an algebraic closure $\Kbar$) and let $\alpha\in E(K)$ be a point of infinite order.
The purpose of this paper is to study the extensions of $K$ generated by the division points of $\alpha$; in order to formally introduce these extensions we need to set some notation.

Given a positive integer $M$, we denote by $E[M]$ the group of $M$-torsion points of $E$, that is, the set $\{P \in E(\Kbar) : MP=0\}$ equipped with the group law inherited from $E$.
Moreover, we denote by $K_M$
the \emph{$M$-th torsion field} $K(E[M])$ of $E$, namely, the finite extension of $K$ obtained by adjoining the coordinates of all the $M$-torsion points of $E$. For each positive integer $N$ dividing $M$, we let $N^{-1}\alpha:=\set{\beta\in E(\Kbar)\mid N\beta=\alpha}$ denote the set of $N$-division points of $\alpha$ and set
\begin{align*}
K_{M,N}:=K(E[M],N^{-1}\alpha).
\end{align*}
The field $K_{M,N}$ is called the \emph{$(M,N)$-Kummer extension} of $K$ (related to $\alpha$), and both $K_M$ and $K_{M,N}$ are finite Galois extensions of $K$.

It is a classical question to study the degree of $K_{M,N}$ over $K_M$ as $M,N$ vary, see for example \cite[Théorème 5.2]{Durham}, \cite[Lemme 14]{Hindry}, or Ribet's foundational paper \cite{MR552524}. In particular, it is known that there exists an integer $C=C(E/K, \alpha)$, depending only on $E/K$ and $\alpha$, such that
\begin{align*}
\frac{N^2}{\left[K_{M,N}:K_M\right]}\quad\text{divides}\quad C
\end{align*}
for every pair of positive integers $(M,N)$ with $N\mid M$.

The aim of this paper is to give an explicit version of this result, and to show that it can even be made uniform when the base field is $K=\Q$. 
Our first result is that, under the assumption $\operatorname{End}_K(E)=\Z$, the integer $C$ can be bounded (explicitly) in terms of the $\ell$-adic Galois representations attached to $E$ and of divisibility properties of the point $\alpha$, and that this statement becomes false if we remove the hypothesis $\operatorname{End}_K(E)=\Z$.

On the other hand, the assumption $\End_\Q(E)=\Z$ is always satisfied when $K=\Q$, and we show that in this case $C$ can be taken to be independent of $E$ and $\alpha$, provided that $\alpha$ and all its translates by torsion points are not divisible by any $n>1$ in the group $E(\Q)$. This is a rather surprising statement, especially given that such a strong uniformity result is not known for the closely connected problem of studying the degrees of the torsion fields $K_M$ over $K$.

\subsection{Main results}
Our main results are the following.

\begin{thm}\label{thm:Main}
Assume that $\End_K(E)=\Z$. There is an explicit constant $C$, depending only on $\alpha$ and on the $\ell$-adic torsion representations associated to $E$ for all primes $\ell$, such that
\begin{align*}
\frac{N^2}{\left[K_{M,N}:K_M\right]}\quad\text{divides}\quad C
\end{align*}
for all pairs of positive integers $(M,N)$ with $N$ dividing $M$.
\end{thm}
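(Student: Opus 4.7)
The argument has three ingredients: the Kummer pairing, a Chinese Remainder reduction to prime powers, and Ribet's theorem on Kummer representations. For any fixed $\beta \in N^{-1}\alpha$, the map $\sigma \mapsto \sigma(\beta) - \beta$ defines an injection $\kappa_N \colon \gal(K_{M,N}/K_M) \hookrightarrow E[N]$ whose image is stable under the natural action of $\gal(K_M/K)$; consequently $N^2/[K_{M,N}:K_M]$ equals the index $[E[N] : \im \kappa_N]$. Decomposing $E[N] = \bigoplus_{\ell \mid N} E[\ell^{v_\ell(N)}]$ and using that $\kappa_N$ is compatible with the projections to $\ell$-primary components, I reduce the problem to giving a uniform bound, for each prime $\ell$, on the indices $[E[\ell^n] : \im \kappa_{\ell^n}]$ as $n$ varies, together with the statement that this bound equals $1$ for all but finitely many $\ell$.

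For a fixed prime $\ell$, I pass to the inverse limit to obtain the $\ell$-adic Kummer representation $\kappa_{\ell^\infty} \colon \gal(\overline{K}/K_{\ell^\infty}) \to T_\ell E$. This map is $\Z_\ell$-linear and its image is a $\gal(K_{\ell^\infty}/K)$-stable $\Z_\ell$-submodule of $T_\ell E$. Under the hypothesis $\End_K(E) = \Z$, Ribet's theorem implies that this image is open; writing its index as $\ell^{c_\ell}$, one obtains $[E[\ell^n] : \im \kappa_{\ell^n}] \leq \ell^{c_\ell}$ for every $n$. The point is that $c_\ell$ should be made explicit in terms of the $\ell$-adic torsion representation of $E$ and of the divisibility properties of $\alpha$, for instance by analysing the Galois cohomology group $H^1(\gal(K_{\ell^\infty}/K), T_\ell E)$ in which the Kummer class of $\alpha$ lives.

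To deal with infinitely many primes at once, I invoke Serre's open image theorem: since $\End_K(E) = \Z$, there is a finite set of primes $S$ outside of which the mod $\ell$ torsion representation surjects onto $\GL_2(\F_\ell)$. For $\ell \notin S$, provided $\alpha$ is not divisible by $\ell$ modulo torsion in $E(K)$, a vanishing argument for $H^1(\GL_2(\F_\ell), E[\ell])$ shows that $\im \kappa_\ell = E[\ell]$, and a Nakayama-style lift promotes this to $\im \kappa_{\ell^\infty} = T_\ell E$, i.e.\ $c_\ell = 0$. Taking $C := \prod_\ell \ell^{c_\ell}$ (a finite product) then yields the desired constant.

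The main obstacle is effectivity throughout: each step must be carried out with explicit constants. One needs an explicit finite set $S$, an explicit bound $\ell^{c_\ell}$ at each $\ell \in S$ computed from the $\ell$-adic torsion representation and the $\ell$-divisibility of $\alpha$, and explicit control on the finitely many primes where $\alpha$ itself is divisible in $E(K)$. Some additional bookkeeping is required to handle the small intersections between the $\ell$-primary parts of $K_M$ coming from the Weil pairing, but these contribute only a bounded error which can be absorbed into the final constant $C$.
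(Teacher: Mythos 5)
Your sketch correctly captures one half of the paper's argument but omits, or rather mischaracterizes, the other half, and the omission is a genuine gap.

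What you do well: the passage to the $\ell$-adic Kummer representation $\kappa_{\ell^\infty}\colon\gal(\overline K/K_{\ell^\infty})\to T_\ell E$, the use of openness of its image (the paper's Section 4, Theorem 4.15, following Jones--Rouse), the cohomological argument for divisibility of $\alpha$ along the $\ell$-adic tower, and the vanishing of the failure for almost all $\ell$ via surjectivity of the mod-$\ell$ representation -- this is essentially the paper's treatment of what it calls the \emph{$\ell$-adic failure} $A_\ell(N)$.

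The gap is in the CRT reduction. You write that $N^2/[K_{M,N}:K_M]$ equals $[E[N]:\operatorname{im}\kappa_N]$ and then reduce to bounding $[E[\ell^n]:\operatorname{im}\kappa_{\ell^n}]$ where $\kappa_{\ell^n}$ is the Kummer map over $K_{\ell^\infty}$. But the original $\kappa_N$ is computed over the base field $K_M$, where $M$ can have many prime factors, and the $\ell$-primary piece of $\gal(K_{M,N}/K_M)$ lives over $K_M$ (or at least over $K_N$), not over $K_{\ell^n}$. Concretely,
\[
\frac{\ell^{2n}}{[K_{N,\ell^n}:K_N]} \;=\; \frac{\ell^{2n}}{[K_{\ell^n,\ell^n}:K_{\ell^n}]}\cdot\bigl[K_{\ell^n,\ell^n}\cap K_N : K_{\ell^n}\bigr],
\]
and the second factor -- the \emph{adelic failure} $B_\ell(N)$ in the paper's terminology (Definition 2.3) -- is not, in general, explained by the Weil pairing or absorbed by trivial bookkeeping. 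The paper's Example 2.5 exhibits a curve over $\Q$ with $B_2(6)=2$ coming from $2$-division points of a rational point lying in $\Q(\zeta_3)\subseteq\Q(E[3])$, which is not a cyclotomic coincidence at the level of torsion fields but a genuine interaction between the Kummer extension at $2$ and the torsion field at $3$. Controlling $B_\ell(N)$ occupies all of Section 5 of the paper: it requires a linear-disjointness theorem for torsion fields of $E$ at large primes (Theorem 5.4, whose proof uses Serre's theorem together with a group-theoretic classification of occurrences of $\PSL_2(\F_p)$), the base change to an auxiliary field $\tilde K$, and a commutator estimate for the arboreal representation (Propositions 5.10--5.12). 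None of this is captured by analysing $\kappa_{\ell^\infty}$ over $K_{\ell^\infty}$ alone, and it is not a ``bounded error absorbed into $C$'' unless one actually proves it is bounded -- which is where the real work lies. The paper's final constant is $C=\prod_\ell\ell^{a_\ell+b_\ell}$, whereas your $C=\prod_\ell\ell^{c_\ell}$ accounts only for the $a_\ell$'s.
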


The proof gives an explicit expression for $C$ that depends on computable parameters associated with $E$ and $\alpha$. We also show that all these quantities can be bounded effectively in terms of standard invariants of the elliptic curve and of the height of $\alpha$, see Remark \ref{rmk:MainTheoremIsEffective}.

\begin{thm}\label{thm:UniformIntroduction}
There is a universal constant $C$ with the following property. Let $E/\Q$ be an elliptic curve, and let $\alpha \in E(\Q)$ be a point such that the class of $\alpha$ in the free abelian group $E(\Q)/E(\Q)_{\operatorname{tors}}$ is not divisible by any $n>1$. Then
\[
\frac{N^2}{\left[ \Q_{M,N} : \Q_M \right]} \quad \text{ divides } \quad C
\]
for all pairs of positive integers $(M,N)$ with $N$ dividing $M$.
\end{thm}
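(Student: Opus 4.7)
The plan is to derive Theorem \ref{thm:UniformIntroduction} from the relative result Theorem \ref{thm:Main}. The first observation is that Theorem \ref{thm:Main} applies to every elliptic curve over $\Q$: the complex multiplication of a CM elliptic curve is always defined only over an imaginary quadratic extension, so $\End_\Q(E)=\Z$ holds automatically over the rationals. Thus for each pair $(E,\alpha)$ satisfying the hypotheses of Theorem \ref{thm:UniformIntroduction} we already obtain a constant $C=C(E,\alpha)$ with the required divisibility, and the task is to show that $C$ can be chosen independent of $E$ and $\alpha$.

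The natural strategy is to decompose the integer $N^2/[\Q_{M,N}:\Q_M]$ into $\ell$-adic pieces and bound each piece uniformly. Fix a prime $\ell$. The $\ell$-adic component of the defect is controlled by the image of the arithmetic representation attached to $(E,\alpha)$, taking values in the semidirect product $T_\ell E\rtimes\Aut(T_\ell E)$. The indivisibility hypothesis on $\alpha$ modulo $E(\Q)_{\mathrm{tors}}$ is precisely what guarantees that the $\ell$-adic Kummer cocycle is as surjective as the torsion image permits, so that the $\ell$-adic defect is governed by how far the $\ell$-adic torsion representation $\rho_{E,\ell^\infty}$ is from being onto $\GL_2(\Z_\ell)$. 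Proving the uniform theorem therefore reduces to bounding, uniformly over $E/\Q$, the index $[\GL_2(\Z_\ell):\im\rho_{E,\ell^\infty}]$ at each prime $\ell$, and to showing that the contribution is trivial for all $\ell$ outside an explicit finite set that depends only on $\Q$.

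This second step is the hard part, and it is where the restriction $K=\Q$ is essential. The uniform control on $\rho_{E,\ell^\infty}$ over $\Q$ must be assembled from several classical and more recent inputs: Mazur's theorem eliminates the Borel case for $\ell$ sufficiently large, the theorem of Bilu--Parent--Rebolledo (and its successors) handles the normalizer of a split Cartan, the exceptional subgroups of $\PGL_2(\F_\ell)$ occur only for small $\ell$, and the CM elliptic curves over $\Q$ contribute only finitely many $j$-invariants, which can be treated case by case. The main obstacle is the normalizer of the non-split Cartan, where full Serre uniformity over $\Q$ is not known; one must rely on the partial uniform bounds currently available in the literature, which nevertheless suffice to cap each individual local defect by a constant $c_\ell$ depending only on $\ell$. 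Assembling the finitely many non-trivial contributions then yields the universal $C=\prod_\ell c_\ell$, completing the argument.
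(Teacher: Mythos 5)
Your opening observation is correct and matches the paper: over $\Q$ the CM of a CM curve is never defined over the base field, so $\End_\Q(E)=\Z$ always holds and Theorem \ref{thm:Main} does apply. The decomposition into $\ell$-adic pieces is also the right shape.

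However, the central reduction you propose is wrong, and the error is exactly the kind that cannot be papered over. You claim that proving Theorem \ref{thm:UniformIntroduction} ``reduces to bounding, uniformly over $E/\Q$, the index $[\GL_2(\Z_\ell):\operatorname{im}\rho_{E,\ell^\infty}]$ at each prime $\ell$.'' If that were the correct reduction, the theorem would be out of reach: uniformly bounding that index over all $E/\Q$ is precisely the Serre uniformity problem, which is open because of the normaliser-of-non-split-Cartan case. You acknowledge this obstacle but then assert that ``the partial uniform bounds currently available in the literature \ldots nevertheless suffice to cap each individual local defect.'' That sentence is doing all the work, and it has no content: when $H_\ell$ is contained in $N_{\operatorname{ns}}(\ell)$ the index $[\GL_2(\F_\ell):H_\ell]$ grows without bound as $\ell\to\infty$, so no finite list of ``partial bounds'' caps it.

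The paper's argument avoids this trap entirely, and the mechanism it uses is the piece you are missing. What the Kummer-theoretic argument actually needs at a prime $\ell$ is not surjectivity of the mod-$\ell$ torsion representation, but \emph{irreducibility} of $E[\ell]$ as an $H_\ell$-module, together with a uniform bound on the exponent of certain cohomology groups $H^1(H_{N},E[\ell^k])$ (Propositions \ref{prop:NewBoundCohomology} and \ref{prop:UniformBoundOnCohomology}, which feed into Lemma \ref{cor:BoundInTermsOfC1}(3) via Proposition \ref{prop:GroupTheory}(2)). Irreducibility is a much weaker requirement than surjectivity: by Mazur's Theorem \ref{thm:Mazur}, $E/\Q$ has no rational $\ell$-isogeny for $\ell$ outside an explicit finite set, so $E[\ell]$ is $H_\ell$-irreducible for such $\ell$ --- and, crucially, this is true \emph{whether or not} $H_\ell$ sits inside a non-split Cartan normaliser, because the non-split Cartan itself acts irreducibly. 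Zywina's classification (Theorem \ref{thm:Zywyna}) is then used not to bound an index, but to control the action on the abelianisation of the relevant Galois groups in the cohomological estimate. The finite exceptional set of primes is handled by Arai's Theorem \ref{thm:Arai} (a uniform parameter of maximal growth depending only on $\Q$ and $\ell$, not on $E$), and the CM curves over $\Q$ are handled separately using the finiteness of CM $j$-invariants together with the adelic image result of \cite{MR3766118}. None of this is a uniformity statement about the index of the torsion image. So the gap in your proposal is not a missing citation but a wrong target: you need to replace ``bound the index of $\rho_{E,\ell^\infty}$'' with ``bound the exponent of $H^1(\gal(\Q_N\mid\Q),E[\ell^k])$ and establish $H_\ell$-irreducibility of $E[\ell]$,'' and those are genuinely different problems, only the latter of which is tractable.
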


\subsection{Structure of the paper}

We start with some necessary general preliminaries in Section \ref{sec:Preliminaries}, leading up to a factorisation of the constant $C$ of Theorem \ref{thm:Main} as a product of certain contributions which we dub the \textit{$\ell$-adic} and \textit{adelic} failures (corresponding to $E$, $\alpha$, and a fixed prime $\ell$). In the same section we also introduce some of the main actors of this paper, in the form of several Galois representations associated with the torsion and Kummer extensions. In Section \ref{sec:PropertiesTorsionRepresentation} we then recall some important properties of the torsion representations that will be needed in the rest of the paper.
In Sections \ref{sec:lAdicFailure} and \ref{sec:AdelicFailure} we study the $\ell$-adic and adelic failures respectively. 
In Section \ref{sec:CMCounterexample} we show that one cannot hope to na\"ively generalise some of the results in section \ref{sec:lAdicFailure} to CM curves. Finally, in Section \ref{sec:UniformBounds} we prove Theorem \ref{thm:UniformIntroduction} by establishing several auxiliary results about the Galois cohomology of the torsion modules $E[M]$ that might have an independent interest.

\subsection{Acknowledgements}
It is a pleasure to thank Antonella Perucca for suggesting the problem that led to this paper, for her constant support, and for her useful comments.
We are grateful to Peter Bruin for many interesting discussions, and to Peter Stevenhagen and Francesco Campagna for useful correspondence about the results of section \ref{sec:IntersectionTorsionFields}.

\section{Preliminaries}
\label{sec:Preliminaries}
\subsection{Notation and definitions}
The letter $K$ will always denote a number field, $E$ an elliptic curve defined over $K$, and $\alpha$ a point of infinite order in $E(K)$.

For $n$ a positive integer, we denote by $\zeta_n$ a primitive root of unity of order $n$.

Given a prime $\ell$, we denote by $v_\ell$ the usual $\ell$-adic valuation on $\Q$ and on $\Q_\ell$. If $X$ is a vector in $\Z_\ell^n$ or a matrix in $\Mat_{m \times n}(\Z_\ell)$, we call \textit{valuation} of $X$, denoted by $v_\ell(X)$, the minimum of the $\ell$-adic valuations of its coefficients.

We shall often use divisibility conditions involving the symbols $\ell^\infty$ (where $\ell$ is a prime) and $\infty$. Our convention is that every power of $\ell$ divides $\ell^\infty$, every positive integer divides $\infty$, and $\ell^\infty$ divides $\infty$.

Recall from the Introduction that we denote by $K_M$ the field $K(E[M])$ generated by the coordinates of the $M$-torsion points of $E$, and by $K_{M,N}$ (for $N \mid M$) the field $K(E[M], N^{-1} \alpha)$. We extend this notation by setting $K_{\ell^\infty} = \bigcup_n K_{\ell^n}$, $K_\infty = \bigcup_M K_M$, and more generally, for $M, N \in \mathbb{N}_{>0} \cup \{\ell^\infty, \infty\}$ with $N \mid M$,
\[
K_M = \bigcup_{d \mid M} K_d, \quad K_{M,N} = \bigcup_{d \mid M} \bigcup_{\substack{e \mid d \\ e \mid N}} K_{d,e}
\]

If $H$ is a subgroup of $\GL_2(\Z_\ell)$, we denote by $\Z_\ell[H]$ the sub-$\Z_\ell$-algebra of $\Mat_2(\Z_\ell)$ generated by the elements of $H$.

Let $G$ be a (profinite) group. We write $G'$ for its derived subgroup, namely, the subgroup of $G$ (topologically) generated by commutators, and $G^{\operatorname{ab}}=G/G'$ for its abelianisation, namely, its largest abelian (profinite) quotient.
We say that a finite simple group $S$ \textit{occurs} in a profinite group $G$ if there are closed subgroups $H_1, H_2$ of $G$, with $H_1 \triangleleft H_2$, such that $H_2/H_1$ is isomorphic to $S$. Finally, we denote by $\exp G$ the exponent of a finite group $G$, namely, the smallest integer $e \geq 1$ such that $g^e=1$ for every $g \in G$.

\subsection{The $\ell$-adic and adelic failures}

We start by observing that it is enough to restrict our attention to the case $N=M$:
\begin{rem}
\label{rem-NM}
Suppose that there is a constant $C\geq1$ satisfying
\begin{align*}
\frac{M^2}{\left[K_{M,M}:K_M\right]}\quad\text{divides}\quad C
\end{align*}
for all positive integers $M$. Then for any $N\mid M$, since $[K_{M,M}:K_{M,N}]$ divides $(M/N)^2$, we have that
\begin{align*}
\frac{N^2}{\left[K_{M,N}:K_M\right]}=\frac{N^2[K_{M,M}:K_{M,N}]}{[K_{M,M}:K_M]}\quad \text{divides} \quad \frac{M^2}{\left[K_{M,M}:K_M\right]},
\end{align*}
which in turn divides $C$.
\end{rem}

Elementary field theory gives
\begin{align*}
\frac{N^2}{[K_{N,N}:K_{N}]}=&\prod_{\overset{\ell|N}{\ell\text{ prime}}}\frac{\ell^{2n_\ell}}{[K_{N,\ell^{n_\ell}}:K_N]}=\\
=&\prod_{\overset{\ell|N}{\ell\text{ prime}}}\frac{\ell^{2n_\ell}} {[K_{\ell^{n_\ell},\ell^{n_\ell}}:K_{\ell^{n_\ell}}]}\cdot\frac{[K_{\ell^{n_\ell},\ell^{n_\ell}}:K_{\ell^{n_\ell}}]} {[K_{N,\ell^{n_\ell}}:K_N]}=\\
=&\prod_{\overset{\ell|N}{\ell\text{ prime}}} \frac{\ell^{2n_\ell}} {[K_{\ell^{n_\ell},\ell^{n_\ell}}:K_{\ell^{n_\ell}}]} \cdot [K_{\ell^{n_\ell},\ell^{n_\ell}}\cap K_N:K_{\ell^{n_\ell}}]
\end{align*}
where $n_\ell=\lval(N)$. To see why the first equality holds, recall that the degree $[K_{N, \ell^{n_\ell}} : K_N]$ is a power of $\ell$, so the fields $K_{N, \ell^{n_\ell}}$ are linearly disjoint over $K_N$, and clearly they generate all of $K_{N,N}$.

\begin{defi}
Let $\ell$ be a prime and $N$ a positive integer. Let $n:=\lval(N)$. We call
\begin{align*}
A_\ell(N):=\frac{\ell^{2n}} {[K_{\ell^{n},\ell^{n}}:K_{\ell^{n}}]}
\end{align*}
the \emph{$\ell$-adic failure} at $N$ and
\begin{align*}
B_\ell(N):=\frac{[K_{\ell^{n},\ell^{n}}:K_{\ell^{n}}]} {[K_{N,\ell^{n}}:K_N]}=[K_{\ell^{n},\ell^{n}}\cap K_N:K_{\ell^{n}}]
\end{align*}
the \emph{adelic failure} at $N$ (related to $\ell$). Notice that both $A_\ell(N)$ and $B_\ell(N)$ are powers of $\ell$.
\end{defi}

\begin{exa}
It is clear that the $\ell$-adic failure $A_\ell(N)$ can be nontrivial, that is, different from $1$. Suppose for example that $\alpha=\ell\beta$ for some $\beta\in E(K)$: then we have
\begin{align*}
K_{\ell^n,\ell^n}=K_{\ell^n}(\ell^{-n}\alpha)=K_{\ell^n}(\ell^{-n+1}\beta),
\end{align*}
and the degree of this field over $K_{\ell^n}$ is at most $\ell^{2(n-1)}$, so $\ell^2\mid A_\ell(N)$.
In Example \ref{exa-17739g1} we will show that the $\ell$-adic failure can be non-trivial also when $\alpha$ is strongly $\ell$-indivisible (see Definition \ref{def-divisib}).
\end{exa}

\begin{exa}
We now show that the adelic failure $B_\ell(N)$ can be non-trivial as well. Consider the elliptic curve $E$ over $\Q$ given by the equation
\begin{align*}
y^2 = x^{3} + x^{2} - 44 x - 84
\end{align*}
and with Cremona label 624f2 (see \cite[\href{http://www.lmfdb.org/EllipticCurve/Q/624f2/}{label 624f2}]{lmfdb}). One can show that $E(\Q)\cong \Z\oplus (\Z/2\Z)^2$, so that the curve has full rational $2$-torsion, and that a generator of the free part of $E(\Q)$ is given by $P=(-5,6)$. The $2$-division points of $P$ are given by $(1 + \sqrt{-3}, -3 + 7\sqrt{-3})$, $(-11 + 3\sqrt{-3},27 + 15\sqrt{-3})$, and their Galois conjugates, so they are defined over $\Q(\zeta_3)\subseteq \Q_3$, and we have $B_2(6):=[\Q_{2,2}\cap \Q_6:\Q_2]=[\Q(\zeta_3):\Q]=2$.

These computations have been checked with SageMath \cite{sagemath}.
\end{exa}

\subsection{The torsion, Kummer and arboreal representations}\label{subsec:Representations}
In this section we introduce three representations of the absolute Galois group of $K$ that will be our main tool for studying the extensions $K_{M,N}$. For further information about these representations see for example \cite[Section 3]{JonesRouse},  \cite{2018arXiv180208527B}, and \cite{2016arXiv161202847L}.

\subsubsection{The torsion representation}

Let $N$ be a positive integer. The group $E[N]$ of $N$-torsion points of $E$ is a free $\Z/N\Z$-module of rank $2$. Since the multiplication-by-$N$ map is defined over $K$, the absolute Galois group of $K$ acts $\Z/N\Z$-linearly on $E[N]$, and we get a homomorphism
\begin{align*}
\tau_N:\gal(\Kbar\mid K)\to \Aut(E[N]).
\end{align*}

The field fixed by the kernel of $\tau_N$ is exactly the $N$-th torsion field $K_N$. Thus, after fixing a $\Z/N\Z$-basis of $E[N]$, the Galois group $\gal(K_N\mid K)$ is identified with a subgroup of $\GL_2(\Z/N\Z)$ which we denote by $H_N$. 

As $N$ varies, and provided that we have made compatible choices of bases, these representations form a compatible projective system, so we can pass to the limit over powers of a fixed prime $\ell$ to obtain the \textit{$\ell$-adic torsion representation} $\tau_{\ell^\infty} : \gal(\overline{K} \mid K) \to \GL_2(\Z_\ell)$. We can also take the limit over all integers $N$ (ordered by divisibility) to obtain the \textit{adelic torsion representation} $\tau_\infty : \gal(\overline{K} \mid K) \to \GL_2(\hat{\Z})$. We denote by $H_{\ell^\infty}$ (resp.~$H_\infty$) the image of $\tau_{\ell^\infty}$ (resp.~$\tau_\infty$). The group $H_{\ell^\infty}$ (resp.~$H_{\infty}$) is isomorphic to $\gal(K_{\ell^\infty} \mid K)$ (resp.~$\gal(K_\infty \mid K)$).

One can also pass to the limit on the torsion subgroups themselves, obtaining the \textit{$\ell$-adic Tate module} $T_\ell E=\varprojlim_n E[\ell^n] \cong \Z_\ell^2$ and the \textit{adelic Tate module} $TE = \varprojlim_M E[M] \cong \hat{\Z}^2 \cong \prod_{\ell} \Z_\ell^2$.

\subsubsection{The Kummer representation}

Let $M$ and $N$ be positive integers with $N\mid M$. Let $\beta\in E(\Kbar)$ be a point such that $N\beta=\alpha$. For any $\sigma\in\gal(\Kbar\mid K_M)$ we have that $\sigma(\beta)-\beta$ is an $N$-torsion point, so the following map is well-defined:
\[
\begin{array}{cccc}
\kappa_N: & \gal(\Kbar\mid K_M) &\to & E[N]\\
& \sigma &\mapsto & \sigma(\beta)-\beta.
\end{array}
\]
Since any other $N$-division point $\beta'$ of $\alpha$ satisfies $\beta'=\beta+T$ for some $T\in E[N]$, and the coordinates of $T$ belong to $K_N \subseteq K_M$, the map $\kappa_N$ does not depend on the choice of $\beta$. It is also immediate to check that $\kappa_N$ is a group homomorphism, and that the field fixed by its kernel is exactly the $(M,N)$-Kummer extension of $K$. Fixing a basis of $E[N]$ we can identify the Galois group $\gal(K_{M,N}\mid K_M)$ with a subgroup of $(\Z/N\Z)^2$. It is then clear that $K_{M,N}$ is an abelian extension of $K_M$ of degree dividing $N^2$, and the Galois group of this extension has exponent dividing $N$.

In the special case $M=N$ we denote by $V_N$ the image of $\gal\left( K_{N,N} \mid K_N \right)$ in $(\Z/N\Z)^2$.

By passing to the limit in the previous constructions we also obtain the following:
\begin{itemize}
\item There is an $\ell$-adic Kummer representation $\kappa_{\ell^\infty} : \gal(\overline{K} \mid K_{\ell^\infty}) \to T_\ell E$ which factors via a map $\gal(K_{\ell^\infty, \ell^\infty} \mid K_{\ell^\infty}) \to T_\ell E$ (still denoted by $\kappa_{\ell^\infty}$). 
\item The image $V_{\ell^\infty}$ of $\kappa_{\ell^\infty}$ is a sub-$\Z_\ell$-module of $T_\ell E \cong \Z_\ell^2$, isomorphic to $\gal(K_{\ell^\infty, \ell^\infty} \mid K_{\ell^\infty})$ as a profinite group. We therefore identify $\gal(K_{\ell^\infty, \ell^\infty} \mid K_{\ell^\infty})$ with $V_{\ell^\infty}$.
\item We can identify the Galois group $\gal(K_{\infty, \ell^\infty} \mid K_\infty)$ with a $\Z_\ell$-submodule $W_{\ell^\infty}$ of $V_{\ell^\infty}$ (hence also of $T_\ell E$) via the representation $\kappa_{\ell^\infty}$.
\item We can identify the Galois group $\gal(K_{\infty, \infty} \mid K_\infty)$ with a sub-$\hat{\Z}$-module $W_\infty$ of $TE \cong \hat{\Z}^2$.
\end{itemize}

Notice that $W_{\ell^\infty}$ is the projection of $W_{\infty}$ in $\Z_\ell^2$, and since $W_{\ell^\infty}$ is a pro-$\ell$ group and there are no nontrivial continuous morphisms from a pro-$\ell$ group to a pro-$\ell'$ group for $\ell \neq \ell'$ we have $W_\infty = \prod_\ell W_{\ell^\infty}$.

\subsubsection{The arboreal representation}
Fix a sequence $\{\beta_i\}_{i\in\NN}$ of points in $E(\overline{K})$ such that $\beta_1=\alpha$ and $N \beta_M=\beta_{M/N}$ for all pairs of positive integers $(N,M)$ with $N \mid M$. For every $N\geq 1$ fix furthermore a $\Z/N\Z$-basis $\{T_1^N,T_2^N\}$ of $E[N]$ in such a way that $N T_1^M=T_1^{M/N}$ and $N T_2^M=T_2^{M/N}$ for every pair of positive integers $(N,M)$ with $N \mid M$. For every $N \geq 1$, the map
\begin{align*}
\omega_N:\gal(K_{N,N}\mid K)&\to \left(\Z/N\Z\right)^2\rtimes\GL_2\left(\Z/N\Z\right)\\
\sigma&\mapsto \left(\sigma(\beta_N)-\beta_N,\tau_{N}(\sigma)\right)
\end{align*}
is an injective homomorphism (similarly to \cite[Proposition 3.1]{JonesRouse}) and thus identifies the group $\gal(K_{N,N}\mid K)$ with a subgroup of $\left(\Z/N\Z\right)^2\rtimes\GL_2\left(\Z/N\Z\right)$.

It will be important for our applications to notice that $V_N$ comes equipped with an action of $H_N$ coming from the fact that $V_N$ is the (abelian) kernel of the natural map $\gal(K_{N,N}\mid K) \to H_N$. More precisely, the action of $h \in H_N$ on $v \in V_N$ is given by conjugating the element $(v, \operatorname{Id}) \in (\Z/N\Z)^2 \rtimes \GL_2(\Z/N\Z)$ by $(0,h)$. Explicitly, we have
\[
(0,h) (v, \operatorname{Id}) (0,h)^{-1} = (hv,h)(0,h^{-1}) = (hv, \operatorname{Id}),
\]
so that the action of $H_N$ on $V_N$ is induced by the natural action of $\GL_2(\Z/N\Z)$ on $\left(\Z/N\Z \right)^2$. We obtain similar statements by suitably passing to the limit in $N$:

\begin{lem}\label{lemma:HnActionOnVn}
For every positive integer $N$, the group $V_N$ is an $H_N$-submodule of $(\Z/N\Z)^2$ for the natural action of $H_N \leq \GL_2(\Z/N\Z)$ on $V_N \leq (\Z/N\Z)^2$. Similarly, both $V_{\ell^\infty}$ and $W_{\ell^\infty}$ are $H_{\ell^\infty}$-modules.
\end{lem}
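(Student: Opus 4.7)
The key observation is that the argument is essentially already laid out in the paragraph immediately preceding the lemma, so my plan is to organise it into a proper proof and then pass to the limit for the $\ell$-adic statements.

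First I would note that $V_N = \gal(K_{N,N} \mid K_N)$ is the kernel of the natural surjection $\gal(K_{N,N} \mid K) \twoheadrightarrow \gal(K_N \mid K) = H_N$, hence is a normal subgroup. Via the embedding $\omega_N$, the subgroup $V_N$ is identified with the set of elements of the form $(v, \operatorname{Id}) \in (\Z/N\Z)^2 \rtimes \GL_2(\Z/N\Z)$ in the image, with $v$ ranging over $V_N \subseteq (\Z/N\Z)^2$. Normality of $V_N$ therefore translates into the fact that conjugation by any element of $\omega_N(\gal(K_{N,N} \mid K))$ preserves this subset, and since $V_N$ is abelian, this conjugation action descends to an action of the quotient $H_N$.

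Next, I would make the $H_N$-action explicit. Choose any lift $(v_0, h) \in \omega_N(\gal(K_{N,N} \mid K))$ of an element $h \in H_N$; using the semidirect product multiplication rule $(v_1, h_1)(v_2, h_2) = (v_1 + h_1 v_2, h_1 h_2)$ and $(v_0, h)^{-1} = (-h^{-1}v_0, h^{-1})$, the direct computation
\[
(v_0, h)(v, \operatorname{Id})(v_0, h)^{-1} = (v_0 + hv, h)(-h^{-1}v_0, h^{-1}) = (hv, \operatorname{Id})
\]
shows two things at once: the conjugation action is independent of the chosen lift $v_0$ (so it is well-defined on $H_N$), and it coincides with the natural $\GL_2(\Z/N\Z)$-action of $h$ on $v \in (\Z/N\Z)^2$. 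This proves that $V_N$ is stable under the natural $H_N$-action, i.e.\ is an $H_N$-submodule, as claimed.

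Finally, for the $\ell$-adic statements, I would observe that all the constructions are compatible under the inverse system indexed by powers of $\ell$: the groups $V_{\ell^n}$ are $H_{\ell^n}$-modules compatibly in $n$, so passing to the inverse limit gives the $H_{\ell^\infty}$-module structure on $V_{\ell^\infty} = \varprojlim_n V_{\ell^n}$. The submodule $W_{\ell^\infty}$ is by construction the image of $\gal(K_{\infty, \ell^\infty} \mid K_\infty)$ inside $V_{\ell^\infty}$, and since conjugation inside $\gal(K_{\infty, \ell^\infty} \mid K)$ by elements mapping to $H_{\ell^\infty}$ preserves this image (again by normality), $W_{\ell^\infty}$ is automatically an $H_{\ell^\infty}$-submodule of $V_{\ell^\infty}$. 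There is no serious obstacle here; the only point that deserves care is verifying that the conjugation action is independent of the chosen lift, which is exactly where abelianness of $V_N$ is used.
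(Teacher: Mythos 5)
Your proof is correct and follows essentially the same route as the paper, which proves the lemma via the remarks immediately preceding it: identify $V_N$ with the (abelian, normal) kernel of $\gal(K_{N,N}\mid K)\to H_N$, compute the conjugation action inside $(\Z/N\Z)^2\rtimes\GL_2(\Z/N\Z)$, and pass to the limit. You are a touch more careful than the paper in one respect: you conjugate by an actual element $(v_0,h)$ of the image of $\omega_N$ and then verify independence of the lift, whereas the paper conjugates by $(0,h)$, which need not lie in the image; the two computations agree precisely because $(\Z/N\Z)^2$ is abelian, which is the point you make explicit.
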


\begin{rem}\label{rem:Vsubgroup}
Let $N \in \mathbb{N} \cup \{\ell^\infty\}$ and $M \in \mathbb{N} \cup \{ \ell^\infty, \infty \}$ with $N \mid M$. Then $\gal(K_{M,N} \mid K_M)$ can be identified with a subgroup of $V_N$: this follows from inspection of the diagram
\[
\xymatrix{
& K_{M,N} \ar@{-}[dr] \ar@{-}[dl] \\
K_M \ar@{-}[dr] && K_{N,N} \ar@{-}[dl]  \\
& K_M \cap K_{N,N} \ar@{-}[d] \\
& K_N
}
\]
which shows that $\gal(K_{M,N} \mid K_M)$ is isomorphic to $\gal( K_{N,N} \mid K_M \cap K_{N,N})$, which in turn is clearly a subgroup of $\gal(K_{N,N} \mid K_N) \cong V_N$.
\end{rem}

\subsection{Curves with complex multiplication}\label{subsect:CMCurves}

If $\End_{\Kbar}(E)\neq \mathbb{Z}$ we say that \emph{$E$ has complex multiplication}, or CM for short. In this case $\End_{\Kbar}(E)$ is an order in an imaginary quadratic field, called \emph{the CM-field of $E$}. The torsion representations in the CM case have been studied for example in \cite{Deur1} and \cite{Deur2}.

In this case, the image of the torsion representation $\tau_{\ell^\infty}$ is closely related to the \textit{Cartan subgroup of $\operatorname{GL}_2(\Z_\ell)$ corresponding to $\operatorname{End}_{\overline{K}}(E)$}, defined as follows:
\begin{defi}
Let $F$ be a reduced $\Q_\ell$-algebra of degree $2$ and let $\order_\ell$ be a $\Z_\ell$-order in $F$. 
The \textit{Cartan subgroup} corresponding to $\order_\ell$ is the group of units of $\order_\ell$, which we embed in $\GL_2(\Z_\ell)$ by fixing a $\Z_\ell$-basis of $\order_\ell$ and considering the left multiplication action of $\order_\ell^\times$. If $\order$ is an order in an imaginary quadratic number field, the Cartan subgroup of $\GL_2(\Z_\ell)$ corresponding to $\order$ is defined by taking $\order_\ell=\order \otimes \Z_\ell$ in the above.
\end{defi}

More precisely, when $E/K$ is an elliptic curve with CM, the image of the $\ell$-adic torsion representation $\tau_{\ell^\infty}$ is always contained (up to conjugacy in $\GL_2(\Z_\ell)$) in the normaliser of the Cartan subgroup corresponding to $\operatorname{End}_{\overline{K}}(E)$, and is contained in the Cartan subgroup itself if and only if the complex multiplication is defined over the base field $K$.

In order to have a practical representation of Cartan subgroups, we recall the following definition from \cite{MR3690236}:
\begin{defi}\label{def:CartanParameters}
Let $C$ be a Cartan subgroup of $\GL_2(\Z_\ell)$. We say that $(\gamma, \delta) \in \Z_\ell^2$ are \textit{parameters for $C$} if $C$ is conjugated in $\GL_2(\Z_\ell)$ to the subgroup 
\begin{equation}\label{nfaa}
\left\{ \begin{pmatrix}
x & \delta y \\ y & x+\gamma y
\end{pmatrix} : x,y \in \Z_{\ell},\; v_{\ell}(x(x+\gamma y)-\delta y^2)=0 \right\}\,.
\end{equation}
Parameters for $C$ always exist, see \cite[§2.3]{MR3690236}.
\end{defi}
\begin{rem}[{\cite[Remark 9]{MR3690236}}]\label{rem:Parameters}
One may always assume that $\gamma,\delta$ are integers. Furthermore, one can always take $\gamma \in \{0,1\}$, and $\gamma=0$ if $\ell \neq 2$.
\end{rem}

We also recall the following explicit description of the normaliser of a Cartan subgroup \cite[Lemma 14]{MR3690236}:
\begin{lem}\label{lem-Norm}
A Cartan subgroup has index 2 in its normaliser. If $C$ is as in \eqref{nfaa}, its normaliser $N$ in $\GL_2(\Z_\ell)$ is the disjoint union of $C$ and  
$\displaystyle C':=\begin{pmatrix}
1 & \gamma \\ 0 & -1
\end{pmatrix} \cdot C\,.$
\end{lem}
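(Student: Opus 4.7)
The plan is to combine one structural observation (what the centraliser of $C$ is) with one explicit verification (that the given matrix $w:=\begin{pmatrix}1&\gamma\\0&-1\end{pmatrix}$ normalises $C$ but does not lie in it). The underlying idea is that a Cartan subgroup is the unit group of a $\Z_\ell$-order $\OO_\ell$ in a quadratic étale $\Q_\ell$-algebra $F=\OO_\ell\otimes_{\Z_\ell}\Q_\ell$, and its normaliser is the preimage of $\Aut_{\Q_\ell}(F)\cong \Z/2\Z$ under the conjugation action.

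\textbf{Step 1: identify the centraliser.} First I would unwind the definition of $C$: writing $\OO_\ell=\Z_\ell[\tau]$ with $\tau^2=\gamma\tau+\delta$, the matrix \eqref{nfaa} is exactly the matrix of left multiplication by $x+y\tau$ on the $\Z_\ell$-basis $\{1,\tau\}$ of $\OO_\ell$; the valuation-zero condition on the determinant is the condition that $x+y\tau\in\OO_\ell^\times$. Hence $C=\OO_\ell^\times$ and $\Z_\ell[C]=\OO_\ell$. Since $F$ is a $2$-dimensional commutative $\Q_\ell$-subalgebra of $\Mat_2(\Q_\ell)$, it is a maximal commutative subalgebra, so the centraliser of $C$ in $\Mat_2(\Q_\ell)$ is $F$ itself. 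Intersecting with $\Mat_2(\Z_\ell)$ gives $\OO_\ell$ (since under the left multiplication embedding, $a\in F$ has integral matrix iff $a\cdot 1,a\cdot\tau\in\OO_\ell$, i.e.\ iff $a\in\OO_\ell$), so the centraliser of $C$ in $\GL_2(\Z_\ell)$ is exactly $C$.

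\textbf{Step 2: bound the index.} The conjugation action of $N$ on $C$ extends $\Z_\ell$-linearly to $\Z_\ell[C]=\OO_\ell$, hence $\Q_\ell$-linearly to $F$. Because $C$ generates $F$ as a $\Q_\ell$-algebra, this defines a homomorphism $N\to\Aut_{\Q_\ell\text{-alg}}(F)$ whose kernel is the centraliser of $C$, which by Step~1 is $C$ itself. Since $F$ is either a quadratic field extension of $\Q_\ell$ or $\Q_\ell\times\Q_\ell$, its automorphism group has order $2$, so $[N:C]\leq 2$.

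\textbf{Step 3: produce the non-trivial coset.} A direct computation (using $w^2=I$, which gives $w^{-1}=w$) shows
\begin{equation*}
w\begin{pmatrix}x & \delta y\\ y & x+\gamma y\end{pmatrix}w^{-1}=\begin{pmatrix}x+\gamma y & -\delta y\\ -y & x\end{pmatrix},
\end{equation*}
which is the element of $C$ with parameters $(x+\gamma y,-y)$ and corresponds to multiplication by the Galois conjugate $\overline{x+y\tau}=(x+\gamma y)-y\tau$. Thus $wCw^{-1}=C$, so $w\in N$. Finally $w\notin C$ because the $(1,1)$ and $(2,2)$ entries of any element of $C$ differ by $\gamma y$, whereas for $w$ they are $1$ and $-1$ and the off-diagonal entry in position $(2,1)$ is $0$, which would force $y=0$ and hence $1=-1$ in $\Z_\ell$: impossible. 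Combining with Step~2 gives $[N:C]=2$ and $N=C\sqcup wC=C\sqcup C'$.

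\textbf{Main obstacle.} The only subtle point is the identification of the centraliser: one has to be careful that the integral centraliser of $C$ is not strictly larger than $C$. This is handled cleanly by the ``maximal commutative + intersect with $\Mat_2(\Z_\ell)$'' argument above, using crucially that $\OO_\ell$ is realised as its own image under left multiplication on itself. Everything else is either an elementary dimension count or the explicit $2\times2$ computation for $w$.
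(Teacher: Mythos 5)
The paper does not prove this lemma in-text: it cites \cite[Lemma~14]{MR3690236} and moves on, so there is no proof to compare against. Your argument is correct and self-contained, and the three-step structure (identify the centraliser, bound the index via $\Aut_{\Q_\ell}(F)$, exhibit the nontrivial coset) is a clean way to obtain the statement.

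One small imprecision worth flagging. The parenthetical claim ``$\Z_\ell[C]=\OO_\ell$'' is not always true: for $\ell=2$ and a split Cartan with $\gamma=1$, $\delta$ even, every unit $x+y\tau$ has $y\equiv 0\pmod 2$ (since its norm is $x(x+y)-\delta y^2\equiv x(x+y)\pmod 2$), so $\Z_2[C]\subseteq \Z_2+2\Z_2\tau\subsetneq\OO_2$. However, this does not harm your argument: Steps~1 and~2 only actually use the two facts $\Q_\ell[C]=F$ and $F\cap\Mat_2(\Z_\ell)=\OO_\ell$. The first holds in all cases because $1+\ell\tau\in C$ is non-scalar (its norm $1+\gamma\ell-\delta\ell^2$ is $\equiv 1\pmod\ell$), so $\Q_\ell[C]$ has dimension $2$; the second you prove correctly from the left-regular-representation description. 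With that fix the rest of the argument — maximal commutativity of $F$, the homomorphism $N\to\Aut_{\Q_\ell}(F)\cong\Z/2\Z$ with kernel equal to the centraliser of $C$, and the explicit check that $w$ normalises $C$ but does not lie in it — is sound, and the computation $wMw^{-1}=\begin{pmatrix}x+\gamma y & -\delta y\\ -y & x\end{pmatrix}$ together with $w^2=\Id$ and $\det w=-1\in\Z_\ell^\times$ (so $w\in\GL_2(\Z_\ell)$) is all verified correctly.
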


\section{Properties of the torsion representation}
\label{sec:PropertiesTorsionRepresentation}

Torsion representations are studied extensively in the literature; we have in particular the following fundamental theorem of Serre \cite{Serre}, which applies to all elliptic curves (defined over number fields) without complex multiplication:

\begin{thm}[Serre]\label{thm:Serre}
If $\End_{\Kbar}(E)=\Z$, then $H_\infty$ is open in $\GL_2(\hat{\Z})$.
Equivalently, the index of $H_N$ in $\GL_2(\Z/N\Z)$ is bounded independently of $N$.
\end{thm}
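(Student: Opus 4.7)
The plan is to reduce the statement to two independent claims: (i) for each prime $\ell$, the image $H_{\ell^\infty}$ is open in $\GL_2(\Z_\ell)$; and (ii) for all but finitely many primes $\ell$, $H_{\ell^\infty} = \GL_2(\Z_\ell)$. Granting both, openness of $H_\infty$ in $\GL_2(\hat{\Z})$ follows once one controls how the $\ell$-adic images fit together inside $\prod_\ell H_{\ell^\infty}$, via a Goursat-type argument.

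For (i), my approach is Lie-theoretic. By Cartan's theorem on closed subgroups of $\ell$-adic Lie groups, $H_{\ell^\infty}$ is a closed Lie subgroup of $\GL_2(\Z_\ell)$ with Lie algebra $\mathfrak{h}_\ell \subseteq \mathfrak{gl}_2(\Q_\ell)$; openness is equivalent to $\mathfrak{h}_\ell = \mathfrak{gl}_2(\Q_\ell)$. To produce enough elements of $\mathfrak{h}_\ell$ I would invoke Hodge--Tate theory (Sen's theorem applied to $T_\ell E \otimes \mathbb{C}_\ell$, whose Hodge--Tate weights are $0$ and $1$) to get a regular semisimple element, together with the fact that $\det \tau_{\ell^\infty}$ is the cyclotomic character, which shows $\mathfrak{h}_\ell$ meets the centre of $\mathfrak{gl}_2$ non-trivially. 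The proper Lie subalgebras of $\mathfrak{gl}_2$ compatible with such data are Borel, Cartan, or normaliser-of-Cartan subalgebras; each of these forces a non-scalar element of $\End_{T_\ell E}(T_\ell E)$ commuting with Galois, which by Faltings' isogeny theorem would produce a non-scalar element of $\End_{\Kbar}(E)$, contradicting the hypothesis $\End_{\Kbar}(E) = \Z$.

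For (ii), I would use the classification of maximal subgroups of $\GL_2(\F_\ell)$: Borels, normalisers of split or non-split Cartans, and subgroups with projective image isomorphic to $A_4$, $S_4$, or $A_5$. The Borel case corresponds to a $K$-rational cyclic $\ell$-isogeny, ruled out for $\ell$ large by Mazur's theorem (when $K = \Q$) or by Momose-type bounds in general. The Cartan-normaliser cases force, for infinitely many $\ell$, a rigid structure on Frobenius traces that again contradicts Faltings' theorem under the no-CM assumption. The exceptional cases have projective image of bounded order, which would contradict (i). Surjectivity mod $\ell$ then lifts to surjectivity on $\Z_\ell$ via the classical fact that, for $\ell \geq 5$, any closed subgroup of $\GL_2(\Z_\ell)$ whose mod-$\ell$ reduction is all of $\GL_2(\F_\ell)$ must be the whole group, by the perfectness and simplicity of $\PSL_2(\F_\ell)$.

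The main obstacle, and the heart of Serre's original argument, is the final assembly step: converting the collection of open $\ell$-adic images into an open adelic image. Concretely, one must prove that the entanglement fields $K_{\ell^\infty} \cap K_m$ (for $m$ coprime to $\ell$) are trivial for $\ell$ sufficiently large. The mechanism is a Goursat-lemma analysis exploiting the pairwise non-isomorphy of the non-abelian simple groups $\PSL_2(\F_\ell)$ for distinct $\ell$, combined with a separate class-field-theoretic treatment of the abelian quotients coming from the determinants and the cyclotomic characters; this is the most delicate part and would require the bulk of the work.
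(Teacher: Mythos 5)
The paper does not prove this statement: it is quoted as a theorem, with a citation to Serre's 1972 paper, so there is no internal proof to compare your attempt against. Your three-part decomposition (openness of each $H_{\ell^\infty}$, surjectivity for almost all $\ell$, Goursat assembly) does match the architecture of Serre's actual argument and of later expositions, so the outline is a reasonable roadmap, but two of the intermediate steps would not survive scrutiny as written. In step (i), the Borel case is handled too quickly: a Lie subalgebra contained in a Borel of $\mathfrak{gl}_2$ does not, by itself, yield a non-scalar element commuting with Galois, since the commutant of a Borel in $\mathfrak{gl}_2$ is exactly the scalars. What it gives is a Galois-stable line in $T_\ell E \otimes \Q_\ell$, and to convert this reducibility into extra endomorphisms one also needs the \emph{semisimplicity} of the $\ell$-adic Tate module as a Galois module (a theorem of Faltings in general, of Serre for elliptic curves over number fields), which you do not invoke; without it, a non-split Borel image is perfectly compatible with $\End_{\gal}(T_\ell E \otimes \Q_\ell) = \Q_\ell$ and no contradiction is reached. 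In step (ii), invoking Mazur's theorem for $K=\Q$ and Momose-type bounds in general is both anachronistic (Serre's 1972 proof predates Mazur 1978 and Faltings 1983) and, more importantly, not what is needed: no uniform isogeny bound is required, only finiteness for the single fixed curve $E/K$, which already follows from Shafarevich's finiteness theorem via the finiteness of the $K$-isogeny class of $E$. Finally, you are right that the Goursat assembly is the technical heart, but you leave it as a gesture; Serre handles it by a separate analysis of the closure of the commutator subgroup and of the image under the determinant, and any complete proof would have to supply this. None of this, however, bears on the paper's own logic, which is content to cite the result.
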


There is also a CM analogue of Theorem \ref{thm:Serre}, which is more easily stated by introducing the following definition:

\begin{defi}\label{def:MaximalRepresentation}
Let $E/K$ be an elliptic curve and $\ell$ be a prime number. We say that the image of the $\ell$-adic representation is \textit{maximal} if one of the following holds:
\begin{itemize}
\item $E$ does not have CM over $\overline K$ and $H_{\ell^\infty}=\GL_2(\Z_\ell)$.
\item $E$ has CM over $K$ by an order $\order$ in the imaginary quadratic field $F$, the prime $\ell$ is unramified in $F$ and does not divide $[\mathcal{O}_F : \order]$, and $H_{\ell^\infty}$ is conjugated to the Cartan subgroup of $\GL_2(\Z_\ell)$ corresponding to $\order$.
\item $E$ has CM over $\overline K$ (but not over $K$) by an order $\order$ in the imaginary quadratic field $F$, the prime $\ell$ is unramified in $F$ and does not divide $[\mathcal{O}_F : \order]$, and $H_{\ell^\infty}$ is conjugated to the normaliser of the Cartan subgroup of $\GL_2(\Z_\ell)$ corresponding to $\order$.
\end{itemize}
\end{defi}

\begin{thm}[{\cite[Corollaire on p.302]{Serre}}]\label{thm:CMSerre}
Let $E/K$ be an elliptic curve admitting CM over $\overline{K}$. Then the $\ell$-adic representation attached to $E/K$ is maximal for all but finitely many primes $\ell$.
\end{thm}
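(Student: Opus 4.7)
The plan is to use the theory of complex multiplication to identify $\tau_{\ell^\infty}$ with the $\ell$-adic realisation of a single global Hecke character, and then to control the cokernel of this character via class field theory and the Chebotarev density theorem. I begin by reducing to the case where the CM is defined over $K$. If the CM field $F$ of $E$ is not contained in $K$, set $K' := K \cdot F$, a quadratic extension of $K$ over which $E$ has CM. The restriction map identifies the image $H_{\ell^\infty}^{K'}$ of $\gal(\Kbar \mid K')$ with a subgroup of $H_{\ell^\infty}^{K}$ of index $2$, and any element of $H_{\ell^\infty}^{K} \setminus H_{\ell^\infty}^{K'}$ acts on $\End_{\Kbar}(E) = \order$ by complex conjugation, hence normalises without belonging to the Cartan. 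By Lemma~\ref{lem-Norm}, maximality over $K'$ (image equal to the full Cartan) then implies maximality over $K$ (image equal to the full normaliser). So I may assume $F \subseteq K$ and $\End_K(E) = \order$ is an order in $\mathcal{O}_F$.

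For every prime $\ell$ unramified in $F$ and coprime to $[\mathcal{O}_F : \order]$, conditions that exclude only finitely many $\ell$, the algebra $\order \otimes \Z_\ell = \mathcal{O}_F \otimes \Z_\ell$ is étale over $\Z_\ell$, and the Tate module $T_\ell E$ is free of rank one over it; in particular, the image of $\tau_{\ell^\infty}$ is contained in the Cartan $C_\ell := (\mathcal{O}_F \otimes \Z_\ell)^\times$. The main theorem of complex multiplication now provides a single global Hecke character $\psi : \mathbb{A}_K^\times / K^\times \to F^\times$, independent of $\ell$, whose $\ell$-adic component realises $\tau_{\ell^\infty}$ via Artin reciprocity.

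I would then show that $\tau_{\ell^\infty}$ surjects onto $C_\ell$ for all but finitely many $\ell$. The mod-$\ell$ image is contained in $(\mathcal{O}_F/\ell)^\times$ and corresponds via class field theory to a specific abelian extension of $K$; evaluating $\psi$ at unramified prime ideals $\mathfrak{p}$ of $K$ via the Chebotarev density theorem produces elements of $\mathcal{O}_F$ with prescribed norms, and for $\ell$ larger than a bound depending only on the conductor of $\psi$ and the class number of $K$, these elements generate $(\mathcal{O}_F/\ell)^\times$. Once mod-$\ell$ surjectivity is established, a standard Nakayama-type argument upgrades it to $\ell$-adic surjectivity, using that $C_\ell$ is the direct product of its prime-to-$\ell$ torsion part and the pro-$\ell$ principal unit subgroup $1 + \ell(\mathcal{O}_F \otimes \Z_\ell)$, together with the explicit formula for the $\ell$-adic component of $\psi$, which guarantees that its image fills up the pro-$\ell$ factor.

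The main obstacle is this last step: turning the global description of $\psi$ into a uniform, $\ell$-independent bound on the index of the image $\tau_{\ell^\infty}(\gal(\Kbar \mid K))$ inside $C_\ell$ that holds for all but finitely many $\ell$ simultaneously. This is where the genuine content of the theorem lies, and it is essentially here that one invokes the independence of $\ell$ built into the Hecke character $\psi$ together with the analytic input of Chebotarev.
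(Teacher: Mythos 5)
The paper does not prove Theorem \ref{thm:CMSerre}: it is quoted directly from Serre and used as a black box, so there is no internal argument to compare against. Your proposal sets up the right framework — the reduction to CM defined over $K$ via Lemma~\ref{lem-Norm} and the passage to a single Hecke character $\psi$ via the main theorem of complex multiplication are both correct — but, as you yourself note, the final step has a genuine gap. The Chebotarev-based sketch is circular: Chebotarev guarantees that Frobenius classes equidistribute within whatever subgroup of $(\mathcal{O}_F/\ell)^\times$ the mod-$\ell$ image already is, but it cannot on its own certify that this image is all of $(\mathcal{O}_F/\ell)^\times$; equidistribution inside a group does not show the group is large.

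The missing ingredient is local rather than global. Restrict $\tau_{\ell^\infty}$ to the inertia subgroup $I_\lambda$ at a place $\lambda$ of $K$ above $\ell$, where $\ell$ is prime to the conductor of $\psi$, unramified in $F$, and prime to the conductor of the order $\operatorname{End}_{\overline K}(E)$. Local class field theory identifies $I_\lambda^{\operatorname{ab}}$ with $\mathcal{O}_{K_\lambda}^\times$, and the $\ell$-adic realisation of $\psi$ restricted there is given by a \emph{fixed} algebraic homomorphism — the infinity type of $\psi$ — which is the same for every $\ell$. For a CM elliptic curve this homomorphism is nontrivial, and the image of $\prod_{\lambda\mid\ell} I_\lambda$ already has index bounded uniformly in $\ell$ inside $(\mathcal{O}_F \otimes \Z_\ell)^\times$. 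This local calculation, not an appeal to Chebotarev, is what supplies the uniformity in $\ell$; it is packaged in Serre's theory of locally algebraic abelian $\ell$-adic representations, which is the actual engine behind the cited Corollaire.
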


In the rest of this section we recall various important properties of the torsion representations: we shall need results that describe both the asymptotic behaviour of the $\bmod \,\ell^n$ torsion representation as $n \to \infty$ (§\ref{subsect:MaximalGrowth} and \ref{subsect:UniformGrowth}) and the possible images of the $\bmod\,\ell$ representations attached to elliptic curves defined over the rationals (§\ref{subsec:PossibleImagesModl}).

\subsection{Maximal growth}\label{subsect:MaximalGrowth}

We recall some results on the growth of the torsion extensions from \cite[§2.3]{2016arXiv161202847L}.

\begin{prop}
\label{prop-max-growth}
Let $\ell$ be a prime number. Let $\delta=2$ if $E$ has complex multiplication and $\delta=4$ otherwise. There exists a positive integer $n_\ell$ such that
\begin{align*}
\#H_{\ell^{n+1}}/\#H_{\ell^n}=\ell^\delta \qquad \text{for every }n\geq n_\ell.
\end{align*}
\end{prop}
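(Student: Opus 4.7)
The plan is to view $H_{\ell^\infty}$ as a closed subgroup of the $\ell$-adic Lie group $\GL_2(\Z_\ell)$ and to relate the stable growth rate of the images $H_{\ell^n}$ to the dimension of its $\Q_\ell$-Lie algebra. The first ingredient I would establish is a general statement: for any closed subgroup $G \leq \GL_2(\Z_\ell)$, the quotient $G_{\ell^{n+1}}/G_{\ell^n}$ (where $G_{\ell^n}$ denotes the image of $G$ in $\GL_2(\Z/\ell^n\Z)$) has size $\ell^d$ for all $n$ sufficiently large, where $d$ is the dimension of the $\Q_\ell$-Lie algebra of $G$.

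To prove this claim, I would choose $n_0$ large enough that the logarithm gives a homeomorphism from the principal congruence subgroup $\Gamma(\ell^{n_0}) \leq \GL_2(\Z_\ell)$ onto its Lie algebra $\ell^{n_0} M_2(\Z_\ell)$ (any $n_0 \geq 1$ works if $\ell$ is odd, and $n_0 \geq 2$ if $\ell = 2$). Under this identification, $G \cap \Gamma(\ell^{n_0})$ corresponds to a closed $\Z_\ell$-submodule $L$ of $\ell^{n_0}M_2(\Z_\ell)$; and for $n \geq n_0$, the quotient $(G \cap \Gamma(\ell^n))/(G \cap \Gamma(\ell^{n+1}))$ corresponds to $(L \cap \ell^n M_2(\Z_\ell))/(L \cap \ell^{n+1} M_2(\Z_\ell))$. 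As $L$ is a finitely generated $\Z_\ell$-module of rank $d := \dim_{\Q_\ell} L \otimes \Q_\ell$, and $\Z_\ell^d$ has successive quotients of size $\ell^d$ under multiplication by $\ell$, the claim follows for $n$ large enough (namely, once $L$ is saturated beyond $\ell^n M_2(\Z_\ell)$). Finally, the natural map from this kernel to $\ker(H_{\ell^{n+1}} \to H_{\ell^n})$ is an isomorphism, which gives the stable growth rate $\ell^d$.

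It then remains to identify $d$ with the $\delta$ of the statement. In the non-CM case, Theorem \ref{thm:Serre} says $H_{\ell^\infty}$ is open in $\GL_2(\Z_\ell)$, hence its Lie algebra is all of $M_2(\Q_\ell)$ and $d = 4$. In the CM case, I would use Theorem \ref{thm:CMSerre}: for all but finitely many $\ell$, $H_{\ell^\infty}$ is conjugate to the Cartan subgroup attached to $\End_{\overline{K}}(E)$ or to its normaliser, and both are $2$-dimensional $\ell$-adic Lie subgroups of $\GL_2(\Z_\ell)$ (cf.\ Lemma \ref{lem-Norm}), giving $d = 2$. For the remaining finitely many primes, I would argue that $H_{\ell^\infty}$ still has Lie algebra of dimension $2$: after replacing $K$ by the (at most quadratic) extension $K'$ over which the CM is defined, the image of the $\ell$-adic representation is contained in the Cartan subgroup $C_\ell \leq \GL_2(\Z_\ell)$, and the main theorem of complex multiplication guarantees it is open in $C_\ell$, hence $2$-dimensional; passing back to $K$ changes the image by a subgroup of index at most $2$, preserving the Lie algebra dimension.

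The main technical point is the first step — correctly relating the stable growth rate to the Lie algebra dimension, in particular in the awkward case $\ell = 2$ where one must pass deep enough into the congruence filtration for the logarithm to converge. Everything else is a direct application of the open image theorems recalled in the paper, so I expect the heart of the argument to be the $\ell$-adic Lie-theoretic input rather than anything curve-specific.
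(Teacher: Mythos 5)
Your proposal is correct and fills in the Lie-theoretic content that the paper simply outsources to citations (Theorem \ref{thm:Serre}, classical CM theory, and a reference to Lombardo's paper for a more general statement). The approach is sound, but let me flag one technical point and one simplification.

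The technical point: you assert that $L := \log\bigl(G\cap\Gamma(\ell^{n_0})\bigr)$ is a closed $\Z_\ell$-submodule of $\ell^{n_0}M_2(\Z_\ell)$. This is not automatic for an arbitrary closed subgroup; it requires $G\cap\Gamma(\ell^{n_0})$ to be a \emph{uniform} pro-$\ell$ group, so that the Baker--Campbell--Hausdorff formula closes up inside $L$ and $\log$ carries the group structure to a $\Z_\ell$-Lie algebra. This does hold for $n_0$ large enough (any compact $\ell$-adic analytic group has an open uniform subgroup, and then you may shrink $n_0$ further so that $G\cap\Gamma(\ell^{n_0})$ sits inside it), but the phrasing makes it sound like choosing $n_0\geq 1$ (resp.\ $\geq 2$ for $\ell=2$) for the convergence of $\log$ alone suffices. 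It does not; you also need $n_0$ large enough for uniformity. Once you grant this, the rest of the first paragraph is correct: the correspondence $g\in\Gamma(\ell^n)\Leftrightarrow\log g\in\ell^nM_2(\Z_\ell)$ for $n\geq n_0$ holds, $L$ is a finitely generated $\Z_\ell$-module whose rank equals the Lie-algebra dimension, and the index $[L\cap\ell^nM_2(\Z_\ell):L\cap\ell^{n+1}M_2(\Z_\ell)]$ eventually stabilizes at $\ell^{\operatorname{rk}L}$.

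The simplification: the general ``stable growth equals $\ell^{\dim}$'' lemma is more machinery than needed here. In the non-CM case Serre gives $\Gamma(\ell^{n_0})\subseteq H_{\ell^\infty}$ for some $n_0$, whence $\ker(H_{\ell^{n+1}}\to H_{\ell^n})=\Gamma(\ell^n)/\Gamma(\ell^{n+1})$ has order $\ell^4$ for all $n\geq n_0$ directly, no logarithm required. In the CM case $H_{\ell^\infty}$ contains an open subgroup of the Cartan $C\cong\mathcal{O}_\ell^\times$, and (using that elements of the nontrivial coset of the normaliser are bounded away from the identity, cf.\ \cite[Theorem 40]{MR3690236}) one gets $\ker(H_{\ell^{n+1}}\to H_{\ell^n})=(1+\ell^n\mathcal{O}_\ell)/(1+\ell^{n+1}\mathcal{O}_\ell)$ of order $\ell^2$ for $n$ large. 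Your dimension count gives the same answer, and is arguably more conceptual, but the direct computation is shorter and avoids the uniformity issue above. Your identification of $d=4$ (non-CM) and $d=2$ (CM, including the finitely many primes not covered by Theorem \ref{thm:CMSerre}, via openness in the Cartan over the CM field) is correct.
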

\begin{proof}
This follows from Theorem \ref{thm:Serre} in the non-CM case and from classical results in the CM case. See also \cite[Lemma 10 and Remark 13]{2016arXiv161202847L} for a more general result.
\end{proof}

\begin{defi}
We call an integer $n_\ell$ as in Proposition \ref{prop-max-growth} a \emph{parameter of maximal growth for the $\ell$-adic torsion representation}. We say that it is \emph{minimal} if $n_\ell-1$ is not a parameter of maximal growth; when $\ell=2$, we require that the minimal parameter be at least 2.
\end{defi}

\begin{rem}
The assumption $n_\ell \geq 2$ when $\ell=2$ will be needed to apply \cite[Theorem 12]{2016arXiv161202847L}.
\end{rem}

\begin{rem}\label{rmk:nlIsEffective} Given an explicit elliptic curve $E/K$ and a prime $\ell$, the problem of determining the optimal value of $n_\ell$ can be solved effectively (see \cite[Remark 13]{2016arXiv161202847L}). However, computing $n_\ell$ can be challenging in practice, because the na\"ive algorithm requires the determination of the Galois groups of the splitting fields of several large-degree polynomials. The situation is usually better for smaller primes $\ell$, and especially for $\ell=2$, for which the 2-torsion tower is known essentially explicitly (see \cite{MR3500996} for a complete classification result when $K=\mathbb{Q}$, and \cite{MR3426175} for a description of the 2-torsion tower of a given elliptic curve over a number field).
\end{rem}

In Section \ref{sec:AdelicFailure} we will need to bound the minimal parameter of maximal growth for the $\ell$-adic torsion representation defined over certain extensions of the base field. We will do so with the help of the following Lemma:

\begin{lem}
\label{lem-param-tilde-bound}
Let $\tilde K$ be a finite extension of $K$. Let $n_\ell$ (resp.~$\tilde n_\ell$) be the minimal parameter of maximal growth for the $\ell$-adic torsion representation attached to $E/K$ (resp.~$E/\tilde K$).
Then $\tilde n_\ell\leq n_\ell+v_\ell([\tilde K:K])$.
\end{lem}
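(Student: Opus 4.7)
The plan is to compare, for each $n$, the index $a_n := [H_{\ell^n}: \tilde H_{\ell^n}]$ of the image of the mod-$\ell^n$ torsion representation over $\tilde K$ inside the corresponding image over $K$. Via Galois theory, $\tilde H_{\ell^n}$ is identified with $\gal(K_{\ell^n} / (\tilde K \cap K_{\ell^n}))$, so $a_n = [\tilde K \cap K_{\ell^n} : K]$ divides $[\tilde K : K]$; in particular $a_n \leq [\tilde K : K]$ uniformly in $n$.

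Next, I would observe two structural facts about the sequence $(a_n)$. First, since $\tilde H_{\ell^n}$ is the image of $\tilde H_{\ell^{n+1}}$ under the natural projection, we have $a_n \mid a_{n+1}$. Second, the quotient $\#\tilde H_{\ell^{n+1}} / \#\tilde H_{\ell^n}$ is a power of $\ell$, because the kernel of the projection $\tilde H_{\ell^{n+1}} \to \tilde H_{\ell^n}$ sits inside the kernel of $\GL_2(\Z/\ell^{n+1}\Z) \to \GL_2(\Z/\ell^n\Z)$, which is a pro-$\ell$ group. Combining these with the elementary identity
\[
\frac{\#\tilde H_{\ell^{n+1}}}{\#\tilde H_{\ell^n}} \;=\; \frac{\#H_{\ell^{n+1}}}{\#H_{\ell^n}} \cdot \frac{a_n}{a_{n+1}}
\]
and invoking Proposition \ref{prop-max-growth}, for $n \geq n_\ell$ both the LHS and the first factor on the RHS are $\ell$-powers, forcing $a_{n+1}/a_n$ to be an $\ell$-power as well.

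Writing $a_n = \ell^{b_n} c_n$ with $\gcd(c_n, \ell) = 1$, the previous step shows that $c_n$ is constant for $n \geq n_\ell$, while $b_n$ is a non-decreasing sequence of non-negative integers bounded above by $v := v_\ell([\tilde K : K])$. A non-decreasing integer sequence bounded by $v$ and starting from $b_{n_\ell} \geq 0$ can strictly increase at most $v - b_{n_\ell} \leq v$ times, so $b_n = b_{n+1}$ for every $n \geq n_\ell + v$. For such $n$ one has simultaneously $\#H_{\ell^{n+1}}/\#H_{\ell^n} = \ell^\delta$ and $a_{n+1}/a_n = 1$, whence $\#\tilde H_{\ell^{n+1}}/\#\tilde H_{\ell^n} = \ell^\delta$. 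This shows that $n_\ell + v$ is a parameter of maximal growth for $E/\tilde K$, so $\tilde n_\ell \leq n_\ell + v$, as required.

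I do not expect any serious obstacle: the argument is essentially bookkeeping once the two observations (divisibility of $a_n$ by $[\tilde K : K]$, and $\ell$-primary nature of the relevant quotients) are in place. The only minor point to verify is the convention that $\tilde n_\ell \geq 2$ when $\ell = 2$, which is automatic because the bound $n_\ell + v_\ell([\tilde K:K])$ is at least $n_\ell \geq 2$.
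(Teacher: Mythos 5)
The setup with $a_n := [H_{\ell^n}:\tilde H_{\ell^n}]=[\tilde K\cap K_{\ell^n}:K]$ is correct, as is the identity
\[
\frac{\#\tilde H_{\ell^{n+1}}}{\#\tilde H_{\ell^n}} \;=\; \frac{\#H_{\ell^{n+1}}}{\#H_{\ell^n}} \cdot \frac{a_n}{a_{n+1}},
\]
and from it the fact that $b_n := v_\ell(a_n)$ is non-decreasing and bounded by $v := v_\ell([\tilde K:K])$ for $n\geq n_\ell$. However, the step ``a non-decreasing integer sequence bounded by $v$ can strictly increase at most $v$ times, so $b_n=b_{n+1}$ for every $n\geq n_\ell+v$'' is a non-sequitur. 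The first clause is true, but it does not constrain \emph{where} the increases occur: they could just as well happen at indices far beyond $n_\ell+v$. What your counting argument actually yields, by pigeonhole, is that $b_{n^*}=b_{n^*+1}$ for at least one $n^*\in\{n_\ell,\dots,n_\ell+v\}$ (the $v+1$ differences $b_{n+1}-b_n$ telescope to $b_{n_\ell+v+1}-b_{n_\ell}\leq v$, so they cannot all be $\geq 1$). At such an $n^*$ you obtain $\#\tilde H_{\ell^{n^*+1}}/\#\tilde H_{\ell^{n^*}}=\ell^\delta$, i.e.\ maximal growth for $E/\tilde K$ at a single level $n^*\leq n_\ell+v$.

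To conclude that $n^*$ is a parameter of maximal growth (and hence $\tilde n_\ell\leq n^*\leq n_\ell+v$), you need to know that maximal growth, once attained at some level $n\geq 1$ (with $n\geq 2$ if $\ell=2$), \emph{persists} at all larger levels. This is a genuine structural fact about images of $\ell$-adic Galois representations, not a formal consequence of your bookkeeping; it is exactly \cite[Theorem 12]{2016arXiv161202847L}, which the paper cites at precisely this point and which is the reason for the convention $n_\ell\geq 2$ when $\ell=2$. Only \emph{after} invoking persistence can one deduce that $b_n$ is constant for $n\geq n^*$. With this citation inserted, your argument matches the paper's, recast in terms of the index sequence $a_n$ rather than the degrees $[K_{\ell^{n_0}}:K_{\ell^{n_\ell}}]$ and $[\tilde K_{\ell^{n_0}}:\tilde K_{\ell^{n_\ell}}]$; without it, the proof does not close.
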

\begin{proof}

Let $n_0:=n_\ell+v_\ell([\tilde K:K])+1$ and consider the following diagram:

\vspace{0.3cm}

\begin{center}\begin{tikzpicture}[yscale=1.5]
\node (L2tilde) at (0,1) {$\tilde K_{\ell^{n_0}}$};
\node (L1tilde) at (-2,0) {$\tilde K_{\ell^{n_\ell}}$};
\node (L2) at (2,0) {$K_{\ell^{n_0}}$};
\node (L1tildecapL2) at (0,-1) {$\tilde K_{\ell^{n_\ell}}\cap K_{\ell^{n_0}}$};
\node (Ktilde) at (-4,-1) {$\tilde K$};
\node (fantasma) at (4,-1) {\phantom{$\tilde K$}};
\node (L1) at (2,-2) {$K_{\ell^{n_\ell}}$};
\node (KtildecapL1) at (0,-3) {$\tilde K \cap K_{\ell^{n_\ell}}$};
\node (K) at (0,-4) {$K$};
\draw (L2)--(L2tilde)--(L1tilde)--(Ktilde);
\draw (L1)--(L1tildecapL2)--(L1tilde);
\draw (K)--(KtildecapL1)--(Ktilde);
\draw (L2)--(L1tildecapL2);
\draw (L1)--(KtildecapL1);
\end{tikzpicture}\end{center}
Since clearly $[\tilde K_{\ell^{n_\ell}}\cap K_{\ell^{n_0}}:K_{\ell^{n_\ell}}]$ divides $[\tilde K_{\ell^{n_\ell}}:K_{\ell^{n_\ell}}]$, which in turn divides $[\tilde K:K]$, and since $[\tilde K_{\ell^{n_0}}:\tilde K_{\ell^{n_\ell}}]=[K_{\ell^{n_0}}:\tilde K_{\ell^{n_\ell}}\cap K_{\ell^{n_0}}]$, we have
\[
\begin{aligned}
v_\ell\left([K_{\ell^{n_0}}:K_{\ell^{n_\ell}}]\right) & = v_\ell\left([K_{\ell^{n_0}}:\tilde K_{\ell^{n_\ell}} \cap K_{\ell^{n_0}}] \right) + v_\ell\left( [\tilde K_{\ell^{n_\ell}}\cap K_{\ell^{n_0}}:K_{\ell^{n_\ell}}]\right) \\ &\leq v_\ell\left([\tilde K_{\ell^{n_0}}:\tilde K_{\ell^{n_\ell}}]\right)+ v_\ell\left( [\tilde K:K]\right).
\end{aligned}
\]
By \cite[Theorem 12]{2016arXiv161202847L} we have $v_\ell\left([K_{\ell^{n_0}}:K_{\ell^{n_\ell}}]\right)=\delta(n_0-n_\ell)=\delta\left( v_\ell\left([\tilde K:K]\right)+1\right)$, where $\delta$ is as in Proposition \ref{prop-max-growth}, and we get
\begin{align*}
v_\ell\left([\tilde K_{\ell^{n_0}}:\tilde K_{\ell^{n_\ell}}]\right)\geq \delta+ (\delta-1)v_\ell\left([\tilde K:K]\right)> (\delta-1)(n_0-n_\ell).
\end{align*}
Consider now the tower of extensions
\begin{align*}
\tilde K_{\ell^{n_\ell}}\subseteq \tilde K_{\ell^{n_\ell+1}}\subseteq \cdots \subseteq \tilde K_{\ell^{n_0}}
\end{align*}
and notice that by the pigeonhole principle for at least one $n\in \set{n_\ell, n_\ell+1,\dots, n_0-1}$ we must have $[\tilde K_{\ell^{n+1}}:\tilde K_{\ell^{n}}]\geq \delta$. But then by \cite[Theorem 12]{2016arXiv161202847L} we have maximal growth over $\tilde K$ from $n< n_0$. Thus we get $\tilde n_\ell\leq n_\ell+v_\ell\left([\tilde K:K]\right)$ as claimed.
\end{proof}

\subsection{Uniform growth of $\ell$-adic representations}\label{subsect:UniformGrowth}

The results in this subsection and the next will be needed in Section \ref{sec:UniformBounds}.
We start by recalling the following result, due to Arai:
\begin{thm}[{\cite[Theorem 1.2]{MR2434156}}]\label{thm:Arai}
Let $K$ be a number field and let $\ell$ be a prime. Then there exists an integer $n \geq 0$, depending only on $K$ and $\ell$, such that for any elliptic
curve $E$ over $K$ with no complex multiplication over $\overline{K}$ we have
\[
\tau_{\ell^\infty}(\gal(\overline{K} \mid K)) \supseteq \{M \in \GL_2(\Z_\ell) : M \equiv \Id \pmod{\ell^n}\}.
\]
\end{thm}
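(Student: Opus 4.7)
The plan is to combine a Lie-algebraic lifting argument at a fixed finite level with a finiteness statement for $K$-rational points on modular curves of small mod-$\ell^n$ image, reducing the uniform $\ell$-adic claim first to a uniform mod-$\ell^{n_0}$ claim.

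\textbf{Step 1: Lifting lemma.} A standard group-theoretic result in the spirit of Serre gives an absolute integer $n_0 = n_0(\ell) \geq 1$ and a subgroup $\Gamma_0 \leq \GL_2(\Z/\ell^{n_0}\Z)$ (large enough to contain the image of $\Id + \ell^{n_0-1}\mathfrak{sl}_2(\F_\ell)$ in $\GL_2(\Z/\ell^{n_0}\Z)$, with $n_0$ slightly larger when $\ell = 2$) such that any closed subgroup $H \leq \GL_2(\Z_\ell)$ whose reduction mod $\ell^{n_0}$ contains $\Gamma_0$ automatically contains the principal congruence kernel $\{M \in \GL_2(\Z_\ell) : M \equiv \Id \pmod{\ell^{n_0}}\}$. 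The proof is by iterated commutator arguments within the $\ell$-adic filtration of $\GL_2(\Z_\ell)$ by congruence subgroups, using the Lie bracket structure of $\mathfrak{gl}_2(\F_\ell)$. This reduces the theorem to producing a single $n \geq n_0$, depending only on $K$ and $\ell$, such that $H_{\ell^n, E} \supseteq \Gamma_0$ for \emph{every} non-CM elliptic curve $E/K$.

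\textbf{Step 2: Modular curve finiteness.} For each proper subgroup $G \leq \GL_2(\Z/\ell^n\Z)$ not containing $\Gamma_0$, the non-CM elliptic curves $E/K$ whose mod-$\ell^n$ image lies in a conjugate of $G$ correspond (up to twist) to $K$-rational points on a modular curve $X_G$. By Dickson's classification of the maximal subgroups of $\GL_2(\F_\ell)$ (Borel, normalisers of split and non-split Cartan, and the exceptional types projecting to $A_4$, $S_4$, $A_5$), only finitely many conjugacy classes of maximal $G$ arise at each level, and the genera $g(X_G)$ grow with $n$. For $n$ sufficiently large one has $g(X_G) \geq 2$, so by Faltings' theorem each such $X_G$ has only finitely many $K$-rational points; the CM hypothesis rules out the infinite families of CM points that would otherwise appear on Cartan-type modular curves. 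One must also carefully track the mod-$\ell^n$ subgroups arising from the lift of each maximal mod-$\ell$ type, which gives only a controlled collection of ``small image'' loci to examine.

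\textbf{Step 3: Assembly.} Steps 1--2 produce a finite list of sporadic non-CM curves $E_1, \dots, E_r$ over $K$ for which one does not immediately obtain $H_{\ell^{n_0}, E_i} \supseteq \Gamma_0$. For each of these Serre's theorem (Theorem~\ref{thm:Serre}) supplies an individual $n_{E_i}$ such that the congruence subgroup of level $\ell^{n_{E_i}}$ is contained in the image. Setting $n := \max\{n_0, n_{E_1}, \dots, n_{E_r}\}$ yields the desired uniform exponent.

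The main obstacle is Step 2: verifying that the non-CM locus on each $X_G$ is finite for $n$ large, and bounding the threshold $n$ at which the genera $g(X_G)$ exceed $1$ uniformly in $G$. This requires the case analysis over the four Dickson types of maximal subgroups, together with the CM hypothesis to discard the positive-dimensional non-CM components that could potentially persist on normaliser-of-Cartan modular curves. Once this is settled, the combination with the Lie-algebraic lifting of Step 1 yields the theorem.
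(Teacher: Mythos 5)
The paper does not give a proof of this statement: it is quoted verbatim as \cite[Theorem 1.2]{MR2434156} (Arai) and used as a black box, most notably in Corollary \ref{cor:UniversalBoundGrowthParameter} and in Section \ref{sec:UniformBounds}. So there is no ``paper's own proof'' to compare against; what follows is a comparison with what would be needed to make your outline rigorous.

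Your overall strategy (Lie-theoretic lifting at a fixed finite level, plus a finiteness statement for rational points on the relevant modular curves via Dickson's classification and Faltings) is indeed the standard route and is in the spirit of Arai's argument. Step 1 is a correct and classical reduction. However, Step 3 has a genuine gap that propagates from an elision in Step 2: the modular-curve argument yields finiteness of \emph{$j$-invariants} (or of $\overline{K}$-isomorphism classes), not finiteness of elliptic curves over $K$. A single $j$-invariant corresponds to infinitely many twists, and twisting does change the mod-$\ell^n$ image (a quadratic twist by $\chi$ multiplies the image by $\chi$; quartic and sextic twists alter it further for $j = 1728, 0$). So you cannot pass from ``finitely many sporadic $j$-invariants'' to ``finitely many sporadic curves $E_1,\dots,E_r$'' and then take a maximum of individual $n_{E_i}$'s over a finite set, as Step 3 asserts.

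This gap is fixable, and in fact the paper's own Lemmas \ref{lem-param-tilde-bound} and \ref{lemma:Untwisting} show exactly how: any two curves over $K$ with the same $\overline{K}$-isomorphism class become isomorphic over an extension $F/K$ of degree dividing $12$, and the minimal parameter of maximal growth over $K$ is bounded by the one over $F$, which changes by at most $v_\ell([F:K]) \leq 2$ across twists. (This is precisely the mechanism used in Corollary \ref{cor:UniversalBoundGrowthParameter} to handle the CM $j$-invariants.) Without an argument of this kind, Step 3 does not close, because you are implicitly bounding a quantity over an infinite set. A secondary, more minor point: in Step 2 the assertion that ``only finitely many conjugacy classes of maximal $G$ arise at each level'' is only directly clean for $\ell \geq 5$, where Serre's lifting lemma (cf.~Lemma \ref{lemma:SerreLifting}) reduces the classification to the mod-$\ell$ level; for $\ell = 2, 3$ there are additional proper subgroups surjecting mod $\ell$, and these require a separate (finite) enumeration at a fixed finite level before the genus-growth argument can be applied uniformly.
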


For the next result we shall need a well-known Lemma about twists of elliptic curves:
\begin{lem}\label{lemma:Untwisting}
Let $E_1, E_2$ be elliptic curves over $K$ such that $(E_1)_{\overline{\Q}}$ is isomorphic to $(E_2)_{\overline{\Q}}$. There is an extension $F$ of $K$, of degree dividing $12$, such that $E_1$ and $E_2$ become isomorphic over $F$.
\end{lem}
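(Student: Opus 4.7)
The plan is to reduce the statement to a Galois cohomology computation. Since $(E_1)_{\overline{K}}$ and $(E_2)_{\overline{K}}$ are isomorphic, $E_2$ is a twist of $E_1$, and $K$-isomorphism classes of such twists are classified by the pointed set $H^1(\gal(\overline{K}/K), \Aut_{\overline{K}}(E_1))$. I would therefore start by recalling the structure of $\Aut_{\overline{K}}(E_1)$: in characteristic zero it is cyclic of order $n \in \{2,4,6\}$, with $n=2$ if $j(E_1)\neq 0,1728$, $n=4$ if $j(E_1)=1728$, and $n=6$ if $j(E_1)=0$. Moreover, as a Galois module, $\Aut_{\overline{K}}(E_1)$ is canonically isomorphic to $\mu_n$ (the action in the CM cases $j=0,1728$ coincides with the cyclotomic action on $\mu_n$, since a generator of $\Aut(E_1)$ acts on the tangent space at the origin by multiplication by an $n$-th root of unity).

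Next I would invoke Kummer theory (equivalently, Hilbert 90) to identify
\[
H^1(\gal(\overline{K}/K), \mu_n) \cong K^\times/K^{\times n}.
\]
Under this identification, the class $\xi \in H^1(K,\mu_n)$ corresponding to $E_2$ is represented by a cocycle of the form $\sigma \mapsto \sigma(\eta)/\eta$ with $\eta \in \overline{K}^\times$ satisfying $\eta^n = d$ for some $d \in K^\times$. Such a cocycle becomes trivial precisely over the fixed field of $\{\sigma : \sigma(\eta)=\eta\}$, i.e.\ over $F := K(\eta) = K(\sqrt[n]{d})$, which is an extension of $K$ of degree dividing $n$.

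Finally, I would conclude by observing that in all three cases $n$ divides $12 = \lcm(2,4,6)$, so $[F:K]$ divides $12$, and $E_1$ and $E_2$ are isomorphic over $F$.

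There is no real obstacle in the argument: the only subtlety worth flagging is the identification of $\Aut_{\overline{K}}(E_1)$ with $\mu_n$ as a \emph{Galois module} in the CM cases, since otherwise Kummer theory would not apply directly. Alternatively, one could give a slightly more hands-on argument by writing explicit Weierstrass equations for $E_1$ and $E_2$ (using the normal forms $y^2 = x^3+ax$ for $j=1728$ and $y^2=x^3+b$ for $j=0$) and solving for the change of variables making them isomorphic, which again visibly requires extracting a root of degree at most $6$; this makes the bound $12$ transparent without invoking cohomology.
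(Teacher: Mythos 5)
Your proof is correct and follows essentially the same route as the paper: classify $E_2$ as a twist of $E_1$ via $H^1(\gal(\overline{K}/K), \Aut_{\overline{K}}(E_1)) \cong K^\times/K^{\times n}$ with $n \in \{2,4,6\}$, then kill the corresponding class by adjoining an $n$-th root, noting that $n$ always divides $12$. The only difference is that you unpack the isomorphism $H^1 \cong K^\times/K^{\times n}$ via the Galois-module identification $\Aut_{\overline{K}}(E_1)\cong\mu_n$ and Kummer theory, whereas the paper simply cites \cite[Proposition X.5.4]{SilvermanEC} for this.
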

\begin{proof}
Fixing a $\overline{\Q}$-isomorphism between $E_1$ and $E_2$ allows us to attach to $E_2$ a class in $H^1\left( \gal(\overline{K} \mid K), \Aut(E_1) \right)$. Since 
$H^1\left( \gal(\overline{K} \mid K), \Aut(E_1) \right) \cong K^\times/K^{\times n}$ for some $n \in \{2, 4, 6\}$ (see \cite[Proposition X.5.4]{SilvermanEC}), the class of $E_2$ corresponds to the class of a certain $[\alpha] \in K^\times/K^{\times n}$. Letting $F=K(\sqrt[n]{\alpha})$, whose degree over $K$ divides $12$, it is clear that $[\alpha] \in F^\times/F^{\times n}$ is the trivial class, so the same is true for $[E_2] \in H^1(\gal(\overline{F} \mid F), \Aut(E_1))$, which means that $E_2$ is isomorphic to $E_1$ over $F$ as desired.
\end{proof}

\begin{cor}\label{cor:UniversalBoundGrowthParameter}
Let $K$ be a number field and $\ell$ be a prime number. There exists an integer $n_\ell$ with the following property: for every elliptic curve $E/K$, the minimal parameter of maximal growth for the $\ell$-adic representation attached to $E$ is at most $n_\ell$.
\end{cor}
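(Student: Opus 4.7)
I would split the argument into the non-CM and CM cases, treating the former by a direct application of Arai's theorem and reducing the latter to finitely many fixed representative curves via Lemma~\ref{lemma:Untwisting}.

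\textbf{Non-CM case.} Apply Theorem~\ref{thm:Arai} to produce an integer $n_0 = n_0(K,\ell)$ such that $H^E_{\ell^\infty} \supseteq \Gamma_{n_0} := \{M \in \GL_2(\Z_\ell) : M \equiv \Id \pmod{\ell^{n_0}}\}$ holds for every non-CM elliptic curve $E/K$. For every $m \geq n_0$, the kernel of the natural projection $\GL_2(\Z/\ell^{m+1}\Z) \twoheadrightarrow \GL_2(\Z/\ell^m\Z)$ (which has order $\ell^4$) consists of elements congruent to $\Id$ modulo $\ell^m$, and hence modulo $\ell^{n_0}$; it is therefore contained in the image of $\Gamma_{n_0}$ modulo $\ell^{m+1}$, and so in $H^E_{\ell^{m+1}}$. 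This forces $\#H^E_{\ell^{m+1}}/\#H^E_{\ell^m} = \ell^4$, so the growth is maximal for $m \geq n_0$ and one may take $n_\ell = \max(n_0,2)$ uniformly for all non-CM curves.

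\textbf{CM case.} Only finitely many $\Kbar$-isomorphism classes of CM elliptic curves admit a model over $K$: if $\End_{\Kbar}(E) = \mathcal{O}$, then by CM theory $\Q(j(E))$ has degree $h(\mathcal{O})$ over $\Q$, so $j(E) \in K$ forces $h(\mathcal{O}) \leq [K:\Q]$, and by Siegel's theorem only finitely many imaginary quadratic orders have class number bounded by $[K:\Q]$. Fix representatives $E_1,\dots,E_r/K$ of these classes and set $M := \max_i n_\ell(E_i/K)$, finite by Proposition~\ref{prop-max-growth}. Given an arbitrary CM curve $E/K$, Lemma~\ref{lemma:Untwisting} produces an extension $F/K$ of degree $d \leq 12$ with $E \cong_F E_i$ for some $i$; over $F$ the Galois representations of $E$ and $E_i$ coincide, so Lemma~\ref{lem-param-tilde-bound} yields $n_\ell(E/F) = n_\ell(E_i/F) \leq M + v_\ell(12)$.

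\textbf{Descent and main obstacle.} It remains to descend from $F$ back to $K$. The indices $c_n := [H^E_{\ell^n}(K) : H^E_{\ell^n}(F)] = [F \cap K^E_{\ell^n} : K]$ form a non-decreasing sequence of divisors of $d \leq 12$, hence stabilize; once stable and once $n \geq n_\ell(E/F)$, the growth ratio $\#H^E_{\ell^{n+1}}(K)/\#H^E_{\ell^n}(K) = (c_{n+1}/c_n)\cdot \ell^2 = \ell^2$ is maximal, giving the desired uniformity. The main obstacle is bounding \emph{uniformly in $E$} the level at which $c_n$ stabilizes: a priori the jumps of $c_n$ can occur at arbitrarily high levels. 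I would overcome this by exploiting the explicit form of $F$ -- namely $F = K(\sqrt[n]{\alpha})$ for some $\alpha \in K^\times$ with $n \in \{2,4,6\}$, reflecting the structure of the twisting cocycle in $H^1(\gal(\Kbar/K), \Aut(E_i)) \cong K^\times/K^{\times n}$ -- together with the description of CM Galois representations recalled in Section~\ref{subsect:CMCurves}, which together confine the jumps of $c_n$ to levels depending only on $\ell$, $d$, and $M$.
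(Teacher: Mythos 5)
Your decomposition into the non-CM case (via Arai's theorem) and the CM case (finitely many $j$-invariants, untwisting, and Lemma~\ref{lem-param-tilde-bound}) is exactly the paper's strategy, and your non-CM argument is correct. But you have manufactured an obstacle in the CM descent step that does not exist, and the vague fix you propose in its place (``exploiting the explicit form of $F$\ldots together with the description of CM Galois representations'') is not a proof.

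The descent from $F$ back to $K$ is immediate. Since $K \subseteq F$, the image of the mod-$\ell^n$ representation of $E/F$ is a subgroup of that of $E/K$, so the kernel of reduction
$\ker\left(H^E_{\ell^{n+1}}(F) \to H^E_{\ell^n}(F)\right)$
is contained in $\ker\left(H^E_{\ell^{n+1}}(K) \to H^E_{\ell^n}(K)\right)$. For a CM curve both kernels consist of matrices $\equiv \Id \pmod{\ell^n}$, and for $n\geq 1$ (resp.\ $n\geq 2$ when $\ell=2$) such matrices lie in the Cartan subgroup, which has $\Z_\ell$-rank $2$; hence both kernels have order at most $\ell^2 = \ell^\delta$. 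Maximal growth over $F$ at level $n$ means the first kernel has order exactly $\ell^2$, which then forces the second to have order exactly $\ell^2$, i.e.\ maximal growth over $K$ at level $n$. Thus $n_\ell(E/K) \leq n_\ell(E/F)$ with no further input. In your own notation: for $n \geq n_\ell(E/F)$ the growth over $K$ equals $(c_{n+1}/c_n)\cdot\ell^2$; since the growth over $K$ is bounded above by $\ell^2$, we get $c_{n+1}\leq c_n$, which combined with $c_n$ nondecreasing gives $c_{n+1}=c_n$. So $c_n$ is automatically stable from level $n_\ell(E/F)$ onward, and your ``main obstacle'' evaporates. The paper records this step as the (unexplained) inequality $m \leq \tilde{m}$ in its chain $m \leq \tilde{m} = \tilde{m}_i \leq m_i+2 \leq n_\ell$. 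You should replace the last paragraph of your proposal with this one-line observation; as written, your proof has a gap precisely where you claim the main difficulty lies.
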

\begin{proof}
Let $n$ be the integer whose existence is guaranteed by Theorem \ref{thm:Arai}. By the general theory of CM elliptic curves, we know that there are finitely many values $j_1,\ldots,j_k \in \overline{\Q}$ such that for every CM elliptic curve $E/K$ we have $j(E) \in \{j_1,\ldots,j_k\}$. For each such $j_i$, fix an elliptic curve $E_i/K$ with $j(E_i)=j_i$. To every $E_i/K$ corresponds a minimal parameter of maximal growth for the $\ell$-adic representation that we call $m_i$. Let $n_\ell=\max\{n, m_i+2 \bigm\vert i =1,\ldots,k\}$: we claim that this value of $n_\ell$ satisfies the conclusion of the Corollary. Indeed, let $E/K$ be any elliptic curve. If $E$ does not have CM, the minimal parameter of maximal growth for its $\ell$-adic representation is at most $n \leq n_\ell$. If $E$ has CM, then there exists $i$ such that $j(E)=j_i=j(E_i)$, so $E$ is a twist of $E_i$. By Lemma \ref{lemma:Untwisting} the curves $E$ and $E_i$ become isomorphic over an extension $F/K$ of degree dividing $12$, so if $m$ (resp.~$\tilde{m}$, resp.~$\tilde{m}_i$) denotes the minimal parameter of maximal growth for $E/K$ (resp.~for $E/F$, resp.~for $E_i/F$) we have
\[
m \leq \tilde{m} = \tilde{m}_i \leq m_i + 2 \leq n_\ell,
\]
where the equality follows from the fact that $E$ and $E_i$ are isomorphic over $F$, while the inequality $\tilde{m}_i \leq m_i + 2$ follows from  Lemma \ref{lem-param-tilde-bound} and the fact that $v_\ell([F:K]) \leq v_\ell(12) \leq 2$ for every prime $\ell$.
\end{proof}

\subsection{Possible images of $\bmod\,\ell$ representations}
\label{subsec:PossibleImagesModl}

We recall several results concerning the images of the $\bmod\,\ell$ representations attached to elliptic curves over $\Q$.

We begin with a famous Theorem of Mazur. Let $\mathcal{T}_0:=\set{p\text{ prime }\mid p\leq 17}\cup\{37\}$.

\begin{thm}[{\cite[Theorem 1]{Mazur}}]
\label{thm:Mazur}
Let $E/\Q$ be an elliptic curve and assume that $E$ has a $\mathbb{Q}$-rational subgroup of order $p$. Then $p\in \mathcal T_0\cup \set{19,43,67,163}$. If $E$ does not have CM over $\overline \Q$, then $p\in\mathcal{T}_0$.
\end{thm}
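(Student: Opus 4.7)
The plan is to reformulate the statement as a question about rational points on the modular curve $X_0(p)$ and then to invoke Mazur's analysis of these via the Eisenstein ideal. A $\Q$-rational subgroup of order $p$ in $E$ is the same datum as a $\Q$-rational cyclic $p$-isogeny out of $E$, hence a non-cuspidal $\Q$-point of $X_0(p)$. The theorem is therefore equivalent to asserting that, for $p \notin \mathcal{T}_0 \cup \{19,43,67,163\}$, every $\Q$-rational point of $X_0(p)$ is a cusp, and that for $p \in \{19,43,67,163\}$ the non-cuspidal rational points of $X_0(p)$ all correspond to elliptic curves with complex multiplication.

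For the smallest primes a direct computation suffices. The curve $X_0(p)$ has genus zero for $p \in \{2,3,5,7,13\}$, so its rational points form a copy of $\mathbb{P}^1(\Q)$ and non-CM examples of $\Q$-rational $p$-subgroups abound. For $p \in \{11, 17, 19, 37, 43, 67, 163\}$ one has $X_0(p)$ of small positive genus with Jacobian $J_0(p)$ of Mordell--Weil rank zero over $\Q$, and one identifies $X_0(p)(\Q)$ explicitly as a finite set by intersecting $X_0(p)$ with the torsion group $J_0(p)(\Q)$ inside $J_0(p)$; the non-cuspidal points that appear are then seen to be CM $j$-invariants, with the exception of two non-CM rational points on $X_0(37)$ that account for the inclusion $37 \in \mathcal{T}_0$.

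The heart of the theorem is the remaining (infinite) family of primes. Following Mazur, let $\tilde{J}$ denote the Eisenstein quotient of $J_0(p)$: the maximal quotient on which the Eisenstein ideal $\mathfrak{I} \subset \mathbb{T}$ of the Hecke algebra acts nilpotently. The first and deepest step is to prove that $\tilde{J}(\Q)$ is finite, generated by the image of the cuspidal divisor class $[(0)-(\infty)]$; this is the content of Mazur's analysis of the arithmetic of the Eisenstein ideal and constitutes the genuine obstacle. Granting this, one completes the proof by a formal immersion argument: the composite map $\iota: X_0(p) \to \tilde{J}$, $y \mapsto [y-\infty]$, is shown to be a formal immersion at the cusp $\infty$ modulo a suitable small prime $\ell$ (typically $\ell \in \{2,3\}$) of good reduction. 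Any hypothetical non-cuspidal $y \in X_0(p)(\Q)$ would then give a point of $\tilde{J}(\Q)$ whose reduction mod $\ell$ agreed with that of $\iota(\infty)$, contradicting the formal-immersion property and ruling out the existence of $y$.
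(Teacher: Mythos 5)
The paper does not prove this result: it is stated purely as a citation to Mazur's paper (\cite[Theorem 1]{Mazur}) and is used as a black box elsewhere. There is therefore no in-paper proof to compare against. Your sketch is a reasonable high-level account of Mazur's actual argument --- reduction to rational points on $X_0(p)$, direct computation in the small-genus cases, and the Eisenstein-quotient/formal-immersion argument for the general prime --- so at the level of strategy it is consistent with the source being cited.

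One small inaccuracy in your account of the explicit low-genus cases: the non-cuspidal rational points on $X_0(17)$ (and one of the three on $X_0(11)$) have non-CM $j$-invariants, not CM ones. Since $11,17 \in \mathcal{T}_0$ this does not affect the truth of the theorem --- those primes are already excluded in both clauses --- but the statement that ``the non-cuspidal points that appear are then seen to be CM $j$-invariants, with the exception of two non-CM rational points on $X_0(37)$'' is not correct as written. The clean dichotomy is: for $p \in \{19,43,67,163\}$ every non-cuspidal rational point of $X_0(p)$ is CM, while for $p \in \mathcal{T}_0 \setminus \{2,3,5,7,13\} = \{11,17,37\}$ there exist non-cuspidal, non-CM rational points.
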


\begin{thm}[{\cite[Proposition 1.13]{PossibleImages}}]
\label{thm:Zywyna}
Let $E/\Q$ be a non-CM elliptic curve and $p\not \in \mathcal{T}_0$ be a prime. Let $C_{\operatorname{ns}}(p)$ be the subgroup of $\GL_2(\F_p)$ consisting of all matrices of the form $\begin{pmatrix}
a & b\epsilon\\
b & a
\end{pmatrix}$ with $(a,b)\in\F_p^2\setminus \{(0,0)\}$ and $\epsilon$ a fixed element of $\F_p^\times\setminus \F_p^{\times 2}$.
Then $H_p$ is conjugate to one of the following:
\begin{enumerate}[(i)]
\item $\GL_2(\F_p)$;
\item the normaliser $N_{\operatorname{ns}}(p)$ of $C_{\operatorname{ns}}(p)$;
\item the index $3$ subgroup
\[
D(p):=\set{a^3\mid a\in C_{\operatorname{ns}}(p)}\cup \set{\begin{pmatrix}
1 & 0\\
0 & -1
\end{pmatrix}\cdot a^3\mid a\in C_{\operatorname{ns}}(p)}
\]
of $N_{\operatorname{ns}}(p)$.
\end{enumerate} 
Moreover, the last case can only occur if $p\equiv 2\pmod 3$.
\end{thm}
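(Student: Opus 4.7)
The plan is to apply the classification of maximal subgroups of $\GL_2(\F_p)$ to the image $H_p$ of the mod-$p$ torsion representation, and then to eliminate every case except those listed, using the Weil pairing together with known results on rational points of modular curves.

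First, I recall the classical result of Dickson, refined in this setting by Serre \cite{Serre}: any subgroup $H \subseteq \GL_2(\F_p)$ either contains $\SL_2(\F_p)$, or falls into one of the following four families: (a) $H$ is contained in a Borel subgroup; (b) $H$ is contained in the normaliser of a split Cartan subgroup; (c) $H$ is contained in the normaliser of a non-split Cartan subgroup; (d) the image of $H$ in $\PGL_2(\F_p)$ is isomorphic to $A_4$, $S_4$, or $A_5$. Since $\det \tau_p$ coincides with the mod-$p$ cyclotomic character, which is surjective onto $\F_p^\times$, containment of $\SL_2(\F_p)$ forces $H_p = \GL_2(\F_p)$, giving case (i).

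Next I would rule out families (a), (b), and (d) under the hypothesis $p \notin \mathcal{T}_0$. Case (a) corresponds to a $\Q$-rational cyclic isogeny of degree $p$, which is impossible for non-CM $E/\Q$ with $p \notin \mathcal{T}_0$ by Theorem \ref{thm:Mazur}. Case (b) is the deepest input: the work of Bilu--Parent--Rebolledo on the modular curves $X_{\operatorname{sp}}^+(p)$ shows that for $p \geq 11$ the image of $\tau_p$ is never contained in the normaliser of a split Cartan subgroup for a non-CM elliptic curve over $\Q$. Case (d) is excluded by Serre's own analysis in \cite{Serre}: since the exceptional groups $A_4$, $S_4$, $A_5$ have order at most $60$, compatibility with the cyclotomic character and with complex conjugation forces $p$ to lie in an explicit finite set contained in $\mathcal{T}_0$.

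The only remaining possibility is case (c), so $H_p$ sits inside $N_{\operatorname{ns}}(p)$. Since $E$ is non-CM over $\overline{\Q}$, the image $H_p$ cannot be contained in $C_{\operatorname{ns}}(p)$ itself (which would produce extra endomorphisms), so the composition $H_p \hookrightarrow N_{\operatorname{ns}}(p) \twoheadrightarrow N_{\operatorname{ns}}(p)/C_{\operatorname{ns}}(p) \cong \Z/2\Z$ is surjective. Recalling that $C_{\operatorname{ns}}(p)$ is cyclic of order $p^2-1$ and that its determinant is the norm map $\F_{p^2}^\times \to \F_p^\times$, one enumerates the subgroups of $N_{\operatorname{ns}}(p)$ that surject onto $\Z/2\Z$, are stable under the normaliser action, and whose determinant covers all of $\F_p^\times$; combined with further modular curve exclusions ruling out proper intermediate subgroups, one is left only with $H_p = N_{\operatorname{ns}}(p)$ and $H_p = D(p)$. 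The group $D(p)$ has surjective determinant only when cubes in $\F_p^\times$ fill $\F_p^\times$, i.e.\ when $\gcd(3,p-1)=1$, equivalently $p \equiv 2 \pmod{3}$. The hard step is the exclusion of case (b), which relies on the deep Chabauty-type analysis of Bilu--Parent--Rebolledo on $X_{\operatorname{sp}}^+(p)$; the enumeration within $N_{\operatorname{ns}}(p)$ yielding precisely the two subgroups $N_{\operatorname{ns}}(p)$ and $D(p)$ is then an elementary but careful book-keeping argument.
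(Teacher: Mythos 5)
The paper does not prove this theorem: it is imported verbatim as a citation to Zywina's \emph{On the possible images of the mod $\ell$ representations associated to elliptic curves over $\Q$} (Proposition~1.13), and no proof is reproduced or sketched. So there is no internal proof to compare your attempt against; what one can do is compare it against the cited source, whose architecture you have reproduced accurately in broad strokes (Dickson/Serre classification, Mazur for Borel, Bilu--Parent--Rebolledo for the split Cartan normaliser, Serre's finiteness for the exceptional $A_4$, $S_4$, $A_5$ cases, then casework inside $N_{\operatorname{ns}}(p)$).

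One step in your sketch is wrong as stated, and it is worth flagging because it is a common slip. You justify $H_p \not\subseteq C_{\operatorname{ns}}(p)$ by saying that containment ``would produce extra endomorphisms.'' This does not follow at the level of a single mod-$p$ representation: having the mod-$p$ image inside a Cartan subgroup does \emph{not} force $E$ to have CM, and in fact non-CM curves can have mod-$p$ image contained in a Cartan for small $p$. The correct and standard argument uses complex conjugation: if $c$ denotes complex conjugation, then $\tau_p(c)$ has order~$2$, trace~$0$, and determinant $-1$. But the unique element of order~$2$ in the cyclic group $C_{\operatorname{ns}}(p) \cong \F_{p^2}^\times$ is $-\operatorname{Id}$, whose determinant is $+1$; hence $\tau_p(c) \notin C_{\operatorname{ns}}(p)$, so $H_p$ meets the nontrivial coset of $N_{\operatorname{ns}}(p)/C_{\operatorname{ns}}(p)$. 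You should replace the endomorphism heuristic with this argument.

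You also under-specify the final enumeration inside $N_{\operatorname{ns}}(p)$. Surjectivity of the determinant alone does not single out $N_{\operatorname{ns}}(p)$ and $D(p)$: there are in general other subgroups of $C_{\operatorname{ns}}(p)$ (of index dividing $p+1$) on which the norm is still surjective. The additional rigidity comes precisely from the constraint imposed by $\tau_p(c)$ and from the requirement that the subgroup be normalised by the nontrivial coset element; combined with the determinant condition this is what pins down index $1$ or~$3$. Your closing remark that $D(p)$ has surjective determinant iff $p \equiv 2 \pmod 3$ is correct (note $-1$ is always a cube, so the extra coset contributes nothing new to the image of $\det$), but the intermediate elimination should be spelled out rather than waved at as ``further modular curve exclusions,'' since for this part no modular curve input is needed — it is pure group theory plus the complex-conjugation constraint.
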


\begin{cor}
\label{cor:ContainsScalarsAndConjugation}
Let $E/\Q$ be a non-CM elliptic curve and $p\not \in \mathcal{T}_0$ be a prime. The following hold:
\begin{enumerate}[(1)]
\item The image $H_p$ of the modulo-$p$ representation attached to $E$ contains $\{\lambda\Id\mid \lambda\in\F_p^\times\}$.
\item Suppose $H_p\neq \GL_2(\F_p)$ and let $g_p\in \GL_2(\F_p)$ be an element that normalises $H_p$. Then there is $h\in \GL_2(\F_p)$ such that $h^{-1}g_ph\in N_{\operatorname{ns}}(p)$ and $h^{-1}H_ph\subseteq N_{\operatorname{ns}}(p)$.
\end{enumerate}
\end{cor}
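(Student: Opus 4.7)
My plan for part (1) is to exploit the fact that scalar matrices are central in $\GL_2(\F_p)$, so they commute with every conjugating element; hence the property ``$\F_p^\times\cdot\Id\subseteq H$'' is preserved under $\GL_2(\F_p)$-conjugation. Combined with Theorem \ref{thm:Zywyna}, this reduces the claim to checking that each of the three groups $\GL_2(\F_p)$, $N_{\operatorname{ns}}(p)$, $D(p)$ contains the scalars. The first is trivial, and the second follows by setting $b=0$ in the definition of $C_{\operatorname{ns}}(p)$. For $D(p)$ I would use that, since the case (iii) of Theorem \ref{thm:Zywyna} forces $p\equiv 2\pmod 3$, cubing is a bijection on $\F_p^\times$, so any scalar $\mu\Id$ equals $(\lambda\Id)^3$ for some $\lambda\in\F_p^\times$ and therefore belongs to the cube subgroup of $C_{\operatorname{ns}}(p)$, which is contained in $D(p)$.

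For part (2), I would first apply Theorem \ref{thm:Zywyna} (using $H_p\neq\GL_2(\F_p)$) to fix $h\in\GL_2(\F_p)$ such that $H'_p:=h^{-1}H_p h$ equals either $N_{\operatorname{ns}}(p)$ or $D(p)$; this automatically gives $h^{-1}H_p h\subseteq N_{\operatorname{ns}}(p)$, and the element $h^{-1}g_p h$ normalises $H'_p$. The whole corollary then reduces to the purely group-theoretic statement $N_{\GL_2(\F_p)}(H'_p)\subseteq N_{\operatorname{ns}}(p)$ in each of the two cases. When $H'_p=N_{\operatorname{ns}}(p)$, I would argue that $C_{\operatorname{ns}}(p)$ is a \emph{characteristic} subgroup of $N_{\operatorname{ns}}(p)$: by Lemma \ref{lem-Norm}, every element of $N_{\operatorname{ns}}(p)\setminus C_{\operatorname{ns}}(p)$ squares to a scalar (a short matrix computation in the parametrisation), hence has order dividing $2(p-1)<p^2-1$ for $p\geq 3$. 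Therefore $C_{\operatorname{ns}}(p)$ is the unique cyclic subgroup of $N_{\operatorname{ns}}(p)$ of order $p^2-1$, and any element normalising $N_{\operatorname{ns}}(p)$ must normalise $C_{\operatorname{ns}}(p)$ and so lie in $N_{\operatorname{ns}}(p)$ by definition.

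The hard part is the case $H'_p=D(p)$: a short Frobenius-twist calculation shows that $D(p)$ is not even normal in $N_{\operatorname{ns}}(p)$ when $p\equiv 2\pmod 3$ (for a generator $c$ of $C_{\operatorname{ns}}(p)$ the element $c^{1-p}$ fails to be a cube), so the characteristic-subgroup argument from Case A breaks down. My plan is to pass to $\PGL_2(\F_p)$: part (1) guarantees that $\F_p^\times\cdot\Id\subseteq D(p)$, so the image $\overline{D(p)}$ is dihedral of order $2(p+1)/3$, and $N_{\GL_2(\F_p)}(D(p))$ is the preimage of $N_{\PGL_2(\F_p)}(\overline{D(p)})$. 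I would then invoke Dickson's classification of the subgroups of $\PGL_2(\F_p)$: the hypothesis $p\notin\mathcal{T}_0$ forces $p\geq 19$, which together with $p\equiv 2\pmod 3$ yields $(p+1)/3\geq 8$. Standard size and divisibility estimates then rule out every potential proper overgroup of $\overline{D(p)}$ other than $\overline{D(p)}$ itself and $N_{\operatorname{ns}}(p)/Z$: dihedral overgroups of order $2n$ would need $(p+1)/3\mid n\mid p\pm 1$, forcing $n\in\{(p+1)/3, p+1\}$; containment in $A_4$, $S_4$, $A_5$ is excluded because these have no dihedral subgroup of order $\geq 12$; Borel and split Cartan normalisers are excluded by the same divisibility constraint; and $\overline{D(p)}$ has many conjugates in $\PSL_2(\F_p)$ and $\PGL_2(\F_p)$ and so is not normal there. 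This yields $N_{\PGL_2(\F_p)}(\overline{D(p)})\subseteq N_{\operatorname{ns}}(p)/Z$, which lifts (using $\F_p^\times\cdot\Id\subseteq N_{\operatorname{ns}}(p)$) to $N_{\GL_2(\F_p)}(D(p))\subseteq N_{\operatorname{ns}}(p)$, as required. Case B is the main obstacle, since one cannot rely on a characteristic-subgroup argument and must instead enumerate overgroups via Dickson.
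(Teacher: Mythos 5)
Your part (1) and the case $H'_p=N_{\operatorname{ns}}(p)$ of part (2) match the paper's proof almost verbatim: reduce to the three cases of Theorem~\ref{thm:Zywyna}, observe that $C_{\operatorname{ns}}(p)$ contains all scalars (with the cube map bijective on $\F_p^\times$ when $p\equiv 2\pmod 3$), and in case (ii) use that $C_{\operatorname{ns}}(p)$ is the unique cyclic index-$2$ subgroup of $N_{\operatorname{ns}}(p)$, hence characteristic.

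For the case $H'_p=D(p)$, however, you take a genuinely different and substantially heavier route, and your stated reason for doing so is a misconception. The paper applies the \emph{same} characteristic-subgroup pattern as in case (ii): the cyclic group $C_3:=\{a^3\mid a\in C_{\operatorname{ns}}(p)\}$ is the unique cyclic subgroup of index $2$ of $D(p)$ (your own squaring argument shows elements of $D(p)\setminus C_3$ square into the scalars and so have order dividing $2(p-1)<(p^2-1)/3$), hence $C_3$ is characteristic in $D(p)$; and since $|C_3|=(p^2-1)/3>p-1$ the group $C_3$ is not contained in the scalars, so it generates the full quadratic subalgebra $\F_{p^2}\subseteq\mathrm{Mat}_2(\F_p)$ and $N_{\GL_2(\F_p)}(C_3)=N_{\operatorname{ns}}(p)$. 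Any $g_p$ normalising $D(p)$ normalises $C_3$ and therefore lies in $N_{\operatorname{ns}}(p)$. Your objection that this argument ``breaks down'' because $D(p)$ is not normal in $N_{\operatorname{ns}}(p)$ is a red herring: normality of $D(p)$ in $N_{\operatorname{ns}}(p)$ would only be relevant to the \emph{reverse} inclusion $N_{\operatorname{ns}}(p)\subseteq N_{\GL_2}(D(p))$, which the corollary does not claim; only $N_{\GL_2}(D(p))\subseteq N_{\operatorname{ns}}(p)$ is needed, and the characteristic-subgroup argument delivers exactly that (the non-normality you computed merely shows the inclusion is strict). Your alternative via Dickson's classification in $\PGL_2(\F_p)$ is correct as a sketch --- $\overline{D(p)}$ is dihedral of order $2(p+1)/3\geq 16$, the divisibility analysis excludes all overgroups except dihedral of order $2(p+1)/3$ or $2(p+1)$, and in the latter case the uniqueness of the maximal torus containing $\overline{C_3}$ pins down $\overline{N_{\operatorname{ns}}(p)}$ --- but it invokes a full classification theorem and a multi-case enumeration where a two-line characteristic-subgroup argument suffices; what it buys is essentially nothing here, while the paper's route is shorter, uniform across cases (ii) and (iii), and avoids appealing to Dickson.
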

\begin{proof}
\begin{enumerate}[(1)]
\item We apply Theorem \ref{thm:Zywyna}. If $H_p$ is either $\GL_2(\F_p)$ or conjugate to $N_{\operatorname{ns}}(p)$, the conclusion follows trivially, since $C_{\operatorname{ns}}(p)$ contains all scalars. In case (iii) of Theorem \ref{thm:Zywyna}, $H_p$ contains the cubes of the scalars, hence all scalars since $p\equiv 2\pmod 3$.
\item We only have to consider cases (ii) and (iii) of Theorem \ref{thm:Zywyna}. Up to conjugation, we may assume that $H_p\subseteq N_{\operatorname{ns}}(p)$ and the claim becomes $g_p\in N_{\operatorname{ns}}(p)$.

In case (ii) it suffices to check that the normaliser of $N_{\operatorname{ns}}(p)$ is $N_{\operatorname{ns}}(p)$ itself. This holds because $C_{\operatorname{ns}}(p)$, being the only cyclic subgroup of index $2$ of $N_{\operatorname{ns}}(p)$, is characteristic in $N_{\operatorname{ns}}(p)$; hence any element that normalises $N_{\operatorname{ns}}(p)$ normalises $C_{\operatorname{ns}}(p)$ as well, so it must be in $N_{\operatorname{ns}}(p)$.
In case (iii), one similarly sees that $\{a^3\mid a\in C_{\operatorname{ns}}(p)\}$ is characteristic in $D(p)$ and that its normaliser is $N_{\operatorname{ns}}(p)$, and the conclusion follows as above.
\end{enumerate}
\end{proof}

\begin{lem}
\label{lem:LiftHomothety}
Let $\ell$ be a prime number and let $H$ be a closed subgroup of $\GL_2(\Z_\ell)$. Denote by $H_\ell$ the reduction of $H$ modulo $\ell$ and suppose that $H_\ell$ contains a scalar matrix $\overline{\lambda} \operatorname{Id}$. Then $H$ contains a scalar matrix $\lambda \operatorname{Id}$ for some $\lambda \in \Z_\ell^\times$ with $\lambda \equiv \overline{\lambda} \pmod{\ell}$.
\end{lem}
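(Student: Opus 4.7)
The plan is to produce the desired scalar as an $\ell$-adic limit of powers of a single lift inside $H$. By hypothesis we may choose some $g \in H$ whose reduction mod $\ell$ equals $\overline{\lambda}\Id$. Fix any integer $\lambda_0$ representing $\overline{\lambda}$; then $g = \lambda_0 \Id + \ell M$ for some $M \in \Mat_2(\Z_\ell)$. The crucial structural point is that $\lambda_0 \Id$ is central, hence commutes with $\ell M$, so the binomial theorem applies without any noncommutativity issues.

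Carrying out the expansion, for every $r \geq 0$ we obtain
\[
g^{\ell^r} \;=\; \lambda_0^{\ell^r}\,\Id \;+\; \sum_{k=1}^{\ell^r} \binom{\ell^r}{k}\,\lambda_0^{\ell^r-k}\,\ell^{k} M^{k}.
\]
Using the identity $k\binom{\ell^r}{k} = \ell^r\binom{\ell^r-1}{k-1}$ together with the fact that $\binom{\ell^r-1}{k-1}$ is a unit in $\Z_\ell$ (immediate from Lucas's theorem, since the base-$\ell$ digits of $\ell^r-1$ are all equal to $\ell-1$), one gets $v_\ell\!\left(\binom{\ell^r}{k}\right) = r - v_\ell(k)$ for $1 \leq k \leq \ell^r$. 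Combined with the elementary bound $k - v_\ell(k) \geq 1$ for $k \geq 1$, this yields $v_\ell\!\left(\binom{\ell^r}{k}\ell^k\right) \geq r+1$, and therefore
\[
g^{\ell^r} \;\equiv\; \lambda_0^{\ell^r}\,\Id \pmod{\ell^{r+1}}.
\]

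To conclude, observe that the sequence $\lambda_0^{\ell^r}$ converges in $\Z_\ell^\times$ as $r \to \infty$ to the Teichmüller representative $\lambda$ of $\overline{\lambda}$, which by construction satisfies $\lambda \equiv \overline{\lambda} \pmod{\ell}$. Combining this with the congruence above, the sequence $g^{\ell^r} \in H$ converges in $\GL_2(\Z_\ell)$ to $\lambda\Id$, and since $H$ is closed it contains this limit. I do not expect a real obstacle here: the only checks needed are the valuation estimate on the binomial coefficients and the convergence of $\lambda_0^{\ell^r}$, both of which are elementary and uniform in the prime $\ell$ (the case $\ell=2$ being essentially trivial since then $\overline{\lambda} = 1$ and one may take $\lambda=1$, but the same argument still applies verbatim).
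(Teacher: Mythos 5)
Your proof is correct and follows essentially the same strategy as the paper: take $\ell^r$-th powers of a lift $g \in H$ of $\overline{\lambda}\Id$ and use closedness of $H$ to capture the limit $\lambda\Id$ with $\lambda$ the Teichm\"uller representative. The paper factors $g = \lambda h_1$ with $h_1 \equiv \Id \pmod{\ell}$ first (so that $\lambda^{\ell^n} = \lambda$ handles the scalar part automatically), whereas you expand binomially and track valuations directly, but the two computations are interchangeable.
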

\begin{proof}
Let $h \in H$ be any element that is congruent modulo $\ell$ to $\overline{\lambda} \operatorname{Id}$. Let $\lambda \in \Z_\ell^\times$ be the Teichm\"uller lift of $\overline{\lambda}$ (that is, $\lambda^\ell=\ell$ and $\lambda \equiv \overline{\lambda} \pmod{\ell}$) and write $h=\lambda h_1$, where $h_1 =\Id+\ell A$ for some $A\in\Mat_2(\Z_\ell)$.
The sequence $h^{\ell^n} = \lambda^{\ell^n} h_1^{\ell^n}= \lambda h_1^{\ell^n}$ converges to $\lambda \operatorname{Id}$, because for every $n$ we have $h_1^{\ell^n} = \left( \operatorname{Id} + \ell A \right)^{\ell^n} \equiv \operatorname{Id} \pmod{\ell^n}$. As $H$ is closed, the limit of this sequence, namely $\lambda \Id$, also belongs to $H$ as claimed.
\end{proof}

\section{The $\ell$-adic failure}
\label{sec:lAdicFailure}
The aim of this section is to study the $\ell$-adic failure $A_\ell(N)$ for a fixed prime $\ell$.
The divisibility properties of $\alpha$ in the group $E(K)$ play a crucial role in the study of this quantity, so we begin with the following definition:

\begin{defi}
\label{def-divisib}
Let $\alpha \in E(K)$ and let $n$ be a positive integer. We say that $\alpha$ is \emph{$n$-indivisible over $K$} if there is no $\beta \in E(K)$ such that $n \beta=\alpha$; otherwise we say that $\alpha$ is \textit{$n$-divisible} or \textit{divisible by $n$ over $K$}. Let $\ell$ be a prime number. We say that $\alpha$ is \emph{strongly $\ell$-indivisible over $K$} if the point $\alpha+T$ is $\ell$-indivisible over $K$ for every torsion point $T\in E(K)$ of $\ell$-power order. Finally, we say that $\alpha$ is \emph{strongly indivisible over $K$} if its image in the free abelian group $E(K)/E(K)_{\operatorname{tors}}$ is not divisible by any $n>1$, or equivalently if $\alpha$ is strongly $\ell$-indivisible over $K$ for every prime $\ell$.
\end{defi}

Our aim is to give an analogue of the following result, which bounds the index of the image of the Kummer representation, in those cases when the torsion representation is not surjective.

\begin{thm*}[Jones-Rouse, {\cite[Theorem 5.2]{JonesRouse}}]\label{thm:JonesRouseSurjectivity}
Assume that the $\ell$-adic torsion representation $\tau_{\ell^\infty}:\gal(K_{\ell^\infty}\mid K)\to \GL_2(\Z_\ell)$ is surjective. Assume that $\alpha$ is $\ell$-indivisible in $E(K)$ and, if $\ell=2$, that $K_{2,2}\not\subseteq K_4$. Then the $\ell$-adic Kummer representation $\kappa_{\ell^\infty}:\gal(K_{\ell^\infty,\ell^\infty}\mid K_{\ell^\infty})\to \Z_\ell^2$ is surjective.
\end{thm*}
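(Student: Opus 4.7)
The plan hinges on the fact (Lemma \ref{lemma:HnActionOnVn}) that $V_{\ell^\infty}$ is a closed $\Z_\ell$-submodule of $T_\ell E \cong \Z_\ell^2$ stable under $H_{\ell^\infty} = \GL_2(\Z_\ell)$. Since $\GL_2(\Z_\ell)$ acts transitively on primitive vectors in $\Z_\ell^2$, any such submodule has the form $\ell^k \Z_\ell^2$ for some $k \in \{0, 1, 2, \ldots, \infty\}$: if $v \in V_{\ell^\infty}$ realises the minimum $\ell$-adic valuation $k$, then writing $v = \ell^k w$ with $w$ primitive, the $\GL_2(\Z_\ell)$-orbit of $v$ generates $\ell^k \Z_\ell^2$. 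The theorem therefore reduces to proving $V_{\ell^\infty} \not\subseteq \ell T_\ell E$, equivalently that the image of $V_{\ell^\infty}$ in $E[\ell]$ is non-zero.

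For odd $\ell$ I would argue cohomologically. The Kummer sequence embeds $E(K)/\ell E(K)$ into $H^1(K, E[\ell])$, and the $\ell$-indivisibility of $\alpha$ forces the class $\delta(\alpha) \in H^1(K, E[\ell])$ to be non-zero. The restriction of $\delta(\alpha)$ to $H^1(K_{\ell^\infty}, E[\ell]) = \Hom(G_{K_{\ell^\infty}}, E[\ell])$ is precisely $\kappa_{\ell^\infty} \bmod \ell$, whose image in $E[\ell]$ is $V_{\ell^\infty} \bmod \ell$. Inflation-restriction yields
\[
0 \to H^1(\gal(K_{\ell^\infty}/K), E[\ell]) \to H^1(K, E[\ell]) \to H^1(K_{\ell^\infty}, E[\ell]),
\]
so $V_{\ell^\infty} \bmod \ell = 0$ would force $\delta(\alpha) \in H^1(\GL_2(\Z_\ell), E[\ell])$. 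But $-\Id \in \GL_2(\Z_\ell)$ is central and acts on $E[\ell]$ as multiplication by $-1$, and for $\ell \neq 2$ the scalar $-2 = (-1) - 1$ is invertible on $E[\ell]$; the standard centre argument (multiplication by a central element induces the identity on group cohomology, so $(g-1)$ acts both as zero and as an isomorphism when it is invertible on $M$) then yields $H^1(\GL_2(\Z_\ell), E[\ell]) = 0$, contradicting $\delta(\alpha) \neq 0$.

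The prime $\ell = 2$ demands a different treatment, since $-\Id = \Id$ and the above vanishing fails; here the extra hypothesis $K_{2,2} \not\subseteq K_4$ enters decisively. If $V_{2^\infty} \bmod 2 = 0$ then $K_{2^\infty,2} = K_{2^\infty}$, whence $K_{2,2} \subseteq K_{2^\infty}$, so $K_{2,2}/K_2$ would be an elementary abelian $2$-extension sitting inside $K_{2^\infty}/K_2$. The strategy is to identify the maximal such subextension explicitly: writing $U := \gal(K_{2^\infty}/K_2) = \Id + 2 \Mat_2(\Z_2)$, the identities $(\Id + 2A)^2 = \Id + 4A + 4A^2$ and $[\Id + 2A, \Id + 2B] \equiv \Id + 4(AB - BA) \pmod 8$ show that $\overline{[U,U]\, U^2} \subseteq \Id + 4 \Mat_2(\Z_2)$, while a short check that the values $\{A + A^2 : A \in \Mat_2(\F_2)\}$ together with the commutator subspace $[\Mat_2(\F_2), \Mat_2(\F_2)]$ span $\Mat_2(\F_2)$ gives the reverse inclusion. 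Thus $U/\overline{[U,U]\, U^2} \cong \Mat_2(\F_2) \cong \gal(K_4/K_2)$, so the maximal elementary abelian $2$-subextension of $K_{2^\infty}/K_2$ is exactly $K_4$. This forces $K_{2,2} \subseteq K_4$, contradicting the hypothesis; so $V_{2^\infty} \bmod 2 \neq 0$ here too, completing the proof.

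The genuine obstacle is the case $\ell = 2$: the slick cohomological vanishing used for odd $\ell$ is unavailable and must be replaced by the explicit pro-$2$ computation above, identifying the Frattini quotient of the first congruence subgroup of $\GL_2(\Z_2)$. The hypothesis $K_{2,2} \not\subseteq K_4$ is calibrated precisely to rule out the one way in which a non-trivial Kummer extension at $\ell=2$ could be absorbed into the $2$-adic torsion tower.
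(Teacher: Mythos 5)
The paper does not prove this statement: it is cited as a black box from Jones and Rouse, so there is no internal proof to compare your argument against, and I will assess it on its own terms.

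Your reduction to showing $V_{\ell^\infty} \not\subseteq \ell T_\ell E$ via the $\GL_2(\Z_\ell)$-module structure of $V_{\ell^\infty}$ (Lemma \ref{lemma:HnActionOnVn}) is correct, and the odd-$\ell$ argument via the Kummer class $\delta(\alpha)$, inflation--restriction, and Sah's lemma applied to the central element $-\operatorname{Id}$ is sound.

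The $\ell = 2$ computation, however, contains a genuine error. You assert that $\{A + A^2 : A \in \Mat_2(\F_2)\}$ together with the commutators $[A,B]$ span $\Mat_2(\F_2)$, but they do not: over $\F_2$ one has $\operatorname{tr}(A^2) = \operatorname{tr}(A)^2 = \operatorname{tr}(A)$, hence $\operatorname{tr}(A + A^2) = 0$, and commutators are always traceless, so the span lies inside the $3$-dimensional space $\mathfrak{sl}_2(\F_2) \subsetneq \Mat_2(\F_2)$ (and equals it, as $[E_{12},E_{21}]=\operatorname{Id}$, $[E_{11},E_{12}]=E_{12}$, $[E_{22},E_{21}]=E_{21}$ show). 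Consequently the Frattini quotient of $U := \operatorname{Id} + 2\Mat_2(\Z_2)$ is $(\Z/2\Z)^5$, not $(\Z/2\Z)^4$; the extra factor is detected by $\det : U \to 1+2\Z_2$, whose target has maximal elementary abelian $2$-quotient $(\Z/2\Z)^2$. Thus the maximal elementary abelian $2$-extension of $K_2$ inside $K_{2^\infty}$ properly contains $K_4$, and the implication ``$K_{2,2} \subseteq K_{2^\infty}$ with $K_{2,2}/K_2$ elementary abelian implies $K_{2,2} \subseteq K_4$'' does not follow from the Frattini computation alone; the step deriving the contradiction is therefore unjustified.

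The missing ingredient is $\GL_2(\F_2)$-equivariance, which is precisely the content of Lemma \ref{lemma:HnActionOnVn} that you used for the reduction but then abandoned in the final step. Since $K_{2,2}/K$ is Galois, the surjection $U \twoheadrightarrow \gal(K_{2,2}/K_2)$ is equivariant for the conjugation action of $\gal(K_2/K) \cong \GL_2(\F_2)$, and $\gal(K_{2,2}/K_2)$, being a nonzero $\GL_2(\F_2)$-submodule of $E[2]$, must be all of $E[2]$, the irreducible nontrivial $2$-dimensional module. The extra $\Z/2\Z$ in the Frattini quotient of $U$ is a \emph{trivial} $\GL_2(\F_2)$-module (it sits in the trace quotient $\Mat_2(\F_2)/\mathfrak{sl}_2(\F_2)$), so any equivariant map to $E[2]$ annihilates it, forcing the surjection to factor through $U/(\operatorname{Id}+4\Mat_2(\Z_2)) = \gal(K_4/K_2)$. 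With this repair your strategy does go through, but as written the ``short check'' is false.
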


\subsection{An exact sequence}

We shall need to understand the divisibility properties of $\alpha$ not only over the base field $K$, but also over the division fields of $E$. Thus we turn to studying how the divisibility of the point $\alpha$ by powers of $\ell$ changes when passing to a field extension.
Our main tool will be the following Lemma.

\begin{lem}
\label{lem-divibility-1}
Let $L$ be a finite Galois extension of $K$ with Galois group $G$. For every $m\geq 1$ there is an exact sequence of abelian groups
\begin{align*}
0 \to m E(K)\to E(K)\cap m E(L)\to H^1(G,E[m](L)),
\end{align*}
where the injective map on the left is the natural inclusion.
\begin{proof}
Consider the short exact sequence of $G$-modules
\begin{align*}
0\to E[m](L)\to E(L) \xrightarrow{[m]} m E(L)\to 0
\end{align*}
and the beginning of the long exact sequence in cohomology,
\begin{align*}
0\to (E[m](L))^G\to (E(L))^G\to (m E(L))^G\to H^1(G,E[m](L)).
\end{align*}
Noticing that
\begin{align*}
(E[m](L))^G= E[m](K), && (E(L))^G=E(K), && (m E(L))^G = E(K)\cap m E(L)
\end{align*}
and that
\begin{align*}
E(K)/E[m](K) \cong m E(K)
\end{align*}
the lemma follows.
\end{proof}
\end{lem}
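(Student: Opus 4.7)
The plan is to obtain the desired four-term exact sequence as a truncation of the long exact sequence in Galois cohomology associated with a very natural short exact sequence of $G$-modules. Specifically, I would start from the tautological short exact sequence
\[
0 \to E[m](L) \to E(L) \xrightarrow{[m]} mE(L) \to 0,
\]
which is exact because over the algebraically closed field $\overline{K}$ (and hence over $L$ when we restrict to the subgroup $mE(L)$) the multiplication-by-$m$ map is surjective onto its image by definition. This is a short exact sequence of $\mathbb{Z}[G]$-modules since $G=\gal(L\mid K)$ acts on $E(L)$ and the multiplication map is $G$-equivariant.

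Next, I would take the associated long exact sequence in group cohomology for $G$, whose first terms read
\[
0 \to (E[m](L))^G \to (E(L))^G \to (mE(L))^G \to H^1(G, E[m](L)) \to \cdots.
\]
The identifications $(E[m](L))^G = E[m](K)$ and $(E(L))^G = E(K)$ are immediate from the definition of $L$ being Galois over $K$. The crucial identification is $(mE(L))^G = E(K) \cap mE(L)$: an element of $mE(L)$ lies in the fixed subgroup if and only if it is also fixed by $G$ as an element of $E(L)$, i.e.~if and only if it lies in $E(K)$.

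The final step is to compute the image of the connecting map $E(K) \to (mE(L))^G$: this map sends $P \in E(K)$ to $mP$, so its image is exactly $mE(K)$. By exactness at $(mE(L))^G$, we obtain the short exact sequence
\[
0 \to mE(K) \to E(K) \cap mE(L) \to H^1(G, E[m](L)),
\]
which is the desired conclusion. The leftmost arrow is then automatically the natural inclusion because it is induced by $[m] \colon E(K) \to E(K)$ descending to the quotient $E(K)/E[m](K) \cong mE(K)$, and composed with the inclusion $mE(K) \hookrightarrow E(K) \cap mE(L)$ this is just the inclusion.

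I do not foresee a genuine obstacle here: the argument is a direct application of the snake/long-exact-sequence formalism. The only point that requires (very mild) care is the correct interpretation of $(mE(L))^G$, since in general for a submodule $N \subseteq M$ of a $G$-module one has $N^G = N \cap M^G$, which is precisely what is used here.
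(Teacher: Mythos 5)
Your proof is correct and follows essentially the same route as the paper: the same tautological short exact sequence, the same long exact sequence in group cohomology, and the same identification of $G$-invariants; you simply describe the image of the connecting map directly as $mE(K)$ rather than invoking the isomorphism $E(K)/E[m](K)\cong mE(K)$, but this is the same computation phrased differently.
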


The quotient $\left(E(K)\cap m E(L)\right)/m E(K)$ gives a measure of ``how many'' $K$-points of $E$ are $m$-divisible in $E(L)$ but not $m$-divisible in $E(K)$. 
 We shall often use this Lemma in the special case of $m=\ell^n$ being a power of $\ell$: in this context, the quotient $\left(E(K)\cap \ell^n E(L)\right)/\ell^n E(K)$ is a subgroup of $E(K)/\ell^n E(K)$, so it has exponent dividing $\ell^n$. 
We conclude that if $\ell\nmid \#H^1(G,E[\ell^n](L))$ then no $\ell$-indivisible $K$-point of $E$ can become $\ell$-divisible in $E(L)$. This applies in particular when $\ell\nmid \#G$, see \cite[Proposition 1.6.2]{MR2392026}.

\subsection{Divisibility in the $\ell$-torsion field}

As an example, we investigate the situation of Lemma \ref{lem-divibility-1} with $m=\ell$ and $L=K_\ell$. In this case the exact sequence becomes
\begin{align*}
0 \to \ell E(K)\to E(K)\cap\ell E(K_{\ell})\to H^1(H_\ell,E[\ell]).
\end{align*}

The following Lemma can also be found in \cite[Section 3]{LawsonWutrich}.
\begin{lem}
The cohomology group $H^1(H_\ell,E[\ell])$ is either trivial or cyclic of order $\ell$. When $\ell=2$ it is always trivial.
\begin{proof}
Since $\ell E[\ell]=0$, we have $\ell H^1(H_\ell,E[\ell])=0$. It follows from \cite[Theorem IX.4]{Serre-LocalFields} that we have an injective map
$H^1(H_\ell,E[\ell])\to H^1(S_\ell, E[\ell])$, where $S_\ell$ is an $\ell$-Sylow subgroup of $H_\ell$. This is either trivial, in which case $H^1(H_\ell,E[\ell])=0$, or cyclic of order $\ell$. In the latter case, up to a change of basis for $E[\ell]$ we can assume that $S_\ell$ is generated by $\sigma=\left(\begin{array}{cc}1&1\\ 0&1\end{array}\right)$. One can conclude the proof by explicitly computing the cohomology of the cyclic group $\langle \sigma\rangle$ as in \cite[Lemma 7]{LawsonWutrich}.
\end{proof}
\end{lem}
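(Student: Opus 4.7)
The plan is to reduce the computation to an $\ell$-Sylow subgroup of $H_\ell$. Since $E[\ell]$ is annihilated by $\ell$, so is $H^1(H_\ell, E[\ell])$; in particular the whole cohomology group is $\ell$-primary. The standard corestriction--restriction argument then shows that the restriction map $H^1(H_\ell, E[\ell]) \hookrightarrow H^1(S_\ell, E[\ell])$ is injective for any $\ell$-Sylow subgroup $S_\ell \leq H_\ell$, since the composition $\operatorname{Cor} \circ \operatorname{Res}$ equals multiplication by $[H_\ell : S_\ell]$, which is invertible on the $\ell$-primary part. If $\ell \nmid \#H_\ell$ then $S_\ell$ is trivial and there is nothing to prove.

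Suppose instead that $\ell \mid \#H_\ell$. From $\#\GL_2(\F_\ell) = \ell(\ell-1)^2(\ell+1)$, the $\ell$-part of $\#\GL_2(\F_\ell)$ is exactly $\ell$, so $S_\ell$ is cyclic of order $\ell$. Every subgroup of order $\ell$ is Sylow in $\GL_2(\F_\ell)$, so all such subgroups are mutually conjugate; after a change of $\F_\ell$-basis of $E[\ell]$ I may therefore assume $S_\ell = \langle \sigma \rangle$ with $\sigma = \left(\begin{smallmatrix} 1 & 1 \\ 0 & 1 \end{smallmatrix}\right)$.

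It then remains to compute $H^1(\langle \sigma \rangle, E[\ell])$ using the standard formula for cyclic groups, $H^1(\langle \sigma \rangle, M) = \ker N / (\sigma - \Id)M$, where $N = \sum_{k=0}^{\ell-1} \sigma^k$. A direct calculation on $E[\ell] = \F_\ell^2$ gives $(\sigma - \Id)E[\ell] = \F_\ell \cdot e_1$, while
\[
N \equiv \begin{pmatrix} \ell & \binom{\ell}{2} \\ 0 & \ell \end{pmatrix} \pmod{\ell}.
\]
For odd $\ell$ the entry $\binom{\ell}{2}$ is divisible by $\ell$, so $N \equiv 0$ and the quotient is cyclic of order $\ell$; for $\ell = 2$, by contrast, $\binom{2}{2} = 1$ is a unit modulo $2$, so $\ker N = (\sigma - \Id)E[2]$ and the quotient vanishes. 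Combined with the injectivity of restriction this yields the lemma in both cases.

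The only genuinely delicate step is the case $\ell = 2$: one must notice that the off-diagonal entry of $N$ fails to vanish modulo $2$, which is precisely what forces the triviality of $H^1$ at the prime $2$ and makes the behaviour there qualitatively different from odd primes. Everything else is a formal application of standard group-cohomological machinery.
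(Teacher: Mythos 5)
Your proposal is correct and follows the same route as the paper: reduce to the $\ell$-Sylow subgroup of $H_\ell$ by corestriction--restriction, observe $S_\ell$ is trivial or cyclic of order $\ell$ (and may be taken generated by the standard unipotent $\sigma$ after a change of basis), and then compute $H^1(\langle\sigma\rangle, E[\ell])$ for the cyclic group. The only difference is that you carry out the cyclic-cohomology computation explicitly — including the key observation that the norm element has off-diagonal entry $\binom{\ell}{2}$, which vanishes mod $\ell$ for odd $\ell$ but not for $\ell=2$ — whereas the paper delegates this step to \cite[Lemma 7]{LawsonWutrich}; your computation is correct.
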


In \cite{LawsonWutrich} the authors classify the cases when $H^1(H_\ell,E[\ell]) \neq 0$ for $K=\Q$ and they give rather complete results in case $K$ is a number field with $K\cap\Q(\zeta_\ell)=\Q$. In particular, it turns out that, for $K=\Q$, the group
$H^1(H_\ell,E[\ell])$ can be non-trivial only when $\ell=3,5,11$, and only when additional conditions are satisfied (see \cite[Theorem 1]{LawsonWutrich}).

The next Example shows that for $K=\Q$ a point in $E(\Q)$ that is strongly $3$-indivisible may become $3$-divisible over the $3$-torsion field.

\begin{exa}
\label{exa-17739g1}
Consider the elliptic curve $E$ over $\Q$ given by the equation
\begin{align*}
y^2 + y = x^{3} - 216 x - 1861  
\end{align*}
with Cremona label 17739g1 (see \cite[\href{http://www.lmfdb.org/EllipticCurve/Q/17739g1/}{label 17739g1}]{lmfdb}).
We have $E(\Q)\cong \Z\oplus \Z/3\Z$, with a generator of the free part given by $P=\left(\frac{23769}{400}, \frac{3529853}{8000}\right)$. One can show that $P$ is strongly $3$-indivisible.

Since the $\mathbb{Q}$-isogeny class of $E$ consists of exactly two curves, by \cite[Theorem 1]{LawsonWutrich} we have
$H^1(H_3,E[3])=\Z/3\Z$. The $3$-torsion field is given by $\Q(z)$, where $z$ is any root of $x^6+3$. Over this field the point
\begin{align*}
Q=\left(\frac{803}{400}z^4 - \frac{416}{400}z^2 + \frac{507}{400}, \frac{89133}{8000}z^4 - \frac{199071}{8000}z^2 - \frac{95323}{8000}\right)\in E(\Q(z))
\end{align*}
is such that $3Q=P$.

A computer search performed with the help of the LMFDB \cite{lmfdb} and of Pari/GP \cite{PARI2} shows that  there are only $20$ elliptic curves with conductor less than $4\times 10^{5}$ satisfying this property for $\ell=3$, none of which has conductor less than $17739$.
\end{exa}

\subsection{Divisibility in the $\ell$-adic torsion tower}

As we have seen in the previous Section, the $\ell$-divisibility of a point can increase when we move along the $\ell$-adic torsion field tower. We would now like to give a bound on the extent of this phenomenon.

Our purpose in this section is to prove Proposition \ref{prop-not-divisible} (essentially an application of Sah's lemma, see \cite[Proposition 2.7 (b)]{MR0229713} and \cite[Lemma A.2]{MR2018998}), which will allow us to give such a bound in terms of the image of the torsion representation.

\begin{lem}
\label{lem-not-divisible}
Let $L$ be a finite Galois extension of $K$ containing $K_{\ell^{n}}$ and let $G:=\gal(L|K)$. Assume that $\ell^{k}H^1(G,E[\ell^n])=0$.
If $\alpha\in E(K)$ is strongly $\ell$-indivisible in $E(K)$, then $\alpha$ is not $\ell^{k+1}$-divisible in $E(L)$.
\begin{proof}
Applying Lemma \ref{lem-divibility-1} with $M=\ell^{k+1}$ we have that the quotient $\frac{E(K)\cap \ell^{k+1}E(L)}{\ell^{k+1} E(K)}$ embeds in $H^1(G,E[\ell^n])$, so it is killed by $\ell^k$. Therefore $\ell^k\left(E(K)\cap \ell^{k+1}E(L)\right)\subseteq\ell^{k+1} E(K)$. Assuming by contradiction that $\alpha\in \ell^{k+1} E(L)$ we get $\ell^{k}\alpha=\ell^{k+1}\beta$ for some $\beta\in E(K)$. But then $T=\ell\beta-\alpha\in E[\ell^{k}](K)$ is such that $\alpha+T\in \ell E(K)$, contradicting our assumption that $\alpha$ is strongly $\ell$-indivisible.
\end{proof}
\end{lem}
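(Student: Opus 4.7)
The plan is to argue by contradiction: if $\alpha\in\ell^{k+1}E(L)$, I would manufacture from it an $\ell$-power torsion point $T\in E(K)$ such that $\alpha+T\in\ell E(K)$, a configuration explicitly ruled out by the strong $\ell$-indivisibility of $\alpha$.

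The key cohomological input is Lemma~\ref{lem-divibility-1} applied with $m=\ell^n$. Since $L\supseteq K_{\ell^n}$, we have $E[\ell^n](L)=E[\ell^n]$, and the lemma yields an injection
\[
\frac{E(K)\cap \ell^n E(L)}{\ell^n E(K)} \hookrightarrow H^1(G,E[\ell^n]).
\]
The hypothesis $\ell^k H^1(G,E[\ell^n])=0$ then translates into the divisibility statement $\ell^k\bigl(E(K)\cap \ell^n E(L)\bigr)\subseteq \ell^n E(K)$.

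Assume now $\alpha=\ell^{k+1}\gamma$ for some $\gamma\in E(L)$; I would focus on the meaningful range $k\leq n-1$ (for $k\geq n$ the hypothesis on $H^1$ is automatic, as this group is $\ell^n$-torsion). Then $\ell^{n-k-1}\alpha=\ell^n\gamma$ belongs to $E(K)\cap\ell^n E(L)$, so the preceding inclusion yields $\ell^{n-1}\alpha\in\ell^n E(K)$, say $\ell^{n-1}\alpha=\ell^n\beta$ for some $\beta\in E(K)$. Setting $T:=\ell\beta-\alpha\in E(K)$, the identity $\ell^{n-1}T=\ell^n\beta-\ell^{n-1}\alpha=0$ shows that $T$ is an $\ell$-power torsion point, and $\alpha+T=\ell\beta\in\ell E(K)$ contradicts the strong $\ell$-indivisibility of $\alpha$.

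I do not anticipate any serious obstacle, as this is a standard Sah-type argument. The one delicate point is the choice of $m$ in Lemma~\ref{lem-divibility-1}: taking $m=\ell^n$ (rather than $m=\ell^{k+1}$, which at first sight matches the shape of the conclusion) sidesteps any change-of-coefficients subtleties and lets us invoke the hypothesis on $H^1(G,E[\ell^n])$ directly, with the fact that $E[\ell^n]$ is already $L$-rational automatically identifying $E[\ell^n](L)$ with $E[\ell^n]$.
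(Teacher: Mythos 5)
Your proof is correct and takes a genuinely cleaner route than the paper's. The paper applies Lemma~\ref{lem-divibility-1} with $m = \ell^{k+1}$, which yields an embedding of the quotient into $H^1(G, E[\ell^{k+1}](L))$, not into $H^1(G, E[\ell^n])$ as asserted; matching the two requires an implicit change of coefficients along $E[\ell^{k+1}] \hookrightarrow E[\ell^n]$, and the induced map on $H^1$ need not be injective. Consequently the paper's intermediate inclusion $\ell^k\bigl(E(K) \cap \ell^{k+1}E(L)\bigr) \subseteq \ell^{k+1}E(K)$ holds only up to $K$-rational $\ell$-power torsion (which, to be fair, still yields the desired contradiction). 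Your choice $m = \ell^n$ directly engages the hypothesis $\ell^k H^1(G, E[\ell^n]) = 0$ and sidesteps the comparison issue entirely; the torsion witness it produces lies in $E[\ell^{n-1}](K)$ rather than the paper's $E[\ell^k](K)$, but both are $\ell$-power torsion, so the contradiction with strong $\ell$-indivisibility is the same. Your restriction to $k \leq n-1$ is also the right one: for $k \geq n$ the hypothesis is vacuous while the conclusion genuinely fails (take for instance $L = K_{\ell^{k+1}}(\ell^{-(k+1)}\alpha)$, a finite Galois extension containing $K_{\ell^n}$ over which $\alpha$ is $\ell^{k+1}$-divisible); this range is never needed in the paper's applications, where one always has $k < n$.
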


\begin{lem}
\label{lemma:ExponentOfH1}
Assume that for some $n_0\geq 1$ we have $(1+\ell^{n_0})\Id \in H_{\ell^n}$ (if $n \leq n_0$ the condition is automatically satisfied). Then the exponent of $H^1(H_{\ell^n},E[\ell^k])$ divides $\ell^{n_0}$ for every $k \leq n$.
\begin{proof}
Let $\lambda=(1+\ell^{n_0})\Id$ and let $\varphi:H_{\ell^n}\to E[\ell^{k}]$ be a cocycle. Using that $\lambda$ is central in $H_{\ell^n}$ and that $\varphi$ is a cocycle, for any $g\in H_{\ell^n}$ we have
\begin{align*}
g\varphi(\lambda)+\varphi(g)=\varphi(g\lambda)=\varphi(\lambda g)= \lambda\varphi(g)+\varphi(\lambda),
\end{align*}
so
\begin{align*}
\ell^{n_0}\varphi(g)=(\lambda-1)\varphi(g)=g\varphi(\lambda)-\varphi(\lambda),
\end{align*}
that is, $\ell^{n_0}\varphi$ is a coboundary. This proves that $\ell^{n_0}H^1(H_{\ell^n},E[\ell^{k}])=0$ as claimed.
\end{proof}
\end{lem}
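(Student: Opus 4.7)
The plan is to apply a version of Sah's lemma, exploiting the fact that the hypothesis provides a central element of $H_{\ell^n}$ that acts on $E[\ell^k]$ as multiplication by $1+\ell^{n_0}$, hence whose action differs from the identity by $\ell^{n_0}$.

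Concretely, let $\lambda := (1+\ell^{n_0})\operatorname{Id} \in H_{\ell^n}$, which by hypothesis lies in the group and is central since it is a scalar matrix. Given any $1$-cocycle $\varphi : H_{\ell^n} \to E[\ell^k]$ and any $g \in H_{\ell^n}$, I will expand $\varphi(g\lambda)$ and $\varphi(\lambda g)$ using the cocycle identity $\varphi(xy) = \varphi(x) + x\cdot\varphi(y)$. Since $g\lambda = \lambda g$, equating the two expressions yields
\[
\varphi(g) + g\cdot\varphi(\lambda) \;=\; \varphi(\lambda) + \lambda\cdot\varphi(g),
\]
which rearranges to $(\lambda - 1)\varphi(g) = g\cdot\varphi(\lambda) - \varphi(\lambda)$. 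The left-hand side equals $\ell^{n_0}\varphi(g)$ because $\lambda - 1$ acts on $E[\ell^k]$ as multiplication by $\ell^{n_0}$, while the right-hand side is visibly the coboundary of the constant $\varphi(\lambda) \in E[\ell^k]$.

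Thus $\ell^{n_0}\varphi$ is a coboundary, so $\ell^{n_0}$ kills the class of $\varphi$ in $H^1(H_{\ell^n}, E[\ell^k])$; since $\varphi$ was arbitrary, the exponent of $H^1(H_{\ell^n}, E[\ell^k])$ divides $\ell^{n_0}$. The parenthetical remark that the condition is automatic for $n \leq n_0$ is immediate: if $n \leq n_0$ then $1 + \ell^{n_0} \equiv 1 \pmod{\ell^n}$, so $(1+\ell^{n_0})\operatorname{Id}$ reduces to the identity in $\GL_2(\Z/\ell^n\Z)$ and lies in $H_{\ell^n}$ trivially.

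There is no real obstacle here; the proof is essentially a one-line manipulation of the cocycle identity once the central scalar has been identified. The only small thing to be careful about is the direction of the cocycle condition (left vs.\ right action convention) and the verification that $\lambda$ genuinely makes sense as an element of $H_{\ell^n}$ acting on $E[\ell^k]$ via reduction modulo $\ell^k$, but since $k \leq n$ this reduction is well defined and the computation goes through verbatim.
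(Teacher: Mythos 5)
Your proof is correct and is essentially identical to the paper's: both pick the central element $\lambda=(1+\ell^{n_0})\operatorname{Id}$, expand $\varphi(g\lambda)=\varphi(\lambda g)$ via the cocycle identity, and observe that $(\lambda-1)\varphi$ is the coboundary of $\varphi(\lambda)$ (this is the standard Sah's lemma argument, which the paper also invokes elsewhere). Your added verification of the parenthetical remark for $n\leq n_0$ is a harmless, correct bonus.
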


\begin{lem}
\label{lem-all-mat}
Assume that $E$ does not have complex multiplication and let $n_\ell\geq1$ be a parameter of maximal growth for the $\ell$-adic torsion representation. Then for every $n\geq n_\ell$ and for every $g\in \mat_{2}(\Z_\ell)$ we have that $(\Id + \ell^{n_\ell}g) \bmod \ell^n$ is an element of $H_{\ell^n}$.
\begin{proof}
We prove this by induction. 
For $n=n_\ell$ the statement is trivial, so suppose $(\Id+\ell^{n_\ell}g)\mod \ell^n$ belongs to $H_{\ell^n}$ for some $n> n_\ell$. Since the map $H_{\ell^{n+1}}\to H_{\ell^n}$ is surjective we can lift this element to an element of the form $\Id + \ell^{n_\ell}g + \ell^n g' \in H_{\ell^{n+1}}$, where $g'\in\mat_{2}(\F_\ell)$. Since 
\[
\ker(H_{\ell^{n+1}}\to H_{\ell^n})=\set{\Id + \ell^n h \mid h\in \mat_{2}(\F_\ell)}
\]
we have that $\Id - \ell^n g'$ is in $H_{\ell^{n+1}}$, hence $H_{\ell^{n+1}}$ contains the product
\[
(\Id-\ell^{n}g')(\Id + \ell^{n_\ell} g + \ell^n g') \equiv (\Id + \ell^{n_\ell}g)\pmod{\ell^{n+1}},
\]
where we use the fact that $\ell^{2n}(g')^2 = \ell^{n+n_\ell} g'g = 0 $ since we are working modulo $\ell^{n+1}$.
\end{proof}
\end{lem}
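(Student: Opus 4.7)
The plan is to argue by induction on $n \geq n_\ell$. The base case $n = n_\ell$ is immediate since $\Id + \ell^{n_\ell} g \equiv \Id \pmod{\ell^{n_\ell}}$, and the identity manifestly lies in $H_{\ell^{n_\ell}}$.

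For the inductive step, suppose the statement holds for some $n \geq n_\ell$, so that $(\Id + \ell^{n_\ell} g) \bmod \ell^n$ lies in $H_{\ell^n}$. Since reduction modulo $\ell^n$ yields a surjection $H_{\ell^{n+1}} \twoheadrightarrow H_{\ell^n}$, I can pick a lift in $H_{\ell^{n+1}}$ of the form $\Id + \ell^{n_\ell} g + \ell^n g'$ for some $g' \in \Mat_2(\F_\ell)$. The crux of the argument is then to identify the full kernel of the reduction map: by Proposition \ref{prop-max-growth}, together with the assumption $n \geq n_\ell$ and $\delta = 4$ in the non-CM case, this kernel has cardinality $\ell^4$. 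Since the kernel is automatically contained in $\{\Id + \ell^n h : h \in \Mat_2(\F_\ell)\}$, which also has cardinality $\ell^4$, the two coincide. Therefore $\Id - \ell^n g' \in H_{\ell^{n+1}}$.

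Multiplying the two elements of $H_{\ell^{n+1}}$, I obtain
\begin{align*}
(\Id - \ell^n g')(\Id + \ell^{n_\ell} g + \ell^n g') &= \Id + \ell^{n_\ell} g + \ell^n g' - \ell^n g' - \ell^{n+n_\ell} g' g - \ell^{2n}(g')^2 \\
&\equiv \Id + \ell^{n_\ell} g \pmod{\ell^{n+1}},
\end{align*}
where I use $n + n_\ell \geq n + 1$ (since $n_\ell \geq 1$) and $2n \geq n+1$ (since $n \geq n_\ell \geq 1$) to kill the quadratic terms. This shows $(\Id + \ell^{n_\ell} g) \bmod \ell^{n+1} \in H_{\ell^{n+1}}$, completing the induction.

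I do not expect any serious obstacle: the whole argument is a clean application of the definition of maximal growth, which forces the reduction map to have the largest possible kernel. The only subtle point is remembering that the non-CM hypothesis is used precisely to guarantee $\delta = 4$, so that the kernel of $H_{\ell^{n+1}} \to H_{\ell^n}$ fills up all of $\Id + \ell^n \Mat_2(\F_\ell)$ rather than just a proper subgroup.
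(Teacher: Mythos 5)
Your proposal is correct and follows essentially the same approach as the paper: induction on $n$, lifting the element to $H_{\ell^{n+1}}$, using maximal growth (with $\delta=4$) to show the kernel of $H_{\ell^{n+1}} \to H_{\ell^n}$ is all of $\Id + \ell^n \Mat_2(\F_\ell)$, and then multiplying by $\Id - \ell^n g'$ to clean up the error term. You spell out the cardinality count underlying the kernel identification, which the paper leaves implicit, but the argument is the same.
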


In the special case $g=\Id$, the same result also holds for elliptic curves with complex multiplication:

\begin{lem}\label{lemma:ImageAlwaysContainsScalars}
Let $E$ be an arbitrary elliptic curve and let $n_\ell \geq 1$ be a parameter of maximal growth (in particular, $n_\ell \geq 2$ if $\ell=2$). For every $n \geq n_\ell$ we have $(1+\ell^{n_\ell}) \operatorname{Id}  \in H_{\ell^n}$.
\end{lem}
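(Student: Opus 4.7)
The strategy is to split into two cases. If $E$ does not have complex multiplication over $\overline{K}$, the statement follows immediately from Lemma \ref{lem-all-mat} applied with $g = \Id$. So assume $E$ has CM over $\overline{K}$.

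In the CM case I will use the structural results recalled in subsection \ref{subsect:CMCurves}: up to conjugation in $\GL_2(\Z_\ell)$, we may assume $H_{\ell^\infty} \subseteq N_C$, where $N_C$ is the normaliser of a Cartan subgroup $C$ arising from a $\Z_\ell$-order $\mathcal{O}_\ell$ of rank $2$ acting on itself by left multiplication. Since $\mathcal{O}_\ell \supseteq \Z_\ell \cdot 1$, the Cartan $C$ contains every scalar matrix; and by Lemma \ref{lem-Norm} we have $[N_C:C]=2$. The heart of the argument is the equality
\[
H_{\ell^\infty} \cap \bigl(\Id + \ell^{n_\ell}\Mat_2(\Z_\ell)\bigr) \;=\; C \cap \bigl(\Id + \ell^{n_\ell}\Mat_2(\Z_\ell)\bigr),
\]
from which the lemma follows on reducing modulo $\ell^n$, since $(1 + \ell^{n_\ell})\Id$ lies in the right-hand side.

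To establish the equality I would prove both inclusions separately. For ``$\subseteq$'' the key point is that, under the standing hypothesis ($\ell$ odd, or $\ell=2$ together with $n_\ell \geq 2$), the group $\Id + \ell^{n_\ell}\Mat_2(\Z_\ell)$ is pro-$\ell$; combined with $[N_C:C]=2$ and an explicit inspection of the coset $C'$ from Lemma \ref{lem-Norm} (using the parametrisation of Definition \ref{def:CartanParameters}), this forces the intersection with $N_C$ to land inside $C$. For the reverse inclusion ``$\supseteq$'' I would count cardinalities modulo $\ell^n$: by the maximal growth hypothesis, the image of the left-hand side in $\GL_2(\Z/\ell^n\Z)$ has size $|H_{\ell^n}|/|H_{\ell^{n_\ell}}| = \ell^{2(n - n_\ell)}$, while the image of the right-hand side is isomorphic to $(1 + \ell^{n_\ell}\mathcal{O}_\ell)/(1 + \ell^n\mathcal{O}_\ell)$, which via the log/exp bijection on $\mathcal{O}_\ell$ also has cardinality $\ell^{2(n - n_\ell)}$. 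Equality of sizes together with the first inclusion then gives the claim at every finite level.

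The main obstacle I expect is the careful verification, especially in the case $\ell=2$, that the non-trivial coset $C'$ of $C$ in $N_C$ contains no element congruent to $\Id$ modulo $\ell^{n_\ell}$: this is precisely where the assumption $n_\ell \geq 2$ for $\ell=2$ enters, since at $n_\ell=1$ the coset $C'$ really does meet $\Id + 2 \Mat_2(\Z_2)$. A direct matrix computation starting from Definition \ref{def:CartanParameters} and Lemma \ref{lem-Norm} takes care of this, but the rest of the argument really does depend on it.
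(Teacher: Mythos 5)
Your proposal is correct, and the strategy in the CM case is genuinely different from the paper's. The paper argues by induction on $n$: it lifts $(1+\ell^{n_\ell})\Id$ from $H_{\ell^n}$ to an element $(1+\ell^{n_\ell})\Id + \ell^n t$ of $H_{\ell^{n+1}}$, invokes \cite[Theorem 40]{MR3690236} to see that this lift lies in the Cartan rather than its non-trivial coset, deduces that $t$ lies in the tangent space $\mathbb{T}$ of \cite[Definition 18]{MR3690236}, and then uses the equality $\ker(H_{\ell^{n+1}}\to H_{\ell^n}) = \Id + \ell^n\mathbb{T}$ to multiply away the error term $\ell^n t$. You instead pin down the whole intersection $H_{\ell^\infty}\cap(\Id+\ell^{n_\ell}\Mat_2(\Z_\ell))$ in one shot by showing its reduction mod $\ell^n$ has the same cardinality as that of $C\cap(\Id+\ell^{n_\ell}\Mat_2(\Z_\ell))=1+\ell^{n_\ell}\mathcal{O}_\ell$; maximal growth gives the former a size of $\ell^{2(n-n_\ell)}$, and a graded (or log/exp) count gives the same for the latter. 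The paper's route keeps the induction parallel to Lemma \ref{lem-all-mat} and is almost purely ``local'' at each level of the tower, at the cost of importing the tangent-space machinery; your route is more global and replaces that machinery with the elementary fact $|(1+\ell^{n_\ell}\mathcal{O}_\ell)/(1+\ell^n\mathcal{O}_\ell)| = \ell^{2(n-n_\ell)}$.

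One point of exposition: for the inclusion ``$\subseteq$'', the pro-$\ell$ observation by itself only handles odd $\ell$ (a pro-$\ell$ group with $\ell$ odd has no non-trivial map to $N_C/C\cong\Z/2\Z$); for $\ell=2$ you do genuinely need the explicit matrix check from Definition \ref{def:CartanParameters} and Lemma \ref{lem-Norm}. That check goes through: writing an element of $C'$ as $\begin{pmatrix}1&\gamma\\0&-1\end{pmatrix}\begin{pmatrix}x&\delta y\\y&x+\gamma y\end{pmatrix}$, the $(1,1)$ and $(2,2)$ entries force $x+\gamma y\equiv 1$ and $x+\gamma y\equiv -1$ modulo $\ell^{n_\ell}$, hence $\ell^{n_\ell}\mid 2$, which is excluded since $n_\ell\geq 2$ when $\ell=2$. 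So the argument is sound; it is exactly the same obstruction the paper handles via \cite[Theorem 40]{MR3690236}.
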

\begin{proof}
In the light of the previous lemma we may assume that $E$ has complex multiplication, so that the image of the torsion representation is contained in the normaliser of a Cartan subgroup of $\operatorname{GL}_2(\Z_\ell)$.
The equality $\#H_{\ell^{n+1}} = \ell^2 \# H_{\ell^n}$ for $n \geq n_\ell$ is equivalent to the fact that 
\[
\ker \left( H_{\ell^{n+1}} \to H_{\ell^n} \right) = \operatorname{Id} + \ell^n \mathbb{T} \subseteq \{ M \in \operatorname{Mat}_2(\Z/\ell^{n+1}\Z) : M \equiv \operatorname{Id} \pmod{\ell^n} \},
\]
where $\mathbb{T}$ is the tangent space to the image of the Galois representation as introduced in \cite[Definition 9]{2016arXiv161202847L} and further studied in \cite[Definition 18]{MR3690236}. We proceed by induction, the base case $n=n_\ell$ being trivial. By surjectivity of $H_{\ell^{n+1}} \to H_{\ell^n}$ and the inductive hypothesis, we know that $H_{\ell^{n+1}}$ contains an element reducing to $(1+\ell^{n_\ell}) \operatorname{Id}$ modulo $\ell^n$, that is, an element of the form $M_{n+1} := (1 + \ell^{n_\ell})\operatorname{Id} + \ell^n t$. Here $t$ is an element of $\mathbb{T}$: to see this, notice that $M_{n+1}$ is congruent to the identity modulo $\ell^{n_\ell}$, so it cannot lie in the non-trivial coset of the normaliser of a Cartan subgroup (\cite[Theorem 40]{MR3690236}), and therefore belongs to the Cartan subgroup itself. But then $M_{n+1}$ is of the form $\begin{pmatrix}
x & \delta y \\
y & x+\gamma y
\end{pmatrix}$ for appropriate parameters $(\gamma,\delta)$, hence
\[
t=\frac{1}{\ell^n}\begin{pmatrix}
x-1-\ell^{n_\ell} & \delta y \\
y & (x-1-\ell^{n_\ell})+\gamma y
\end{pmatrix} \in \operatorname{Mat}_2(\mathbb{F}_\ell)
\]
belongs to $\mathbb{T}$ by the explicit description given in \cite[Definition 18]{MR3690236}. Using the equality $\ker \left( H_{\ell^{n+1}} \to H_{\ell^n} \right) = \Id + \ell^n \mathbb{T}$ we see that $H_{\ell^{n+1}}$ also contains $\operatorname{Id}-\ell^n t$, so it contains
\[
((1+\ell^{n_\ell})\operatorname{Id} + \ell^n t)(\operatorname{Id}-\ell^n t) \equiv \operatorname{Id}-\ell^{2n} t^2 + \ell^{n_\ell} \operatorname{Id} - \ell^{n+n_\ell} t \equiv (1 + \ell^{n_\ell}) \Id \pmod{\ell^{n+1}}
\]
as claimed.
\end{proof}

\begin{prop}
\label{prop-not-divisible}
Assume that $\alpha$ is strongly $\ell$-indivisible in $E(K)$. Let $n_\ell$ be a parameter of maximal growth for the $\ell$-adic torsion representation. Then for every $n$ the point $\alpha$ is not $\ell^{n_\ell+1}$-divisible in $K_{\ell^n}$; equivalently, $\alpha$ is not $\ell^{n_\ell+1}$-divisible in $K_{\ell^{\infty}}$.
\begin{proof}
By Lemma \ref{lemma:ImageAlwaysContainsScalars} the group $H_{\ell^n}$ contains $(1+\ell^{n_\ell})\Id$, so by Lemma \ref{lemma:ExponentOfH1} the exponent of $H^1(H_{\ell^n}, E[\ell^{n}])$ divides $\ell^{n_\ell}$. We conclude by Lemma \ref{lem-not-divisible}.
\end{proof}
\end{prop}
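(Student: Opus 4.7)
The plan is to combine the three preceding lemmas in a direct sequence, since together they essentially give the proposition.

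First, I would reduce to the case $n \geq n_\ell$. Indeed, for $n \leq n_\ell$ we have $K_{\ell^n} \subseteq K_{\ell^{n_\ell}}$, so non-divisibility in $K_{\ell^{n_\ell}}$ implies non-divisibility in $K_{\ell^n}$. Thus it suffices to prove the statement for $n \geq n_\ell$. The equivalence with non-divisibility in $K_{\ell^\infty} = \bigcup_n K_{\ell^n}$ follows because $\ell^{n_\ell+1}\beta = \alpha$ with $\beta \in E(K_{\ell^\infty})$ would force $\beta$ to lie in some finite layer $K_{\ell^n}$.

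Next, I would apply Lemma \ref{lemma:ImageAlwaysContainsScalars} to obtain that the scalar matrix $(1+\ell^{n_\ell})\Id$ belongs to $H_{\ell^n}$ for every $n \geq n_\ell$. Feeding this into Lemma \ref{lemma:ExponentOfH1} (with $n_0 = n_\ell$ and $k = n$) yields the cohomological bound
\[
\ell^{n_\ell} \cdot H^1(H_{\ell^n}, E[\ell^n]) = 0.
\]

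Finally, I would invoke Lemma \ref{lem-not-divisible} with $L = K_{\ell^n}$, $G = H_{\ell^n} \cong \gal(K_{\ell^n}\mid K)$, and $k = n_\ell$: the hypothesis $\ell^{n_\ell}H^1(G, E[\ell^n]) = 0$ is precisely the bound just obtained, and the conclusion is that $\alpha$, being strongly $\ell$-indivisible over $K$, is not $\ell^{n_\ell+1}$-divisible in $E(K_{\ell^n})$. This is exactly the desired statement.

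There is no real obstacle here — the proposition is essentially the assembly of Lemmas \ref{lem-not-divisible}, \ref{lemma:ExponentOfH1} and \ref{lemma:ImageAlwaysContainsScalars}, which is why the preparatory work has been organised precisely to make this final step immediate. The only point requiring a little care is ensuring that $n_\ell$ plays the same role ("parameter of maximal growth") in all three lemmas, and that the hypothesis of strong $\ell$-indivisibility (not merely $\ell$-indivisibility) is genuinely needed in the application of Lemma \ref{lem-not-divisible}, where the argument exploits the freedom to translate $\alpha$ by an $\ell$-power torsion point $T \in E(K)$.
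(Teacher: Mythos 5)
Your proof is correct and follows essentially the same route as the paper: chain Lemma \ref{lemma:ImageAlwaysContainsScalars} into Lemma \ref{lemma:ExponentOfH1} to bound the exponent of $H^1(H_{\ell^n}, E[\ell^n])$ by $\ell^{n_\ell}$, then apply Lemma \ref{lem-not-divisible}. The reduction to $n \geq n_\ell$ is a harmless (and slightly redundant) extra step, since for $n \leq n_\ell$ the scalar $(1+\ell^{n_\ell})\Id$ is trivially in $H_{\ell^n}$, as noted parenthetically in Lemma \ref{lemma:ExponentOfH1}.
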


\subsection{The $\ell$-adic failure is bounded}
In this section we establish some general results that will form the basis of all subsequent arguments (in particular Lemma \ref{lem-order-n-d} and Proposition \ref{prop:GroupTheory}) and use them to show that the $\ell$-adic failure $A_\ell(N)$ can be effectively bounded (Theorem \ref{thm:UpperBoundAl}).

\begin{lem}
\label{lem-order-n-d}
Assume that for some $d\geq 0$ the point $\alpha\in E(K)$ is not $\ell^{d+1}$-divisible over $K_{\ell^\infty}$. Then $V_{\ell^\infty}$ contains a vector of valuation at most $d$.

Similarly, if $\alpha\in E(K)$ is not $\ell^{d+1}$-divisible over $K_{\infty}$ then $W_{\ell^\infty}$ contains a vector of valuation at most $d$.
\begin{proof}
Assume by contradiction that every element of $V_{\ell^\infty}$ has valuation at least $d+1$. Then the image of $V_{\ell^\infty}$ in $E[\ell^{d+1}]=T_\ell(E)/\ell^{d+1}T_\ell(E)$ is zero. As this image is exactly $\gal(K_{\ell^\infty,\ell^{d+1}}\mid K_{\ell^\infty})$, we obtain $K_{\ell^\infty,\ell^{d+1}}=K_{\ell^\infty}$, so $\alpha$ is $\ell^{d+1}$-divisible in $K_{\ell^\infty}$, a contradiction.

The second part can be proved in exactly the same way.
\end{proof}
\end{lem}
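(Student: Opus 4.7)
The plan is to argue by contrapositive: suppose every vector in $V_{\ell^\infty}$ has $\ell$-adic valuation at least $d+1$, and show that $\alpha$ must then be $\ell^{d+1}$-divisible over $K_{\ell^\infty}$. The key bridge is the identification from Section \ref{subsec:Representations}: $V_{\ell^\infty}$ is the image of the $\ell$-adic Kummer representation $\kappa_{\ell^\infty}:\gal(K_{\ell^\infty,\ell^\infty}\mid K_{\ell^\infty})\to T_\ell E$, and reducing modulo $\ell^{d+1}$ turns it into the image of $\gal(K_{\ell^\infty,\ell^{d+1}}\mid K_{\ell^\infty})$ inside $E[\ell^{d+1}]=T_\ell E/\ell^{d+1}T_\ell E$.

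Under the contrapositive hypothesis $V_{\ell^\infty}\subseteq \ell^{d+1}T_\ell E$, this reduction is the zero map, so $\gal(K_{\ell^\infty,\ell^{d+1}}\mid K_{\ell^\infty})$ is trivial, and hence $K_{\ell^\infty,\ell^{d+1}}=K_{\ell^\infty}$. By the very definition of the Kummer extension, $K_{\ell^\infty,\ell^{d+1}}$ is obtained from $K_{\ell^\infty}$ by adjoining the $\ell^{d+1}$-division points of $\alpha$; the triviality of the extension therefore means that some (equivalently, every) such division point $\beta$ is already $K_{\ell^\infty}$-rational, which is precisely what it means for $\alpha$ to be $\ell^{d+1}$-divisible in $E(K_{\ell^\infty})$. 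This contradicts the assumption and proves the first assertion.

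The second assertion is obtained by the same argument verbatim after replacing $V_{\ell^\infty}$ by $W_{\ell^\infty}$ and $K_{\ell^\infty}$ by $K_\infty$: by the bulleted identifications in Section \ref{subsec:Representations}, $W_{\ell^\infty}$ sits inside $T_\ell E$ as the image of $\gal(K_{\infty,\ell^\infty}\mid K_\infty)$ under $\kappa_{\ell^\infty}$, so if all its elements have valuation at least $d+1$ then its reduction modulo $\ell^{d+1}$ is trivial, forcing $K_{\infty,\ell^{d+1}}=K_\infty$ and thus the $\ell^{d+1}$-divisibility of $\alpha$ over $K_\infty$.

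No genuine obstacle is anticipated: the statement is a direct translation between a valuation condition on the image of the Kummer representation and a divisibility statement about $\alpha$, and the only point that requires any attention is the bookkeeping that the submodule of $T_\ell E$ whose reduction modulo $\ell^{d+1}$ is trivial is exactly $\ell^{d+1}T_\ell E$, so that ``all vectors have valuation $\geq d+1$'' genuinely coincides with ``the mod $\ell^{d+1}$ reduction of the Kummer image is trivial''.
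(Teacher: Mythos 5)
Your proof is correct and follows essentially the same argument as the paper: assume all elements of $V_{\ell^\infty}$ (resp.\ $W_{\ell^\infty}$) have valuation at least $d+1$, observe that the reduction modulo $\ell^{d+1}$ is then the zero map, identify this reduction with $\gal(K_{\ell^\infty,\ell^{d+1}}\mid K_{\ell^\infty})$ (resp.\ $\gal(K_{\infty,\ell^{d+1}}\mid K_\infty)$), and conclude that the $(\ell^{d+1})$-division points of $\alpha$ are already rational over the base, giving $\ell^{d+1}$-divisibility. The only difference is cosmetic (contrapositive versus contradiction, plus your extra remark on the bookkeeping between valuation $\geq d+1$ and triviality of the mod-$\ell^{d+1}$ reduction).
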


The following group-theoretic Proposition will be applied in this section and in Section \ref{sec:UniformBounds}. In all of our applications the group $H$ will be the image of the $\ell$-adic torsion representation associated with some elliptic curve.

\begin{prop}\label{prop:GroupTheory}
Let $\ell$ be a prime number, $d$ be a positive integer, $H$ be a closed subgroup of $\GL_2(\Z_\ell)$, and $A=\Z_\ell[H]$ be the sub-$\Z_\ell$-algebra of $\Mat_2(\Z_\ell)$ generated by the elements of $H$.
Let $V \subseteq \Z_\ell^2$ be an $A$-submodule of $\Z_\ell^2$, and suppose that $V$ contains at least one vector of $\ell$-adic valuation at most $d$.
\begin{enumerate}
\item Suppose that $H$ contains $\{ M \in \operatorname{Mat}_2(\Z_\ell) : M \equiv \operatorname{Id} \pmod{\ell^n}  \}$ for some $n \geq 1$. Then $V$ contains $\ell^{d+n} \Z_\ell^2$.
\item Suppose that the reduction of $H$ modulo $\ell$ acts irreducibly on $\F_\ell^2$. Then $V$ contains $\ell^d  \Z_\ell^2$.
\item Let $C$ be a Cartan subgroup of $\GL_2(\Z_\ell)$ with parameters $(\gamma,\delta)$ and let $N$ be its normaliser. Suppose that $H$ is an open subgroup of $N$ not contained in $C$, and that $H$ contains $\{ M \in C : M \equiv \operatorname{Id} \pmod{\ell^n}  \}$ for some $n \geq 1$. Then $V$ contains $\ell^{3n+d+v_\ell(4\delta)}\Z_\ell^2$.
\end{enumerate}
\end{prop}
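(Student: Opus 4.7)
The plan is to use a common strategy in all three parts: show that the $\Z_\ell$-algebra $A = \Z_\ell[H]$ contains a submodule of the form $\ell^k \Mat_2(\Z_\ell)$ for an appropriate $k$. Then, writing the given vector of valuation at most $d$ as $v = \ell^a w$ with $a \leq d$ and $w$ primitive, and using the elementary fact that the evaluation map $\Mat_2(\Z_\ell) \to \Z_\ell^2,\ M \mapsto Mw,$ is surjective for primitive $w$, I obtain $V \supseteq A v \supseteq \ell^{a+k} \Z_\ell^2 \supseteq \ell^{d+k} \Z_\ell^2$.

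For (1), the hypothesis gives $\Id + \ell^n \Mat_2(\Z_\ell) \subseteq H$; subtracting $\Id \in H$ yields $\ell^n \Mat_2(\Z_\ell) \subseteq A$, and the common strategy gives the claim with $k = n$. For (2) the common strategy is replaced by a direct argument: the $\Z_\ell$-submodule $Aw \subseteq \Z_\ell^2$ reduces modulo $\ell$ to the $\F_\ell[\overline H]$-span of the nonzero vector $\overline w$, which is all of $\F_\ell^2$ by irreducibility; Nakayama's lemma then forces $Aw = \Z_\ell^2$, whence $V \supseteq \ell^a \Z_\ell^2 \supseteq \ell^d \Z_\ell^2$.

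Part (3) is the crux. After conjugating by an element of $\GL_2(\Z_\ell)$, which preserves the $\ell$-adic valuation of vectors and all inclusions $\ell^k \Z_\ell^2 \subseteq V$, I may assume that $C$ is the explicit Cartan of Definition \ref{def:CartanParameters} with order $\mathcal{O}_\ell = \Z_\ell[\tau]$, and by Remark \ref{rem:Parameters} that $\gamma \in \{0,1\}$. The hypothesis gives $\Id + \ell^n \mathcal{O}_\ell \subseteq H$, hence $\ell^n \mathcal{O}_\ell \subseteq A$. Since $N = C \sqcup JC$ by Lemma \ref{lem-Norm} and $H \not\subseteq C$, I can pick some $h = Jc \in H$ with $c \in C = \mathcal{O}_\ell^\times$; left-multiplication by $h$, together with $c \mathcal{O}_\ell = \mathcal{O}_\ell$, produces $\ell^n J \mathcal{O}_\ell \subseteq A$, so that $\ell^n (\mathcal{O}_\ell + J \mathcal{O}_\ell) \subseteq A$. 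The key technical step is then a direct determinant computation with the $\Z_\ell$-basis $\{\Id, \tau, J, J\tau\}$ of $\mathcal{O}_\ell + J \mathcal{O}_\ell$: one finds that this submodule has index $\ell^{v_\ell(4\delta + \gamma^2)}$ in $\Mat_2(\Z_\ell)$, and both admissible cases $\gamma \in \{0,1\}$ satisfy $v_\ell(4\delta + \gamma^2) \leq v_\ell(4\delta)$. Combined with the elementary-divisor fact that a sublattice of $\Z_\ell^4$ of index $\ell^k$ always contains $\ell^k \Z_\ell^4$, I deduce $\ell^{n+v_\ell(4\delta)} \Mat_2(\Z_\ell) \subseteq A$; the common strategy then delivers $V \supseteq \ell^{d+n+v_\ell(4\delta)} \Z_\ell^2$, which is stronger than the stated bound since $n \leq 3n$. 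The main obstacle is the determinant computation together with the verification that $v_\ell(4\delta + \gamma^2)$ is uniformly dominated by $v_\ell(4\delta)$ across the allowed values of $\gamma$, in particular the subtle case $\ell = 2$, $\gamma = 1$.
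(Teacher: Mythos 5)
Your parts (1) and (2) match the paper's reasoning. For part (3), your approach is genuinely different from the paper's and, in fact, gives a strictly stronger conclusion. The paper proceeds by constructing specific matrices $M_0 \in H\setminus C$, $M_1, M_3 \in \Id + \ell^n C$, forming the combination $M_2 = M_1 - \Id + \ell^n M_0$, and multiplying to land on a nonzero multiple of $E_{11}$; chaining these products loses a factor of $\ell^{2n}$ and yields only $\ell^{3n+v_\ell(4\delta)}\Mat_2(\Z_\ell) \subseteq A$. You instead observe the cleaner structural fact that $A$ must contain $\ell^n(\mathcal{O}_\ell + J\mathcal{O}_\ell)$: indeed $\ell^n\mathcal{O}_\ell \subseteq A$ (subtracting $\Id$ from $\Id + \ell^n\mathcal{O}_\ell \subseteq H$), and multiplying on the left by any $h = Jc \in H\setminus C$ and using $c\mathcal{O}_\ell = \mathcal{O}_\ell$ gives $\ell^n J\mathcal{O}_\ell \subseteq A$. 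The $4\times4$ determinant of the basis $\{\Id,\tau,J,J\tau\}$ against $\{E_{11},E_{12},E_{21},E_{22}\}$ is indeed $-(4\delta+\gamma^2)$, so $\mathcal{O}_\ell + J\mathcal{O}_\ell$ is a full sublattice of index $\ell^{v_\ell(4\delta+\gamma^2)}$ in $\Mat_2(\Z_\ell)$, and by Smith normal form it contains $\ell^{v_\ell(4\delta+\gamma^2)}\Mat_2(\Z_\ell)$. Your check that $v_\ell(4\delta+\gamma^2) \le v_\ell(4\delta)$ for $\gamma\in\{0,1\}$ is correct (when $\gamma=0$ it is an equality; when $\gamma=1$, $\ell=2$, the left side is $v_2(1+4\delta)=0$, and the degenerate case $\delta=0$ only makes the target claim vacuous). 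The net result $\ell^{n+v_\ell(4\delta)}\Mat_2(\Z_\ell) \subseteq A$ improves the paper's bound by a factor of $\ell^{2n}$; what you gain is both a sharper exponent and a more conceptual argument (a single lattice discriminant computation replaces several ad hoc matrix identities), at the mild cost of invoking the explicit $4\times4$ determinant and the elementary-divisor fact. Your argument is correct.
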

\begin{proof}
The assumptions and the conclusions of the Proposition are invariant under changes of basis in $\Z_\ell^2$, so we may assume that $v=\ell^d e_1$ is in $V$, where $e_1=\begin{pmatrix}
1 \\ 0
\end{pmatrix}$. 
\begin{enumerate}
\item It is clear that $A$ contains $\ell^n \Mat_2(\Z_\ell)$, so we have 
\[
V \supseteq A \cdot v \supseteq \ell^n \Mat_2(\Z_\ell) \cdot v = \ell^{n+d} \Mat_2(\Z_\ell) \cdot e_1=\ell^{n+d}\Z_\ell^2.
\]

Let $H_\ell$ denote the reduction of $H$ modulo $\ell$. The condition that $H_\ell$ acts irreducibly on $\mathbb{F}_\ell^2$ implies that there exists $\overline{M} \in \F_\ell[H_\ell]$ such that $\overline{M}e_1 \equiv \begin{pmatrix}
0 \\ 1
\end{pmatrix} \pmod{\ell}$. Fix a lift $M \in A$ of $\overline{M}$, which exists because the natural reduction map $A=\Z_\ell[H] \to \F_\ell[H_\ell]$ is clearly surjective. Then $M v=\ell^d M e_1$ is a vector whose second coordinate has valuation exactly $d$ and whose first coordinate has valuation strictly larger than $d$. It is then immediate to see that $v$ and $Mv$, that are contained in $V$, generate $\ell^d \Z_\ell^2$.
\item It is enough to show that $A$ contains $\ell^{3n+v_\ell(4\delta)}\Mat_2(\Z_\ell)$, and the conclusion follows as in (1) above.

Suppose first that $\gamma=0$, and let $M_0=\begin{pmatrix}
x_0 & -\delta y_0\\
y_0 & -x_0
\end{pmatrix}\in H\setminus C$ and $M_1=\begin{pmatrix}
1+\ell^nx_0 & \delta\ell^ny_0\\
\ell^n y_0 & 1+\ell^nx_0
\end{pmatrix}\in H$. The existence and the form of such matrices follow from the assumptions and from the description of Cartan subgroups and their normaliser given in Definition \ref{def:CartanParameters} and Lemma \ref{lem-Norm}.
Then $A$ contains $M_2=M_1-\Id+\ell^nM_0=2\ell^n\begin{pmatrix}
x_0 & 0 \\ y_0 & 0
\end{pmatrix}$. Let moreover $M_3=\ell^n\begin{pmatrix}
0 & \delta \\ 1 & 0
\end{pmatrix}$, which is in $A$ since it can be written as $\begin{pmatrix}
1 & \ell^n\delta \\ \ell^n & 1
\end{pmatrix}-\Id$, where both matrices are in $H$ by assumption. Then we have
\[
4\ell^{2n}\begin{pmatrix}
x_0^2-\delta y_0^2 & 0\\ 0 & 0
\end{pmatrix} = \left(M_2-2y_0M_3\right)\cdot M_2 \in A
\]
and $x_0^2-\delta y_0^2=-\det M_0\in \Z_\ell^\times$. It follows that $A$ contains $4\ell^{2n}\begin{pmatrix}
1 & 0\\ 0 & 0
\end{pmatrix}$, and since $\Id \in A$ we have that all diagonal matrices of valuation at least $2n+v_\ell(4)$ are in $A$, which therefore also contains
$\displaystyle 
\begin{pmatrix}
0 & 0 \\ \ell^{3n+v_\ell(4)} & 0
\end{pmatrix}= M_3\begin{pmatrix}
\ell^{2n+v_\ell(4)}  & 0 \\ 0 & 0
\end{pmatrix}$ and $\displaystyle \begin{pmatrix}
0 & \ell^{3n+v_\ell(4)}\delta \\ 0 & 0
\end{pmatrix}= M_3\begin{pmatrix}
0 & 0 \\ 0 & \ell^{2n+v_\ell(4)} 
\end{pmatrix}$.
Together with the diagonal matrices found above, these elements clearly generate $\ell^{3n+v_\ell(4\delta)}\Mat_2(\Z_\ell)$, and we are done.

If $\gamma\neq 0$, by Remark \ref{rem:Parameters} we may assume $\gamma =1$ and $\ell=2$. In this case let $M_0=\begin{pmatrix}
x_0+y_0& \delta y_0+x_0+y_0\\
-y_0 & -x_0-y_0
\end{pmatrix}\in H\setminus C$ and $M_1=\Id+\ell^n\begin{pmatrix}
x_0 & \delta y_0\\
y_0 & x_0+y_0
\end{pmatrix}\in H$. Then $A$ contains $M_2=M_1-\Id+\ell^nM_0=\ell^n\begin{pmatrix}
2x_0+y_0 & 2\delta y_0 + x_0 + y_0\\
0 & 0
\end{pmatrix}$. Let moreover $M_3=\ell^n\begin{pmatrix}
-1 & \delta \\
1 & 0
\end{pmatrix}\in A$. Then we have
\[
M_2(\delta M_2 - (2\delta y_0+x_0+y_0) M_3)=
-\ell^{2n}\det(M_0)(1+4\delta)\begin{pmatrix}
1&0\\0&0
\end{pmatrix}\in A,
\]
and using the fact that $\det(M_0) \in \Z_\ell^\times$ (since $M_0 \in H \subseteq \GL_2(\Z_\ell)$) we obtain that $A$ contains all diagonal matrices of valuation at least $2n$. We can then conclude as before.
\end{enumerate}
\end{proof}

\begin{prop}
\label{prop-max-growth-kummer}
Assume that $\alpha$ is strongly $\ell$-indivisible in $E(K)$ and let $n_\ell$ be a parameter of maximal growth for the $\ell$-adic torsion representation.
\begin{enumerate}[(1)]
\item Assume that $E$ does not have complex multiplication.
Then for every $k\geq 1$ we have $E[\ell^k]\subseteq V_{\ell^{k+2n_\ell}}$.
\item Assume that $E$ has complex multiplication by $\order:=\operatorname{End}_{\overline K}(E)$, and that $K$ does not contain the imaginary quadratic field $\order \otimes_{\mathbb{Z}} \mathbb{Q}$.
Let $(\gamma,\delta)$ be parameters for the Cartan subgroup of $\GL_2(\Z_\ell)$ corresponding to $\order$. Then for all $k\geq 1$ we have $E[\ell^k]\subseteq V_{\ell^{k+4n_\ell+v_\ell(4\delta)}}$.
\end{enumerate}
\begin{proof}
By Remark \ref{rem:Vsubgroup}, in order to show part (1) it is enough to prove $\ell^{2n_\ell} T_\ell(E)\subseteq V_{\ell^\infty}$. To see that this holds, notice that by Lemma \ref{lem-order-n-d} and Proposition \ref{prop-not-divisible} there is an element of valuation at most $n_\ell$ in $V_{\ell^\infty}$. Now we just need to apply Proposition \ref{prop:GroupTheory}(1) with $H=H_{\ell^\infty}$, $V=V_{\ell^\infty}$ and $d=n=n_\ell$. Part (2) can be proved in the same way using Proposition \ref{prop:GroupTheory}(3).
\end{proof}
\end{prop}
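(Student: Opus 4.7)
The plan is to reduce both statements to inclusions of the form $\ell^c T_\ell E \subseteq V_{\ell^\infty}$ for an explicit constant $c$, and then apply the group-theoretic Proposition \ref{prop:GroupTheory} to inflate a single vector of small $\ell$-adic valuation in $V_{\ell^\infty}$ into a sizeable sub-$\Z_\ell$-module of the whole Tate module.

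First, Remark \ref{rem:Vsubgroup} (applied level by level in the $\ell$-adic tower) identifies $V_{\ell^n}$ with the image of $V_{\ell^\infty}$ under reduction modulo $\ell^n$, and under the compatible bases fixed in §\ref{subsec:Representations} the subgroup $E[\ell^k] \subseteq E[\ell^{k+c}]$ corresponds to $\ell^c (\Z/\ell^{k+c}\Z)^2 \subseteq (\Z/\ell^{k+c}\Z)^2$. Hence the inclusion $\ell^c T_\ell E \subseteq V_{\ell^\infty}$ immediately yields $E[\ell^k] \subseteq V_{\ell^{k+c}}$ for every $k \geq 1$, so part~(1) is reduced to the case $c = 2n_\ell$ and part~(2) to $c = 4n_\ell + v_\ell(4\delta)$.

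The second ingredient is a vector of small valuation in $V_{\ell^\infty}$. The strong $\ell$-indivisibility of $\alpha$, combined with Proposition \ref{prop-not-divisible}, tells us that $\alpha$ is not $\ell^{n_\ell+1}$-divisible in $K_{\ell^\infty}$, and Lemma \ref{lem-order-n-d} then produces a vector in $V_{\ell^\infty}$ of $\ell$-adic valuation at most $n_\ell$. By Lemma \ref{lemma:HnActionOnVn}, $V_{\ell^\infty}$ is a module over $\Z_\ell[H_{\ell^\infty}]$, so this seed is available as input to Proposition \ref{prop:GroupTheory} with parameter $d = n_\ell$.

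For part~(1), I would verify the hypothesis of Proposition \ref{prop:GroupTheory}(1) by invoking Lemma \ref{lem-all-mat}: it shows that $H_{\ell^\infty}$ contains the full principal congruence subgroup $\Id + \ell^{n_\ell}\Mat_2(\Z_\ell)$. Proposition \ref{prop:GroupTheory}(1) with $n = d = n_\ell$ then gives the desired $\ell^{2n_\ell} T_\ell E \subseteq V_{\ell^\infty}$. For part~(2), the hypothesis that $K$ does not contain the CM field forces $H_{\ell^\infty}$ to be an open subgroup of the normaliser $N$ of the Cartan $C$ attached to $\order$ that is \emph{not} contained in $C$ itself. The tangent-space description of $H_{\ell^\infty}$ used in the proof of Lemma \ref{lemma:ImageAlwaysContainsScalars}, combined with the definition of $n_\ell$ as a parameter of maximal growth, shows by the same inductive argument that $H_{\ell^\infty}$ contains $\{M \in C : M \equiv \Id \pmod{\ell^{n_\ell}}\}$. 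Proposition \ref{prop:GroupTheory}(3) with $n = d = n_\ell$ then delivers $\ell^{3n_\ell + n_\ell + v_\ell(4\delta)} T_\ell E = \ell^{4n_\ell + v_\ell(4\delta)} T_\ell E \subseteq V_{\ell^\infty}$, as required.

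The only real obstacle is the bookkeeping involved in checking the group-theoretic hypotheses, especially in the CM case where one must pin down exactly which matrices in the Cartan and its normaliser belong to $H_{\ell^\infty}$. Once these verifications are in place, both parts are a direct assembly of Proposition \ref{prop-not-divisible}, Lemma \ref{lem-order-n-d}, and the relevant case of Proposition \ref{prop:GroupTheory}.
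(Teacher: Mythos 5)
Your proposal is correct and follows exactly the same route as the paper: reduce via Remark \ref{rem:Vsubgroup} to an inclusion $\ell^c T_\ell E \subseteq V_{\ell^\infty}$, obtain a vector of valuation at most $n_\ell$ in $V_{\ell^\infty}$ from Proposition \ref{prop-not-divisible} and Lemma \ref{lem-order-n-d}, and then apply the relevant case of Proposition \ref{prop:GroupTheory} with $d=n=n_\ell$ (using Lemma \ref{lem-all-mat} to verify its hypothesis in the non-CM case, and the tangent-space description together with [MR3690236, Theorem 40] in the CM case). The only minor slip is in phrasing: Remark \ref{rem:Vsubgroup} gives that the reduction of $V_{\ell^\infty}$ modulo $\ell^{k+c}$ is \emph{contained in} $V_{\ell^{k+c}}$, not that the two are identified, but this containment is exactly what the argument needs.
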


In \S\ref{sec:CMCounterexample} we will show that a na\"ive analogue of Proposition \ref{prop-max-growth-kummer} does not hold in case $E$ has complex multiplication defined over $K$.

\begin{rem}
\label{rem-parameters}
Write $\alpha=\ell^d\beta+T_h$, where $\beta\in E(K)$ is strongly $\ell$-indivisible and $T_h\in E[\ell^h](K)$ is a point of order $\ell^h$, for some $h,d\geq 0$. Notice that it is always possible to do so: first, let $\beta\in E(K)$ and $d$ be such that $\alpha=\ell^{d}\beta+T$ for some $T\in E(K)$ of order a power of $\ell$, with $d$ maximal. Assume then by contradiction that $\beta$ is not strongly $\ell$-indivisible. This means that there are $\gamma,S\in E(K)$ with $S$ of order a power of $\ell$ such that $\beta =\ell\gamma+S$. But then $\alpha=\ell^d(\ell\gamma+S)+T=\ell^{d+1}\gamma +(\ell^dS+T)$, contradicting the maximality of $d$.
\end{rem}

\begin{rem}\label{rmk:dIsEffective}
Let $\hat{h}$ be the canonical (N\'eron-Tate) height on $E$, as described in \cite[Section VIII.9]{SilvermanEC}. Following \cite{Petsche}, it is possible to bound the divisibility parameters $d$ and $h$ in terms of $\hat h(\alpha)$, the degree of $K$ over $\Q$, the discriminant $\Delta_E$ of $E$ over $K$ and the Szpiro ratio
\begin{align*}
\sigma=\begin{cases}
1 & \text{if $E$ has everywhere good reduction}\\
\frac{\log|N_{K/\Q}(\Delta_{E})|}{\log|N_{K/\Q}(N_{E})|} & \text{otherwise}
\end{cases}
\end{align*}
where $N_E$ denotes the conductor of $E$ over $K$.
In fact, \cite[Theorem 1]{Petsche} gives the bound
\begin{align*}
h\leq v_\ell \left\lfloor c_1[K:\Q]\sigma^2\log\left(c_2[K:\Q]\sigma^2\right)\right\rfloor
\end{align*}
where $c_1=134861$ and $c_2=104613$.

For the parameter $d$ we can reason as follows. For $\alpha=\ell^d \beta+T_h$, by \cite[Theorem 9.3]{SilvermanEC} we have
\begin{align*}
\hat h(\alpha) = \hat h(\ell^d\beta+T_h)=\hat h(\ell^d\beta)=\ell^{2d}\hat h(\beta)
\end{align*}
so we get
$\displaystyle d\leq \frac{1}{2 \log \ell}\log\left(\frac{\hat h(\alpha)}{\hat h(\beta)}\right).$
Now in view of \cite[Theorem 2]{Petsche} for any non-torsion point $\beta\in E(K)$ we have
\begin{align*}
\hat h(\beta)\geq B:=\frac{\log|N_{K/\Q}(\Delta_{E})|}{10^{15}[K:\Q]^3\sigma^6\log^2(c_2[K:\Q]\sigma^2)},
\end{align*}
where again $c_2=104613$. We thus obtain the effective bound
$d\leq \frac{1}{2 \log \ell}\log\left(\frac{\hat h(\alpha)}{B}\right)$.
\end{rem}

\begin{thm}\label{thm:UpperBoundAl}
Let $\ell$ be a prime and assume that $\operatorname{End}_K(E)=\Z$ (i.e.~either $E$ does not have CM, or it has CM but the complex multiplication is not defined over $K$). There is an effectively computable constant $a_\ell$, depending only on $\alpha$ and on the $\ell$-adic torsion representation associated to $E$, such that $A_\ell(N)$ divides $\ell^{a_\ell}$ for all positive integers $N$.

Moreover, $a_\ell$ is zero for every odd prime $\ell$ such that $\alpha$ is $\ell$-indivisible and for which the $\ell$-adic torsion representation associated with $E$ is maximal (see Definition \ref{def:MaximalRepresentation}). For the finitely many remaining primes $\ell$ we can take $a_\ell$ as follows: let $n_\ell$ be a parameter of maximal growth for the $\ell$-adic torsion representation and let $d$ be as in Remark \ref{rem-parameters}. If $E$ has CM over $\overline K$, let $(\gamma,\delta)$ be parameters for the Cartan subgroup of $\GL_2(\Z_\ell)$ corresponding to $\operatorname{End}_{\overline K}(E)$. Then:
\begin{itemize}
\item $a_\ell=4n_\ell+2d$ if $E$ does not have CM over $\overline K$;
\item $a_\ell=8n_\ell+2v_\ell(4\delta)+2d$ if $E$ has CM over $\overline K$.
\end{itemize}
\begin{proof}
Let $\alpha=\ell^d\beta+T_h$ as described above. Notice that if $\alpha$ is strongly $\ell$-indivisible we have $d=0$, and the conclusion follows from Proposition \ref{prop-max-growth-kummer}. If the $\ell$-adic torsion representation is maximal, the fact that $a_\ell$ is zero in the cases stated follows from \cite[Theorem 5.2 and Theorem 5.8]{JonesRouse}.

We now study the $\ell$-adic failure $A_\ell(N)$ in the general case. Let $n=v_\ell(N)$ and notice that the claim is trivial for $n\leq d$, so we may assume $n> d$. Since
\begin{align*}
[K_{\ell^{n+h}}(\ell^{-n}\alpha):K_{\ell^{n+h}}]=[K_{\ell^{n}}(\ell^{-n}\alpha) K_{\ell^{n+h}}:K_{\ell^n}K_{\ell^{n+h}}]\quad \text{divides}\quad [K_{\ell^{n}}(\ell^{-n}\alpha):K_{\ell^n}]
\end{align*}
 we have that
$\displaystyle \frac{\ell^{2n}}{[K_{\ell^{n}}(\ell^{-n}\alpha):K_{\ell^n}]}$ divides $\displaystyle  \frac{\ell^{2n}}{[K_{\ell^{n+h}}(\ell^{-n}\alpha):K_{\ell^{n+h}}]}
$,
and since we have 
\begin{align*}
K_{\ell^{n+h}}(\ell^{-n}\alpha)=K_{\ell^{n+h}}(\ell^{-(n-d)}\beta)
\end{align*}
we get
\begin{align*}
\frac{\ell^{2n}}{[K_{\ell^{n+h}}(\ell^{-n}\alpha):K_{\ell^{n+h}}]}=\ell^{2d}\frac{\ell^{2(n-d)}}{[K_{\ell^{n+h}}(\ell^{-(n-d)}\beta):K_{\ell^{n+h}}]}
\end{align*}
so in view of Remark \ref{rem-NM} we are reduced to proving the statement for $\beta$ instead of $\alpha$. Since $\beta$ is strongly $\ell$-indivisible, we can conclude as stated at the beginning of the proof.

The fact that $a_\ell$ is effective follows from the fact that one can effectively compute a parameter of maximal growth for the $\ell$-adic torsion representation (Remark \ref{rmk:nlIsEffective}), an upper bound for the value of $d$ (Remark \ref{rmk:dIsEffective}), and the endomorphism ring $\operatorname{End}_{\overline K}(E)$ (\cite{Achter}, \cite{CMSV}, \cite{LombardoEndo}).
\end{proof}
\end{thm}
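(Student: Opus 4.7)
The plan is to reduce the problem to the case where $\alpha$ is strongly $\ell$-indivisible, where we can directly use the inclusions provided by Proposition \ref{prop-max-growth-kummer}, and to invoke the Jones--Rouse theorem for the maximal case in order to settle ``$a_\ell=0$'' for almost all primes.

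First I would dispose of the maximal case: if $\ell$ is odd, $\alpha$ is $\ell$-indivisible, and the $\ell$-adic torsion representation attached to $E$ is maximal, the Jones--Rouse surjectivity theorem (\cite[Theorems 5.2 and 5.8]{JonesRouse}) shows that the Kummer representation is surjective, so $V_{\ell^n}=(\Z/\ell^n\Z)^2$ and $A_\ell(\ell^n)=1$ for every $n$. By Serre's open-image theorem (Theorem \ref{thm:Serre}) and its CM analogue (Theorem \ref{thm:CMSerre}), this handles all but finitely many primes, so that the residual finite set for which we must exhibit a quantitative bound is indeed finite.

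Next I would treat the strongly $\ell$-indivisible case, i.e.\ assume $d=0$ in the decomposition of Remark \ref{rem-parameters}. Proposition \ref{prop-max-growth-kummer} gives $\ell^{2n_\ell}T_\ell E\subseteq V_{\ell^\infty}$ in the non-CM case and $\ell^{4n_\ell+v_\ell(4\delta)}T_\ell E\subseteq V_{\ell^\infty}$ in the case of CM over $\overline{K}$ but not over $K$. Reducing modulo $\ell^n$, the index $[(\Z/\ell^n\Z)^2:V_{\ell^n}]$ divides $\ell^{4n_\ell}$ (respectively $\ell^{8n_\ell+2v_\ell(4\delta)}$); since this index is exactly $A_\ell(\ell^n)=\ell^{2n}/|V_{\ell^n}|$, we obtain the claimed bound for strongly $\ell$-indivisible~$\alpha$.

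For the general case, I would write $\alpha=\ell^d\beta+T_h$ with $\beta$ strongly $\ell$-indivisible and $T_h\in E[\ell^h](K)$. The key field-theoretic observation is that $K_{\ell^{n+h}}(\ell^{-n}\alpha)=K_{\ell^{n+h}}(\ell^{-(n-d)}\beta)$, since any $\ell^{-n}T_h$ is already $\ell^{n+h}$-torsion and so lies in $K_{\ell^{n+h}}$. Combining this with the elementary divisibility $[K_{\ell^{n+h}}(\ell^{-n}\alpha):K_{\ell^{n+h}}]\mid[K_{\ell^n}(\ell^{-n}\alpha):K_{\ell^n}]$, coming from the fact that enlarging the base field can only decrease relative degrees, one sees that $A_\ell(\ell^n)$ for $\alpha$ divides $\ell^{2d}$ times $\ell^{2(n-d)}/[K_{\ell^{n+h}}(\ell^{-(n-d)}\beta):K_{\ell^{n+h}}]$, and by Remark \ref{rem-NM} this latter quantity divides the full failure $A_\ell(\ell^{n+h})$ for $\beta$. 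Applying the strongly $\ell$-indivisible case to $\beta$ yields the stated $a_\ell$. Effectiveness follows because each ingredient can be computed effectively: $n_\ell$ via Remark \ref{rmk:nlIsEffective}, the divisibility parameters $d,h$ via Remark \ref{rmk:dIsEffective}, and the geometric endomorphism ring (hence the Cartan parameters $\gamma,\delta$ in the CM case) via \cite{Achter,CMSV,LombardoEndo}. The hard part will be the general reduction: one must carefully propagate the bounds through the chain of inclusions of Kummer extensions and invoke Remark \ref{rem-NM} without losing track of the exponent, so as to verify that precisely the predicted factor $\ell^{2d}$ appears when moving from $\beta$ back to $\alpha$.
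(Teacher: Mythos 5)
Your proposal is correct and follows essentially the same route as the paper: it disposes of the maximal case via Jones--Rouse, treats the strongly $\ell$-indivisible case via Proposition~\ref{prop-max-growth-kummer}, and reduces the general case to it by writing $\alpha=\ell^d\beta+T_h$, using the field identity $K_{\ell^{n+h}}(\ell^{-n}\alpha)=K_{\ell^{n+h}}(\ell^{-(n-d)}\beta)$ together with Remark~\ref{rem-NM}. The only cosmetic difference is that you spell out the bound $\ell^{2n_\ell}T_\ell E\subseteq V_{\ell^\infty}$ (which is an intermediate step inside the proof of Proposition~\ref{prop-max-growth-kummer}) and then reduce modulo $\ell^n$, rather than invoking the proposition's inclusion $E[\ell^k]\subseteq V_{\ell^{k+2n_\ell}}$ directly, and you add an explicit remark on why only finitely many primes remain, via Theorems~\ref{thm:Serre} and~\ref{thm:CMSerre}.
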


\section{The adelic failure}
\label{sec:AdelicFailure}

In this section we study the adelic failure $B_\ell(N)$, that is, the degree of the intersection $K_{\ell^{n},\ell^{n}}\cap K_N$ over $K_{\ell^{n}}$. Notice that this intersection is a finite Galois extension of $K_{\ell^n}$.

\subsection{Intersection of torsion fields in the non-CM case}\label{sec:IntersectionTorsionFields}

We first aim to establish certain properties of the intersections of different torsion fields of $E$, assuming for this subsection that $E$ does not have complex multiplication over $\overline{K}$. Our main tool (Theorem \ref{th-intersection}) is a refinement of \cite[Theorem 3.3.1]{Campagna}, and will appear in an upcoming paper of F.~Campagna and P.~Stevenhagen. The proof of the stronger version we need requires only minor changes with respect to that of \cite[Theorem 3.3.1]{Campagna}, and can be easily derived from it using the following well-known lemmas (see \cite{Serre} and \cite{MR1484415}).

\begin{lem}
\label{lemma-solvable}
Let $p$ be a prime and let $H$ be a subgroup of $\GL_2(\F_p)$. Let $S$ be a non-abelian simple group that occurs in $H$. Then $S$ is isomorphic either to $A_5$ or to $\PSL_2(\F_p)$; the latter case is only possible if $H$ contains $\SL_2(\F_p)$.
\end{lem}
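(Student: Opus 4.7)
The plan is to invoke Dickson's classification of subgroups of $\PGL_2(\F_p)$ and to read off the possible non-abelian simple sections. First I would reduce to the projective setting. Let $Z \leq \GL_2(\F_p)$ denote the centre and $\pi \colon \GL_2(\F_p) \to \PGL_2(\F_p)$ the natural projection. If a non-abelian simple group $S$ occurs in $H$ as $H_2/H_1$ with $H_1 \triangleleft H_2 \leq H$, then the induced surjection $H_2/H_1 \twoheadrightarrow \pi(H_2)/\pi(H_1)$ has kernel isomorphic to $(H_2 \cap Z)/(H_1 \cap Z)$, which is abelian. Since $S$ is non-abelian simple, this kernel is trivial and $S$ already occurs in $\pi(H) \leq \PGL_2(\F_p)$.

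Next I would appeal to Dickson's theorem: every subgroup of $\PGL_2(\F_p)$ is, up to conjugacy, either (i) contained in a Borel subgroup, (ii) contained in the normaliser of a split or non-split Cartan subgroup, (iii) isomorphic to $A_4$, $S_4$, or $A_5$, or (iv) equal to $\PSL_2(\F_p)$ or $\PGL_2(\F_p)$. The groups in (i) and (ii) are solvable; in (iii) the only non-abelian simple possibility is $A_5$; and in (iv) the only non-abelian simple composition factor is $\PSL_2(\F_p)$, which is simple for $p \geq 5$ and solvable for $p \in \{2,3\}$. Combined with the projective reduction this establishes the first assertion; note that for $p \leq 3$ the order of $\GL_2(\F_p)$ is at most $48$, so no non-abelian simple section occurs at all.

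For the second part, assume $S \cong \PSL_2(\F_p)$ (hence $p \geq 5$). The analysis above forces $\pi(H)$ into case (iv): for $p \geq 7$, the groups in (i)--(iii) have only abelian composition factors together with possibly $A_5$, and $|A_5| = 60 < |\PSL_2(\F_p)|$; for $p = 5$ the isomorphism $\PSL_2(\F_5) \cong A_5$ identifies case (iii) with projective image $A_5$ as an instance of case (iv). Hence $\pi(H) \supseteq \PSL_2(\F_p)$. Setting $H' = H \cap \pi^{-1}(\PSL_2(\F_p))$ and letting $P$ be the stable term of the derived series of $H'$ (which is attained in finitely many steps as $H$ is finite), the group $P$ is perfect, and since $\PSL_2(\F_p)$ is perfect the derived series of $H'$ projects constantly onto $\PSL_2(\F_p)$, so $\pi(P) = \PSL_2(\F_p)$. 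Thus $P$ is a perfect central extension of $\PSL_2(\F_p)$, and because the universal central extension of $\PSL_2(\F_p)$ is $\SL_2(\F_p)$, the group $P$ must equal $\SL_2(\F_p)$ or $\PSL_2(\F_p)$.

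The decisive step, and the main obstacle, is to rule out $P \cong \PSL_2(\F_p)$. For this I would argue that $\PSL_2(\F_p)$ admits no faithful two-dimensional representation over $\overline{\F_p}$ when $p \geq 5$: the irreducible $\overline{\F_p}$-representations of $\SL_2(\F_p)$ are the symmetric powers of the standard representation, of dimensions $1, 2, \ldots, p$, and those factoring through $\PSL_2(\F_p)$ are the even symmetric powers, of odd dimensions $1, 3, \ldots, p$. Consequently $\PSL_2(\F_p)$ does not embed in $\GL_2(\F_p)$, forcing $P = \SL_2(\F_p) \subseteq H$, as claimed.
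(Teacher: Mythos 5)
Your argument is correct, and since the paper states this lemma as well-known (citing Serre and Masser--Wüstholz) without supplying a proof, the route via Dickson's classification is exactly the expected one. Three small remarks. First, in the reduction to the projective setting, the kernel computation $(H_2\cap Z)/(H_1\cap Z)$ is right (Dedekind's modular law gives $H_1Z\cap H_2=H_1(Z\cap H_2)$), and the conclusion that a non-abelian simple quotient cannot land inside an abelian kernel is what makes the descent to $\PGL_2(\F_p)$ legitimate. Second, your final step only yields $P\cong\SL_2(\F_p)$ abstractly, whereas the lemma asserts literal containment $\SL_2(\F_p)\subseteq H$; this is easily repaired, and in fact the representation-theoretic input (symmetric powers, parity of $-I$) can be dropped entirely: since $P$ is perfect, $\det|_P$ is a homomorphism to the abelian group $\F_p^\times$ and hence trivial, so $P\subseteq\SL_2(\F_p)$; as $\pi(P)=\PSL_2(\F_p)$, $P$ has index at most $2$ in $\SL_2(\F_p)$, and since $\SL_2(\F_p)$ is perfect for $p\geq 5$ it has no index-$2$ subgroup, giving $P=\SL_2(\F_p)$ on the nose. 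Third, your handling of $p=5$ (identifying the $A_5$ of Dickson's list with $\PSL_2(\F_5)$, which is the unique index-$2$ subgroup of $\PGL_2(\F_5)\cong S_5$) is correct and worth making explicit, since that is precisely the case where the two alternatives of the lemma coincide.
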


\begin{lem}[Serre]\label{lemma:SerreLifting}
Let $\ell \geq 5$ be a prime and let $G \subseteq \SL_2(\Z/\ell^k\Z)$ be a subgroup. Let $\pi : \SL_2(\Z/\ell^k\Z) \to \SL_2(\Z/\ell\Z)$ be the reduction homomorphism and suppose that $\pi(G) = \SL_2(\Z/\ell\Z)$: then $G=\SL_2(\Z/\ell^k\Z)$.
\end{lem}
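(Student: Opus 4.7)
The plan is to proceed by induction on $k$, with the base case $k=1$ being the hypothesis. For the inductive step I would fix $k \geq 2$, assume the lemma for $k-1$, and let $G \subseteq \SL_2(\Z/\ell^k\Z)$ satisfy $\pi(G) = \SL_2(\F_\ell)$. Applying the inductive hypothesis to the reduction of $G$ modulo $\ell^{k-1}$ shows that $G$ already surjects onto $\SL_2(\Z/\ell^{k-1}\Z)$, so by a counting argument it suffices to prove that $G$ contains the kernel
\[
K := \ker\!\bigl(\SL_2(\Z/\ell^k\Z) \to \SL_2(\Z/\ell^{k-1}\Z)\bigr).
\]

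For $k \geq 2$ the subgroup $K$ is abelian (since $\ell^{2(k-1)} \equiv 0 \pmod{\ell^k}$) and canonically identifies with the additive group of $\mathfrak{sl}_2(\F_\ell)$ via $\Id + \ell^{k-1}X \leftrightarrow X$. Moreover, conjugation of $K$ by an arbitrary element of $\SL_2(\Z/\ell^k\Z)$ depends only on its image modulo $\ell$, and under the identification above it corresponds to the adjoint action of $\SL_2(\F_\ell)$ on $\mathfrak{sl}_2(\F_\ell)$. Setting $H := G \cap K$, I would thus obtain an $\F_\ell$-subspace of $\mathfrak{sl}_2(\F_\ell)$ stable under the adjoint $\SL_2(\F_\ell)$-action, since $G$ normalises $K$ and the conjugation action factors through the surjection $G \twoheadrightarrow \SL_2(\F_\ell)$. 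For $\ell \geq 5$ this representation is well known to be irreducible, so $H$ is either $0$ or all of $K$; in the latter case $G = \SL_2(\Z/\ell^k\Z)$ and the induction closes.

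The real work is therefore to exhibit a nonzero element of $H$, and this is where I expect the main obstacle to lie. I would use the surjection $G \twoheadrightarrow \SL_2(\Z/\ell^{k-1}\Z)$ to choose a lift $\tilde g = g + \ell^{k-1}Y \in G$ of $g := \begin{pmatrix}1 & 1\\0 & 1\end{pmatrix}$, with $Y \in \Mat_2(\F_\ell)$ constrained only by the condition $\det \tilde g = 1$, and then compute $\tilde g^{\ell^{k-1}}$ modulo $\ell^k$. Because $\ell^{2(k-1)} \equiv 0 \pmod{\ell^k}$, the expansion of $\tilde g^\ell$ truncates to
\[
\tilde g^\ell \equiv g^\ell + \ell^{k-1}\sum_{i=0}^{\ell-1} g^i Y g^{\ell-1-i} \pmod{\ell^k},
\]
and a direct entry-by-entry calculation, valid precisely when $\ell \geq 5$ (so that $\tfrac{\ell(\ell-1)}{2}$ and $\sum_{i=0}^{\ell-1} i^2$ both vanish modulo $\ell$), shows that the sum on the right is $\equiv 0 \pmod \ell$. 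Hence $\tilde g^\ell = g^\ell$ in $\SL_2(\Z/\ell^k\Z)$, and iterating $k-1$ times yields $\tilde g^{\ell^{k-1}} = g^{\ell^{k-1}} = \begin{pmatrix}1 & \ell^{k-1}\\0 & 1\end{pmatrix}$, a nontrivial element of $K$ lying in $G$, hence in $H$. This combinatorial vanishing is the delicate step; everything else reduces to standard facts about the congruence filtration on $\SL_2(\Z_\ell)$ and the irreducibility of the adjoint representation.
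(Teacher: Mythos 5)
The paper does not supply a proof of this lemma; it is cited as a well-known result of Serre (with references to \cite{Serre} and \cite{MR1484415}), so there is no in-paper argument to compare against. Your proof is correct, and it is essentially Serre's original argument (cf.\ \cite[IV-23, Lemma 3]{Serre}), reorganised to run the induction on $k$ directly rather than reducing to $k=2$ and arguing that the extension $1 \to \mathfrak{sl}_2(\F_\ell) \to \SL_2(\Z/\ell^2\Z) \to \SL_2(\F_\ell) \to 1$ does not split. All the ingredients are verified correctly: for $k \geq 2$ the kernel $K$ is an $\F_\ell$-vector space isomorphic as $\SL_2(\F_\ell)$-module to $\mathfrak{sl}_2(\F_\ell)$ under the adjoint action; this module is irreducible for $\ell \geq 5$ (in fact for $\ell \geq 3$, but $\ell \geq 5$ suffices); and the expansion $\tilde g^\ell \equiv g^\ell + \ell^{k-1}\sum_{i=0}^{\ell-1} g^i Y g^{\ell-1-i} \pmod{\ell^k}$ combined with the vanishing $\sum_{i=0}^{\ell-1}i \equiv \sum_{i=0}^{\ell-1}i^2 \equiv 0 \pmod \ell$ for $\ell \geq 5$ gives $\tilde g^\ell = g^\ell$ exactly in $\SL_2(\Z/\ell^k\Z)$. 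Two small remarks. First, once $\tilde g^\ell = g^\ell$ holds on the nose, the conclusion $\tilde g^{\ell^{k-1}} = g^{\ell^{k-1}} = \begin{pmatrix}1&\ell^{k-1}\\0&1\end{pmatrix}$ is immediate, so the phrase ``iterating $k-1$ times'' is redundant and could be misread as requiring a fresh calculation at each stage. Second, the parenthetical attributes the constraint $\ell \geq 5$ to the vanishing of both $\tfrac{\ell(\ell-1)}{2}$ and $\sum i^2$ modulo $\ell$, but the first already vanishes for all odd $\ell$; only the second forces $\ell \geq 5$.
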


\begin{thm}
\label{th-intersection}
Assume that $E$ does not have complex multiplication. Let $S$ be the set consisting of the primes $\ell$ satisfying one or more of the following three conditions:
\begin{enumerate}[(i)]
\item $\ell\mid 30\disc(K\mid \mathbb{Q})$;
\item $E$ has bad reduction at some prime of $K$ above $\ell$;
\item the modulo $\ell$ torsion representation is not surjective.
\end{enumerate}
For every $\ell\not \in S$ we have $K_{\ell^n}\cap K_M=K$ for all $M,n\geq 1$ with $\ell\nmid M$. 
\end{thm}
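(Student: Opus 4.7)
The plan is to set $L := K_{\ell^n} \cap K_M$ and show $L = K$ in two stages: first a group-theoretic argument placing $L$ inside the cyclotomic field $K(\zeta_\ell)$, then a ramification comparison at primes above $\ell$. Observe that $L\mid K$ is a finite Galois extension and $\gal(L\mid K)$ is a quotient of $\gal(K_M\mid K) \hookrightarrow \GL_2(\Z/M\Z)$; since $\ell \nmid M$, this group has order coprime to $\ell$, so the same is true of $[L:K]$.

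For the first stage, I would consider the kernel $N$ of the surjection $H_{\ell^n} = \gal(K_{\ell^n}\mid K) \twoheadrightarrow \gal(L\mid K)$. Since $[L:K]$ is coprime to $\ell$, $N$ must contain every element of $\ell$-power order in $H_{\ell^n}$: in particular the kernel of reduction $H_{\ell^n} \to H_\ell$ (which is an $\ell$-group) and all $\ell$-elements of $H_\ell$. By hypothesis (iii), $H_\ell = \GL_2(\F_\ell)$, and the $\ell$-elements of $\GL_2(\F_\ell)$ are the transvections, which generate $\SL_2(\F_\ell)$. Thus $N$ contains the full preimage of $\SL_2(\F_\ell)$ in $H_{\ell^n}$, and so $L$ is contained in the fixed field of this preimage, which coincides with the fixed field of $\SL_2(\F_\ell)$ inside $K_\ell$. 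Since the determinant $\GL_2(\F_\ell) \to \F_\ell^\times$ agrees via the Weil pairing with the mod-$\ell$ cyclotomic character, and since hypothesis (i) yields $K \cap \Q(\zeta_\ell) = \Q$, this fixed field is exactly $K(\zeta_\ell)$; hence $L \subseteq K(\zeta_\ell)$.

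For the ramification stage, fix a prime $\mathfrak{l}$ of $K$ above $\ell$. Since $\ell \nmid \disc(K\mid \Q)$ by (i), $\ell$ is unramified in $K$, and $\Q(\zeta_\ell)/\Q$ is totally (and tamely) ramified at $\ell$; hence $\mathfrak{l}$ is totally ramified in $K(\zeta_\ell)/K$ with index $\ell - 1$. Consequently, for every intermediate extension $L/K$ the ramification index at $\mathfrak{l}$ equals $[L:K]$. On the other hand, by hypothesis (ii), $E$ has good reduction at $\mathfrak{l}$, so the N\'eron-Ogg-Shafarevich criterion (combined with $\ell \nmid M$) implies $K_M/K$ is unramified at $\mathfrak{l}$; hence so is its subextension $L$. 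Combining these, $[L:K] = 1$, that is $L = K$. The main technical point I expect to require care is the group-theoretic step, namely that $N$ contains the preimage of $\SL_2(\F_\ell)$; if one prefers, a direct application of Lemma \ref{lemma:SerreLifting} (valid because $\ell \geq 7$) upgrades (iii) to $H_{\ell^n} = \GL_2(\Z/\ell^n\Z)$, after which the identification of the fixed field in question becomes immediate.
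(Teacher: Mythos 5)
Your two-stage plan (a group-theoretic reduction to $L\subseteq K(\zeta_\ell)$ followed by a ramification comparison at primes above $\ell$) is a sound outline, and the second stage is correct as written. However, the first stage rests on a false claim: it is not true that $\GL_2(\Z/M\Z)$ has order coprime to $\ell$ whenever $\ell\nmid M$. The order of $\GL_2(\F_p)$ is $p(p-1)^2(p+1)$, and for $p\neq\ell$ this can very well be divisible by $\ell$ — for instance with $\ell=7$ and $M=13$ one has $7\mid 14\mid\#\GL_2(\F_{13})$. So the deduction that $[L:K]$ is coprime to $\ell$ is unsupported, and without it you cannot conclude that the kernel $N$ of $H_{\ell^n}\twoheadrightarrow\gal(L\mid K)$ absorbs all elements of $\ell$-power order. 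This is precisely the point where the paper's Lemma~\ref{lemma-solvable} (which you never invoke) is meant to enter.

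Here is the intended repair, in the spirit of the tools the paper supplies. As you note, (i) and (iii) together with Lemma~\ref{lemma:SerreLifting} give $H_{\ell^n}=\GL_2(\Z/\ell^n\Z)$, so $N$ is normal in $\GL_2(\Z/\ell^n\Z)$ and its image $\overline N$ modulo $\ell$ is normal in $\GL_2(\F_\ell)$. Since $\ell\geq 7$, the normal subgroups of $\GL_2(\F_\ell)$ either contain $\SL_2(\F_\ell)$ or are contained in the scalars. If $\overline N$ were contained in the scalars, then $\PSL_2(\F_\ell)$ would occur in $\GL_2(\Z/\ell^n\Z)/N\cong\gal(L\mid K)$; but $\gal(L\mid K)$ is also a quotient of $H_M\leq\prod_{p\mid M}\GL_2(\Z/p^{e_p}\Z)$, whose non-abelian simple subquotients are, by Lemma~\ref{lemma-solvable}, among $A_5$ and $\PSL_2(\F_p)$ for $p\mid M$ — and none of these is isomorphic to $\PSL_2(\F_\ell)$ since $\ell\geq 7$ and $\ell\nmid M$. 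This contradiction forces $\overline N\supseteq\SL_2(\F_\ell)$, and then Lemma~\ref{lemma:SerreLifting} applied to $N\cap\SL_2(\Z/\ell^n\Z)$ gives $N\supseteq\SL_2(\Z/\ell^n\Z)$, whence $L\subseteq K(\zeta_{\ell^n})$. From there your ramification argument (which in fact works just as well with $K(\zeta_{\ell^n})$ as with $K(\zeta_\ell)$) closes the proof. In short: the architecture is right and the second half is fine, but the coprimality shortcut must be replaced by the Jordan--Hölder argument via Lemma~\ref{lemma-solvable}.
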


\begin{rem}
The finite set $S$ appearing in Theorem \ref{th-intersection} can be computed explicitly. 
In fact, it is well known that one can compute the discriminant of $K$ and the set of primes of bad reduction of $E$. An algorithm to compute the set of primes for which the $\bmod\, \ell$ representation is not surjective is described in \cite{Zywina}. 
\end{rem}

As a corollary, we give a slightly more precise version of \cite[\S3.4, Lemma 6]{MR1484415}.

\begin{cor}
\label{cor-direct-prod}
Assume that $E$ does not have complex multiplication and let $S$ be as in Theorem \ref{th-intersection}. Let $M$ be a positive integer and write $M=M_1 M_2$, where
\begin{align*}
M_1&=\prod_{p\not \in S}p^{e_p}&& p\text{ prime, }e_p\geq 0,\\
M_2&=\prod_{q\in S}q^{e_q} && q\text{ prime, }e_q\geq 0.
\end{align*}
Then we have
\begin{align*}
\gal(K_M\mid K)\cong\GL_2\left( \Z/M_1\Z\right)\times \gal\left(K_{M_2}\mid K\right).
\end{align*}
\begin{proof}
By Theorem \ref{th-intersection} we have that, for any $p\not \in S$ and any $e\geq 0$, the field $K_{p^e}$ is linearly disjoint over $K$ from $K_{M_2}$ and from $K_{q^f}$ for every $q \neq p$ and every $f\geq 1$. Moreover we have
\begin{align*}
\GL_2\left( \Z/M_1\Z\right)\cong \prod_{p\not\in S}\GL_2(\Z/p^{e_p}\Z)\cong  \prod_{p\not\in S}\gal(K_{p^{e_p}}\mid K),
\end{align*}
and the Corollary follows by standard Galois theory.
\end{proof}
\end{cor}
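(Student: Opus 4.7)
The plan is to combine Theorem \ref{th-intersection} with standard Galois theory and the Serre lifting lemma, breaking the argument into a linear disjointness step and a realisation step.

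For linear disjointness, I would iterate Theorem \ref{th-intersection}: enumerate the primes $p_1,\dots,p_r$ dividing $M_1$ (all outside $S$), set $L_0 := K_{M_2}$, and inductively define $L_i := L_{i-1}\cdot K_{p_i^{e_{p_i}}}$. Since $L_{i-1} \subseteq K_{M'}$ with $M' := M_2\, p_1^{e_{p_1}}\cdots p_{i-1}^{e_{p_{i-1}}}$ coprime to $p_i$, Theorem \ref{th-intersection} gives $K_{p_i^{e_{p_i}}}\cap L_{i-1} = K$. Standard Galois theory then yields
\[
\gal(K_M\mid K) \;\cong\; \gal(K_{M_2}\mid K)\;\times\;\prod_{\substack{p\notin S\\ e_p>0}}\gal(K_{p^{e_p}}\mid K).
\]

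For the realisation step, I want to identify each factor $\gal(K_{p^{e_p}}\mid K)$ with $\GL_2(\Z/p^{e_p}\Z)$ for $p\notin S$. By conditions (i) and (iii) of Theorem \ref{th-intersection}, such a prime $p$ satisfies $p\geq 7$, is unramified in $K$, and the mod-$p$ representation is surjective, so $H_p = \GL_2(\F_p)$. Applying Lemma \ref{lemma:SerreLifting} to $H_{p^{e_p}}\cap \SL_2(\Z/p^{e_p}\Z)$ (whose reduction mod $p$ is all of $\SL_2(\F_p)$) yields $\SL_2(\Z/p^{e_p}\Z)\subseteq H_{p^{e_p}}$. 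By the Weil pairing, $\det\circ \tau_{p^{e_p}}$ is the mod-$p^{e_p}$ cyclotomic character, whose image is $\gal(K(\zeta_{p^{e_p}})\mid K)$; since $p$ is unramified in $K$ while $\Q(\zeta_{p^{e_p}})/\Q$ is totally ramified at $p$, we have $K\cap \Q(\zeta_{p^{e_p}})=\Q$, so the determinant is surjective onto $(\Z/p^{e_p}\Z)^\times$. Together these give $H_{p^{e_p}}=\GL_2(\Z/p^{e_p}\Z)$.

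Finally the Chinese Remainder Theorem yields $\prod_{p\mid M_1}\GL_2(\Z/p^{e_p}\Z)\cong \GL_2(\Z/M_1\Z)$, which combined with the first display gives the claim. I do not anticipate a serious obstacle here, since the bulk of the work is absorbed into Theorem \ref{th-intersection}; the only subtle point is checking surjectivity of the determinant character, which relies precisely on condition (i) in the definition of $S$.
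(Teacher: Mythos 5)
Your proposal follows essentially the same approach as the paper's proof: linear disjointness from Theorem \ref{th-intersection} plus identification of each $\gal(K_{p^{e_p}}\mid K)$ with $\GL_2(\Z/p^{e_p}\Z)$, glued by the Chinese Remainder Theorem and standard Galois theory. The only difference is that you explicitly justify the identification via the lifting lemma (Lemma \ref{lemma:SerreLifting}) together with the determinant/cyclotomic argument exploiting that $p$ is unramified in $K$, whereas the paper states the isomorphism $\prod_{p\notin S}\gal(K_{p^{e_p}}\mid K)\cong \prod_{p\notin S}\GL_2(\Z/p^{e_p}\Z)$ without spelling out this step.
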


\begin{rem}
\label{rem-S-stable}
Let $\tilde K$ be the compositum of the fields $K_p$ for all $p\in S$, where $S$ is as in Theorem \ref{th-intersection}. In the following section it will be important to notice that $S$ is stable under base change to $\tilde K$. More precisely, let $\tilde S$ be the set of all primes $\ell$ that satisfy one of the following:
\begin{enumerate}[(i')]
\item $\ell\mid 30\disc(\tilde K\mid \mathbb{Q})$;
\item $E$ has bad reduction at some prime of $\tilde K$ above $\ell$;
\item the modulo $\ell$ torsion representation attached to $E/\tilde{K}$ is not surjective.
\end{enumerate}
Then $\tilde S=S$.

Indeed, the inclusion $\tilde S\supseteq S$ is easy to see: clearly conditions (i) and (iii) imply (i') and (iii') respectively, so we only need to discuss (ii). Let $\mathfrak{p}$ be a prime of $K$ (of characteristic $\ell$) at which $E$ has bad reduction, and let $\mathfrak{q}$ be a prime of $\tilde{K}$ lying over $\mathfrak{p}$. We need to show that $\ell \in \tilde{S}$. If $E$ has bad reduction at $\mathfrak{q}$ we have $\ell \in \tilde{S}$ by (ii'), while if $E$ has good reduction at $\mathfrak{q}$ then $\mathfrak{p}$ ramifies in $\tilde{K}$ by \cite[Proposition VII.5.4 (a)]{SilvermanEC}, so we have $\ell \mid \operatorname{disc}(\tilde{K} \mid \Q)$ and $\ell$ is in $\tilde{S}$ by (i').

Conversely, let $\ell \in \tilde{S}$. If (ii') holds, then clearly also (ii) holds, and $\ell$ is in $S$. Suppose that (i') holds. If $\ell$ divides $30$, then it is in $S$ by (1). Otherwise $\ell$ divides $\disc(\tilde K\mid \Q)$, which by \cite[III.§4, Proposition 8]{Serre-LocalFields} is equal to $\disc(K \mid \Q)^{[\tilde{K}:K]} N_{K/\Q} \disc(\tilde{K} \mid K)$; if $\ell$ divides $\disc(K \mid \Q)$, then it is in $S$ by (1), while if it divides $\disc(\tilde{K} \mid K)$ then we have $\ell\in S$ by \cite[Proposition VIII.1.5(b)]{SilvermanEC}.
We may therefore assume that (i') and (ii') do not hold. Since $\ell$ is in $\tilde{S}$, (iii') must hold, that is, the modulo-$\ell$ torsion representation attached to $E/\tilde{K}$ is not surjective. We claim that the same is true for $E/K$. Indeed, if $\ell$ is in $S$ this is true by definition, while if $\ell \not \in S$ the previous corollary shows that $K_{\ell}$ is linearly disjoint from $\tilde{K}$, so the images of the modulo-$\ell$ representations over $K$ and over $\tilde{K}$ coincide.
\end{rem}

\subsection{The adelic failure is bounded}

We now go back to the general case of $E$ possibly admitting complex multiplication.

Fix an integer $N>1$ and a prime number $\ell$ dividing $N$. Write $N=\ell^nR$ with $\ell\nmid R$ and recall that the adelic failure $B_\ell(N)$ is defined to be the degree $[K_{\ell^{n},\ell^{n}}\cap K_N:K_{\ell^{n}}]$. In this section we study this failure for $N=\ell^nR$, starting with a simple Lemma in Galois theory.

\begin{lem}
\label{lem-gal}
Let $L_1$, $L_2$ and $L_3$ be Galois extensions of $K$ with $L_1\subseteq L_2$. Then the compositum $L_1(L_2\cap L_3)$ is equal to the intersection $L_2\cap (L_1L_3)$.
\begin{proof}
For $i=1,2,3$ let $G_i:=\gal(\overline{K}\mid L_i)$. The claim is equivalent to $G_1\cap(G_2\cdot G_3)=G_2\cdot(G_1\cap G_3)$, where the inclusion ``$\supseteq$'' is obvious. Let then $g\in G_1\cap(G_2\cdot G_3)$, so that there are $g_1\in G_1$, $g_2\in G_2$ and $g_3\in G_3$ such that $g=g_1=g_2g_3$. But then $g_2^{-1}g_1=g_3\in G_3$ and, since $G_2\subseteq G_1$, also $g_2^{-1}g_1\in G_1$, so that $g=g_2(g_2^{-1}g_1)\in G_2\cdot(G_1\cap G_3)$.
\end{proof}
\end{lem}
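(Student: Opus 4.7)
The statement is a translation to field theory of the Dedekind modular law for (normal) subgroups of a group. My plan would be to pass to the Galois correspondence and then verify the group-theoretic identity directly.

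First, I would set $G = \gal(\overline{K}\mid K)$ and $G_i = \gal(\overline{K}\mid L_i)$ for $i=1,2,3$. Since each $L_i$ is Galois over $K$, each $G_i$ is a closed normal subgroup of $G$, and the hypothesis $L_1 \subseteq L_2$ translates to $G_2 \subseteq G_1$. Under the Galois correspondence, compositum corresponds to intersection of subgroups and intersection of fields corresponds to the (closed) subgroup generated by the two subgroups, which, by normality, is simply the product. Therefore
\[
L_1(L_2\cap L_3) \;=\; \overline{K}^{\,G_1\cap(G_2 G_3)} \qquad\text{and}\qquad L_2\cap(L_1 L_3) \;=\; \overline{K}^{\,G_2\cdot(G_1\cap G_3)}.
\]
So the lemma reduces to the group-theoretic equality
\[
G_1\cap(G_2 G_3) \;=\; G_2\cdot(G_1\cap G_3).
\]

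The inclusion $\supseteq$ is clear: $G_2\subseteq G_1$ and $G_2\subseteq G_2 G_3$, and $G_1\cap G_3\subseteq G_1$ and $G_1\cap G_3\subseteq G_3\subseteq G_2 G_3$, whence $G_2\cdot(G_1\cap G_3)\subseteq G_1\cap(G_2 G_3)$. For the reverse inclusion, take $g\in G_1\cap(G_2 G_3)$ and write $g=g_2 g_3$ with $g_2\in G_2$ and $g_3\in G_3$; then $g_3 = g_2^{-1}g$ lies in $G_1$ since both $g_2\in G_2\subseteq G_1$ and $g\in G_1$, so $g_3 \in G_1\cap G_3$ and $g\in G_2\cdot(G_1\cap G_3)$. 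This is precisely the Dedekind modular law applied with the three subgroups $G_2\subseteq G_1$ and $G_3$.

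There is no real obstacle here: the only subtlety is to remember that one is allowed to write $\overline{K}^{G_2 \cdot (G_1 \cap G_3)}$ (rather than the a priori topological closure of this product) because $G_2$ and $G_1\cap G_3$ are closed normal subgroups, so their product is a closed subgroup of $G$. Everything else is bookkeeping.
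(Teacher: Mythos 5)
Your proof is correct and follows essentially the same route as the paper's: translate to the group-theoretic identity $G_1\cap(G_2G_3)=G_2\cdot(G_1\cap G_3)$ via the Galois correspondence and verify it by the same element-chasing argument (the modular law). The added remark that $G_2\cdot(G_1\cap G_3)$ is closed because it is a product of closed normal subgroups is a sensible clarification but does not change the substance.
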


We now establish some properties of certain subfields of $K_{\ell^nR,\ell^n}$.

\begin{lem}
\label{nice-lemma}
Setting
\begin{align*}
L:=K_{\ell^n,\ell^n}\cap K_{N}, && F:= L\cap K_R=K_{\ell^n,\ell^n}\cap K_R, && T:= F\cap K_{\ell^n}=K_{\ell^n}\cap K_R
\end{align*}
we have:
\begin{enumerate}
\item[(a)] The compositum $F K_{\ell^n}$ is $L$.
\item[(b)] $\gal(F\mid T)\cong \gal(L\mid K_{\ell^n})$; in particular, $\gal(F\mid T)$ is an abelian $\ell$-group.
\item[(c)] $F$ is the intersection of the maximal abelian extension of $T$ contained in $K_{\ell^n,\ell^n}$ and the maximal abelian extension of $T$ contained in $K_R$.
\end{enumerate}
\begin{proof}
(a) By Lemma \ref{lem-gal} we have $FK_{\ell^n}=K_{\ell^n}(K_{\ell^n,\ell^n}\cap K_R)=K_{\ell^n,\ell^n}\cap K_{\ell^nR}=L$. (b) Follows from (a) and standard Galois theory. For (c), notice that $F$ is abelian over $T$ by (b), so it must be contained in the maximal abelian extension of $T$ contained in $K_{\ell^n,\ell^n}$ and in the maximal abelian extension of $T$ contained in $K_R$. On the other hand, $F$ cannot be smaller than the intersection of these abelian extensions, because by definition it is the intersection of $K_{\ell^n,\ell^n}$ and $K_R$.
\end{proof}
\end{lem}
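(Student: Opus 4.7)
For part (a), the containment $FK_{\ell^n}\subseteq L$ is immediate: by definition $F\subseteq K_{\ell^n,\ell^n}$ and of course $K_{\ell^n}\subseteq K_{\ell^n,\ell^n}$, while $F\subseteq K_R$ and $K_{\ell^n}\subseteq K_N=K_{\ell^nR}$. The reverse inclusion follows by applying Lemma \ref{lem-gal} with $L_1=K_{\ell^n}$, $L_2=K_{\ell^n,\ell^n}$, $L_3=K_R$: the inclusion $L_1\subseteq L_2$ holds, and the lemma gives
\[
K_{\ell^n}\bigl(K_{\ell^n,\ell^n}\cap K_R\bigr)=K_{\ell^n,\ell^n}\cap\bigl(K_{\ell^n}K_R\bigr)=K_{\ell^n,\ell^n}\cap K_N=L,
\]
where the middle equality uses $K_{\ell^n}K_R=K_{\ell^nR}=K_N$. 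The left-hand side is precisely $K_{\ell^n}F$.

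For part (b), notice that by definition $F\cap K_{\ell^n}=T$ and by (a) we have $FK_{\ell^n}=L$. Since $F/T$ is Galois (both $K_{\ell^n,\ell^n}$ and $K_R$ are Galois over $K$, hence so is their intersection $F$, and $T\subseteq F$ is also Galois over $K$), the standard ``diamond" isomorphism of Galois theory yields
\[
\gal(L\mid K_{\ell^n})=\gal(FK_{\ell^n}\mid K_{\ell^n})\cong \gal(F\mid F\cap K_{\ell^n})=\gal(F\mid T).
\]
The group $\gal(L\mid K_{\ell^n})$ is a subgroup of $\gal(K_{\ell^n,\ell^n}\mid K_{\ell^n})$, which by the discussion in \S\ref{subsec:Representations} is abelian of exponent dividing $\ell^n$; thus $\gal(F\mid T)$ is an abelian $\ell$-group.

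For part (c), let $A_1$ (resp.\ $A_2$) denote the maximal abelian extension of $T$ contained in $K_{\ell^n,\ell^n}$ (resp.\ in $K_R$). Part (b) shows that $F/T$ is abelian, and by definition $F\subseteq K_{\ell^n,\ell^n}\cap K_R$, so $F\subseteq A_1\cap A_2$. Conversely, $A_1\cap A_2\subseteq K_{\ell^n,\ell^n}\cap K_R=F$ by the very definition of $F$, giving the reverse inclusion.

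I do not expect a genuine obstacle here: the entire argument is formal Galois theory once Lemma \ref{lem-gal} is in hand. The one subtlety worth flagging is the choice of triple $(L_1,L_2,L_3)$ in the application of Lemma \ref{lem-gal} for part (a) — the ``wrong" choice (e.g.\ $L_1=K_{\ell^n}$, $L_2=K_N$, $L_3=K_{\ell^n,\ell^n}$) collapses to a tautology, whereas the choice above produces the desired compositum identity.
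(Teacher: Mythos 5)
Your proof is correct and follows essentially the same route as the paper: part (a) via Lemma \ref{lem-gal} with the triple $(K_{\ell^n}, K_{\ell^n,\ell^n}, K_R)$, part (b) via the diamond isomorphism, and part (c) by double inclusion. You merely spell out a few details (the easy containment in (a), the Galois-ness of $F/T$, the abelian $\ell$-group structure of $\gal(K_{\ell^n,\ell^n}\mid K_{\ell^n})$) that the paper leaves implicit.
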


\begin{figure}[t]
\begin{equation*}
\xymatrix{
& K_{\ell^nR,\ell^n} \ar@{-}[dl]\ar@{-}[dr]\\
K_{\ell^n,\ell^n}\ar@{-}[dd]\ar@{-}[dr] & & K_{\ell^nR}\ar@{-}[dl]\ar@{-}[dd]\\
& L:=K_{\ell^n,\ell^n}\cap K_{\ell^nR} \ar@{-}[dl]\ar@{-}[dd]\\
K_{\ell^n}\ar@{-}[dddr] & & K_R\ar@{-}[dl]\ar@{-}[dddl]\\
& F:=K_{\ell^n,\ell^n}\cap K_R\ar@{-}[dd]\\
\\
&T:=K_{\ell^n}\cap K_R\ar@{-}[d]\\
&K
}
\end{equation*}
\caption{The situation described in Lemma \ref{nice-lemma} and Proposition \ref{prop-1}.}
\end{figure}
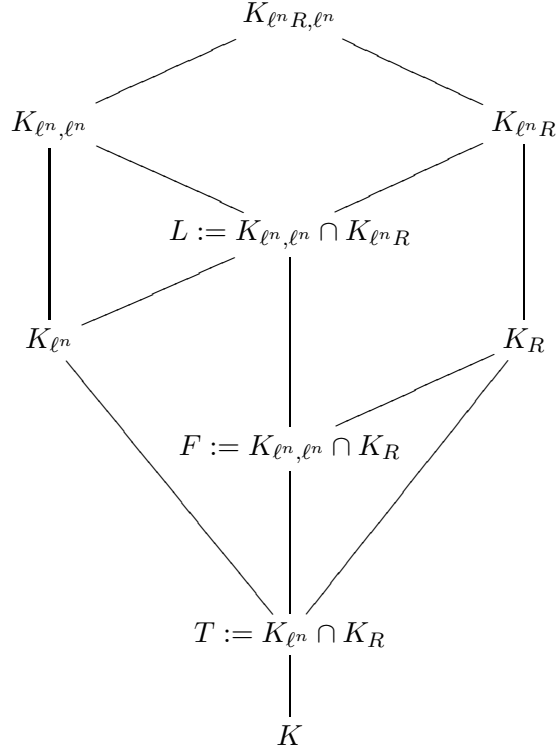

\begin{prop}
\label{prop-1}
The adelic failure $B_\ell(N)$ is equal to $[F:T]$, where $F=K_{\ell^n,\ell^n}\cap K_{R}$ and $T=K_{\ell^n}\cap K_R$.
\begin{proof}
Let as above $L=K_{\ell^n,\ell^n}\cap K_{\ell^nR}$. 
We have $\gal(K_{\ell^n,\ell^n}\,|\,L)\cong \gal(K_{\ell^nR,\ell^n}\,|\,K_{\ell^nR})$, so we get
\begin{align*}
[K_{\ell^n,\ell^n}:K_{\ell^n}]=[K_{\ell^n,\ell^n}:L][L:K_{\ell^n}]=[K_{\ell^nR,\ell^n}:K_{\ell^nR}][L:K_{\ell^n}]
\end{align*}
and we conclude by Lemma \ref{nice-lemma}(b).
\end{proof}
\end{prop}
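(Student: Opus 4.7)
The plan is short: the statement follows almost immediately from the definition of $B_\ell(N)$ combined with Lemma \ref{nice-lemma}, so the main task is to assemble the equalities in the right order rather than to introduce any new ideas.

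First I would unwind the definition of the adelic failure. By definition,
\[
B_\ell(N)=\frac{[K_{\ell^n,\ell^n}:K_{\ell^n}]}{[K_{N,\ell^n}:K_N]}=[K_{\ell^n,\ell^n}\cap K_N:K_{\ell^n}],
\]
and since $N=\ell^n R$, the intermediate field $K_{\ell^n,\ell^n}\cap K_N$ is precisely the field $L$ introduced in Lemma \ref{nice-lemma}. Thus $B_\ell(N)=[L:K_{\ell^n}]$, and the problem reduces to identifying $[L:K_{\ell^n}]$ with $[F:T]$.

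For this I would invoke Lemma \ref{nice-lemma}(b), which gives the isomorphism $\gal(F\mid T)\cong \gal(L\mid K_{\ell^n})$. This isomorphism is the standard one coming from part (a) of the same lemma: $L=FK_{\ell^n}$ and $F\cap K_{\ell^n}=T$, so the restriction map $\gal(L\mid K_{\ell^n})\to \gal(F\mid T)$ is an isomorphism of Galois groups. Taking cardinalities, $[L:K_{\ell^n}]=[F:T]$, and combining with the previous paragraph yields $B_\ell(N)=[F:T]$, as required.

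There is essentially no obstacle here; the only thing one has to be careful about is to recognise that the field called $L$ in the proof is the same as the field $K_{\ell^n,\ell^n}\cap K_N$ that appears in the definition of $B_\ell(N)$, which is immediate because $K_N=K_{\ell^n R}$ and Lemma \ref{nice-lemma} is set up precisely with this notation. All the substantive content (abelianness of $\gal(F\mid T)$, the compositum structure $L=FK_{\ell^n}$, and the identification of intersections) has been packaged into Lemma \ref{nice-lemma}.
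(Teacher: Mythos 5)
Your proof is correct and follows essentially the same route as the paper: identify $L=K_{\ell^n,\ell^n}\cap K_{\ell^n R}$ with $K_{\ell^n,\ell^n}\cap K_N$, observe $B_\ell(N)=[L:K_{\ell^n}]$, and apply Lemma~\ref{nice-lemma}(b). The only superficial difference is that you invoke the second form of the definition of $B_\ell(N)$ directly, whereas the paper re-derives the equality $B_\ell(N)=[L:K_{\ell^n}]$ via the explicit Galois isomorphism $\gal(K_{\ell^n,\ell^n}\mid L)\cong\gal(K_{\ell^nR,\ell^n}\mid K_{\ell^nR})$; the substance is identical.
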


In what follows we will need to work over a certain extension $\tilde K$ of $K$; this extension will depend on the prime $\ell$. More precisely, we give the following definition.

\begin{defi}
\label{def-tildeK}
Let $\tilde K$ be the finite extension of $K$ defined as follows:

\begin{itemize}
\item If $E$ has complex multiplication, we take $\tilde K$ to be the compositum of $K$ with the CM field of $E$. This is an at most quadratic extension of $K$.
Notice that in this case by \cite[Lemma 2.2]{2018arXiv180902584L} we have $\tilde K_n=K_n$ for every $n\geq 3$.
\item If $E$ does not have CM and $\ell$ is not one of the primes in the set $S$ of Theorem \ref{th-intersection}, we just let $\tilde K = K$. Notice that this happens for all but finitely many primes $\ell$.
\item If $E$ does not have CM and $\ell$ is one of the primes in the set $S$ of Theorem \ref{th-intersection}, we let $\tilde K$ be the compositum of all the $K_p$ for $p\in S$. Notice that in this case $\tilde{K}_\ell=\tilde{K}$.
\end{itemize}

We shall use the notation $\tilde{K}_M$ (respectively $\tilde{K}_{M,N}$) for the torsion (resp.~Kummer) extensions of $\tilde{K}$. We shall also write
\begin{align*}
\tilde H_{\ell^n}&:=\im\left(\tau_{\ell^n}:\gal(\Kbar\mid\tilde K)\to \Aut(E[\ell^n]) \right)\cong \gal\left(\tilde K_{\ell^n}\mid\tilde K\right),\\
\tilde V_{\ell^n}&:=\im\left(\kappa_{\ell^n}:\gal(\Kbar\mid\tilde K_{\ell^n})\to E[\ell^n]\right)\cong \gal\left(\tilde K_{\ell^n,\ell^n}\mid\tilde K_{\ell^n}\right)
\end{align*}
for the images of the  $\ell^n$-torsion representation and of the $(\ell^n,\ell^n)$-Kummer map attached to $E/\tilde{K}$.
Finally, we let $\tilde n_\ell$ be the minimal parameter of maximal growth for the $\ell$-adic torsion representation over $\tilde K$. Notice that, thanks to Lemma \ref{lem-param-tilde-bound}, we have $\tilde n_\ell \leq n_\ell+v_\ell([\tilde K:K])$.
\end{defi}

\begin{prop}
\label{prop-F-abelian}
The extension $F':=\tilde K_{\ell^n,\ell^n}\cap \tilde K_R$ is abelian over $\tilde K$.
\begin{proof}
This is well known if $E$ has complex multiplication because then $\tilde{K}_R$ is itself abelian over $\tilde K$, see for example \cite[Theorem II.2.3]{MR1312368}. In case $E$ does not have complex multiplication and $\ell$ is not in the set $S$ of Theorem \ref{th-intersection}, this follows easily by considering the diagram
\begin{align*}
\xymatrix{
& \tilde K_{\ell^n,\ell^n} \ar@{-}[d]\\
& \tilde K_{\ell^n}F'\ar@{-}[dl]\ar@{-}[dr]\\
\tilde  K_{\ell^n} \ar@{-}[dr]& & F'\ar@{-}[dl]\\
&\tilde K
}
\end{align*}
In fact, since $\tilde K_{\ell^n}\cap F'=\tilde K$ by Theorem \ref{th-intersection} (notice that in this case $\tilde{K}=K$), we have that $\gal(F'\mid \tilde K)\cong \gal(\tilde K_{\ell^n} F' \mid \tilde K_{\ell^n})$ is a quotient of $\tilde V_{\ell^n}$, hence abelian. Thus we can assume that $E$ does not have CM and that $\ell$ is in the set $S$ of Theorem \ref{th-intersection}.

Notice that $F'$ is a Galois extension of $\tilde K$ with degree a power of $\ell$, since the same is true for $\tilde K_{\ell^n,\ell^n} \mid \tilde{K}$ and $F' \subseteq \tilde K_{\ell^n,\ell^n}$. Letting $r$ denote the radical of $R$, the degree of $[F' : F' \cap \tilde{K}_r]$, which is still a power of $\ell$, divides $[\tilde{K}_R : \tilde{K}_r]$, which is a product of primes dividing $R$. So since $\ell \nmid R$ we obtain $[F' : F' \cap \tilde{K}_r]=1$, that is $\tilde{K}_{\ell^n, \ell^n} \cap \tilde{K}_R =\tilde{K}_{\ell^n, \ell^n} \cap \tilde{K}_r$, and we may assume that $R$ is squarefree. Write now $R=R_1R_2$, where $R_1$ is the product of the prime factors of $R$ that are \textit{not} in $S$ and $R_2$ is the product of the prime factors of $R$ that belong to $S$. By definition of $\tilde K$ we have $\tilde{K}_{R} = \tilde{K}_{R_1}$, so we may further assume that no prime $p\in S$ divides $R$. By Corollary \ref{cor-direct-prod} we then have $\gal(\tilde K_{R}\,|\,\tilde K)\cong \GL_2(\Z/R\Z)$.

Since $F'\subseteq \tilde K_{R}$, there must be a normal subgroup $D=\gal(\tilde{K}_R\mid F')\trianglelefteq \GL_2(\Z/R\Z)$ of index a power of $\ell$. In order to conclude we just need to show that $D$ contains $\SL_2(\Z/R\Z)$, for then $\gal( F'\mid \tilde{K})\cong \GL_2(\Z/R\Z)/D$ is abelian.

Since $\SL_2(\Z/R\Z)\cong\prod_{p\mid R}\SL_2(\F_p)$, we can consider the intersection $D_p:=D\cap \SL_2(\F_p)$, which is a normal subgroup of $\SL_2(\F_p)$. Here we identify $\SL_2(\F_p)$ with the corresponding direct factor of $\SL_2(\Z/R\Z)$. The quotient $\SL_2(\F_p)/D_p$ cannot have order a power of $\ell$ unless it is trivial (recall that in our case $p\geq 5$), so we deduce that $D\supseteq \SL_2(\F_p)$. As this is true for every $p\mid R$, we have $D\supseteq \SL_2(\Z/R\Z)$, and we are done.
\end{proof}
\end{prop}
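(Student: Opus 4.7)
The plan is to split into three cases, matching the three clauses in Definition \ref{def-tildeK}. The first two are essentially formal; the real work is in the last one.

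First I would dispatch the CM case by invoking class field theory: when $\tilde K$ contains the CM field of $E$, the entire field $\tilde K_R$ is already abelian over $\tilde K$ (this is standard, e.g.\ via the main theorem of CM), and \emph{a fortiori} so is $F'$. For the non-CM case with $\ell\notin S$, I would use Theorem \ref{th-intersection} (with $\tilde K=K$ here): the fields $\tilde K_{\ell^n}$ and $\tilde K_R$ are linearly disjoint over $\tilde K$, so by standard Galois theory $\gal(F'\mid \tilde K)\cong \gal(\tilde K_{\ell^n}F'\mid \tilde K_{\ell^n})$, which is a subquotient of $\tilde V_{\ell^n}\leq E[\ell^n]$, hence abelian.

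The genuine case is non-CM with $\ell\in S$. My first move is to simplify $R$. Since $F'\subseteq \tilde K_{\ell^n,\ell^n}$ and the latter has $\ell$-power degree over $\tilde K$, the extension $F'\mid \tilde K$ has $\ell$-power degree. Letting $r=\rad(R)$, the extension $\tilde K_R\mid \tilde K_r$ has degree coprime to $\ell$ (only primes dividing $R$ can appear), so $F'\cap \tilde K_R=F'\cap \tilde K_r$, and I may assume $R$ squarefree. Next, by the very definition of $\tilde K$, every prime $p\in S$ satisfies $\tilde K_p\subseteq \tilde K$, so I can drop such primes from $R$ and assume $R$ is coprime to every element of $S$.

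At this point Remark \ref{rem-S-stable} says the set of exceptional primes is the same whether computed over $K$ or over $\tilde K$, so Corollary \ref{cor-direct-prod} (applied to $E/\tilde K$) gives $\gal(\tilde K_R\mid \tilde K)\cong \GL_2(\Z/R\Z)$. Let $D:=\gal(\tilde K_R\mid F')\trianglelefteq \GL_2(\Z/R\Z)$; its index is a power of $\ell$. The key step is to show $D\supseteq \SL_2(\Z/R\Z)$: writing $\SL_2(\Z/R\Z)\cong\prod_{p\mid R}\SL_2(\F_p)$ and setting $D_p:=D\cap \SL_2(\F_p)$, each $D_p$ is normal in $\SL_2(\F_p)$ of $\ell$-power index. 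Since every $p\mid R$ is outside $S$, hence $p\geq 7$, the only normal subgroups of $\SL_2(\F_p)$ are $\{1\}$, $\{\pm \Id\}$, and $\SL_2(\F_p)$, with corresponding quotient orders $p(p^2-1)$, $p(p^2-1)/2$, or $1$. For $p\geq 7$ the first two orders are visibly divisible by primes other than $\ell$, so only $D_p=\SL_2(\F_p)$ is compatible with $\ell$-power index. Hence $D\supseteq\SL_2(\Z/R\Z)$, and $\gal(F'\mid \tilde K)\cong \GL_2(\Z/R\Z)/D$ is a quotient of $\GL_2(\Z/R\Z)/\SL_2(\Z/R\Z)\cong (\Z/R\Z)^\times$, which is abelian.

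The main obstacle is the last step: verifying that the constraint ``index a power of $\ell$'' forces $D_p=\SL_2(\F_p)$. This is purely the simplicity of $\PSL_2(\F_p)$ together with an arithmetic check on $|\PSL_2(\F_p)|$, but it is where the hypotheses $p\notin S$ (so $p\neq 2,3,5$) and the reductions on $R$ are really used; the other steps are bookkeeping to bring the statement into a form where Corollary \ref{cor-direct-prod} applies.
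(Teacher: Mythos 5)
Your proposal is correct and follows essentially the same route as the paper: CM case via class field theory, non-CM $\ell\notin S$ via linear disjointness from Theorem \ref{th-intersection}, and for $\ell\in S$ the same reductions (squarefree $R$, strip primes of $S$, invoke Corollary \ref{cor-direct-prod}) followed by the argument that $D_p:=D\cap\SL_2(\F_p)$ must be all of $\SL_2(\F_p)$. You are slightly more explicit at two points where the paper is terse: you cite Remark \ref{rem-S-stable} to justify applying Corollary \ref{cor-direct-prod} over $\tilde K$ (the paper leaves this implicit, though one can also get it directly from the direct-product decomposition of $\gal(K_{R\cdot\prod_{q\in S}q}\mid K)$), and you enumerate the normal subgroups of $\SL_2(\F_p)$ rather than appealing directly to simplicity of $\PSL_2(\F_p)$ — for the latter the decisive point is that $p\mid p(p^2-1)$ and $p\neq\ell$ because $\ell\nmid R$, which your phrase \emph{visibly divisible by primes other than $\ell$} covers but could state more sharply.
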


In what follows, whenever $A$ is an abelian group and $Q$ is a group acting on $A$, we denote by $[A,Q]$ the subgroup of $A$ generated by elements of the form $gv-v$ for $v\in A$ and $g\in Q$. For example, we will consider the case $A=\tilde V_{\ell^n}$ and $Q=\tilde H_{\ell^n}$.

\begin{lem}
\label{lemma-ab}
Let 
\begin{align*}
1\to A\to G\to Q \to 1
\end{align*}
be a short exact sequence of groups, with $A$ abelian, so that $Q$ acts naturally on $A$. Let $G^{\ab}$ and $Q^{\ab}$ be the maximal abelian quotients of $G$ and $Q$ respectively. Then $A/[A,Q]$ surjects onto $\ker(G^{\ab}\to Q^{\ab})$.
\begin{proof}
We have an injective map of short exact sequences
\begin{center}
\begin{tikzcd}
1\arrow[r] & A\cap G'\arrow[r]\arrow[d, hook] & G'\arrow[r]\arrow[d, hook] & Q' \arrow[r]\arrow[d, hook] & 1\\
1\arrow[r] & A \arrow[r]& G \arrow[r]& Q\arrow[r]&1
\end{tikzcd}
\end{center}
from which we get the exact sequence
\begin{align*}
1\to \frac{A}{A\cap G'}\to G^{\ab}\to Q^{\ab}\to 1
\end{align*}
and since $[A,Q]\subseteq A\cap G'$ we get that $A/[A,Q]$ surjects onto $A/A\cap G'=\ker(G^{\ab}\to Q^{\ab})$.
\end{proof}
\end{lem}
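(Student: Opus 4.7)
The plan is to identify $\ker(G^{\ab} \to Q^{\ab})$ with $A/(A \cap G')$ and then show that $[A,Q]$ sits inside $A \cap G'$, so that the quotient map $A \twoheadrightarrow A/(A \cap G')$ factors through $A/[A,Q]$ and yields the desired surjection. This is essentially the tail end of the five-term exact sequence in low-degree group (co)homology, but it can be done by hand since $A$ is abelian.

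First I would establish the exact sequence
\[
1 \longrightarrow A/(A \cap G') \longrightarrow G^{\ab} \longrightarrow Q^{\ab} \longrightarrow 1.
\]
To see this, observe that the surjection $G \twoheadrightarrow Q$ sends commutators to commutators, so it restricts to a surjection $G' \twoheadrightarrow Q'$ with kernel $A \cap G'$. This gives an injection of short exact sequences (with the row $1 \to A \cap G' \to G' \to Q' \to 1$ sitting inside $1 \to A \to G \to Q \to 1$), and a routine diagram chase (or the snake lemma) on the quotients produces the displayed sequence. In particular, $\ker(G^{\ab} \to Q^{\ab}) \cong A/(A \cap G')$.

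Second I would check the inclusion $[A,Q] \subseteq A \cap G'$. The action of $Q$ on $A$ is by conjugation through a chosen set-theoretic lift $\tilde q \in G$ of $q \in Q$, which is well-defined since $A$ is abelian. Thus for $a \in A$ and $q \in Q$, the element $q \cdot a - a$ (written additively) corresponds to the commutator $\tilde q a \tilde q^{-1} a^{-1}$ in $G$. This element is visibly in $G'$, and it lies in $A$ because $A$ is normal in $G$; hence the generators of $[A,Q]$ all lie in $A \cap G'$.

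Combining the two steps, the canonical map $A \to A/(A \cap G') \cong \ker(G^{\ab} \to Q^{\ab})$ kills $[A,Q]$ and therefore descends to a surjection $A/[A,Q] \twoheadrightarrow \ker(G^{\ab} \to Q^{\ab})$. There is no real obstacle here: the only subtlety is the verification that $G' \to Q'$ is surjective (so that the kernel of the induced map on abelianisations is correctly identified), and this follows immediately from the surjectivity of $G \to Q$.
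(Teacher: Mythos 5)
Your argument is correct and follows the same route as the paper: establish the short exact sequence $1 \to A/(A \cap G') \to G^{\ab} \to Q^{\ab} \to 1$ via the injection of short exact sequences on derived subgroups, then observe that $[A,Q] \subseteq A \cap G'$ to descend the surjection. The only difference is that you spell out why the generators of $[A,Q]$ are commutators lying in $A$, which the paper leaves implicit.
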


\begin{prop}
\label{bound-F-over-K}
The adelic failure $B_\ell(N)$ divides $\displaystyle [\tilde{K}:K]\cdot \#\frac{\tilde V_{\ell^n}}{[\tilde V_{\ell^n},\tilde H_{\ell^n}]}$.
\begin{proof}
Let $\campo_1$ and $\campo_2$ be the maximal abelian extensions of $\tilde K$ contained in $\tilde K_{\ell^n}$ and $\tilde K_{\ell^n,\ell^n}$ respectively. Then we have $\gal(\campo_1\,|\,\tilde K)=\tilde H_{\ell^n}^{\ab}$ and $\gal(\campo_2\,|\,\tilde K)=\tilde G_{\ell^n}^{\ab}$, where $\tilde G_{\ell^n}=\gal(\tilde K_{\ell^n,\ell^n}\,|\,\tilde K)$.
Notice that $[\campo_2:\campo_1]=\#W$, where $W=\ker(\tilde G_{\ell^n}^{\ab}\to \tilde H_{\ell^n}^{\ab})$ is a quotient of $\tilde V_{\ell^n}/[\tilde V_{\ell^n},\tilde H_{\ell^n}]$ by Lemma \ref{lemma-ab}.
Let moreover $F':=\tilde K_{\ell^n,\ell^n}\cap \tilde K_R$ and $T':= \tilde K_{\ell^n}\cap \tilde K_R$. By Proposition \ref{prop-F-abelian} we have $F'\subseteq \campo_2$ and clearly also $T'\subseteq \campo_1$ (indeed $T'$ is abelian over $\tilde{K}$ since it is a sub-extension of $F'$).
Consider the compositum $ \campo_1F'$ inside $\campo_2$.
\begin{align*}
\xymatrix{
& \campo_2 \ar@{-}[d]\\
& \campo_1F'\ar@{-}[dl]\ar@{-}[dr]\\
\campo_1 \ar@{-}[dr]& & F'\ar@{-}[dl]\\
&\tilde K_\ell 
}
\end{align*}
It is easy to check that $F'\cap \campo_1=T'$, so we have that $[F':T']=[\campo_1F':\campo_1]$ divides $[\campo_2:\campo_1]$, which in turn divides $\tilde V_{\ell^n}/[\tilde V_{\ell^n},\tilde H_{\ell^n}]$.

Now applying Proposition \ref{prop-1} with $\tilde K$ in place of $K$ we get that
\begin{align*}
\frac{[\tilde K_{\ell^n,\ell^n}:\tilde K_{\ell^n}]}{[\tilde K_{\ell^nR,\ell^n}:\tilde K_{\ell^nR}]}\qquad \text{divides}\qquad [F':T'],
\end{align*}
and using that $[\tilde K_{\ell^nR,\ell^n}:\tilde K_{\ell^nR}]$ divides $[K_{\ell^nR,\ell^n}:K_{\ell^nR}]$ it is easy to see that
\begin{align*}
\frac{[K_{\ell^n,\ell^n}:K_{\ell^n}]}{[K_{\ell^nR,\ell^n}:K_{\ell^nR}]}\qquad \text{divides}\qquad [\tilde K:K]\cdot\frac{[\tilde K_{\ell^n,\ell^n}:\tilde K_{\ell^n}]}{[\tilde K_{\ell^nR,\ell^n}:\tilde K_{\ell^nR}]}.
\end{align*}
We conclude that
\begin{align*}
B_\ell(N)=\frac{[K_{\ell^n,\ell^n}:K_{\ell^n}]}{[K_{\ell^nR,\ell^n}:K_{\ell^nR}]} \qquad \text{divides} \qquad [\tilde{K}:K]\cdot \#\frac{\tilde V_{\ell^n}}{[\tilde V_{\ell^n},\tilde H_{\ell^n}]}.
\end{align*}
\end{proof}
\end{prop}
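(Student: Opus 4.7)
The strategy is to pass to the auxiliary base field $\tilde K$, where by Proposition \ref{prop-F-abelian} the extension $F':=\tilde K_{\ell^n,\ell^n}\cap\tilde K_R$ is abelian over $\tilde K$. This is the key simplification: once we know $F'$ is abelian, its size is controlled by the abelianisation of the Galois group of $\tilde K_{\ell^n,\ell^n}\mid\tilde K$, which fits into the short exact sequence $1\to\tilde V_{\ell^n}\to\tilde G_{\ell^n}\to\tilde H_{\ell^n}\to 1$. Applying Lemma \ref{lemma-ab} to this sequence then produces the desired bound in terms of the coinvariants $\tilde V_{\ell^n}/[\tilde V_{\ell^n},\tilde H_{\ell^n}]$.

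More concretely, I would first set $T':=\tilde K_{\ell^n}\cap\tilde K_R$ and let $\campo_1\subseteq\tilde K_{\ell^n}$, $\campo_2\subseteq\tilde K_{\ell^n,\ell^n}$ be the maximal subextensions abelian over $\tilde K$, so that $\gal(\campo_1\mid\tilde K)=\tilde H_{\ell^n}^{\mathrm{ab}}$ and $\gal(\campo_2\mid\tilde K)=\tilde G_{\ell^n}^{\mathrm{ab}}$. Since $F'$ is abelian over $\tilde K$ and $T'\subseteq F'$ is a fortiori abelian over $\tilde K$, we have $F'\subseteq\campo_2$ and $T'\subseteq\campo_1$. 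The elementary identity $F'\cap\campo_1\subseteq F'\cap\tilde K_{\ell^n}\subseteq \tilde K_R\cap\tilde K_{\ell^n}=T'$ gives $F'\cap\campo_1=T'$, whence $[F':T']=[\campo_1 F':\campo_1]$ divides $[\campo_2:\campo_1]=\#\ker(\tilde G_{\ell^n}^{\mathrm{ab}}\to\tilde H_{\ell^n}^{\mathrm{ab}})$, which by Lemma \ref{lemma-ab} divides $\#\tilde V_{\ell^n}/[\tilde V_{\ell^n},\tilde H_{\ell^n}]$.

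The final paragraph of the proof will descend from $\tilde K$ back to $K$. Applying the identity of Proposition \ref{prop-1} to both $K$ and $\tilde K$, one rewrites $B_\ell(N)=[K_{\ell^n,\ell^n}:K_{\ell^n}]/[K_{\ell^n R,\ell^n}:K_{\ell^n R}]$ and $[F':T']=[\tilde K_{\ell^n,\ell^n}:\tilde K_{\ell^n}]/[\tilde K_{\ell^n R,\ell^n}:\tilde K_{\ell^n R}]$. Then one checks two divisibilities: the degree $[\tilde K_{\ell^n R,\ell^n}:\tilde K_{\ell^n R}]$ divides $[K_{\ell^n R,\ell^n}:K_{\ell^n R}]$ (restriction to a larger base field can only shrink the Kummer group), and $[K_{\ell^n,\ell^n}:K_{\ell^n}]$ divides $[\tilde K:K]\cdot[\tilde K_{\ell^n,\ell^n}:\tilde K_{\ell^n}]$ (since $\tilde K K_{\ell^n}=\tilde K_{\ell^n}$, the inclusion $K_{\ell^n,\ell^n}\hookrightarrow\tilde K_{\ell^n,\ell^n}$ gives $[K_{\ell^n,\ell^n}:K_{\ell^n}]/[\tilde K_{\ell^n,\ell^n}:\tilde K_{\ell^n}]$ dividing $[\tilde K_{\ell^n}:K_{\ell^n}]\mid[\tilde K:K]$). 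Combining these two bookkeeping inequalities with the result of the previous paragraph yields the claimed divisibility.

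The main obstacle is the base-change step at the end: one has to be careful that the Kummer degree over $\tilde K$ can be smaller than the one over $K$, and that the resulting discrepancy is exactly absorbed by the factor $[\tilde K:K]$. The abelianisation argument itself is clean once Proposition \ref{prop-F-abelian} is in place; everything else is a diagram chase combined with standard degree multiplicativity in towers of fields.
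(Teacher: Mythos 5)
Your proof is correct and follows essentially the same path as the paper's: pass to $\tilde K$, bound $[F':T']$ by $[\campo_2:\campo_1]$ using Lemma \ref{lemma-ab} and Proposition \ref{prop-F-abelian}, then descend from $\tilde K$ to $K$ via Proposition \ref{prop-1}. You even make the final base-change step (which the paper compresses to ``it is easy to see'') more explicit by isolating the two divisibilities $[\tilde K_{\ell^n R,\ell^n}:\tilde K_{\ell^n R}]\mid[K_{\ell^n R,\ell^n}:K_{\ell^n R}]$ and $[K_{\ell^n,\ell^n}:K_{\ell^n}]\mid[\tilde K:K]\cdot[\tilde K_{\ell^n,\ell^n}:\tilde K_{\ell^n}]$, which is a small improvement in readability but not a different argument.
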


So we are left with giving an upper bound on the ratio $\#\tilde V_{\ell^n}/\#[\tilde V_{\ell^n},\tilde H_{\ell^n}]$: this is achieved in the following Proposition.

\begin{prop}
\label{prop-quotient-bounded}
For every $n$, the order of $\tilde V_{\ell^n}/[\tilde V_{\ell^n},\tilde H_{\ell^n}]$ divides $\ell^{2\tilde n_\ell}$, where $\tilde n_\ell$ is the minimal parameter of maximal growth for the $\ell$-adic torsion representation of $E/\tilde K$.
\end{prop}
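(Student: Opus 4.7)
The plan is to exploit the fact that $\tilde H_{\ell^n}$ contains a scalar matrix which is congruent to the identity modulo $\ell^{\tilde n_\ell}$; this scalar, acting on any $v \in \tilde V_{\ell^n}$, produces $\ell^{\tilde n_\ell} v$, which by definition lies in the commutator $[\tilde V_{\ell^n}, \tilde H_{\ell^n}]$. This immediately bounds the exponent of the quotient, and bounding its order then reduces to a Nakayama-style observation about subgroups of $(\Z/\ell^n\Z)^2$.

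First I would dispose of the trivial range $n \leq \tilde n_\ell$: here $\tilde V_{\ell^n}$ is a subgroup of $(\Z/\ell^n\Z)^2$, so it has order dividing $\ell^{2n} \leq \ell^{2\tilde n_\ell}$, and the quotient by $[\tilde V_{\ell^n}, \tilde H_{\ell^n}]$ is only smaller. Assume then $n > \tilde n_\ell$. Applying Lemma \ref{lemma:ImageAlwaysContainsScalars} to $E/\tilde K$ with its parameter $\tilde n_\ell$, we obtain $(1+\ell^{\tilde n_\ell}) \operatorname{Id} \in \tilde H_{\ell^n}$. Since the action of $\tilde H_{\ell^n}$ on $\tilde V_{\ell^n}$ is induced by the natural $\operatorname{GL}_2(\Z/\ell^n\Z)$-action on $(\Z/\ell^n\Z)^2$ (Lemma \ref{lemma:HnActionOnVn}), for any $v \in \tilde V_{\ell^n}$ the element
\[
\bigl((1+\ell^{\tilde n_\ell})\operatorname{Id}\bigr) \cdot v - v = \ell^{\tilde n_\ell} v
\]
lies in $[\tilde V_{\ell^n}, \tilde H_{\ell^n}]$ by definition. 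Hence $\ell^{\tilde n_\ell} \tilde V_{\ell^n} \subseteq [\tilde V_{\ell^n}, \tilde H_{\ell^n}]$, and the quotient $\tilde V_{\ell^n}/[\tilde V_{\ell^n}, \tilde H_{\ell^n}]$ is annihilated by $\ell^{\tilde n_\ell}$.

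To conclude, I would observe that $\tilde V_{\ell^n}$ is a $\Z/\ell^n\Z$-submodule of $(\Z/\ell^n\Z)^2$, and hence generated by at most two elements: the reduction $\tilde V_{\ell^n}/\ell \tilde V_{\ell^n}$ is an $\F_\ell$-subspace of $(\F_\ell)^2$, so it has dimension at most $2$, and Nakayama's lemma (applied to the local ring $\Z/\ell^n\Z$) lifts any $\F_\ell$-basis of this reduction to a generating set of $\tilde V_{\ell^n}$. Consequently the further quotient $\tilde V_{\ell^n}/[\tilde V_{\ell^n},\tilde H_{\ell^n}]$ is generated by two elements of order dividing $\ell^{\tilde n_\ell}$, and therefore has order dividing $\ell^{2\tilde n_\ell}$, as required.

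There is no real obstacle here: the only subtlety is making sure the right parameter of maximal growth is used, namely the one relative to $\tilde K$ rather than $K$, so that Lemma \ref{lemma:ImageAlwaysContainsScalars} applies to $\tilde H_{\ell^n}$ (and this is exactly why $\tilde n_\ell$ was introduced in Definition \ref{def-tildeK}, with its bound from Lemma \ref{lem-param-tilde-bound} ensuring effectivity in the overall application).
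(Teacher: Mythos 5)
Your proposal is correct and follows essentially the same route as the paper: apply Lemma \ref{lemma:ImageAlwaysContainsScalars} over $\tilde K$ to produce the scalar $(1+\ell^{\tilde n_\ell})\operatorname{Id}$ in $\tilde H_{\ell^n}$, deduce $\ell^{\tilde n_\ell}\tilde V_{\ell^n}\subseteq[\tilde V_{\ell^n},\tilde H_{\ell^n}]$, and then use that $\tilde V_{\ell^n}$ is a $\Z/\ell^n\Z$-submodule of $(\Z/\ell^n\Z)^2$ and hence two-generated. The only difference is cosmetic: you split off the range $n\leq\tilde n_\ell$ by hand, whereas the paper implicitly uses that for such $n$ the matrix $(1+\ell^{\tilde n_\ell})\operatorname{Id}$ reduces to the identity modulo $\ell^n$, so the same argument applies (vacuously) without a case distinction.
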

\begin{proof}
By Lemma \ref{lemma:ImageAlwaysContainsScalars}, the group $\tilde H_{\ell^n}$ contains $(1+\ell^{\tilde n_\ell}) \operatorname{Id}$. This implies that for every $v \in \tilde V_{\ell^n}$ the group $[\tilde V_{\ell^n}, \tilde H_{\ell^n}]$ contains 
\[
\left[v, (1+\ell^{\tilde n_\ell}) \operatorname{Id}\right] = (1+\ell^{\tilde{n}_\ell}) \operatorname{Id} \cdot v - v = \ell^{\tilde n_\ell} v,
\]
that is, $[\tilde V_{\ell^n}, \tilde H_{\ell^n}]$ contains $\ell^{\tilde n_\ell} \tilde V_{\ell^n}$.
The claim now follows from the fact that $\tilde{V}_{\ell^n}$ is generated over $\Z/\ell^n\Z$ by at most two elements.
\end{proof}

\begin{lem}
\label{lem-adelic-good-ell}
Assume that $\ell\geq 5$ is unramified in $K\mid \Q$ and that the image of the $\bmod\,\ell$ torsion representation is $\GL_2(\F_\ell)$ (so in particular $E$ does not have CM over $\Kbar$). Assume moreover that $\alpha$ is $\ell$-indivisible. Then $V_{\ell^n}=[V_{\ell^n},H_{\ell^n}]$.
\end{lem}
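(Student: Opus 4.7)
The plan is to locate a central element of $H_{\ell^n}$ of the form $\lambda \Id$ with $\lambda-1$ a unit modulo $\ell^n$; once this is in hand, the conclusion follows by a one-line computation with commutators.

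First I would produce such a scalar. Since $\ell\geq 5$ we have $|\F_\ell^\times|\geq 4$, so we may pick $\overline{\lambda}\in\F_\ell^\times$ with $\overline{\lambda}\neq 1$. By hypothesis the mod-$\ell$ image is $H_\ell=\GL_2(\F_\ell)$, which contains $\overline{\lambda}\Id$. Applying Lemma \ref{lem:LiftHomothety} to the closed subgroup $H_{\ell^\infty}\subseteq \GL_2(\Z_\ell)$ then yields a lift $\lambda\Id\in H_{\ell^\infty}$ with $\lambda\equiv\overline{\lambda}\pmod{\ell}$, and in particular $\lambda-1\in\Z_\ell^\times$. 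Reducing modulo $\ell^n$ gives $\lambda\Id\in H_{\ell^n}$ with $\lambda-1\in(\Z/\ell^n\Z)^\times$.

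For any $v\in V_{\ell^n}$, the definition of $[V_{\ell^n},H_{\ell^n}]$ together with the $H_{\ell^n}$-action (Lemma \ref{lemma:HnActionOnVn}) gives
\[
(\lambda-1)v \;=\; \lambda\Id\cdot v - v \;\in\; [V_{\ell^n},H_{\ell^n}].
\]
Since $\lambda-1$ is a unit of $\Z/\ell^n\Z$, multiplication by $\lambda-1$ is a bijection of $V_{\ell^n}$, so
\[
V_{\ell^n}\;=\;(\lambda-1)V_{\ell^n}\;\subseteq\;[V_{\ell^n},H_{\ell^n}];
\]
the reverse inclusion holds by definition.

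There is no real obstacle: the argument is essentially one invocation of Lemma \ref{lem:LiftHomothety} followed by a trivial computation, and it goes through as soon as $H_{\ell^n}$ contains a non-identity central scalar whose eigenvalue differs from $1$ by a unit. It is worth noting that the hypotheses of unramifiedness of $\ell$ in $K/\Q$ and $\ell$-indivisibility of $\alpha$ are not strictly required for the argument as stated; they are presumably included because they are the standing assumptions under which Jones–Rouse's theorem applies, ensuring that $V_{\ell^n}=(\Z/\ell^n\Z)^2$ so that the equality $V_{\ell^n}=[V_{\ell^n},H_{\ell^n}]$ carries its full intended content in subsequent applications.
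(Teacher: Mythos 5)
Your proof is correct, and it takes a genuinely different and more economical route than the paper. The paper first invokes Lemma \ref{lemma:SerreLifting} and the unramifiedness of $\ell$ to pin down $H_{\ell^\infty}=\GL_2(\Z_\ell)$, then uses the $\ell$-indivisibility of $\alpha$ together with the Jones--Rouse surjectivity theorem to conclude $V_{\ell^n}=(\Z/\ell^n\Z)^2$, and finally exhibits two explicit unipotent matrices $g_1,g_2$ and a vector $v$ whose commutators $g_iv-v$ span the whole module. Your argument sidesteps all of this: since $H_\ell=\GL_2(\F_\ell)$ already contains every scalar and $\ell\geq 5$ supplies one with $\overline\lambda\neq 1$, Lemma \ref{lem:LiftHomothety} yields $\lambda\Id\in H_{\ell^n}$ with $\lambda-1\in(\Z/\ell^n\Z)^\times$, and then $(\lambda-1)v=\lambda\Id\cdot v-v$ immediately gives $V_{\ell^n}\subseteq[V_{\ell^n},H_{\ell^n}]$ because $V_{\ell^n}$ is a $\Z/\ell^n\Z$-submodule. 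This is essentially the same scalar-commutator trick the paper itself uses in Proposition \ref{prop-quotient-bounded} and Lemma \ref{lemma:CMVanishing}, so your proof also unifies the CM and non-CM computations. You are also right that the unramifiedness and $\ell$-indivisibility hypotheses are not needed for the bare conclusion $V_{\ell^n}=[V_{\ell^n},H_{\ell^n}]$; what the paper's longer argument buys is the stronger intermediate fact that $V_{\ell^n}$ is all of $(\Z/\ell^n\Z)^2$, which is the information that those hypotheses are actually for, and which is relevant elsewhere even though it is not needed to prove the stated equality.
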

\begin{proof}
Since $H_{\ell^\infty}'$ is a closed subgroup of $\SL_2(\Z_\ell)$ whose reduction modulo $\ell$ contains $H_\ell'=\GL_2(\F_\ell)'=\SL_2(\F_\ell)$, by Lemma \ref{lemma:SerreLifting} the group $H_{\ell^\infty}$ contains $\SL_2(\Z_\ell)$. The assumption that $\ell$ is unramified in $K$ implies that $\det(H_{\ell^\infty})=\Z_\ell^\times$, which together with the inclusion $\SL_2(\Z_\ell) \subseteq H_{\ell^\infty}$ implies $H_{\ell^\infty}=\GL_2(\Z_\ell)$, and
in particular $H_{\ell^n} = \GL_2(\Z/\ell^n\Z)$. By \cite[Theorem 5.2]{JonesRouse} we have $V_{\ell^n} = (\Z/\ell^n\Z)^2$, so it is enough to consider
\begin{align*}
\begin{array}{ccc}
g_1:=\left(\begin{array}{cc}
1 & 1 \\
0 & 1
\end{array}
\right)\in H_{\ell^n}, &
g_2:=\left(\begin{array}{cc}
1 & 0 \\
1 & 1
\end{array}
\right) \in H_{\ell^n},&
v:=\left(\begin{array}{c}
1\\
1
\end{array}\right)\in V_{\ell^n}
\end{array}
\end{align*}
to conclude that 
\begin{align*}
\begin{array}{ccc}
\left(\begin{array}{c}
1\\
0
\end{array}\right)=g_1v-v\in [V_{\ell^n},H_{\ell^n}] &
\text{and} &
\left(\begin{array}{c}
0\\
1
\end{array}\right)=g_2v-v\in [V_{\ell^n},H_{\ell^n}],
\end{array}
\end{align*}
so that $V_{\ell^n}\subseteq[V_{\ell^n},H_{\ell^n}]$.
\end{proof}

\begin{lem}\label{lemma:CMVanishing}
Let $E/K$ be an elliptic curve such that $\End_{\overline{K}}(E)$ is an order $\order$ in the imaginary quadratic field $\Q(\sqrt{-d})$. Let $\ell \geq 3$ be a prime unramified both in $K$ and in $\mathbb{Q}(\sqrt{-d})$, and suppose that $E$ has good reduction at all places of $K$ of characteristic $\ell$. Then $V_{\ell^n}=[V_{\ell^n},H_{\ell^n}]$ and $\tilde{V}_{\ell^n}=[\tilde{V}_{\ell^n},\tilde{H}_{\ell^n}]$.
\end{lem}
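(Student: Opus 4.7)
The plan is to exhibit a scalar matrix $\lambda \Id \in \tilde H_{\ell^n} \subseteq H_{\ell^n}$ with $\lambda - 1 \in (\Z/\ell^n\Z)^\times$, equivalently $\lambda \not\equiv 1 \pmod{\ell}$. Once this is achieved, for every $v \in V_{\ell^n}$ one has $(\lambda \Id - \Id) v = (\lambda - 1) v \in [V_{\ell^n}, H_{\ell^n}]$; since multiplication by the unit $\lambda - 1$ is a bijection of $V_{\ell^n}$, this forces $V_{\ell^n} \subseteq [V_{\ell^n}, H_{\ell^n}]$, and the reverse inclusion is automatic from Lemma \ref{lemma:HnActionOnVn}. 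The very same scalar yields $\tilde V_{\ell^n} = [\tilde V_{\ell^n}, \tilde H_{\ell^n}]$ in identical fashion, so both equalities reduce to this single ingredient.

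To produce the scalar I would work over $\tilde K$. Because the CM is defined over $\tilde K$, the image $\tilde H_{\ell^\infty}$ is contained in the Cartan subgroup $C$ of $\GL_2(\Z_\ell)$ attached to $\order$. Unramifiedness of $\ell$ in both $K$ and $\Q(\sqrt{-d})$ implies $\ell$ is unramified in $\tilde K$; since $\Q(\zeta_\ell)/\Q$ is totally ramified at $\ell$, this gives $\tilde K \cap \Q(\zeta_\ell) = \Q$, so the determinant map $\det \circ \tau_\ell \colon \gal(\Kbar \mid \tilde K) \to \F_\ell^\times$, which equals the mod-$\ell$ cyclotomic character via the Weil pairing, is surjective. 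By Lemma \ref{lem:LiftHomothety} it now suffices to find a non-trivial scalar in the mod-$\ell$ reduction $\tilde H_\ell$, viewed inside $\overline C := C \bmod \ell$.

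I would then split into two cases according to the splitting behaviour of $\ell$ in $F := \Q(\sqrt{-d})$. When $\ell$ is inert, $\overline C \cong \F_{\ell^2}^\times$ is cyclic of order $\ell^2 - 1$ and the scalars $\F_\ell^\times \cdot \Id$ form its unique subgroup of order $\ell - 1$; the determinant restricted to $\overline C$ is the norm map $\alpha \mapsto \alpha^{\ell+1}$, so its surjectivity forces $\ell - 1$ to divide $|\tilde H_\ell|$, and since every cyclic group has a unique subgroup of each divisor order, $\tilde H_\ell$ must contain the full scalar subgroup. When $\ell$ is split, $\overline C \cong \F_\ell^\times \times \F_\ell^\times$ and surjectivity of the determinant alone is insufficient (the subgroup $\F_\ell^\times \times \{1\}$ has surjective determinant but trivial diagonal), so one must invoke the structure of the CM Hecke character attached to $E/\tilde K$: the good reduction hypothesis at the primes of $\tilde K$ above $\ell$ ensures this character is unramified at such primes in a sufficiently non-degenerate way to force $\tilde H_\ell$ to contain the full diagonal of $\overline C$. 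In either case, since $\ell \geq 3$, $\F_\ell^\times$ has a non-identity element $\bar\lambda$ with $\bar\lambda \Id \in \tilde H_\ell$, and the argument concludes as outlined in the first paragraph. The main obstacle is precisely this split case: surjectivity of the determinant is not enough on its own, and the good reduction hypothesis must really be used to rule out the pathological subgroups of $\overline C$.
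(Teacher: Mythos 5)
Your reduction to exhibiting a scalar $\lambda\operatorname{Id}\in\tilde H_{\ell^n}$ with $\lambda-1$ a unit is the right idea and matches the core of the paper's argument, and your inert-case reasoning (surjectivity of the cyclotomic character forces $\ell-1\mid |\tilde H_\ell|$, and a cyclic group contains a unique subgroup of each divisor order) is sound. However, your proposal stops at exactly the place where the real work lies: in the split case you explicitly concede that surjectivity of the determinant does not force $\tilde H_\ell$ to contain a non-trivial scalar, and you offer only a heuristic appeal to ``the structure of the CM Hecke character'' and ``a sufficiently non-degenerate way'' without any actual argument. As you yourself observe, there are subgroups of $\F_\ell^\times\times\F_\ell^\times$ (such as $\F_\ell^\times\times\{1\}$) with surjective determinant and trivial diagonal, so this is a genuine gap, not a detail. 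There is also a secondary issue you do not address: the hypothesis allows $\ell$ to divide the conductor $[\mathcal{O}_F:\order]$, in which case $\order\otimes\Z_\ell$ is a non-maximal order and $\overline C\cong(\F_\ell[\epsilon]/(\epsilon^2))^\times$ rather than $\F_{\ell^2}^\times$ or $\F_\ell^\times\times\F_\ell^\times$, a case your split/inert dichotomy misses entirely.

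The paper takes a much shorter route that avoids all of this: it invokes \cite[Theorem 1.5]{MR3766118}, which directly gives $(\order\otimes\Z_\ell)^\times\subseteq H_{\ell^\infty}$ (and likewise over $\tilde K$) under exactly these hypotheses. Since $\ell\geq 3$, the integer $2$ is a unit in $\Z_\ell$, hence $2\in(\order\otimes\Z_\ell)^\times$ and $2\operatorname{Id}\in H_{\ell^\infty}$; then $[V_{\ell^n},2\operatorname{Id}]=\{2v-v:v\in V_{\ell^n}\}=V_{\ell^n}$ and one is done. This black-boxes precisely the Hecke-character analysis you would need to carry out by hand, and it handles the split, inert, and ramified-order cases uniformly. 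If you want to salvage an argument along your lines, you would need to prove (not just assert) that the good-reduction hypothesis forces the mod-$\ell$ image to contain the scalars in the split case, and separately handle the case $\ell\mid[\mathcal{O}_F:\order]$; as written, the proposal does not establish the Lemma.
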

\begin{proof}
By \cite[Theorem 1.5]{MR3766118}, the image of the $\ell$-adic representations attached to both $E/K$ and $E/\tilde{K}$ contains $(\order \otimes \Z_\ell)^\times$, hence in particular it contains an operator that acts as multiplication by $2$ on $E[\ell^n]$ for every $n$. Let $\lambda$ be such an operator: then $[V_{\ell^n},H_{\ell^n}]$ contains $[V_{\ell^n}, \lambda] = \{\lambda v - v \bigm\vert v \in V_{\ell^n} \} = V_{\ell^n}$ as claimed. The case of $\tilde{V}_{\ell^n}$ is similar.
\end{proof}

\begin{thm}\label{thm:UpperBoundBl}
Let $\ell$ be a prime. There is a constant $b_\ell$, depending only on the $p$-adic torsion representations associated with $E$ for all the primes $p$, such that $B_\ell(N)$ divides $\ell^{b_\ell}$ for all positive integers $N$.
Moreover,
\begin{itemize}
\item Suppose that $E$ does not have complex multiplication over $\overline{\mathbb{Q}}$. Then $b_\ell$ is zero whenever the following conditions all hold: $\alpha$ is $\ell$-indivisible, $\ell >  5$ is unramified in $K\mid \Q$, the $\bmod \, \ell$ torsion representation is surjective, and $E$ has good reduction at all places of $K$ of characteristic $\ell$.
\item Suppose that $\End_{\overline{K}}(E)$ is an order in the imaginary quadratic field $\mathbb{Q}(\sqrt{-d})$. Then $b_\ell$ is zero whenever the following conditions all hold: $\ell \geq 3$ is a prime unramified both in $K$ and in $\mathbb{Q}(\sqrt{-d})$, and $E$ has good reduction at all places of $K$ of characteristic $\ell$.
\end{itemize}

Both in the CM and non-CM cases, for the finitely many remaining primes $\ell$ we can take $b_\ell=2n_\ell+3v_\ell\left([\tilde K:K]\right)$, where $\tilde K$ is as in Definition \ref{def-tildeK} and $n_\ell$ is a parameter of maximal growth for the $\ell$-adic torsion part.

\begin{proof}
Let $n$ be the $\ell$-adic valuation of $N$. By Proposition \ref{bound-F-over-K}, the adelic failure $B_\ell(N)$ divides $\displaystyle [\tilde{K}:K] \cdot \# \frac{\tilde{V}_{\ell^n}}{[\tilde{V}_{\ell^n}, \tilde{H}_{\ell^n}]}$.

\begin{itemize}
\item Suppose that $E$ does not have CM over $\overline{\Q}$, that $\alpha$ is $\ell$-indivisible, that $\ell > 5$ is unramified in $K\mid \Q$, that the $\bmod\,\ell$ torsion representation is surjective, and that $E$ has good reduction at all places of $K$ of characteristic $\ell$. Under these assumptions, the prime $\ell$ does not belong to the set $S$ of Theorem \ref{th-intersection}, so we have $\tilde{K}=K$ and $\displaystyle [\tilde{K}:K] \cdot \# \frac{\tilde{V}_{\ell^n}}{[\tilde{V}_{\ell^n}, \tilde{H}_{\ell^n}]}$ is simply $\displaystyle \# \frac{V_{\ell^n}}{[V_{\ell^n}, H_{\ell^n}]}$. We conclude because this quotient is trivial by Lemma \ref{lem-adelic-good-ell}.
\item 
In the CM case, the conclusion follows from Lemma \ref{lemma:CMVanishing} since $\ell \nmid [\tilde{K}:K]\leq 2$.
\end{itemize}

For all other primes, combining Proposition \ref{bound-F-over-K} and Proposition \ref{prop-quotient-bounded} we get that $B_\ell(N)$ divides $[\tilde{K}:K]\cdot \ell^{2\tilde n_\ell}$ 
and we conclude using Lemma \ref{lem-param-tilde-bound}.
\end{proof}
\end{thm}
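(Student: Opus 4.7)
The plan is to combine Proposition \ref{bound-F-over-K} with the vanishing/bounding results for the quotient $\tilde{V}_{\ell^n}/[\tilde{V}_{\ell^n},\tilde{H}_{\ell^n}]$ already established. Let $n := v_\ell(N)$. By Proposition \ref{bound-F-over-K}, for every $N$ we have
\[
B_\ell(N) \,\Big|\, [\tilde{K}:K] \cdot \#\frac{\tilde{V}_{\ell^n}}{[\tilde{V}_{\ell^n},\tilde{H}_{\ell^n}]}.
\]
This single divisibility contains all the information we need; the three bullet points in the statement correspond to three different ways of estimating the right-hand side.

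For the vanishing in the non-CM case, observe that the conditions listed ($\ell>5$ unramified in $K\mid\Q$, surjective $\bmod\,\ell$ representation, good reduction at characteristic-$\ell$ places of $K$) are exactly the conditions that place $\ell$ \emph{outside} the set $S$ of Theorem \ref{th-intersection}. By Definition \ref{def-tildeK} this forces $\tilde{K}=K$, so the bound reduces to $\#(V_{\ell^n}/[V_{\ell^n},H_{\ell^n}])$; since $\alpha$ is $\ell$-indivisible, Lemma \ref{lem-adelic-good-ell} gives $V_{\ell^n}=[V_{\ell^n},H_{\ell^n}]$, whence $b_\ell=0$. For the vanishing in the CM case, $\tilde{K}$ is the compositum of $K$ with the CM field, so $[\tilde{K}:K]\leq 2$; since the listed conditions require $\ell\geq 3$, we have $v_\ell([\tilde{K}:K])=0$, and Lemma \ref{lemma:CMVanishing} yields $\tilde{V}_{\ell^n}=[\tilde{V}_{\ell^n},\tilde{H}_{\ell^n}]$, again forcing $b_\ell=0$.

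For the remaining (finitely many) primes $\ell$, we feed the general bound of Proposition \ref{prop-quotient-bounded}, namely $\#(\tilde{V}_{\ell^n}/[\tilde{V}_{\ell^n},\tilde{H}_{\ell^n}])$ divides $\ell^{2\tilde{n}_\ell}$, into Proposition \ref{bound-F-over-K}, so that $B_\ell(N)$ divides $[\tilde{K}:K]\cdot \ell^{2\tilde{n}_\ell}$. Taking $\ell$-adic valuations gives
\[
v_\ell(B_\ell(N)) \leq v_\ell([\tilde{K}:K]) + 2\tilde{n}_\ell,
\]
and applying Lemma \ref{lem-param-tilde-bound} to replace $\tilde{n}_\ell$ by $n_\ell+v_\ell([\tilde{K}:K])$ yields
\[
v_\ell(B_\ell(N)) \leq 2n_\ell + 3v_\ell([\tilde{K}:K]),
\]
so we may take $b_\ell = 2n_\ell + 3v_\ell([\tilde{K}:K])$, independently of $N$.

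There is no real technical obstacle beyond correctly tracing the definitions: the finiteness of the exceptional set is governed by Theorem \ref{th-intersection} (in the non-CM case) and by Theorem \ref{thm:CMSerre} together with the finitely many ramified/bad primes (in the CM case), while all the hard input has already been packaged into Proposition \ref{bound-F-over-K}, Proposition \ref{prop-quotient-bounded}, Lemma \ref{lem-adelic-good-ell}, Lemma \ref{lemma:CMVanishing}, and Lemma \ref{lem-param-tilde-bound}. The one subtle point to double-check is the claim that when the non-CM vanishing hypotheses hold, $\ell$ is automatically excluded from $S$ so that $\tilde{K}=K$; this is immediate from the three defining conditions of $S$ in Theorem \ref{th-intersection} compared term-by-term with the hypotheses of the bullet point.
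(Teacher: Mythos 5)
Your proposal is correct and follows essentially the same route as the paper: start from Proposition \ref{bound-F-over-K}, invoke Lemma \ref{lem-adelic-good-ell} (via $\tilde{K}=K$ and $\ell\notin S$) in the non-CM bullet, Lemma \ref{lemma:CMVanishing} together with $\ell\nmid[\tilde{K}:K]\leq 2$ in the CM bullet, and Proposition \ref{prop-quotient-bounded} plus Lemma \ref{lem-param-tilde-bound} for the remaining primes. The only difference is that you spell out the final $\ell$-adic valuation computation explicitly, which the paper leaves implicit.
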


\begin{rem}
The proof shows that the inequality $v_\ell(B_\ell(N)) \leq 2n_\ell + 3v_\ell \left([\tilde{K}:K] \right)$ holds for every prime $\ell$ and for every rational point $\alpha \in E(K)$. In other words, for a fixed prime $\ell$ the adelic failure can be bounded independently of the rational point $\alpha$.
\end{rem}

We can finally prove our first Theorem from the introduction:
\begin{proof}[Proof of Theorem \ref{thm:Main}]
By Remark \ref{rem-NM}, Theorem \ref{thm:Main} follows from Theorems \ref{thm:UpperBoundAl} and \ref{thm:UpperBoundBl} by taking $C:=\prod_\ell \ell^{a_\ell+b_\ell}$.
\end{proof}

\begin{rem}\label{rmk:MainTheoremIsEffective}
Theorem \ref{thm:Main} is completely effective, in the following sense: the quantities $a_\ell$ and $b_\ell$ can be computed in terms of $[\tilde{K}:K]$, $n_\ell$, and the divisibility parameter $d$. The integer $d$ can be bounded effectively in terms of the height of $\alpha$ and of standard invariants of the elliptic curve, as showed in Remark \ref{rmk:dIsEffective}. The remaining quantities $[\tilde{K}:K]$ and $n_\ell$ can be bounded effectively in terms of $[K:\Q]$ and of the height of $E$, as shown in \cite{MR3437765}.
\end{rem}

\section{A counterexample in the CM case}\label{sec:CMCounterexample}
We give an example showing that Proposition \ref{prop-max-growth-kummer} does not hold in the CM case when $\ell$ is split in the field of complex multiplication, and that in fact in this case there can be no uniform lower bound on the image of the Kummer representation depending only on the image of the torsion representation, even when $\alpha$ is strongly $\ell$-indivisible.

Let $E/\mathbb{Q}$ be an elliptic curve with complex multiplication over $\overline \Q$ by the imaginary quadratic field $F$. Let $\alpha \in E(\mathbb{Q})$ be such that the $\ell^n$-arboreal representation attached to $(E,\alpha)$ maps onto $\left( \Z/\ell^n\Z \right)^2 \rtimes N_{\ell^n}$ for every $n \geq 1$, where $N_{\ell^n}$ is the normaliser of a Cartan subgroup $C_{\ell^n}$ of $\GL_2(\Z/\ell^n\Z)$. Suppose furthermore that $\ell$ is split in $F$ and does not divide the conductor of the order $\operatorname{End}_{\overline{\Q}}{E} \subseteq \mathcal{O}_F$.
Such triples $(E, \alpha, \ell)$ exist: by \cite[Example 5.11]{JonesRouse} we can take $E : y^2 = x^3 + 3x$ (which has CM by $\mathbb{Z}[i]$), $\alpha = (1, -2)$ and $\ell = 5$ (which splits in $\mathbb{Z}[i]$). Notice that for this elliptic curve and this $\alpha$ the same property holds for every $\ell \equiv 1 \pmod 4$: \cite[Theorem 1.5 (2)]{MR3766118} implies that for all $\ell \geq 5$ the image of the Galois representation is the full normaliser of a Cartan subgroup, at which point surjectivity of the Kummer representation follows from \cite[Theorem 5.8]{JonesRouse}.

Consider now the image of the arboreal representation associated with the triple $(E / F, \alpha, \ell)$. Base-changing $E$ to $F$ has the effect of replacing the normaliser of the Cartan subgroup with Cartan itself: more precisely we have $\omega_{\ell^n}\left( \gal( F_{\ell^n, \ell^n} \mid F ) \right) = \left( \Z/\ell^n\Z \right)^2 \rtimes C_{\ell^n}$ for every $n \geq 1$. As $\ell$ is split in the quadratic ring $\operatorname{End}_{\overline{\Q}}(E)$, so is the Cartan subgroup $C_{\ell^n}$, and therefore we can assume -- choosing a different basis for $E[\ell^n]$ if necessary -- that $C_{\ell^n}$ is the subgroup of diagonal matrices in $\GL_2(\Z/\ell^n\Z)$. Fix now a large $n$ and let
\[
B_{\ell^n} = \left\{ (t,M) \in \left( \Z/\ell^n\Z \right)^2 \rtimes C_{\ell^n} : t \equiv (*,0) \pmod{\ell^{n-1}} \right\}.
\]
Using the explicit group law on $(\Z/\ell^n\Z)^2 \rtimes C_{\ell^n}$ one checks without difficulty that $B_{\ell^n}$ is a subgroup of $\left( \Z/\ell^n\Z \right)^2 \rtimes C_{\ell^n}$: indeed, given two elements $g_1=(t_1,M_1)$ and $g_2=(t_2,M_2)$ in $B_{\ell^n}$, we have
\[
g_1 \cdot g_2 = (t_1, M_1) \cdot (t_2, M_2) = (t_1 + M_1 t_2, M_1M_2),
\]
and (since $M_1$ is diagonal) the second coordinate of $t_1 + M_1 t_2$ is a linear combination (with $\Z/\ell^n\Z$-coefficients) of the second coordinates of $t_1, t_2$, hence is zero modulo $\ell^{n-1}$. Finally, let $K \subset F_{\ell^n, \ell^n}$ be the field corresponding by Galois theory to the subgroup $B_{\ell^n}$ of $\left( \Z/\ell^n\Z \right)^2 \rtimes C_{\ell^n} \cong \gal(F_{\ell^n,\ell^n} \mid F)$. 

We now study the situation of Proposition \ref{prop-max-growth-kummer} for the elliptic curve $E/K$ and the point $\alpha$. 
By construction, the image of the $\ell^{n-1}$-torsion representation attached to $(E/K, \ell)$ is $C_{\ell^{n-1}}$, so the parameter of maximal growth can be taken to be $n_\ell=1$. We claim that $\alpha \in E(K)$ is strongly $\ell$-indivisible. The modulo-$\ell$ torsion representation is surjective onto $C_\ell$, so that in particular no $\ell$-torsion point of $E$ is defined over $K$, and strongly $\ell$-indivisible is equivalent to $\ell$-indivisible. To see that this last condition holds, notice that if $\alpha$ were $\ell$-divisible then we would have $K_{\ell, \ell}=K_\ell$. However this is not the case, because by construction $\gal(K_{\ell,\ell} \mid K_\ell) = \{ t \in (\Z/\ell\Z)^2 : t \equiv (*,0) \pmod{\ell} \}$ has order $\ell$. Finally, for $k=n-3$ we have
\[
V_{\ell^{k+2n_\ell}} = V_{\ell^{n-1}} = \{ t \in (\Z/\ell^{n-1}\Z)^2 : t \equiv (*,0) \pmod{\ell^{n-1}} \},
\]
which is very far from containing $E[\ell^k]$ -- in fact, the index of $V_{\ell^{k+2n_\ell}}$ in $E[\ell^{k+2n_\ell}]$ can be made arbitrarily large by choosing larger and larger values of $n$. Notice that in any such example the $\ell$-adic representation will be surjective onto a split Cartan subgroup of $\GL_2(\Z_\ell)$.

\section{Uniform Bounds for the Adelic Kummer Representation}
\label{sec:UniformBounds}

Our aim in this section is to show:

\begin{thm}\label{thm:QUniformity}
There is a positive integer $C$ with the following property: for every elliptic curve $E/\Q$ and every strongly indivisible point $\alpha \in E(\Q)$, the image $W_\infty \cong \gal(K_{\infty, \infty} \mid K_\infty)$ of the Kummer map associated with $(E/\Q,\alpha)$ has index dividing $C$ in $\prod_\ell T_\ell(E)$.
\end{thm}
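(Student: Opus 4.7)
The proof will exploit the factorisation $W_\infty = \prod_\ell W_{\ell^\infty}$ noted just before \S2.4, so that
\[
[TE : W_\infty] = \prod_\ell [T_\ell E : W_{\ell^\infty}] = \prod_\ell [T_\ell E : V_{\ell^\infty}] \cdot [V_{\ell^\infty} : W_{\ell^\infty}],
\]
which splits the task into controlling the limits of the $\ell$-adic and adelic failures studied in Sections \ref{sec:lAdicFailure} and \ref{sec:AdelicFailure}. The goal is to upgrade the estimates of Theorems \ref{thm:UpperBoundAl} and \ref{thm:UpperBoundBl} to estimates that are uniform in $E$, and to show that both factors equal $1$ for every prime $\ell$ outside a universal finite set $\mathcal{S}$ of primes.

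\textbf{The universal bad set.} To identify $\mathcal{S}$, I would combine Mazur's theorem \ref{thm:Mazur} with the CM classification. For non-CM $E/\Q$ and $\ell \notin \mathcal{T}_0 = \{2,3,5,7,11,13,17,37\}$, Mazur's theorem rules out a $\Q$-rational subgroup of order $\ell$, so $H_\ell$ acts irreducibly on $E[\ell]$; by Theorem \ref{thm:Zywyna} and Corollary \ref{cor:ContainsScalarsAndConjugation}, $H_\ell$ moreover contains all scalar matrices. For CM $E/\Q$ there are only finitely many $j$-invariants, and Lemma \ref{lemma:Untwisting} shows that each such curve becomes isomorphic, over an extension of degree dividing $12$, to one of finitely many fixed curves; hence the CM analysis contributes only a universally bounded set of primes to $\mathcal{S}$. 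Together with $\mathcal{T}_0$ and the primes in $S$ (Theorem \ref{th-intersection}) forced by each fixed CM model, this gives a universal finite $\mathcal{S}$.

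\textbf{Vanishing outside $\mathcal{S}$.} Fix a non-CM $E/\Q$ and $\ell \notin \mathcal{S}$ (the CM case runs in parallel via the twist reduction). By Lemma \ref{lem:LiftHomothety} the group $H_{\ell^\infty}$ contains a scalar $\lambda \operatorname{Id}$ with $\overline{\lambda}\in \F_\ell^\times$ nontrivial, so Lemma \ref{lemma:ExponentOfH1} forces the exponent of $H^1(H_{\ell^n}, E[\ell^n])$ to be coprime to $\ell$, hence the group is trivial. Lemma \ref{lem-not-divisible} then shows that strong $\ell$-indivisibility of $\alpha$ over $\Q$ persists over $\Q_{\ell^\infty}$, and Lemma \ref{lem-order-n-d} yields a vector of valuation $0$ in $V_{\ell^\infty}$. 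Since $H_\ell$ acts irreducibly, Proposition \ref{prop:GroupTheory}(2) gives $V_{\ell^\infty} = T_\ell E$, killing the $\ell$-adic failure. For the adelic failure, Proposition \ref{bound-F-over-K} reduces the question to the order of $\tilde V_{\ell^n}/[\tilde V_{\ell^n}, \tilde H_{\ell^n}]$; the same scalar $\lambda \operatorname{Id}$ with $\lambda-1 \in \Z_\ell^\times$ forces $[\tilde V_{\ell^n}, \tilde H_{\ell^n}] = \tilde V_{\ell^n}$, exactly as in Lemma \ref{lemma:CMVanishing}. A check using Remark \ref{rem-S-stable} and absorbing any residual primes into $\mathcal{S}$ ensures $\tilde K = \Q$ for $\ell \notin \mathcal{S}$.

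\textbf{Uniform control at bad primes and conclusion.} For each $\ell \in \mathcal{S}$, Theorems \ref{thm:UpperBoundAl} and \ref{thm:UpperBoundBl} bound the two failures in terms of the parameter of maximal growth $n_\ell$, the divisibility parameter $d$ (which is $0$ since $\alpha$ is strongly indivisible), $v_\ell([\tilde K : \Q])$ and, in the CM case, the Cartan parameters. Corollary \ref{cor:UniversalBoundGrowthParameter} makes $n_\ell$ uniform in $E$, and the remaining data are controlled uniformly by the classification results cited above (finitely many CM models, and Theorem \ref{thm:Zywyna} restricting the non-CM images mod the primes in $\mathcal{S}$). Setting $C := \prod_{\ell \in \mathcal{S}} \ell^{c_\ell}$ with $c_\ell$ the universal local bound yields the desired constant. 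I expect the main obstacle to be the uniform treatment of the adelic failure outside $\mathcal{T}_0$ when the $\bmod\,\ell$ image is only $N_{\operatorname{ns}}(\ell)$ or $D(\ell)$: reducing this to the scalar-commutator argument requires a version of Lemma \ref{lem-adelic-good-ell} that bypasses surjectivity of the mod-$\ell$ representation and leans instead on the fact, supplied by Corollary \ref{cor:ContainsScalarsAndConjugation}, that the image always contains every homothety.
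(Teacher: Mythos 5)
Your factorisation $[T_\ell E : W_{\ell^\infty}] = [T_\ell E : V_{\ell^\infty}]\cdot[V_{\ell^\infty}:W_{\ell^\infty}]$ reproduces the $A_\ell/B_\ell$ decomposition of Sections \ref{sec:lAdicFailure}--\ref{sec:AdelicFailure}; the paper's Section \ref{sec:UniformBounds} deliberately abandons this decomposition and bounds $[T_\ell E : W_{\ell^\infty}]$ directly via a uniform bound on the exponent of $H^1(\gal(\Q_N\mid\Q), E[N_1])$ (Proposition \ref{prop:UniformBoundOnCohomology}, fed by the technical Proposition \ref{prop:NewBoundCohomology} on $\Hom(\J,E[\ell^k])^{H_N}$). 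Your route is genuinely different and, as written, has a gap in the adelic-failure factor.

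The $\ell$-adic side of your argument is sound: for $\ell\notin\mathcal{T}_0$, Mazur plus Corollary \ref{cor:ContainsScalarsAndConjugation} give a scalar $\overline\lambda\Id\in H_\ell$ with $\overline\lambda\neq 1$, which lifts to $H_{\ell^\infty}$ by Lemma \ref{lem:LiftHomothety} and kills $H^1(H_{\ell^n},E[\ell^k])$ by Sah's lemma; irreducibility of $E[\ell]$ then gives $V_{\ell^\infty}=T_\ell E$ via Lemma \ref{lem-order-n-d} and Proposition \ref{prop:GroupTheory}(2). The problem is the second factor. Theorem \ref{thm:UpperBoundBl} bounds $B_\ell(N)$ by $[\tilde K:K]\cdot\ell^{2\tilde n_\ell}$, via Proposition \ref{bound-F-over-K}, and both $[\tilde K:K]$ and $\tilde n_\ell\leq n_\ell+v_\ell([\tilde K:K])$ depend on the curve through the set $S$ of Theorem \ref{th-intersection}: $S$ contains all primes of bad reduction of $E$ and all primes where the mod-$p$ representation is not surjective, and $\tilde K$ is the compositum of the $K_p$ for $p\in S$. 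Neither the size of $S$ nor $v_\ell([\tilde K:K])$ is bounded independently of $E$; for any prime $\ell$ one can choose $E/\Q$ with bad reduction at several primes $p\equiv 1\pmod\ell$, making $v_\ell([\tilde K:\Q])$ as large as you like. Your phrase ``absorbing any residual primes into $\mathcal{S}$ ensures $\tilde K=\Q$ for $\ell\notin\mathcal{S}$'' is where the argument breaks: there is no universal finite set $\mathcal{S}$ containing $S(E)$ for all $E/\Q$, so one cannot arrange $\tilde K=\Q$ outside a universal set.

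You do anticipate an obstruction, but you locate it in the wrong place (the non-surjective cases $N_{\operatorname{ns}}(\ell)$, $D(\ell)$), and the fix you suggest --- extending Lemma \ref{lem-adelic-good-ell} using the scalars from Corollary \ref{cor:ContainsScalarsAndConjugation} --- cannot repair the $[\tilde K:K]$ dependence. Worse, when $\ell\in S$ the base change to $\tilde K$ kills the mod-$\ell$ image (by construction $\tilde K_\ell=\tilde K$), so $\tilde H_{\ell^n}$ reduces trivially modulo $\ell$ and contains no scalar with $\lambda-1$ a unit: the commutator argument simply does not apply to $\tilde V_{\ell^n}/[\tilde V_{\ell^n},\tilde H_{\ell^n}]$. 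The missing ingredient is precisely a uniform bound on the divisibility of $\alpha$ over the \emph{entire} torsion tower $\Q_\infty$ (not just $\Q_{\ell^\infty}$), i.e.\ a uniform bound on $H^1(\gal(\Q_N\mid\Q),E[\ell^k])$ with $N$ having many prime factors. That is what the paper's Proposition \ref{prop:NewBoundCohomology} supplies, by a careful group-theoretic analysis of $\Hom(\gal(\Q_N\mid\Q_{\ell^k}),E[\ell^k])^{H_N}$ using the classification of Theorem \ref{thm:Zywyna} and Lemma \ref{lemma-solvable}. Your proposal has no substitute for this, and without it the uniformity claim cannot be obtained along the route you describe.
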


This result immediately implies Theorem \ref{thm:UniformIntroduction}:
\begin{proof}[Proof of Theorem \ref{thm:UniformIntroduction}]
By Remark \ref{rem:Vsubgroup}, for every $N\mid M$ the ratio
$
\displaystyle \frac{N^2}{[\Q_{M,N}: \Q_M]}$ divides 
\[
\displaystyle \frac{N^2}{[\Q_{\infty,N}: \Q_\infty]}=
\left[(\hat \Z/N\hat\Z)^2:W_\infty/N W_\infty\right],
\]
which in turn divides $[\hat\Z^2:W_\infty]$.
\end{proof}

As in Subsection \ref{subsec:PossibleImagesModl}, we will denote by $\mathcal{T}_0$ the finite set of primes
\[
\mathcal{T}_0:=\set{p\text{ prime }\mid p\leq 17}\cup\{37\}.
\]

\subsection{Bounds on Cohomology Groups}

Let $E/\Q$ be an elliptic curve and $N_1,N_2$ be positive integers with $N_1\mid N_2$. The first step in the proof of Theorem \ref{thm:QUniformity} is to bound the exponent of the cohomology group $H^1(H_{N_2},E[N_1])$. In the course of the proof we shall need the following technical result, which will be proved in Section \ref{section:ProofOf}.

\begin{prop}\label{prop:NewBoundCohomology}
There is a universal constant $e$ satisfying the following property. Let $E/\Q$ be a non-CM elliptic curve, $N$ a positive integer and $\ell$ a prime factor of $N$. Let $\ell^k$ be the largest power of $\ell$ dividing $N$ and $\J=\gal(\Q_N \mid \Q_{\ell^k})\triangleleft H_N$. Consider the action of $H_N$ on $\Hom(J,E[\ell^k])$ defined by $(h\psi)(x)=h\psi(h^{-1}xh)$ for all $h\in H_N$, $\psi:\J\to E[\ell^k]$ and $x\in \J$. Then the exponent of $\operatorname{Hom}\left(\J , E[\ell^k] \right)^{H_N}$ divides $e$.
\end{prop}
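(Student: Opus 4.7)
The plan is to use the Hochschild--Serre five-term exact sequence associated with the extension $1 \to J \to H_N \to H_{\ell^k} \to 1$, viewing $E[\ell^k]$ as an $H_N$-module through the projection to $H_{\ell^k}$. Since $J$ acts trivially on $E[\ell^k]$, one has $H^1(J, E[\ell^k]) = \Hom(J, E[\ell^k])$, and a direct check shows that the natural $H_{\ell^k}$-action on this $H^1$ agrees with the one in the statement and that the induced action of $J$ itself is trivial (because $\psi(gxg^{-1}) = \psi(x)$ for every $\psi \in \Hom(J, E[\ell^k])$ and every $g \in J$, by additivity of $\psi$). Hence $\Hom(J,E[\ell^k])^{H_N}$ fits in the five-term exact sequence
\[
0 \to H^1(H_{\ell^k}, E[\ell^k]) \to H^1(H_N, E[\ell^k]) \to \Hom(J, E[\ell^k])^{H_N} \to H^2(H_{\ell^k}, E[\ell^k]),
\]
so that it suffices to bound the exponents of $H^1(H_N, E[\ell^k])$ and $H^2(H_{\ell^k}, E[\ell^k])$.

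The main tool is a higher-degree analogue of Lemma \ref{lemma:ExponentOfH1} (Sah's lemma): for any group $G$ and $G$-module $A$, a central element $z \in G$ acting on $A$ as multiplication by $\lambda$ satisfies $(\lambda-1) H^n(G,A) = 0$ for every $n \geq 0$. For $H^2(H_{\ell^k}, E[\ell^k])$, Lemma \ref{lemma:ImageAlwaysContainsScalars} supplies the central scalar $(1+\ell^{n_\ell})\Id$, yielding exponent dividing $\ell^{n_\ell}$. For $H^1(H_N, E[\ell^k])$ the plan is to exhibit a central element of $H_N$ whose first coordinate, under the embedding $H_N \hookrightarrow H_{\ell^k} \times H_M$ with $N = \ell^k M$ and $\gcd(\ell,M)=1$, is a non-trivial scalar $\lambda \Id$; an element $(z_1,z_2) \in H_N$ is automatically central in $H_N$ whenever $z_1 \in Z(H_{\ell^k})$ and $z_2 \in Z(H_M)$, and since the $H_N$-action on $E[\ell^k]$ factors through the first coordinate, Sah's lemma produces a bound of the form $\ell^{v_\ell(\lambda-1)}$.

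Combining these facts prime-by-prime, the exponent of $\Hom(J,E[\ell^k])^{H_N}$ divides some power $\ell^{e_\ell}$, and the universal constant $e$ should take the form $\prod_\ell \ell^{e_\ell}$, provided this product is finite. For $\ell \in \mathcal{T}_0$, Corollary \ref{cor:UniversalBoundGrowthParameter} (applied with $K = \Q$) provides a uniform upper bound on $n_\ell$, so these finitely many primes contribute a bounded factor. For $\ell \notin \mathcal{T}_0$, the strategy is to show $e_\ell = 0$: by Corollary \ref{cor:ContainsScalarsAndConjugation}(1) together with Lemma \ref{lem:LiftHomothety}, one can find a scalar $\lambda \Id \in H_{\ell^k}$ lifting a primitive root modulo $\ell$, so that $\lambda - 1 \in \Z_\ell^\times$; if this scalar can be extended to a central element of $H_N$, Sah's lemma makes the corresponding exponent trivial.

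The main obstacle lies precisely in this last step: showing that for $\ell \notin \mathcal{T}_0$ the scalar $\lambda \Id$ can always be paired with some $z_2 \in Z(H_M)$ so that $(\lambda \Id, z_2) \in H_N$. This amounts to verifying that the image of $\lambda \Id$ in the quotient $H_{\ell^k}/K' \cong H_M/I$ (where $K' := \ker(H_N \to H_M)$ and $I$ is the image of $J$ in $H_M$) lies in $(Z(H_M) \cdot I)/I$. I expect this to follow from a case-by-case analysis of the three possibilities for $H_\ell$ described in Theorem \ref{thm:Zywyna}, combined with the well-understood structure of scalar matrices inside $H_M \leq \GL_2(\Z/M\Z)$ and the fact that the obstructing quotient $H_{\ell^k}/K'$ is itself a rather constrained subquotient of $\GL_2(\Z/M\Z)$.
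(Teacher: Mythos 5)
Your plan takes a genuinely different route from the paper's. The paper argues directly on the group $\J$: it shows (Proposition \ref{prop:BoundExponentg}) that for a universal integer $M$, the conjugation action of $g^M$ on $\JJ^{\ab}$ is trivial for every $g\in H_N$, and then uses the $H_N$-invariance of $\psi$ to place $\operatorname{Im}(\psi)$ inside $E[\ell^k]^{H_{\ell^k}(M)}$, whose exponent is bounded via Theorem \ref{thm:Arai} and Lemma \ref{lem:Cohen}. Your plan instead runs the five-term inflation--restriction sequence and tries to control the two outer cohomology groups by Sah's lemma. The $H^2(H_{\ell^k},E[\ell^k])$ part is fine (the generalisation of Sah to higher degree and the input from Lemma \ref{lemma:ImageAlwaysContainsScalars} and Corollary \ref{cor:UniversalBoundGrowthParameter} all work), but the step you flag as the "main obstacle" is exactly a genuine gap, and it is not a small one.

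The issue is the need for an element $(z_1,z_2)\in H_N$ that is \emph{central} in $H_N$ and whose $\ell$-component $z_1$ is a scalar nontrivial modulo $\ell$. By Goursat, $H_N$ is determined by the normal subgroups $K_1=\pi_\ell(K')\triangleleft H_{\ell^k}$, $K_2=I\triangleleft H_M$ and an isomorphism $H_{\ell^k}/K_1\cong H_M/K_2$; the element you want exists iff $\lambda\operatorname{Id}\cdot K_1$ maps into $(Z(H_M)K_2)/K_2$ under this isomorphism, and there is no a priori reason this holds. The Weil pairing constrains determinants but does not control this "Goursat quotient," and the abelianisation $(\Z/\ell^k\Z)^\times$ of $\GL_2(\Z/\ell^k\Z)$ can in principle entangle with subquotients of $H_M$ in ways that avoid the image of $Z(H_M)$. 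This is precisely the difficulty the paper is designed to circumvent: rather than seeking a globally central element, it takes an \emph{arbitrary} $g\in H_N$, passes to $g^M$, and shows (prime by prime, using Zywina's Theorem \ref{thm:Zywyna}, Corollary \ref{cor:ContainsScalarsAndConjugation}, and Lemmas \ref{lem:SL2Factor}--\ref{cor:gpId}) that $g^M$ acts trivially on $\JJ^{\ab}$. Centralising $\JJ^{\ab}$ is much weaker than being central in $H_N$, yet it is exactly what the invariance of $\psi$ requires: if $g^M$ acts trivially on $\JJ^{\ab}$ then $\psi = g^M\psi$ forces $\operatorname{Im}\psi\subseteq E[\ell^k]^{g^M}$, with no need for $g^M$ to commute with anything outside $\JJ$.

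Concretely, you should weaken your target: instead of seeking a central element of $H_N$, aim to show that some universal power $g^M$ of any $g\in H_N$ acts trivially by conjugation on $\JJ^{\ab}$. Your planned case analysis of Theorem \ref{thm:Zywyna} is the right tool for this weaker statement, and is in essence what the paper does; but as written, your proposal sets up a stronger goal whose achievability is not established, and the argument does not close without it. A secondary (non-fatal) point: you also need the central element at the primes $\ell\in\mathcal T_0$ to get a nontrivial scalar of bounded distance from $1$, so the gap is present there too, not only for $\ell\notin\mathcal T_0$.
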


\begin{prop}\label{prop:UniformBoundOnCohomology}
There is a positive integer $C_1$ with the following property. Let $E/\mathbb{Q}$ be an elliptic curve, $N_1$ and $N_2$ be positive integers with $N_1\mid N_2$. Then the exponent of $H^1(H_{N_2}, E[N_1])$ divides $C_1$.
\end{prop}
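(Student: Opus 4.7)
The plan is to reduce to bounding the exponent of $H^1(H_{N_2}, E[\ell^k])$ uniformly in $E/\Q$, the prime $\ell$, and $k \leq k' := v_\ell(N_2)$, and then to split this cohomology via inflation--restriction into two pieces that I handle separately.

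For the reduction, observe that as an $H_{N_2}$-module $E[N_1]$ decomposes as $\bigoplus_{\ell \mid N_1} E[\ell^{v_\ell(N_1)}]$, inducing a corresponding decomposition of $H^1(H_{N_2},E[N_1])$; each summand is an $\ell$-group, so uniform bounds on each $H^1(H_{N_2}, E[\ell^k])$ assemble into a uniform bound on the full cohomology group. Setting $J := \gal(\Q_{N_2} \mid \Q_{\ell^{k'}})$, which is normal in $H_{N_2}$ and acts trivially on $E[\ell^k] \subseteq E[\ell^{k'}]$, the inflation--restriction sequence reads
\[
0 \to H^1(H_{\ell^{k'}}, E[\ell^k]) \to H^1(H_{N_2}, E[\ell^k]) \to \Hom(J, E[\ell^k])^{H_{\ell^{k'}}},
\]
and I must bound the two outer terms uniformly.

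For the $\Hom$-term, the $H_{\ell^{k'}}$-equivariant inclusion $E[\ell^k] \hookrightarrow E[\ell^{k'}]$ yields an injection $\Hom(J, E[\ell^k])^{H_{\ell^{k'}}} \hookrightarrow \Hom(J, E[\ell^{k'}])^{H_{\ell^{k'}}}$, and in the non-CM case Proposition \ref{prop:NewBoundCohomology} (applied with $N = N_2$, so that the ``$\ell^k$'' appearing there is my $\ell^{k'}$) bounds the exponent of the target by the universal constant $e$. For the $H^1$-term, Lemma \ref{lemma:ImageAlwaysContainsScalars} combined with Corollary \ref{cor:UniversalBoundGrowthParameter} produces an integer $n_\ell$ depending only on $\ell$ such that $(1 + \ell^{n_\ell})\Id \in H_{\ell^{k'}}$ for every elliptic curve $E/\Q$, so Lemma \ref{lemma:ExponentOfH1} bounds the exponent of $H^1(H_{\ell^{k'}}, E[\ell^k])$ by $\ell^{n_\ell}$. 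This already controls the (finitely many) primes $\ell$ for which the coarse bound $\ell^{n_\ell} > 1$ must be used; for the remaining primes in the non-CM case, a sharper argument gives exponent $0$. Indeed, Corollary \ref{cor:ContainsScalarsAndConjugation}(1) guarantees that $H_\ell$ contains $\overline{2}\cdot\Id$ whenever $\ell \notin \mathcal{T}_0$, Lemma \ref{lem:LiftHomothety} lifts this to a scalar $\lambda\Id \in H_{\ell^\infty}$ with $\lambda - 1 \in \Z_\ell^\times$, and the Sah computation of Lemma \ref{lemma:ExponentOfH1}, run with a unit in place of $\ell^{n_0}$, forces every cocycle $H_{\ell^{k'}} \to E[\ell^k]$ to be a coboundary.

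The CM case of the $\Hom$-term is not covered by Proposition \ref{prop:NewBoundCohomology} and must be treated separately: there are only finitely many rational $j$-invariants with CM, so Lemma \ref{lemma:Untwisting} allows me to reduce any CM $E/\Q$ to one of finitely many fixed model curves after base change to an extension of degree dividing $12$, and transferring the resulting bound back to $\Q$ costs only a bounded inflation--restriction factor; over such an extension the image of Galois lies in the Cartan normaliser associated with the CM order, whose structure (together with Theorem \ref{thm:CMSerre}) provides the analogous uniform control on $\Hom(J, E[\ell^{k'}])^{H_{\ell^{k'}}}$. The main obstacle is precisely proving Proposition \ref{prop:NewBoundCohomology} and its CM counterpart; once these are in hand, the scalar-averaging arguments and the uniform openness of the torsion representation handle the $H^1$-term across all primes and all curves simultaneously.
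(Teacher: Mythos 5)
Your non-CM argument matches the paper's almost exactly: you use the same decomposition of $E[N_1]$ into $\ell$-primary pieces, the same inflation–restriction sequence with $J = \gal(\Q_{N_2}\mid\Q_{\ell^{v_\ell(N_2)}})$, the same bound on the $\Hom$-term via Proposition \ref{prop:NewBoundCohomology}, and the same control of $H^1(H_{\ell^{k'}},E[\ell^k])$ through a scalar/Sah argument (the paper deduces the existence of a nontrivial homothety in $H_\ell$ from Mazur's theorem and \cite[Lemma 4]{LawsonWutrich} rather than from Corollary \ref{cor:ContainsScalarsAndConjugation}, but the effect is identical). That part is fine.

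The genuine gap is the CM case. You correctly notice that Proposition \ref{prop:NewBoundCohomology} is stated only for non-CM curves, but your proposed workaround (reduce to finitely many model curves via Lemma \ref{lemma:Untwisting}, transfer bounds across a degree-$12$ extension, and invoke a "CM counterpart" of Proposition \ref{prop:NewBoundCohomology} which you do not prove) leaves the argument incomplete: nothing in the paper provides that counterpart, and the twist-transfer step would itself need a careful inflation–restriction comparison between $\gal(\Q_{N}\mid\Q)$ and $\gal(F_{N}\mid F)$. The paper avoids all of this: in the CM case it bypasses inflation–restriction entirely and argues globally. By \cite[Theorem 1.5]{MR3766118} the index $d$ of $H_\infty \cap \prod_\ell \mathcal{O}_\ell^\times$ inside $\prod_\ell \mathcal{O}_\ell^\times$ is at most $6$, so $H_\infty$ contains $\hat{\Z}^{\times d}$, hence the homothety $\lambda = (\lambda_\ell)$ with $\lambda_2 = 3^d$ and $\lambda_\ell = 2^d$ for odd $\ell$. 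Sah's Lemma then gives $(\lambda - 1)\,H^1(H_{N_2},E[N_1]) = 0$, and since $\lambda_\ell - 1 \in \Z_\ell$ is nonzero for every $\ell$ and a unit for almost every $\ell$, with $d \leq 6$ bounded, the exponent of the whole cohomology group is uniformly bounded. You should replace your CM sketch with this direct argument (or supply a full proof of the missing CM analogue of Proposition \ref{prop:NewBoundCohomology}), as the current version does not close the CM case.
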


\begin{proof}
We can prove the statement separately for CM and non-CM curves, and then conclude by taking the least common multiple of the two constants obtained in the two cases.

Assume first that $E/\Q$ has CM over $\overline \Q$. Let $F$ be the CM field of $E$, $\mathcal{O}_F$ the ring of integers of $F$ and $\mathcal{O}_\ell:=\mathcal{O}_F \otimes_{\Z}\Z_\ell$. By \cite[Theorem 1.5]{MR3766118} we have $d:=\left[\prod_\ell \mathcal{O}_\ell^\times:H_\infty\cap \prod_\ell \mathcal{O}_\ell^\times\right]\leq 6$. In particular all the $d$-th powers of elements in $\prod_\ell \mathcal{O}_\ell^\times$ are in $H_\infty$, hence we have $\hat \Z^{\times d}\subseteq H_\infty\subseteq \prod_\ell \GL_2(\Z_\ell)$ and $H_\infty$ contains the nontrivial homothety $\lambda=(\lambda_\ell)$, where $\lambda_2=3^d$ and $\lambda_\ell=2^d$ for $\ell\neq 2$. By Sah's Lemma \cite[Lemma A.2]{MR2018998} we have $(\lambda-1)H^1(H_{N_2}, E[N_1])=0$. Notice that the image of $\lambda-1$ in $\Z_\ell$ is nonzero for all $\ell$, and that it is invertible for almost all $\ell$. The claim follows from the fact that $d$ is bounded.

Assume now that $E$ does not have complex multiplication over $\overline \Q$. As cohomology commutes with finite direct products we have
\[
\begin{aligned}
H^1(H_{N_2}, E[N_1]) & \cong H^1 \left( H_{N_2}, \bigoplus_{\ell^v \mid {N_1}} E[\ell^v] \right) \cong \bigoplus_{\ell^v \mid {N_1}}  H^1 \left( H_{N_2}, E[\ell^v] \right).
\end{aligned}
\]
Fix an $\ell$ in this sum and let $\J = \gal(\Q_{N_2} \mid \Q_{\ell^k}) \triangleleft H_{N_2}$, where $\ell^k$ is the largest power of $\ell$ dividing ${N_2}$. By the inflation-restriction sequence we get
\[
0 \to H^1(H_{N_2}/\J, E[\ell^v]^{\J}) \to H^1(H_{N_2}, E[\ell^v]) \to H^1(\J, E[\ell^v])^{H_{N_2}};
\]
since by definition $\J$ fixes $E[\ell^v]$, this is the same as
\[
0 \to H^1( H_{\ell^k} , E[\ell^v]) \to H^1(H_{N_2}, E[\ell^v]) \to \operatorname{Hom}(\J, E[\ell^v])^{H_{N_2}}.
\]
It is clear that the exponent of $H^1(H_{N_2}, E[{N_1}])$ is the least common multiple of the exponents of the direct summands $H^1 \left(H_{N_2}, E[\ell^v] \right)$ for $\ell \mid {N_1}$, so we can focus on one such summand at a time. Furthermore, the above inflation-restriction exact sequence shows that the exponent of $H^1(H_{N_2},E[\ell^v])$ divides the product of the exponents of $H^1( H_{\ell^k} , E[\ell^v])$ and of $\operatorname{Hom}(\J, E[\ell^v])^{H_{N_2}}$. It is enough to give a uniform bound for the exponents of these two cohomology groups.

\begin{itemize}
\item $\boxed{H^1( H_{\ell^k}, E[\ell^v])}$ Assume first that $\ell\not\in\mathcal T_0$. By Theorem \ref{thm:Mazur}, $H_\ell$ is not contained in a Borel subgroup of $\GL_2(\F_\ell)$, so by \cite[Lemma 4]{LawsonWutrich} it contains a nontrivial homothety. By Lemma \ref{lem:LiftHomothety} the image $H_{\ell^\infty}$ of the $\ell$-adic representation contains a homothety that is non-trivial modulo $\ell$, so by Sah's Lemma \cite[Lemma A.2]{MR2018998} we have $H^1( H_{\ell^k}, E[\ell^v])=0$.
For $\ell \in \mathcal{T}_0$ let $n_\ell$ be a universal bound on the parameter of maximal growth of the $\ell$-adic representation, as in Corollary \ref{cor:UniversalBoundGrowthParameter}. 
By Lemma \ref{lemma:ImageAlwaysContainsScalars} we have $(1+\ell^{n_\ell})\Id \in H_{\ell^k}$, and from Lemma \ref{lemma:ExponentOfH1} we obtain that the exponent of $H^1( H_{\ell^k}, E[\ell^v])$ divides $\ell^{n_\ell}$. 
\item $\boxed{\operatorname{Hom}(\J, E[\ell^v])^{H_{N_2}}}$ As $v \leq k$, this group is contained in ${\operatorname{Hom}(\J, E[\ell^k])^{H_{N_2}}}$, whose exponent is uniformly bounded by Proposition \ref{prop:NewBoundCohomology}. Notice that the action of $H_{N_2}$ on $\operatorname{Hom}(J, E[\ell^k])$ is precisely the one considered in Proposition \ref{prop:NewBoundCohomology} by well-known properties of the inflation-restriction exact sequence (see for example \cite[Theorem 4.1.20]{MR1282290}).
\end{itemize}
\end{proof}

\begin{cor}\label{cor:UniformBoundDivisibility}
Let $C_1$ be as in Proposition \ref{prop:UniformBoundOnCohomology}. Let $E/\mathbb{Q}$ be an elliptic curve and let $\alpha \in E(\Q)$ be a strongly indivisible point.
If $\alpha$ is divisible by $n\geq 1$ over $\Q_\infty$, then $n \mid C_1$.
\end{cor}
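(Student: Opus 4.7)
My plan is to combine the exact sequence of Lemma~\ref{lem-divibility-1} with the uniform cohomological bound of Proposition~\ref{prop:UniformBoundOnCohomology}, then project to the free part of $E(\Q)$ and use strong indivisibility.

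First, since $\alpha$ is $n$-divisible over $\Q_\infty=\bigcup_M \Q_M$, I would pick a witness $\beta\in E(\Q_M)$ with $n\beta=\alpha$ for some finite $M$, and enlarge $M$ to $\lcm(M,n)$ if necessary so that $n\mid M$ (this ensures $E[n]\subseteq E(\Q_M)$, which is what lets me drop the ``$(L)$'' in the coefficients of the cohomology group). Then Lemma~\ref{lem-divibility-1}, applied with $K=\Q$, $L=\Q_M$ and $m=n$, furnishes an injection
\[
\frac{E(\Q)\cap nE(\Q_M)}{nE(\Q)}\hookrightarrow H^1(H_M, E[n]).
\]
By construction the class of $\alpha$ lies in the left-hand side and maps to some $\xi$ on the right. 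Proposition~\ref{prop:UniformBoundOnCohomology} says that the exponent of $H^1(H_M, E[n])$ divides $C_1$, so $C_1\xi=0$, and injectivity yields $C_1\alpha = n\gamma$ for some $\gamma\in E(\Q)$.

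To finish, I would pass to the free abelian group $A:=E(\Q)/E(\Q)_{\tors}$, where the relation reads $C_1\bar\alpha = n\bar\gamma$. Setting $d:=\gcd(C_1,n)$ and writing $C_1=da$, $n=db$ with $\gcd(a,b)=1$, I get $a\bar\alpha=b\bar\gamma$; Bézout's identity then immediately gives $\bar\alpha\in b\cdot A$. Strong indivisibility of $\alpha$ forces $b=1$, i.e.\ $n\mid C_1$, as required.

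There is no serious obstacle in this argument itself: all the real work has been offloaded into Proposition~\ref{prop:UniformBoundOnCohomology} (which in turn rests on the technical Proposition~\ref{prop:NewBoundCohomology}). The only subtlety I would keep an eye on is that Lemma~\ref{lem-divibility-1} is only applied at a finite level $M$, so I must not need uniformity in $M$ at that stage; fortunately the bound from Proposition~\ref{prop:UniformBoundOnCohomology} is already uniform in both $E$ and in the torsion level $N_2=M$, which is exactly what makes the passage from $\Q_\infty$ back to a finite $\Q_M$ harmless.
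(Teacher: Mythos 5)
Your argument is correct, and it rests on the same two pillars as the paper's proof (the exact sequence of Lemma~\ref{lem-divibility-1} together with the uniform bound of Proposition~\ref{prop:UniformBoundOnCohomology}), but the endgame is organised differently. The paper first reduces to the case $n=\ell^e$ a prime power, and then invokes the intermediate Lemma~\ref{lem-not-divisible}, whose proof uses the \emph{strongly $\ell$-indivisible} formulation of the hypothesis: starting from $\ell^k\alpha=\ell^{k+1}\beta$ one produces a torsion point $T=\ell\beta-\alpha$ with $\alpha+T\in\ell E(\Q)$, contradicting strong $\ell$-indivisibility. You instead work with a general $n$, obtain $C_1\alpha=n\gamma$ in $E(\Q)$ directly from the cohomological bound, pass to the free quotient $E(\Q)/E(\Q)_{\tors}$, and finish with a gcd/Bézout computation using the equivalent characterisation of strong indivisibility in terms of the free part (Definition~\ref{def-divisib}). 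Both routes are equally short; yours avoids the prime-by-prime reduction and the auxiliary Lemma~\ref{lem-not-divisible} at the cost of a small amount of elementary number theory, and has the minor advantage of applying Lemma~\ref{lem-divibility-1} at the exact modulus $m=n$ (so the coefficient module is exactly $E[n]$), whereas the paper's route applies it at modulus $\ell^{k+1}$ and then compares coefficients, which requires a little more care. Your observation that one should enlarge $M$ so that $n\mid M$ is exactly the right move, both to make $E[n](\Q_M)=E[n]$ and to put yourself in the situation $N_1\mid N_2$ required by Proposition~\ref{prop:UniformBoundOnCohomology}.
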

\begin{proof}
Without loss of generality we can assume that $n=\ell^e$ is a power of a prime $\ell$. Since $\Q_\infty$ is the union of the torsion fields $\Q_N$, there exists $N$ such that $\alpha$ is divisible by $\ell^e$ over $\Q_N$, and we may assume that $\ell^e$ divides $N$. The claim then follows from Lemma \ref{lem-not-divisible}, since by Proposition \ref{prop:UniformBoundOnCohomology} the exponent of $H^1(\gal(\Q_N\mid\Q),E[\ell^e])$ is a power of $\ell$ that divides $C_1$.
\end{proof}

\begin{lem}\label{cor:BoundInTermsOfC1}
Let $C_1$ be as in Proposition \ref{prop:UniformBoundOnCohomology}. The following hold for every prime $\ell$:
\begin{enumerate}
\item The $\Z_\ell$-module $W_{\ell^\infty}$, considered as a submodule of $\Z_\ell^2$, contains a vector of valuation at most $v_\ell(C_1)$.
\item If $E$ does not have CM over $\overline\Q$ and $n_\ell$ is a parameter of maximal growth for the $\ell$-adic torsion representation, then $W_{\ell^\infty}$ contains $\ell^{n_\ell + v_\ell(C_1)} T_\ell(E)$.
\item If $E[\ell]$ is an irreducible $H_\ell$-module, then $W_{\ell^\infty}$ contains $\ell^{v_\ell(C_1)} T_\ell(E)$.
\item If $E$ has CM over $\overline \Q$, let $(\gamma,\delta)$ be parameters for the Cartan subgroup of $\GL_2(\Z_\ell)$ corresponding to $\operatorname{End}_{\overline \Q}(E)$. If $n_\ell$ is a parameter of maximal growth for the $\ell$-adic torsion representation, then $W_{\ell^\infty}$ contains $\ell^{3n_\ell + v_\ell(4\delta C_1)} T_\ell(E)$.
\end{enumerate}
\end{lem}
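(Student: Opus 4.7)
The lemma splits naturally into an existence claim for a small vector in $W_{\ell^\infty}$ (part (1)) and three containment statements for sublattices of $T_\ell(E)$ (parts (2)--(4)). My plan is to prove (1) via the divisibility bound supplied by Corollary~\ref{cor:UniformBoundDivisibility}, and then feed the resulting integer $d := v_\ell(C_1)$ into the three parts of Proposition~\ref{prop:GroupTheory}, applied with $V = W_{\ell^\infty}$ and $H = H_{\ell^\infty}$.

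For (1), since $\alpha$ is strongly indivisible in $E(\Q)$, Corollary~\ref{cor:UniformBoundDivisibility} forbids $\alpha$ from being divisible by $\ell^{v_\ell(C_1)+1}$ over $\Q_\infty$. The second assertion of Lemma~\ref{lem-order-n-d}, applied with $d = v_\ell(C_1)$, then produces a vector of valuation at most $v_\ell(C_1)$ inside $W_{\ell^\infty}$.

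For parts (2) and (3): by Lemma~\ref{lemma:HnActionOnVn}, $W_{\ell^\infty}$ is an $H_{\ell^\infty}$-submodule of $T_\ell(E) \cong \Z_\ell^2$ and hence an $A$-module with $A = \Z_\ell[H_{\ell^\infty}]$; together with part (1), this is exactly the setup of Proposition~\ref{prop:GroupTheory}. Part (3) is immediate from Proposition~\ref{prop:GroupTheory}(2), whose hypothesis matches ours verbatim. For part (2), Lemma~\ref{lem-all-mat} shows that $H_{\ell^\infty}$ contains every matrix congruent to $\Id$ modulo $\ell^{n_\ell}$, so Proposition~\ref{prop:GroupTheory}(1) with $n = n_\ell$ and $d = v_\ell(C_1)$ yields the bound.

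Part (4) is the delicate one. The CM field of $E$ is imaginary quadratic and therefore not contained in $\Q$, so the CM is not $\Q$-rational; this forces $H_{\ell^\infty} \subseteq N$ but $H_{\ell^\infty} \not\subseteq C$ (were it contained in $C$, the action of Galois on $T_\ell(E)$ would commute with $\operatorname{End}_{\overline{\Q}}(E) \otimes \Z_\ell$, i.e.\ the CM would be defined over $\Q$), and $H_{\ell^\infty}$ is open in $N$ by the classical theory of CM elliptic curves (compatible with Proposition~\ref{prop-max-growth}, which gives $\#H_{\ell^{n+1}}/\#H_{\ell^n} = \ell^2$ from $n_\ell$ onwards). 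The only remaining hypothesis to check for Proposition~\ref{prop:GroupTheory}(3) is the containment $H_{\ell^\infty} \supseteq \{M \in C : M \equiv \Id \pmod{\ell^{n_\ell}}\}$. I plan to establish this by extending the argument from the proof of Lemma~\ref{lemma:ImageAlwaysContainsScalars} from the scalar element $(1+\ell^{n_\ell})\Id$ to every Cartan element congruent to $\Id$ modulo $\ell^{n_\ell}$: the identification $\ker(H_{\ell^{n+1}} \to H_{\ell^n}) = \Id + \ell^n \mathbb{T}$ for $n \geq n_\ell$ (with $\mathbb{T}$ the Cartan tangent space) lets one lift any such Cartan element to successively higher levels, correcting the inevitable $\ell^{n+1}$-order errors by multiplication with further kernel elements of the form $\Id - \ell^n t$ and using commutativity of $C$ to ensure that the cross terms vanish modulo the relevant precision. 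Proposition~\ref{prop:GroupTheory}(3) with $n = n_\ell$ and $d = v_\ell(C_1)$ then yields $W_{\ell^\infty} \supseteq \ell^{3n_\ell + v_\ell(4\delta) + v_\ell(C_1)} T_\ell(E) = \ell^{3n_\ell + v_\ell(4\delta C_1)} T_\ell(E)$. The main obstacle is precisely this extension of the scalar argument to general Cartan elements; once that is in place, every part of the lemma follows by a routine invocation of previously established results.
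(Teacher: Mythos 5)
Your proof is correct and follows essentially the same route as the paper: part (1) is Corollary~\ref{cor:UniformBoundDivisibility} fed into Lemma~\ref{lem-order-n-d}, and parts (2)–(4) are direct applications of the three cases of Proposition~\ref{prop:GroupTheory} with $V=W_{\ell^\infty}$, $H=H_{\ell^\infty}$, $d=v_\ell(C_1)$ and $n=n_\ell$. The paper's own proof is just a one-line pointer to these ingredients (plus the observation that no $E/\Q$ has CM defined over $\Q$), so it is equally silent on the point you flag for part (4) — namely that $H_{\ell^\infty}$ contains the congruence Cartan subgroup $\{M\in C: M\equiv \Id \pmod{\ell^{n_\ell}}\}$. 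Your plan to obtain this by running the lifting argument of Lemma~\ref{lemma:ImageAlwaysContainsScalars} with a general Cartan element in place of $(1+\ell^{n_\ell})\Id$ does close that gap: for $n\ge n_\ell$ the kernel $\ker(H_{\ell^{n+1}}\to H_{\ell^n})=\Id+\ell^n\mathbb{T}$ sits inside the Cartan coset (as the paper notes, congruence to $\Id$ forces this) and has order $\ell^2$, hence equals the full Cartan congruence kernel by the dimension count, from which the containment follows by induction and compactness exactly as in Lemma~\ref{lem-all-mat}. So what you call the main obstacle is a genuine but routine step, implicitly assumed in the paper both here and in Proposition~\ref{prop-max-growth-kummer}(2); writing it out as you propose makes the argument more self-contained without changing its substance.
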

\begin{proof}
Part (1) follows from Lemma \ref{lem-order-n-d}, since by Corollary \ref{cor:UniformBoundDivisibility} the point $\alpha$ is not divisible by $\ell^{v_\ell(C_1)+1}$ over $\Q_\infty$. Parts (2), (3) and (4) then follow from Proposition \ref{prop:GroupTheory} (for part (4) observe that no elliptic curve over $\Q$ has CM defined over $\Q$).
\end{proof}

We can now prove the main Theorem of this section.

\begin{proof}[Proof of Theorem \ref{thm:QUniformity}]
As already explained, we have $W_\infty = \prod_\ell W_{\ell^\infty}$, so we obtain
\[
\left[\prod_\ell T_\ell(E) : W_\infty\right] = \prod_\ell [T_\ell(E) : W_{\ell^\infty}].
\]

 Let 
\[\mathcal T_1= \mathcal T_0\cup\{\ell\text{ prime}\mid\ell \text{ divides }C_1\}\cup \set{19,43,67,163}.\]
Notice that by Theorem \ref{thm:Mazur} for $\ell\not\in \mathcal T_1$ there is no elliptic curve over $\Q$ with a rational subgroup of order $\ell$. By Lemma \ref{cor:BoundInTermsOfC1} (3), for $\ell\not\in \mathcal T_1$ we have $W_{\ell^{\infty}}=T_\ell(E)$, so
\begin{equation}
\label{eqn:boundIndexKummerUniform}
\left[\prod_\ell T_\ell(E) : W_\infty\right] = \prod_{\ell \in \mathcal T_1} [T_\ell(E) : W_{\ell^\infty}].
\end{equation}

Now it is enough to prove the Theorem separately in the CM and in the non-CM case, and then take the least common multiple of the two constants obtained.

Suppose first that $E$ does not have CM over $\overline{\Q}$. Applying Lemma \ref{cor:BoundInTermsOfC1}(2) we see that $[T_\ell(E) : W_{\ell^\infty}]$ divides $\ell^{2(n_\ell+v_\ell(C_1))}$, where $n_\ell$ is a parameter of maximal growth for the $\ell$-adic torsion for $E$. By Theorem \ref{thm:Arai} this can be bounded uniformly in $E$. Since $C_1$ does not depend on $E$, each factor of the right hand side of \eqref{eqn:boundIndexKummerUniform} is uniformly bounded.

Assume now that $E$ has complex multiplication over $\overline \Q$ and let $(\gamma,\delta)$ be parameters for the Cartan subgroup of $\GL_2(\Z_\ell)$ corresponding to $\operatorname{End}_{\overline \Q}(E)$. Applying Lemma \ref{cor:BoundInTermsOfC1}(4), we see that $[T_\ell(E) : W_{\ell^\infty}]$ divides $\ell^{2(3n_\ell+v_\ell(4\delta C_1))}$, where $n_\ell$ is a parameter of maximal growth for the $\ell$-adic torsion representation for $E$, which is uniformly bounded by Corollary \ref{cor:UniversalBoundGrowthParameter}. It remains to show that $v_\ell(\delta)$ can be bounded uniformly as well. This follows from the fact that $\delta$ only depends on the $\overline \Q$-isomorphism class of $E$, and that there are only finitely many rational $j$-invariants corresponding to CM elliptic curves.
\end{proof}

\subsection{Proof of Proposition \ref{prop:NewBoundCohomology}}
\label{section:ProofOf}

Recall the setting of Proposition \ref{prop:NewBoundCohomology}: $E/\Q$ is a non-CM elliptic curve, $N$ is a positive integer, and $\ell$ is a prime factor of $N$. Let $\ell^k$ be the largest power of $\ell$ dividing $N$ and $\J=\gal(\Q_N \mid \Q_{\ell^k})\triangleleft H_N$. The question is to study the exponent of the group $\operatorname{Hom}\left(\J , E[\ell^k] \right)^{H_N}$. In order to do this, we shall study the conjugation action of $g \in H_N$ on the abelianisation of $\J$.
More generally, we shall also consider the conjugation action of elements in $\GL_2(\Z/N\Z)$ that normalise $\J$.

It will be useful to work with a certain subgroup $\JJ$ of $\J$. More generally, we introduce the following notation.

\begin{defi}
Let $G$ be a group and $M$ a positive integer. We denote by $G(M)$ the subgroup of $G$ generated by $\set{g^M\mid g\in G}$.
\end{defi}

\begin{lem}\label{lem:SubgroupL}
The subgroup $\JJ$ is normal in $\J$, the quotient group $\J/\JJ$ has exponent at most 2, $\JJ$ is stable under the conjugation action of $H_N$, and
\[
\operatorname{exp} \operatorname{Hom}\left(\J, E[\ell^k] \right)^{H_N} \mid 2 \operatorname{exp} \operatorname{Hom}\left(\JJ, E[\ell^k] \right)^{H_N}.
\]
\end{lem}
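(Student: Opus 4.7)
The plan is to dispatch the three group-theoretic claims directly and then reduce the exponent inequality to a short left-exact sequence obtained from $1 \to \JJ \to \J \to \J/\JJ \to 1$.

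First I would observe that $\JJ$, being generated by the squares of elements of $\J$, is a \emph{characteristic} subgroup of $\J$: every automorphism of $\J$ sends squares to squares and therefore preserves $\JJ$. This gives normality of $\JJ$ in $\J$ immediately. Furthermore, $\J$ is normal in $H_N$ by construction, so conjugation by any $h \in H_N$ induces an automorphism of $\J$ that must preserve the characteristic subgroup $\JJ$; this is the $H_N$-stability. For the exponent bound on $\J/\JJ$, it is enough to note that $g^2 \in \JJ$ by definition for every $g \in \J$, so every coset in $\J/\JJ$ has order dividing $2$ (and the quotient is in fact elementary abelian, though we do not need this).

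For the exponent inequality, I would apply the contravariant functor $\Hom(-, E[\ell^k])$ to the short exact sequence $1 \to \JJ \to \J \to \J/\JJ \to 1$, obtaining the left-exact sequence
\begin{equation*}
0 \to \Hom(\J/\JJ, E[\ell^k]) \to \Hom(\J, E[\ell^k]) \to \Hom(\JJ, E[\ell^k])
\end{equation*}
of abelian groups. Thanks to the $H_N$-stability of $\JJ$ proved above, the action $(h\psi)(x) = h\psi(h^{-1}xh)$ makes sense on each of the three $\Hom$-groups and all three maps are $H_N$-equivariant. Since the functor of $H_N$-invariants is left-exact, we obtain
\begin{equation*}
0 \to \Hom(\J/\JJ, E[\ell^k])^{H_N} \to \Hom(\J, E[\ell^k])^{H_N} \to \Hom(\JJ, E[\ell^k])^{H_N}.
\end{equation*}

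Now, because $\J/\JJ$ has exponent dividing $2$, any homomorphism $\J/\JJ \to E[\ell^k]$ takes values in the $2$-torsion of $E[\ell^k]$, so the group $\Hom(\J/\JJ, E[\ell^k])$ (and a fortiori its subgroup of $H_N$-invariants) has exponent dividing $2$. A one-line diagram chase on the displayed left-exact sequence then yields the bound: if $e := \exp \Hom(\JJ, E[\ell^k])^{H_N}$ and $\varphi \in \Hom(\J, E[\ell^k])^{H_N}$, then $e\varphi$ restricts to $0$ on $\JJ$, hence lies in $\Hom(\J/\JJ, E[\ell^k])^{H_N}$, and is therefore killed by $2$; this gives $\exp \Hom(\J, E[\ell^k])^{H_N} \mid 2e$ as required. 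There is no genuine obstacle here; the lemma is a setup that replaces $\J$ by the more tractable characteristic subgroup $\JJ$ at the cost of a factor of $2$, and the real work will happen in the subsequent estimate on $\exp \Hom(\JJ, E[\ell^k])^{H_N}$ inside the proof of Proposition \ref{prop:NewBoundCohomology}.
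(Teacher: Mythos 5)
Your proof is correct and follows essentially the same route as the paper's: both rely on $\JJ$ being characteristic in $\J$ (hence normal and $H_N$-stable), on the quotient $\J/\JJ$ having exponent dividing $2$, and on the observation that $e\,\varphi$ vanishes on $\JJ$ hence factors through $\J/\JJ$. The paper phrases the last step as a one-line computation ($d\psi(h^2)=0$ gives $2d\psi(h)=0$) rather than packaging it in a left-exact $\Hom$-sequence, but the content is identical.
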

\begin{proof}
Clearly $\JJ$ is a characteristic subgroup of $\J$, so it is normal in $\J$ and stable under the conjugation action of $H_N$ on $\J$. Given a coset $h\JJ \in \J/\JJ$ we have $(h\JJ)^2 = h^2\JJ = \JJ$ since $h^2 \in \JJ$ by definition, so the quotient $\J/\JJ$ is killed by $2$. Finally, take a homomorphism $\psi : \J \to E[\ell^k]$ stable under the conjugation action of $H_N$ and denote by $d$ the exponent of the abelian group $\operatorname{Hom}\left(\JJ, E[\ell^k] \right)^{H_N}$. The restriction of $\psi$ to $\JJ$ is an element of $\operatorname{Hom}\left(\JJ, E[\ell^k] \right)^{H_N}$, so it satisfies $d\psi|_{\JJ} = 0$, and thus given any $h \in \J$ we have $d\psi|_{\JJ}(h^2)=0$. This implies that for every $h \in J$ we have $2d \psi(h) = 0$, hence $\psi$ is killed by $2d$. Since this is true for all $\psi$, the claim follows.
\end{proof}

We will also need the following two simple lemmas:

\begin{lem}\label{lem:NontrivialHomothety}
Let $E/\Q$ be an elliptic curve and let $M\geq 37$ be an integer. If $\ell > M+1$ is a prime number, then $H_{\ell^\infty}{(M)}$ contains a homothety $\lambda \operatorname{Id}$ with $\lambda \not \equiv 1 \pmod{\ell}$.
\end{lem}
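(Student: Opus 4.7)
The argument rests on the following simple numerical fact: since $\ell > M+1$, the group $\F_\ell^\times$ has order $\ell - 1 > M$, so any generator $\bar\lambda$ of $\F_\ell^\times$ satisfies $\bar\lambda^M \not\equiv 1 \pmod \ell$. My plan is therefore to produce a scalar matrix $\mu \operatorname{Id} \in H_{\ell^\infty}$ whose reduction modulo $\ell$ is such a generator; then $(\mu \operatorname{Id})^M = \mu^M \operatorname{Id}$ automatically lies in $H_{\ell^\infty}(M)$ and gives the required homothety, since $\mu^M \equiv \bar\lambda^M \not\equiv 1 \pmod \ell$.

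The hypotheses $M \geq 37$ and $\ell > M+1$ force $\ell \geq 41$, and in particular $\ell \notin \mathcal{T}_0$. When $E$ has no CM over $\overline\Q$, I would invoke Corollary \ref{cor:ContainsScalarsAndConjugation}(1) directly: it guarantees that the reduction $H_\ell$ contains every scalar matrix $\bar\mu \operatorname{Id}$ with $\bar\mu \in \F_\ell^\times$. Picking $\bar\lambda$ to be a generator of $\F_\ell^\times$, Lemma \ref{lem:LiftHomothety} then lifts $\bar\lambda \operatorname{Id}$ to a scalar $\mu \operatorname{Id} \in H_{\ell^\infty}$ with $\mu \equiv \bar\lambda \pmod \ell$, and the computation in the previous paragraph finishes this case.

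If $E$ has CM, the image $H_{\ell^\infty}$ sits inside the normaliser of a Cartan subgroup of $\GL_2(\Z_\ell)$, which itself contains the full group of scalar matrices. The task is then to verify that $H_\ell$ contains all scalars $\bar\mu \operatorname{Id}$ with $\bar\mu \in \F_\ell^\times$; for the primes $\ell \geq 41$ in our range, which exceed all the (finitely many) primes at which the $\ell$-adic image could fail to equal the full Cartan--normaliser intersection, this follows from the structural description of the image of the $\ell$-adic representation for CM curves over $\Q$ recorded in \cite[Theorem 1.5]{MR3766118}. From here the argument concludes as in the non-CM case: lift a scalar of order $\ell-1$ via Lemma \ref{lem:LiftHomothety} and raise it to the $M$-th power.

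The only genuinely delicate point is the CM case: one must be sure that the range $\ell \geq 41$ is large enough to force $H_\ell$ to contain the full scalar subgroup of $\GL_2(\F_\ell)$, because a priori there could be small primes at which the $\ell$-adic image is a proper open subgroup of the Cartan missing some homotheties. The explicit list of possible CM $j$-invariants over $\Q$, together with the uniform index bounds of \cite{MR3766118}, removes this concern. Every remaining ingredient -- the lifting step, the order computation in $\F_\ell^\times$, and the final $M$-th power -- is essentially mechanical.
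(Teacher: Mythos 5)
Your treatment of the non-CM case coincides exactly with the paper's proof: Corollary \ref{cor:ContainsScalarsAndConjugation}(1) gives a generator of $\F_\ell^\times$ as a homothety in $H_\ell$, Lemma \ref{lem:LiftHomothety} lifts it to a scalar in $H_{\ell^\infty}$, and $\ell-1>M$ ensures the $M$-th power remains non-trivial modulo $\ell$. You also correctly observed that this is all the paper provides, and that it is carried out under the implicit hypothesis that $E$ has no CM (which Corollary \ref{cor:ContainsScalarsAndConjugation} requires). In practice this restriction is harmless, since Lemma \ref{lem:NontrivialHomothety} is invoked only inside the proof of Proposition \ref{prop:NewBoundCohomology}, which is stated for non-CM curves.

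The CM case you added, however, has a genuine gap. The claim that $H_\ell$ contains every scalar as soon as $\ell\geq 41$ is not what follows from \cite[Theorem 1.5]{MR3766118}. The unconditional form of that theorem used elsewhere in the paper (in the proof of Proposition \ref{prop:UniformBoundOnCohomology}) only gives the adelic index bound $\bigl[\prod_\ell\mathcal O_\ell^\times : H_\infty\cap\prod_\ell\mathcal O_\ell^\times\bigr]\leq 6$; from this one can only extract that $H_{\ell^\infty}$ contains $(\mathcal O_\ell^\times)^6$, hence the homotheties $\lambda^6\Id$ with $\lambda\in\Z_\ell^\times$. Raising these to the $M$-th power produces a non-trivial homothety modulo $\ell$ only when $\ell-1\nmid 6M$, and this condition fails for infinitely many primes with $\ell>M+1$ (for instance $M=37$, $\ell=223$). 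The stronger conclusion $\mathcal O_\ell^\times\subseteq H_{\ell^\infty}$, which the paper uses in Lemma \ref{lemma:CMVanishing}, requires good-reduction and unramification hypotheses at $\ell$, and neither is automatic once $\ell\geq 41$: a quadratic twist of a CM curve can acquire bad reduction at an arbitrarily large prime, and the finiteness of the list of rational CM $j$-invariants controls only the $\overline{\Q}$-isomorphism class, not the set of bad primes of a particular twist. So your appeal to ``the finitely many primes at which the $\ell$-adic image could fail to equal the full Cartan'' does not give a bound that is uniform in $E$, and the CM branch of your argument does not go through as written.
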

\begin{proof}
By Corollary \ref{cor:ContainsScalarsAndConjugation}, since $\ell>M+1>37$, the image of the modulo-$\ell$ representation contains all the homotheties. In particular, if $\overline \mu \in \F_\ell^\times$ is a generator of the multiplicative group $\F_\ell^\times$, then $H_\ell$ contains $\overline \mu \operatorname{Id}$, so by Lemma \ref{lem:LiftHomothety} $H_{\ell^\infty}$ contains $\mu\Id$, where $\mu\in\Z_\ell^\times$ is congruent to $\overline{\mu}$ modulo $\ell$.
So $H_{\ell^\infty}{(M)}$ contains $\mu^{M} \operatorname{Id}$, which is nontrivial modulo $\ell$ since $\overline \mu$ has order $\ell-1 > M$. 
\end{proof}

\begin{lem}
\label{lem:Cohen}
Let $p$ be a prime and let $n$ be a positive integer (with $n\geq 2 $ if $p=2$). For every positive integer $k$ let $U_k=\set{ x\in\Z_p\mid x\equiv 1 \pmod {p^k}}$. Let $M$ be a positive integer. Then $\set{x^M\mid x\in U_n}\supseteq U_{n+v_p(M)}$.
\end{lem}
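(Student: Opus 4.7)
The plan is to reduce the statement to an elementary fact about the additive group $\Z_p$ via the $p$-adic logarithm. Under the hypotheses $n \geq 1$ (and $n \geq 2$ when $p=2$), the series $\log(x) = -\sum_{m \geq 1}(1-x)^m/m$ and $\exp(y) = \sum_{m \geq 0} y^m/m!$ both converge on $U_n$ and on $p^n\Z_p$ respectively, and furnish mutually inverse isomorphisms of topological groups
\[
\log : U_n \xrightarrow{\ \sim\ } (p^n\Z_p, +), \qquad \exp : p^n\Z_p \xrightarrow{\ \sim\ } U_n.
\]
This is a standard fact in $p$-adic analysis; the hypothesis $n \geq 2$ when $p=2$ is precisely the condition needed for the $\exp$ series to converge on the relevant disc, and hence for the isomorphism to hold.

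Under this isomorphism, the $M$-th power map on $U_n$ intertwines with the multiplication-by-$M$ map on $p^n\Z_p$, since $\log(x^M) = M\log(x)$. Writing $M = p^{v_p(M)} u$ with $u \in \Z_p^\times$, one has
\[
M \cdot p^n \Z_p \;=\; u \cdot p^{n + v_p(M)} \Z_p \;=\; p^{n + v_p(M)} \Z_p,
\]
so that $\{x^M \mid x \in U_n\}$ corresponds under $\log$ to $p^{n+v_p(M)}\Z_p$, which in turn corresponds under $\exp$ to $U_{n+v_p(M)}$. Concretely, given $y \in U_{n+v_p(M)}$, set $w = \log(y) \in p^{n+v_p(M)}\Z_p$, write $w = M\cdot z$ for $z := w/M \in p^n\Z_p$, and let $x = \exp(z) \in U_n$: then $x^M = \exp(Mz) = \exp(w) = y$, establishing the desired inclusion (indeed an equality). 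There is no real obstacle here; the only subtlety is invoking the correct convergence ranges for $\log$ and $\exp$, which is exactly why the hypothesis $n \geq 2$ is imposed when $p=2$.
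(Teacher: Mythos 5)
Your proof is correct and takes essentially the same approach as the paper's: both construct a preimage of $y \in U_{n+v_p(M)}$ by setting $x = \exp(M^{-1}\log y)$ and verifying via the convergence/isomorphism properties of the $p$-adic $\log$ and $\exp$ that $x \in U_n$. The paper simply cites Cohen's book for the relevant facts rather than spelling out the isomorphism $U_n \cong p^n\Z_p$, but the underlying argument is the same.
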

\begin{proof}
Let $y\in U_{n+v_p(M)}$ and let $a=y-1$. By \cite[Corollary 4.2.17 and Corollary 4.2.18(1)]{MR2312337}, the $p$-adic integer $x=\exp(M^{-1}\log y)$ is well defined and satisfies $v_p(x-1)\geq v_p(M^{-1}a)\geq n$. Therefore $x\in U_n$ and clearly $x^M=y$.
\end{proof}

We will derive Proposition \ref{prop:NewBoundCohomology} from the following statement:
\begin{prop}\label{prop:BoundExponentg}
There is a universal constant $M$ with the following property. For every elliptic curve $E/\Q$, every positive integer $N$, every prime power $\ell^k$ dividing $N$, and every $g \in H_N$, the conjugation action of $g^M$ on the abelianisation of $\JJ$ is trivial.
\end{prop}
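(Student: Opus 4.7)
The strategy is to decompose the problem according to the prime factorisation of $M'=N/\ell^k$ and exploit the key property of $\JJ=\J(2)$: squaring kills the non-trivial coset of a Cartan inside its normaliser. Write $N=\ell^kM'$ with $\gcd(\ell,M')=1$. By the Chinese Remainder Theorem, $H_N\subseteq\GL_2(\Z/\ell^k\Z)\times\prod_{p\mid M'}\GL_2(\Z/p^{v_p(N)}\Z)$, and $\J$, being the kernel of the projection to the first factor, embeds in $\prod_{p\mid M'}\GL_2(\Z/p^{v_p(N)}\Z)$. Writing $g=(g_p)_p$ and denoting by $\J_p$ the projection of $\J$ to the $p$-factor, the conjugation action of $g$ on any $p$-primary quotient of $\JJ^{\ab}$ factors through the action of $g_p$ on $\JJ_p^{\ab}$. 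I would then reduce, via Corollary~\ref{cor-direct-prod}, to bounding uniformly the exponent $M_p$ of the action of $g_p$ on $\JJ_p^{\ab}$ in each of three cases, and finally set $M=\operatorname{lcm}_p M_p$.

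Let $\mathcal{T}_1:=\mathcal{T}_0\cup\{19,43,67,163\}$ and consider the three cases given by Theorems~\ref{thm:Mazur} and~\ref{thm:Zywyna}. \emph{(I)} If $p\notin\mathcal{T}_1$ and $H_p=\GL_2(\F_p)$, then Lemma~\ref{lemma:SerreLifting} forces $\SL_2(\Z_p)\subseteq H_{p^\infty}$, and (using Theorem~\ref{th-intersection} to arrange linear disjointness) $\J_p\supseteq\SL_2(\Z/p^{v_p(N)}\Z)$. Since $\SL_2(\Z/p^j\Z)$ is perfect for $p\ge5$, the commutator subgroup $[\J_p,\J_p]$ contains $\SL_2$, so $\J_p^{\ab}$ is a quotient of the determinant $\det\colon\J_p\to(\Z/p^{v_p(N)}\Z)^\times$. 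Conjugation is tautologically trivial on determinants, so $g_p$ already acts trivially on $\J_p^{\ab}$, and hence on $\JJ_p^{\ab}$: we can take $M_p=1$.

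\emph{(II)} If $p\notin\mathcal{T}_1$ and $H_p\subseteq N_{\operatorname{ns}}(p)$ (or its index-$3$ subgroup $D(p)$), a standard lifting argument shows $H_{p^{v_p(N)}}\subseteq N_{\operatorname{ns}}(p^{v_p(N)})$. The crucial observation, explaining the passage to $\JJ$, is that for any $c\in C_{\operatorname{ns}}$ and any $w\in N_{\operatorname{ns}}\setminus C_{\operatorname{ns}}$ one has
\[
(cw)^2=c\cdot(wcw^{-1})\cdot w^2\in C_{\operatorname{ns}},
\]
since $w^2\in C_{\operatorname{ns}}$ and $wcw^{-1}\in C_{\operatorname{ns}}$. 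Hence $\JJ_p\subseteq C_{\operatorname{ns}}(p^{v_p(N)})$, which is abelian, so $\JJ_p^{\ab}=\JJ_p$. The same reasoning applies to $g_p$: either $g_p\in C_{\operatorname{ns}}$ already, or $g_p^2\in C_{\operatorname{ns}}$. Since $C_{\operatorname{ns}}$ is abelian, conjugation by $g_p^2$ fixes $\JJ_p\subseteq C_{\operatorname{ns}}$ pointwise, so $M_p=2$ (this argument also subsumes the CM case, where $H_{p^\infty}$ is always contained in the normaliser of a Cartan). \emph{(III)} If $p\in\mathcal{T}_1$, there are only finitely many such primes. For each, Corollary~\ref{cor:UniversalBoundGrowthParameter} furnishes a universal parameter of maximal growth $n_p$, and Lemma~\ref{lemma:ImageAlwaysContainsScalars} then places the non-trivial scalar $(1+p^{n_p})\Id$ in $H_{p^\infty}$. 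A Sah-type argument applied to this central element shows that the action of $g_p$ on $\JJ_p^{\ab}$ has exponent dividing $p^{n_p}$, independently of $v_p(N)$ and of $E$. Combining everything, one can take $M=2\cdot\prod_{p\in\mathcal{T}_1}p^{n_p}$.

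The main obstacle will be handling the primes where the decomposition $\J\cong\prod_p\J_p$ is only approximate — namely primes of bad reduction, primes dividing $30$, or primes where the $\bmod\,p$ representation is non-surjective — since Theorem~\ref{th-intersection} fails there and $\J_p$ need not equal $H_{p^{v_p(N)}}$. For these, one must verify that the Sah-type argument of case (III) extends even when $\J_p$ is a proper subgroup of $H_{p^{v_p(N)}}$, by lifting the central scalar $(1+p^{n_p})\Id$ from $H_{p^\infty}$ to an element of $\J$ via the fact that scalars commute with everything and hence can be arranged to have trivial $\ell^k$-component up to multiplication by a bounded factor. This uniformity across $E$ is the technical heart of the argument.
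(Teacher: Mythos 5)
Your proposal has a genuine gap at the very first reduction step. You claim that "the conjugation action of $g$ on any $p$-primary quotient of $\JJ^{\ab}$ factors through the action of $g_p$ on $\JJ_p^{\ab}$" and from this reduce to bounding the exponent of the action of each $g_p$ on $\JJ_p^{\ab}$. But the natural map $\JJ^{\ab}\to\prod_p\JJ_p^{\ab}$ need not be injective: its kernel is $\bigl(\JJ\cap\prod_p[\JJ_p,\JJ_p]\bigr)/[\JJ,\JJ]$, which is non-trivial whenever $\JJ$ sits as a fibre-product-like subgroup of $\prod_p\JJ_p$ (a standard example: the subgroup of $Q_8\times Q_8$ given by pairs with the same image in $Q_8/Z(Q_8)$ has abelianization of order $8$, while the two projections have abelianization of order $4$ each and the combined map has a kernel of order $2$). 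In the Galois setting this phenomenon is unavoidable, precisely because for small or bad primes the torsion fields $\Q_{p^e}$ intersect non-trivially; nothing in your argument rules it out. On the part of $\JJ^{\ab}$ killed by all the $\pi_p$, controlling the componentwise actions gives no information at all, so your final $\operatorname{lcm}$ does not bound the action on $\JJ^{\ab}$.

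The paper avoids this problem by proving a strictly stronger assertion: it shows that a suitable modification $\hat g$ of $g$ has the property that $\hat g^M$ \emph{centralises} $\JJ$, not merely acts trivially on $\JJ^{\ab}$. Centralising an element of a direct product \emph{is} a componentwise condition, so the reduction to the $\pi_p(\JJ)$ is legitimate there. To make this work at the primes with $H_p=\GL_2(\F_p)$ (your case (I), where $g_p$ certainly does not centralise $\pi_p(\JJ)$), the paper first replaces $g$ by $\hat g$ with trivial components at those primes, using the observation (Lemma~\ref{lem:SameAutomorphism} and Corollary~\ref{cor:gpId}) that multiplying $g$ by an element of $\JJ$ does not change the automorphism of $\JJ^{\ab}$. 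Your determinant observation for case (I) is a nice and more elementary shortcut, but it yields only the weak conclusion (triviality on $\J_p^{\ab}$), which cannot be fed into the centralising, componentwise argument. Your case (III) is also not actually carried out: Sah's lemma controls the exponent of $H^1$, not of a conjugation action on an abelianization, and the last paragraph of your proposal explicitly defers the difficult uniformity step. The paper instead reduces $N$ to be squarefree away from $\ell$ (a reduction you skip, and which is what makes $\mathcal{T}_0$ tractable), so that $\pi_p(\JJ)\subseteq\GL_2(\F_p)$ for $p\in\mathcal{T}_0$, and then takes $M$ a multiple of $\exp\PGL_2(\F_p)$ to force $\pi_p(\hat g^M)$ to be a scalar and hence central.
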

\begin{proof}[Proof that Proposition \ref{prop:BoundExponentg} implies Proposition \ref{prop:NewBoundCohomology}]
By Lemma \ref{lem:SubgroupL} it suffices to prove Proposition \ref{prop:NewBoundCohomology} with $\J$ replaced by $\JJ$. Let $\psi \in \operatorname{Hom}\left(\JJ, E[\ell^k] \right)$: then as $E[\ell^k]$ is abelian $\psi$ factors through $\JJ^{\ab}$.

For every $g \in H_N$, every $\psi \in \operatorname{Hom}\left(\JJ, E[\ell^k] \right)^{H_N}$ and every $h\in \JJ$ we have
\[
\psi(h)=g^M \cdot \psi(g^{-M} h g^M ) =g^M \cdot \psi(h),
\]
where the first equality holds because $\psi$ is $H_N$-invariant and the second  because the automorphism induced by $g^M$ on $\JJ^{\operatorname{ab}}$ is trivial by Proposition \ref{prop:BoundExponentg}. This means that the image of $\psi$ is contained in $E[\ell^k]^{H_N{(M)}}$. Since the action of $H_N$ on $E[\ell^k]$ factors via the canonical projection $H_N \to \GL_2(\Z/\ell^k\Z)$, this is the same as saying that the image of $\psi$ is contained in the subgroup of $E[\ell^k]$ fixed under $H_{\ell^k}(M)$. It remains to show that the exponent of $E[\ell^k]^{H_{\ell^k}(M)}$ is uniformly bounded, and trivial for $\ell$ sufficiently large.

To see this, recall that by Theorem \ref{thm:Arai} there exists an integer $n\geq 1$, independent of $E$, such that $H_{\ell^k}$ contains $\operatorname{Id} + \ell^{n} \Mat_2(\Z/\ell^k\Z)$ (and we have $n \geq 2$ if $\ell=2$). By Lemma \ref{lem:Cohen}, for every $E/\Q$ the group $H_{\ell^k}(M)$ contains all scalar matrices in $\Mat_2(\Z/\ell^k\Z)$ that are congruent to the identity modulo $\ell^{n+v_\ell(M)}$.
We claim that the exponent of $E[\ell^k]^{H_{\ell^k}(M)}$ divides $\ell^{n+v_\ell(M)}$. In fact, by what we have seen $H_{\ell^k}(M)$ contains $(1+\ell^{n+v_\ell(M)})\Id$, so $E[\ell^k]^{H_{\ell^k}(M)}$ is in particular fixed by $(1+\ell^{n+v_\ell(M)})\Id$, hence it is contained $E[\ell^{n+v_\ell(M)}]$.

Finally, we show that $\Hom(\J,E[\ell^k])^{H_N}$ is trivial for $\ell>M+1$. Since $\ell>2$, by Lemma \ref{lem:SubgroupL} it is enough to show that $\Hom(\JJ,E[\ell^k])^{H_N}$ is trivial. As above, the image of any $H_N$-stable homomorphism from $\JJ$ to $E[\ell^k]$ is contained in the $H_{\ell^k}(M)$-fixed points of $E[\ell^k]$. By Lemma \ref{lem:NontrivialHomothety}, $H_{\ell^k}(M)$ contains a homothety which is nontrivial modulo $\ell$, so we are done since the only fixed point of this homothety is $0$.
\end{proof}

We now turn to the proof of Proposition \ref{prop:BoundExponentg}.
We start by showing that we may assume $N$ to be of the form $\ell^k \cdot \prod_{p \mid N, p \neq \ell} p$. To see this, let $N=\ell^k \prod_{p \mid N, p \neq \ell} p^{e_p}$ be arbitrary and let $N':=\ell^k \prod_{p \mid N, p \neq \ell} p$. There is an obvious reduction map $\J \to \gal(\Q_{N'} \mid \Q_{\ell^k})$. The kernel $\mathcal K$ of this map is a subgroup of $\J$ whose order is divisible only by primes $p \mid N, p \neq \ell$. Recall that we will be considering $\operatorname{Hom}(\J, E[\ell^k])^{H_N}$. Let $\psi : \J \to E[\ell^k]$ be a homomorphism: we claim that $\psi$ factors via the quotient $\gal(\Q_{N'} \mid \Q_{\ell^k})$. Indeed, all the elements in $\mathcal K$ have order prime to $\ell$, hence they must go to zero in $E[\ell^k]$. Therefore we may assume $N=N'$, that is, $N=\ell^k \cdot \prod_{p \mid N, p \neq \ell} p$.

We identify $H_N$ with a subgroup of $\GL_2(\Z/\ell^k\Z) \times \prod_{p \mid N, p \neq \ell} \GL_2(\Z/p\Z)$ and $\J$ with the subgroup of $H_N$ consisting of elements having trivial first coordinate, and for $g\in H_N$ we write $g=(g_\ell, g_{p_1}, \ldots, g_{p_r})$ with $g_\ell \in \GL_2(\Z/\ell^k\Z)$ and $g_{p_i} \in \GL_2(\Z/p_i\Z)$. Finally, for $p\mid N$, $p\neq \ell$ we denote by $\pi_{p_i}:H_N\to \GL_2(\Z/p_i\Z)$ the projection on the factor corresponding to $p_i$, and we denote by $\pi_\ell:H_N\to \GL_2(\Z/\ell^k\Z)$ the projection on the factor corresponding to $\ell$.

\begin{lem}\label{lem:SL2Factor}
Let $p$ be a prime factor of $N$ with $p \geq 7$, $p \neq \ell$. Suppose that the modulo-$p$ representation attached to $E/\Q$ is surjective. Then $\JJ$ contains $\{1\} \times \cdots \times \{1\} \times \SL_2(\Z/p\Z) \times \{1\} \times \cdots \times \{1\}$.
\end{lem}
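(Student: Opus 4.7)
The plan is a Jordan--H\"older / Goursat-style argument: for $p\geq 7$ the simple group $\PSL_2(\F_p)$ is ``too large and too specific'' to appear as a composition factor of $\GL_2(\F_q)$ with $q\neq p$ a prime, or of $\GL_2(\Z/\ell^k\Z)$ with $\ell\neq p$. Exploiting the reduction preceding the lemma, we may assume $N=\ell^k\prod_{p_i\mid N,\,p_i\neq \ell} p_i$ and identify $H_N$ via the Chinese Remainder Theorem with a subgroup of $G:=\GL_2(\Z/\ell^k\Z)\times \prod_i \GL_2(\F_{p_i})$. The first step is to introduce $M_p$, defined as the subgroup of $H_N$ consisting of those tuples whose coordinates are trivial outside the $p$-component, and to identify $M_p$ with its image in $\GL_2(\F_p)$. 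Since elements of $M_p$ have trivial $\ell$-coordinate, $M_p\subseteq \J$.

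Next, I would verify two structural facts. First, a coordinatewise computation shows that for $h\in H_N$ and $m\in M_p$ one has $hmh^{-1}\in M_p$ with $p$-component equal to $\pi_p(h)\, m\, \pi_p(h)^{-1}$; combined with the surjectivity hypothesis $\pi_p(H_N)=\GL_2(\F_p)$, this proves that $M_p$ is normal in $\GL_2(\F_p)$. Second, the projection $H_N\to \GL_2(\Z/\ell^k\Z)\times \prod_{p_i\neq p}\GL_2(\F_{p_i})$ has kernel exactly $M_p$, so that $\GL_2(\F_p)/M_p$ is a subquotient of $\GL_2(\Z/\ell^k\Z)\times \prod_{p_i\neq p}\GL_2(\F_{p_i})$.

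The core of the proof is then the following composition-factor argument. If $M_p\not\supseteq \SL_2(\F_p)$, then $M_p\cap \SL_2(\F_p)$ is a proper normal subgroup of $\SL_2(\F_p)$, hence contained in $\{\pm I\}$ by the classification of normal subgroups of $\SL_2(\F_p)$ for $p\geq 5$. Consequently $\SL_2(\F_p)M_p/M_p\cong \SL_2(\F_p)/(M_p\cap \SL_2(\F_p))$ has $\PSL_2(\F_p)$ as a composition factor, and so do $\GL_2(\F_p)/M_p$ and $\GL_2(\Z/\ell^k\Z)\times \prod_{p_i\neq p}\GL_2(\F_{p_i})$. But the non-abelian simple composition factors of the latter product lie in $\{\PSL_2(\F_\ell)\}\cup \{\PSL_2(\F_{p_i}): p_i\neq p\}$: for distinct primes $q\geq 5$ the groups $\PSL_2(\F_q)$ have distinct orders $q(q^2-1)/2$, while $\GL_2(\F_2)$ and $\GL_2(\F_3)$ are solvable of orders $6$ and $48$, strictly less than $|\PSL_2(\F_p)|\geq 168$. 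Thus $\PSL_2(\F_p)$ cannot occur, a contradiction; hence $M_p\supseteq \SL_2(\F_p)$.

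To conclude, since $p$ is odd, each elementary transvection $I+tE_{ij}\in \SL_2(\F_p)$ lies in a cyclic group of order $p$, so is a square; since transvections generate $\SL_2(\F_p)$ we obtain $\SL_2(\F_p)(2)=\SL_2(\F_p)$. Therefore $\JJ\supseteq M_p(2)\supseteq \SL_2(\F_p)(2)=\SL_2(\F_p)$ in the $p$-th factor, as required. The hardest part of the proof will be the composition-factor bookkeeping in the third paragraph, which requires both invoking the classification of normal subgroups of $\SL_2(\F_p)$ for $p\geq 5$ and carefully identifying the non-abelian simple composition factors of the various $\GL_2$-groups involved.
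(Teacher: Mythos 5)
Your proof is correct and uses essentially the same Jordan--H\"older strategy as the paper's: both identify the kernel $M_p$ (which you construct explicitly, and which the paper handles as the kernel of the projection of $\J$ away from the $p$-factor) as the decisive subgroup sitting inside $\J$, both argue that $\PSL_2(\F_p)$ cannot be a composition factor of the complementary product, and both finish by observing that $\SL_2(\F_p)$ is generated by squares. The one point where you deviate is the middle step: the paper chases the occurrence of $\PSL_2(\F_p)$ directly down into $M_p$ and applies Lemma \ref{lemma-solvable}, which handles \emph{arbitrary} subgroups of $\GL_2(\F_p)$, whereas you argue by contrapositive, which forces you to first prove that $M_p$ is normal in $\GL_2(\F_p)$ so that you can invoke the classification of normal subgroups of $\SL_2(\F_p)$ --- a correct but slightly longer route to the same conclusion.
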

\begin{proof}
Clearly $\PSL_2(\F_p)$ occurs in $H_N$. Hence it must occur either in $\J$ or in $H_N/\J$, but the latter is isomorphic to a subgroup of $\GL_2(\Z/\ell^k\Z)$ with $\ell \neq p$, so it must occur in $\J$. Consider the kernel of the projection $\J \to \prod_{q \mid N, q \neq p} \GL_2(\Z/q\Z)$: then $\PSL_2(\F_p)$ must occur either in this kernel or in $\prod_{q \mid N, q \neq p} \GL_2(\Z/q\Z)$, but the latter case is impossible. Using Lemma \ref{lemma-solvable}, it follows immediately that $\J$ contains $\{1\} \times \cdots \times \{1\} \times \SL_2(\Z/p\Z) \times \{1\} \times \cdots \times \{1\}$. We conclude by noting that $\SL_2(\F_p)$ is generated by its squares.
\end{proof}

\begin{lem}\label{lem:SameAutomorphism}
Let $g \in H_N$ and $h \in \JJ$. Then $gh \in H_N$, and the automorphisms of $\JJ^{\operatorname{ab}}$ induced by $g$ and by $gh$ coincide.
\end{lem}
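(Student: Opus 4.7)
The plan is to handle the two assertions separately. The containment $gh \in H_N$ is immediate, since $\JJ \subseteq \J \trianglelefteq H_N$ implies $h \in H_N$, and $H_N$ is a group. So the content is in showing that the inner automorphisms of $\JJ$ given by conjugation by $g$ and by $gh$ descend to the same automorphism of $\JJ^{\operatorname{ab}}$.

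The key computation I would do is to write, for any $x \in \JJ$,
\[
(gh) x (gh)^{-1} \cdot \left( g x g^{-1} \right)^{-1} = g \left( h x h^{-1} x^{-1} \right) g^{-1} = g [h,x] g^{-1}.
\]
Thus the ratio of the two actions is the conjugate by $g$ of a commutator $[h,x] \in [\JJ,\JJ]$. The only thing I need to verify is that this ratio lies in $[\JJ,\JJ]$, which is exactly the kernel of the quotient map $\JJ \twoheadrightarrow \JJ^{\operatorname{ab}}$.

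For this, I would invoke Lemma \ref{lem:SubgroupL}, which states that $\JJ$ is stable under $H_N$-conjugation. The derived subgroup $[\JJ,\JJ]$ is a characteristic subgroup of $\JJ$, so it is automatically stable under any automorphism of $\JJ$; in particular it is stable under the $H_N$-conjugation action induced on $\JJ$. Hence $g[h,x]g^{-1} \in [\JJ,\JJ]$, and passing to $\JJ^{\operatorname{ab}}$ the ratio becomes trivial, which is exactly the statement that the two automorphisms agree on $\JJ^{\operatorname{ab}}$.

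I do not anticipate any real obstacle here: the lemma is a routine consequence of the normality of $\JJ$ in $H_N$ established in Lemma \ref{lem:SubgroupL} combined with the elementary fact that inner automorphisms of a group act trivially on its abelianisation. The only subtlety worth flagging is that $h$ is not assumed to be central in any sense, so one really does need the passage to the abelianisation; conjugation by $gh$ and by $g$ will generally differ as automorphisms of $\JJ$ itself.
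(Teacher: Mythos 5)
Your proof is correct and follows essentially the same approach as the paper: both reduce the claim to the fact that the "ratio" of the two conjugation actions on $\JJ$ lands in $[\JJ,\JJ]$, using the normality of $\JJ$ from Lemma \ref{lem:SubgroupL}. The only cosmetic difference is that the paper writes the ratio directly as a commutator of two elements of $\JJ$ (namely $h^{-1}$ and $g^{-1}xg$, the latter lying in $\JJ$ by normality), whereas you express it as $g[h,x]g^{-1}$ and invoke that $[\JJ,\JJ]$, being characteristic in $\JJ$, is preserved under conjugation by $g$; both routes are equally valid.
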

\begin{proof}
As $\JJ$ is a subgroup of $H_N$, the fact that $gh \in H_N$ is obvious. For the second statement, notice that for every $x \in \JJ$ the element
$(gh)^{-1} x (gh)$
differs from $g^{-1}xg$ by multiplication by $h^{-1}(g^{-1}x^{-1}g)^{-1}h(g^{-1}x^{-1}g)$, which is a commutator in $\JJ$. Hence the classes of $(gh)^{-1} x (gh)$ and $g^{-1}xg$ are equal in $\JJ^{\operatorname{ab}}$.
\end{proof}

\begin{lem}\label{lem:gpNormalisesKp}
For each $p \mid N, p \neq \ell$, the component $g_p$ of $g$ along $\GL_2(\Z/p\Z)$ normalises $\pi_{p}(\JJ)$ in $\GL_2(\Z/p\Z)$.
\end{lem}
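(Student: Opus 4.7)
The plan is a short, direct verification using the normality of $\JJ$ in $H_N$ combined with the fact that the projections $\pi_p$ are group homomorphisms.

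First I would observe that $\J = \gal(\Q_N \mid \Q_{\ell^k})$ is normal in $H_N$, since it is the Galois group of $\Q_N$ over the intermediate field $\Q_{\ell^k}$. Next, $\JJ$ is a characteristic subgroup of $\J$ (being defined by the group-theoretic property of being generated by squares), and therefore $\JJ$ is itself normal in $H_N$. In particular, for any $g \in H_N$ and any $h \in \JJ$ we have $ghg^{-1} \in \JJ$.

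Next I would apply the projection $\pi_p : H_N \to \GL_2(\Z/p\Z)$. Since $\pi_p$ is a group homomorphism, for every $h \in \JJ$ we get
\[
g_p \, \pi_p(h) \, g_p^{-1} = \pi_p(g) \, \pi_p(h) \, \pi_p(g)^{-1} = \pi_p(ghg^{-1}) \in \pi_p(\JJ).
\]
This shows $g_p \, \pi_p(\JJ) \, g_p^{-1} \subseteq \pi_p(\JJ)$. The reverse inclusion follows by applying the same argument to $g^{-1} \in H_N$, which gives $g_p^{-1} \, \pi_p(\JJ) \, g_p \subseteq \pi_p(\JJ)$, hence $\pi_p(\JJ) \subseteq g_p \, \pi_p(\JJ) \, g_p^{-1}$. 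Combining the two inclusions gives the equality $g_p \, \pi_p(\JJ) \, g_p^{-1} = \pi_p(\JJ)$, which is exactly the statement that $g_p$ normalises $\pi_p(\JJ)$ in $\GL_2(\Z/p\Z)$.

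There is no real obstacle here: the lemma is a purely formal consequence of the normality of $\JJ$ in $H_N$ and functoriality of projection under group homomorphisms. The content of the previous definitions does all the work, and no further information about the image of the torsion representation is needed.
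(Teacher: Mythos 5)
Your proof is correct and follows essentially the same route as the paper's: both rely on $\JJ$ being stable under conjugation by $H_N$ (which the paper quotes from Lemma~\ref{lem:SubgroupL} and you re-derive from $\J$ normal plus $\JJ$ characteristic) and then apply the projection homomorphism $\pi_p$. Your version is slightly more verbose in proving both inclusions separately, but the paper's one-line equality $\pi_p(g^{-1}\JJ g)=\pi_p(g)^{-1}\pi_p(\JJ)\pi_p(g)$ gives the same conclusion directly.
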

\begin{proof}
Since $H_N$ normalises $\JJ$ by Lemma \ref{lem:SubgroupL}, we have $\pi_p(g^{-1}\JJ g)=\pi_p(\JJ)$. On the other hand $\pi_p(g^{-1}\JJ g) = \pi_p(g)^{-1} \pi_p(\JJ) \pi_p(g)$, so $g_p^{-1} \pi_p(\JJ) g_p=\pi_p(\JJ)$ as desired.
\end{proof}
\begin{cor}\label{cor:gpId}
Let $p_1,\dots,p_s \geq 7$ be primes all different from $\ell$ and such that the mod-$p_i$ representation attached to $E/\Q$ is surjective for each $p_i$. Let $g\in H_N$ and let $\hat g$ be the element of $\GL_2(\Z/N\Z)$ obtained by replacing every $p_i$-component (for $i=1,\ldots,s$) of $g$ by $\Id$. Then $\hat g^2$ normalises $\JJ$, and it induces on $\JJ^{\ab}$ the same conjugation action as $g^2$.
\end{cor}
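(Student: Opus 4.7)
My plan is to compare the two conjugation actions of $g^2$ and $\hat g^2$ on $\JJ$ componentwise. Identify $\JJ\subseteq\GL_2(\Z/\ell^k\Z)\times\prod_i\GL_2(\Z/p_i\Z)\times\prod_j\GL_2(\Z/q_j\Z)$, so that any $x\in\JJ$ has trivial $\ell$-coordinate (since $\JJ\subseteq\J$). Because $g$ and $\hat g$ agree at every coordinate except the $p_i$'s, the conjugates $g^{-2}xg^2$ and $\hat g^{-2}x\hat g^2$ can only differ at the $p_i$-coordinates. There the first equals $g_{p_i}^{-2}\pi_{p_i}(x)g_{p_i}^{2}$ while the second equals $\pi_{p_i}(x)$, so the discrepancy is $\sigma_i:=g_{p_i}^{-2}\pi_{p_i}(x)g_{p_i}^{2}\pi_{p_i}(x)^{-1}$. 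This element has determinant $1$ and hence lies in $S_i:=\SL_2(\Z/p_i\Z)$.

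The first step is then to show that the packaged error $\sigma:=(\Id,\sigma_1,\ldots,\sigma_s,\Id,\ldots,\Id)\in\prod_i S_i$ lies inside $\JJ$. This is where Lemma \ref{lem:SL2Factor} enters: it guarantees that each factor $\{1\}\times\cdots\times S_i\times\cdots\times\{1\}$ sits inside $\JJ$, so the product $\sigma$ does too. Combining this with the relation $\hat g^{-2}x\hat g^2=\sigma^{-1}\cdot g^{-2}xg^2$ and the fact that $g\in H_N$ normalises $\JJ$ by Lemma \ref{lem:SubgroupL}, we obtain $\hat g^{-2}x\hat g^2\in\JJ$ for every $x\in\JJ$. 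A cardinality argument (conjugation is injective and $\JJ$ is finite) then upgrades this to the equality $\hat g^{-2}\JJ\hat g^2=\JJ$, proving that $\hat g^2$ normalises $\JJ$.

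For the second assertion, the same identity $\hat g^{-2}x\hat g^2=\sigma^{-1}\cdot g^{-2}xg^2$ reduces the equality of induced actions on $\JJ^{\ab}$ to the statement $\sigma\in[\JJ,\JJ]$. The crucial observation is that, since $p_i\geq 7\geq 5$, each $S_i=\SL_2(\Z/p_i\Z)$ is perfect, so the embedded copy of $S_i$ in $\JJ$ already sits inside $[\JJ,\JJ]$. Hence $\sigma\in\prod_i S_i\subseteq[\JJ,\JJ]$, which completes the argument. The only potential difficulty is notational bookkeeping across the various coordinates; conceptually nothing is hard, as the main point is simply that the discrepancy at each $p_i$ automatically has determinant one and therefore sits inside a perfect subgroup of $\JJ$.
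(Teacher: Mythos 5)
Your proof is correct, and it takes a genuinely different route from the paper's. The paper proves the statement by exhibiting an explicit factorisation $\hat g^2 = g^2 h^{-1}\mu^{-1}$, with $h = h_1\cdots h_s \in \JJ$ (each $h_i$ having $p_i$-component $g_{p_i}^2/\lambda_i \in \SL_2(\Z/p_i\Z)$, where $\lambda_i^2 = \det g_{p_i}^2$) and $\mu$ a central scalar. Normalisation of $\JJ$ is then automatic, and the equality of actions on $\JJ^{\ab}$ follows from Lemma \ref{lem:SameAutomorphism}. You instead compare the conjugates coordinate-by-coordinate: you observe that the discrepancy $\sigma_x$ between $\hat g^{-2}x\hat g^2$ and $g^{-2}xg^2$ lands in $\prod_i \SL_2(\Z/p_i\Z)$ (a commutator automatically has determinant one), which by Lemma \ref{lem:SL2Factor} sits inside $\JJ$ -- giving normalisation by a counting argument -- and then invoke perfectness of $\SL_2(\F_{p_i})$ for $p_i \geq 5$ to conclude $\sigma_x \in [\JJ,\JJ]$. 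Both proofs hinge on Lemma \ref{lem:SL2Factor}, but where the paper uses Lemma \ref{lem:SameAutomorphism} plus the observation that $\det(g_{p_i}^2)$ is a square, you replace that combination by the perfectness of $\SL_2(\F_p)$; your argument is arguably more direct, at the cost of pulling in one more standard group-theoretic fact. Two small points worth being explicit about: $\sigma$ depends on $x$ (the argument is still fine, since what matters is that for each $x$ the corresponding $\sigma_x$ lies in $[\JJ,\JJ]$), and the coordinate decomposition you use implicitly assumes the reduction of $N$ to the form $\ell^k\prod p$ made earlier in the section, which is exactly the setting in which Lemma \ref{lem:SL2Factor} is stated.
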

\begin{proof}
By Lemma \ref{lem:SameAutomorphism}, if we multiply $g^2$ by any element of $\JJ$ the conjugation action on $\JJ^{\operatorname{ab}}$ does not change. By construction, the determinant of $\pi_{p_i}(g^2)=g_{p_i}^2$ is a square in $\F_{p_i}^\times$, say $\lambda_i^2$. It follows that the determinant of $g_{p_i}^2/\lambda_i$ is 1, so $g_{p_i}^2/\lambda_i \in \SL_2(\Z/p_i\Z)$.
By Lemma \ref{lem:SL2Factor} we have that $\JJ$ contains $h_i=(1,1,\ldots,1,g_{p_i}^2/\lambda_i,1,\ldots,1)$. Letting $h=h_1\cdots h_s$, we obtain that the action of $g^2h^{-1}$ is the same as that of $g^2$. But the element
\[
\mu=(1,\dots,1,\lambda_1,1,\dots,1)\cdots (1,\dots,1,\lambda_s,1,\dots,1)
\]
 is central in $\GL_2(\Z/N\Z)$, so $\hat g^2=g^2h^{-1}\mu^{-1}$ normalises $\JJ$ and it induces the same action as $g^2$ on $J(2)^{\ab}$.
\end{proof}

Let $M=\operatorname{lcm} \{\exp \PGL_2(\F_p) : p \in \mathcal T_0\}$, where $\exp \PGL_2(\F_p)$ denotes the exponent of the group $\PGL_2(\F_p)$.

\begin{rem}
\label{rem:evenScalar}
 Notice that $M$ is even. Moreover, for any $g\in \GL_2(\Z/N\Z)$ and any $p\in \mathcal T_0$ with $p\mid N$ and $p\neq \ell$ we have that $\pi_p(g^M)$ is a scalar in $\GL_2(\F_p)$, since it is trivial in $\PGL_2(\F_p)$.
\end{rem}

We now prove Proposition \ref{prop:BoundExponentg}, using the constant $M$ just introduced.

\begin{proof}[Proof of Proposition \ref{prop:BoundExponentg}]
Write as before $g=(g_p)$. We divide the prime factors of $N$ different from $\ell$ into three sets as follows:
\begin{align*}
\mathcal{P}_0&=\set{p\mid N\text{ such that } p\in\mathcal{T}_0,\,p\neq \ell},\\
\mathcal{P}_1&=\set{p\mid N\text{ such that } H_p=\GL_2(\F_p),\,p\neq \ell},\\
\mathcal{P}_2&=\set{p\mid N\text{ such that } H_p\text{ is conjugate to a subgroup of }N_{\operatorname{ns}}(p),\,p\neq \ell}.
\end{align*}
Notice that by Theorem \ref{thm:Zywyna} each prime factor of $N$ different from $\ell$ belongs to one of these three sets.

We now apply Corollary \ref{cor:gpId} with $\{p_1,\dots,p_s\}=\mathcal{P}_1$ to obtain an element $\hat g\in \GL_2(\Z/N\Z)$ such that $\pi_{p}(\hat g)=\Id$ for every $p\in\mathcal{P}_1$ and such that $\hat{g}^2$ induces on $\JJ^{\ab}$ the same conjugation action as $g^2$. In particular, $\hat g^M$ induces on $\JJ^{\ab}$ the same conjugation as $g^M$ (recall that $M$ is even).

We now prove that this conjugation action is trivial by showing that $\hat g^M$ commutes with every element of $\JJ$. It suffices to show that for each $p\mid N$ the projection $\pi_p(\hat g^M)$ commutes with every element of $\pi_p(\JJ)$.

\begin{itemize}
\item \emph{Case $p\in \mathcal{P}_0$:} by Remark \ref{rem:evenScalar}, $\pi_p(\hat g^M)$ is a scalar, thus it commutes with all of $\GL_2(\F_p)$.
\item \emph{Case $p\in \mathcal{P}_1$:} by construction $\pi_p(\hat g^M)$ is trivial.
\item \emph{Case $p\in \mathcal{P}_2$:} by Corollary \ref{cor:ContainsScalarsAndConjugation} applied to $\pi_p(\hat g)$, there is $h\in \GL_2(\F_p)$ such that $\pi_p(\hat g)\in hN_{\operatorname{ns}}(p)h^{-1}$ and $H_p\subseteq hN_{\operatorname{ns}}(p)h^{-1}$. Since $M$ is even and $C_{\operatorname{ns}}(p)$ has index $2$ in $N_{\operatorname{ns}}(p)$, $\pi_p(\hat g^M)\in hC_{\operatorname{ns}}(p)h^{-1}$ and $\pi_p(\JJ)\subseteq \langle a^2\mid a\in H_p\rangle\subseteq hC_{\operatorname{ns}}(p)h^{-1}$. Since $C_{\operatorname{ns}}(p)$ is abelian, $\pi_p(\hat g^M)$ commutes with every element of $\pi_p(\JJ)$.
\item \emph{Case $p=\ell$:} by construction $\pi_p(J(2))$ is trivial.
\end{itemize}
\end{proof}

\bibliographystyle{acm}
\bibliography{biblio}

\end{document}